\documentclass[12pt]{amsart}
\numberwithin{equation}{section}

\usepackage{amsmath}
\usepackage{amscd,amsthm,amssymb,amsfonts}
\usepackage{mathrsfs}
\usepackage{dsfont}
\usepackage{stmaryrd}
\usepackage{euscript}
\usepackage{expdlist}
\usepackage{enumerate}

\input xy

\newtheorem{thm}{Theorem}[section]

\newtheorem*{thm*}{Theorem}

\newtheorem{lm}[thm]{Lemma}
\newtheorem{cor}[thm]{Corollary}
\newtheorem*{cor*}{Corollary}
\newtheorem{prop}[thm]{Proposition}
\newtheorem*{conj*}{Conjecture}

\theoremstyle{Remark}

\newtheorem*{remark}{Remark}

\theoremstyle{definition}
\newtheorem*{defn*}{Definition}

\newtheorem{I_Remark*}{Remark}
\newtheorem{defn}[thm]{Definition}

\newcommand{\nc}{\newcommand}

\newcommand{\beq}{\begin{equation}}
\newcommand{\eeq}{\end{equation}}
\newcommand{\bpmx}{\begin{pmatrix}}
\newcommand{\epmx}{\end{pmatrix}}
\newcommand{\bbmx}{\begin{bmatrix}}
\newcommand{\ebmx}{\end{bmatrix}}

\def\parref#1{\ref{#1}}
\def\thmref#1{Theorem~\parref{#1}}

\def\propref#1{Proposition~\parref{#1}}
\def\corref#1{Corollary~\parref{#1}}

\def\lmref#1{Lemma~\parref{#1}}

\def\makeop#1{\expandafter\def\csname#1\endcsname
  {\mathop{\rm #1}\nolimits}\ignorespaces}

\makeop{Hom}   \makeop{End}   \makeop{Aut}   
\makeop{Pic} \makeop{Gal}       \makeop{Div} \makeop{Lie}
\makeop{PGL}   \makeop{Corr} \makeop{PSL} \makeop{sgn} \makeop{Spf}
 \makeop{Tr} \makeop{Nr} \makeop{Fr} \makeop{disc}
\makeop{Proj} \makeop{supp} \makeop{ker}   \makeop{Im} \makeop{dom}
\makeop{coker} \makeop{Stab} \makeop{SO} \makeop{SL} \makeop{SL}
\makeop{Cl}    \makeop{cond} \makeop{Br} \makeop{inv} \makeop{rank}
\makeop{id}    \makeop{Fil} \makeop{Frac}  \makeop{GL} \makeop{SU}
\makeop{Trd}   \makeop{Sp} \makeop{Tr}    \makeop{Trd} \makeop{Res}
\makeop{ind} \makeop{depth} \makeop{Tr} \makeop{st} \makeop{Ad}
\makeop{Int} \makeop{tr}    \makeop{Sym} \makeop{can} \makeop{SO}
\makeop{torsion} \makeop{GSp} \makeop{Tor}\makeop{Ker} \makeop{rec}
\makeop{Ind} \makeop{Coker}
 \makeop{vol} \makeop{Ext} \makeop{gr} \makeop{ad}
 \makeop{Gr}\makeop{corank} \makeop{Ann}
\makeop{Hol} 
\makeop{Fitt} \makeop{Mp} \makeop{CAP}
\def\Jac{\mathrm{Jac}}

\def\makebb#1{\expandafter\def
  \csname bb#1\endcsname{{\mathbb{#1}}}\ignorespaces}
\def\makebf#1{\expandafter\def\csname bf#1\endcsname{{\bf
      #1}}\ignorespaces}
\def\makegr#1{\expandafter\def
  \csname gr#1\endcsname{{\mathfrak{#1}}}\ignorespaces}
\def\makescr#1{\expandafter\def
  \csname scr#1\endcsname{{\EuScript{#1}}}\ignorespaces}
\def\makecal#1{\expandafter\def\csname cal#1\endcsname{{\mathcal
      #1}}\ignorespaces}

\def\doLetters#1{#1A #1B #1C #1D #1E #1F #1G #1H #1I #1J #1K #1L #1M
                 #1N #1O #1P #1Q #1R #1S #1T #1U #1V #1W #1X #1Y #1Z}
\def\doletters#1{#1a #1b #1c #1d #1e #1f #1g #1h #1i #1j #1k #1l #1m
                 #1n #1o #1p #1q #1r #1s #1t #1u #1v #1w #1x #1y #1z}
\doLetters\makebb   \doLetters\makecal  \doLetters\makebf
\doLetters\makescr
\doletters\makebf   \doLetters\makegr   \doletters\makegr

    \def\setminus{\smallsetminus}

\normalsize

\makeop{Ram} \makeop{Rep} \makeop{mass}

\makeop{Bl}

\def\diag#1{\mathrm{diag}(#1)}
\def\el{\ell}

\def\cI{\mathcal I}

\def\sR{\mathscr R}

\def\ot{\otimes}

\def\hookto{\hookrightarrow}
\def\longto{\longrightarrow}
  \nc{\opp}{\mathrm{opp}} \nc{\ul}{\underline}

\def\XYmatrix{\xymatrix@M=8pt} 
\def\ncmd{\newcommand}
\ncmd{\xysubset}[1][r]{\ar@<-2.5pt>@{^(-}[#1]\ar@<2.5pt>@{_(-}[#1]}
\ncmd{\XYmatrixc}[1]{\vcenter{\XYmatrix{#1}}}
\ncmd{\xyto}[1][r]{\ar@{->}[#1]}
\ncmd{\xyinj}[1][r]{\ar@{^(->}[#1]}
\ncmd{\xysurj}[1][r]{\ar@{->>}[#1]}
\ncmd{\xyline}[1][r]{\ar@{-}[#1]}
\ncmd{\xydotsto}[1][r]{\ar@{.>}[#1]}
\ncmd{\xydots}[1][r]{\ar@{.}[#1]}
\ncmd{\xyleadsto}[1][r]{\ar@{~>}[#1]}
\ncmd{\xyeq}[1][r]{\ar@{=}[#1]} \ncmd{\xyequal}[1][r]{\ar@{=}[#1]}
\ncmd{\xyequals}[1][r]{\ar@{=}[#1]}
\ncmd{\xymapsto}[1][r]{l\ar@{|->}[#1]}\ncmd{\xyimplies}[1][r]{\ar@{=>}[#1]}
\ncmd{\xyiso}{\ar[r]_-{\sim}}
\def\injxy{\ar@{^(->}}

\newcommand{\pMX}[4]{\begin{pmatrix}
{#1}& {#2}\\
{#3}&{#4}\end{pmatrix} }

\newcommand{\seesaw}[4]{{#1}\ar@{-}[rd]\ar@{-}[d]&{#2}\ar@{-}[d]\\
{#3}\ar@{-}[ru]&{#4}}

\def\x{{\times}}

\def\e{\varepsilon} 

\newcommand\stt[1]{\left\{#1\right\}}
\def\ep{\epsilon}

\def\half{\frac{1}{2}}

\def\ka{\kappa}

\renewcommand\pmod[1]{\,(\mbox{mod }{#1})}

\usepackage{color}
\usepackage{hyperref}
\usepackage{MnSymbol}
\usepackage{mathdots}
\usepackage{tikz-cd}

\def\SO{{\rm SO}}
\def\M{{\rm Mat}}
\def\Rep{{\rm Rep}}
\def\Irr{{\rm Irr}}
\def\Ind{{\rm Ind}}
\def\ind{{\rm ind}}
\def\Jac{{\rm Jac}}
\def\St{{\rm St}}

\def\ka{\kappa}
\def\d{\delta}
\def\D{\Delta}
\def\bt{\boxtimes}
\def\ul{\underline}
\def\a{\alpha}
\def\ep{\epsilon}
\def\la{\lambda}
\def\({\left(}
\def\){\right)}

\title{Local newforms for generic representations of $p$-adic $\SO_{2n+1}$: Reduction}
\author{Yao Cheng \\  with an appendix by Chi-Heng Lo}
\date{\today}
\address{Department of Applied Mathematics and Data Science, Tamkang University, No. 151, Yingzhuan Road, Tamsui District, 
New Taipei City 251, Taiwan (R.O.C),  Lui-Hsien Memorial Science Hall.}
\email{briancheng@o365.tku.edu.tw}

\address{Department of Mathematics, National University of Singapore, 119076, Singapore}
\email{ch\_lo@nus.edu.sg}

\begin{document}
\maketitle
\begin{abstract}
We prove that if the space of newforms is non-zero for every irreducible generic supercuspidal representation of $\SO_{2n+1}$ then it is also 
non-zero for all irreducible generic representations of $\SO_{2n+1}$. 
\end{abstract}

\section{Introduction}
This paper is a sequel to \cite{YCheng2025}, in which we proved the uniqueness part of the newform conjecture for the $p$-adic group 
$\SO_{2n+1}$ proposed by Gross (\cite{Gross2015}) and verified the expected arithmetic properties of newforms, conditional on their existence. 
The aim of the present paper is to reduce the existence part of the conjecture to the case of generic supercuspidal representations. 

In the literature, the newform conjecture for generic supercuspidal representations was treated in the PhD 
thesis of Tsai (\cite{Tsai2013}); however, it seems that Tsai's proofs contain some issues (see \S\ref{SS:Tsai lemma}). 
In the present paper, we do not attempt to resolve these issues. Therefore, it may be safer to say that the existence part of the conjecture 
remains conditional.

\subsection{Main results}
To state the conjecture and our results, let $F$ be a finite extension of $\bbQ_p$, and let $\SO_{2n+1}(F)$ denote the split odd special 
orthogonal group defined over $F$. In \cite{Gross2015} (see also \cite{Tsai2013} and \cite{YCheng2025}), Gross defined a 
family $\stt{K_{n,m}}_{m\ge 0}$ of open compact subgroups of $\SO_{2n+1}(F)$, generalizing the families introduced by Casselman 
(\cite{Casselman1973}) for $n=1$ and Roberts--Schmidt (\cite{RobertsSchmidt2007}) for $n=2$ to arbitrary $n$. He also defined a family 
$\stt{J_{n,m}}_{m\ge 0}$ of open compact subgroups of $\SO_{2n+1}(F)$ such that $K_{n,0}=J_{n,0}$ and, for each $m\ge 1$, 
$K_{n,m}$ is a normal subgroup of $J_{n,m}$ of index $2$. 

Let $U_n(F)$ be the maximal unipotent subgroup of $\SO_{2n+1}(F)$, which consists of upper triangular matrices.
Fix an unramified additive character $\psi$ of $F$, and define a non-degenerate character $\psi_{U_n}$ of $U_n(F)$ by 
\[
\psi_{U_n}(u)=\psi\left(u_{1,2}+\cdots+u_{n-1,n}+2^{-1}u_{n,n+1}\right)
\]
for $u=(u_{i,j})\in U_n(F)$.

Let $\pi$ be an irreducible generic representation of $\SO_{2n+1}(F)$, i.e., ${\rm Hom}_{U_n(F)}\left(\pi,\psi_{U_n}\right)\ne 0$, 
with associated $L$-parameter $\phi_\pi$ (\cite{JiangSoudry2003}, \cite{JiangSoudry2004} and \cite{Arthur2013}). 
The $\epsilon$-factor $\epsilon(s,\phi_\pi,\psi)$ attached to $\phi_\pi$, $\psi$ and the standard 
representation of the $L$-group of $\SO_{2n+1}(F)$ (\cite{Tate1979}), can be expressed as
\[
\epsilon(s,\phi_\pi,\psi)
= 
\e_\pi q^{-c_\pi\left(s-\frac{1}{2}\right)},
\]
for some $\e_\pi=\pm 1$ and an integer $c_\pi\ge 0$, where $q$ is the cardinality of the residue field of $F$.

\begin{conj*}[Gross]
Let $\pi$ be an irreducible generic representation of $\SO_{2n+1}(F)$. Then 
\begin{itemize}
\item[(1)] the subspaces satisfy $\pi^{K_{n,m}}=0$ for $0\le m<c_\pi$ and $\dim_\bbC\pi^{K_{n,c_\pi}}=1$;
\item[(2)] the action of $J_{n,c_\pi}/K_{n,c_\pi}$ on $\pi^{K_{n,c_\pi}}$ is given by the scalar $\e_\pi$;
\item[(3)] the natural pairing of the one-dimensional spaces 
\[
{\rm Hom}_{K_{n,c_\pi}}\left(1,\pi\right)\,\x\,{\rm Hom}_{U_n(F)}\left(\pi,\psi_{U_n}\right)\longto\bbC,
\]
is non-degenerate.
\end{itemize}
\end{conj*}

This conjecture holds when $n=1$ and $2$, due to the works of Casselman (\cite{Casselman1973}) and Roberts--Schmidt 
(\cite{RobertsSchmidt2007}), respectively. It also holds when $\pi$ is unramified. In the prequel to the present paper, we obtain 
the following results:

\begin{thm}[\cite{YCheng2025}]\label{T:mainA}
Let $\pi$ be an irreducible generic representation of $\SO_{2n+1}(F)$. Then 
\begin{itemize}
\item[(1)] the subspaces satisfy $\pi^{K_{n,m}}=0$ for $0\le m<c_\pi$ and $\dim_\bbC\pi^{K_{n,c_\pi}}\le 1$;
\item[(2)] if $\pi^{K_{n,c_\pi}}\ne 0$, then the action of $J_{n,c_\pi}/K_{n,c_\pi}$ on $\pi^{K_{n,c_\pi}}$ is given by the scalar $\e_\pi$;
\item[(3)] if $\pi$ is tempered and $\pi^{K_{n,c_\pi}}\ne 0$, then the natural pairing of the one-dimensional spaces 
\[
{\rm Hom}_{K_{n,c_\pi}}\left(1,\pi\right)\,\x\,{\rm Hom}_{U_n(F)}\left(\pi,\psi_{U_n}\right)\longto\bbC,
\]
is non-degenerate.
\end{itemize}
\end{thm}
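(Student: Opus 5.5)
This result is quoted from the prequel \cite{YCheng2025}, so the plan below reconstructs the argument given there. The tool is the Rankin--Selberg (Jacquet--Piatetski-Shapiro--Shalika type) theory for $\SO_{2n+1}\times\GL_r$. The central idea is to realize the $\epsilon$-factor through integrals of Whittaker functions against a suitable $\GL_n$ datum. We will then show that any $K_{n,m}$-fixed vector gives a Whittaker function whose restriction to a torus, or to $\GL_n$ embedded in a Levi, is controlled by the conductor.

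\textbf{Step 1: Restriction to a Levi.} Let $v\in\pi^{K_{n,m}}$ with Whittaker function $W_v$. Using the Iwasawa decomposition and the fact that $K_{n,m}$ contains a large part of the Siegel-type parahoric, we show that $W_v$ is determined by its values on a distinguished torus-type set. The tool here is the $\psi_{U_n}$-equivariance together with right $K_{n,m}$-invariance. This restriction step is what ultimately yields $\dim\pi^{K_{n,m}}\le 1$ for $m=c_\pi$ and vanishing for smaller $m$. The specific role of $W_v$ on $\GL_n$ is as follows. We embed $\GL_n\hookrightarrow \SO_{2n+1}$ via the Siegel Levi. The function $g\mapsto W_v(g)$ on $\GL_n(F)$ is right invariant under $\GL_n(\frak o)$ up to a twist, and it is therefore an unramified-type Whittaker function on $\GL_n$.

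\textbf{Step 2: Functional equation with unramified $\GL_n$ data.} We form the zeta integrals $\Psi(s,W_v,f_s)$ for $\SO_{2n+1}\times\GL_n$, with $f_s$ a section induced from a $\GL_n(\frak o)$-spherical vector of an unramified generic representation $\tau$. We choose $f_s$ and the intertwining in a way adapted to $K_{n,m}$. The $\GL_n$-data are unramified, so $\gamma(s,\pi\times\tau,\psi)=\epsilon\cdot L$-ratio with $\epsilon(s,\pi\times\tau,\psi)=\epsilon_\pi^n\,\omega\, q^{-nc_\pi(s-1/2)}$. We then compare the two sides of the functional equation. The $K_{n,m}$-invariance, combined with the Atkin--Lehner-type element in $J_{n,m}\setminus K_{n,m}$, forces the integrals to have the shape $q^{-nm(s-1/2)}\times$(a polynomial ratio). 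Varying $\tau$ over all unramified characters and using Laurent-polynomial independence, we deduce two things. First, $W_v|_{\GL_n}$ vanishes unless $m\ge c_\pi$. Second, when $m=c_\pi$ the restriction is a scalar multiple of a fixed function, namely the spherical Shintani-type function times $L$. Combined with Step 1, this gives (1).

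\textbf{Step 3: Sign of the Atkin--Lehner action.} For (2), we insert $\pi(u)v$, where $u$ generates $J_{n,c_\pi}/K_{n,c_\pi}$, into the functional equation. The same computation produces a sign, and matching it against the sign $\epsilon_\pi$ in $\epsilon(s,\pi\times\tau)$ identifies the eigenvalue.

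\textbf{Step 4: Non-degeneracy for tempered $\pi$.} For (3), the pairing is non-degenerate if $W_v(1)\neq0$. In the tempered case $L(s,\pi\times\tau)$ has no poles in $\Re s>0$. From Step 2, the zeta integral equals $W_v(1)\cdot L(s,\pi\times\tau)$ up to units. The zeta integrals span the fractional ideal generated by $L$, and by Step 1 $W_v$ is determined by its restriction, so $W_v\ne0$ forces $W_v(1)\ne0$.

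\textbf{Main obstacle.} I expect the hardest part to be Step 1 and the unfolding in Step 2. There one must check that $K_{n,m}$-invariance really reduces $W_v$ to its $\GL_n$-restriction, and that the section $f_s$ can be chosen so that the zeta integral collapses to a clean Laurent polynomial times $L$. I would first try an explicit double-coset decomposition $U_n\backslash \SO_{2n+1}/K_{n,m}$, and would use the paramodular-type support arguments of Roberts--Schmidt as the model.
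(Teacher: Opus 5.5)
The paper gives no proof of this theorem; it is imported from \cite{YCheng2025}. Still, your plan has a genuine gap, and the paper itself shows where: it is in Step~1. You assert that $\psi_{U_n}$-equivariance, right $K_{n,m}$-invariance and an Iwasawa decomposition make $W_v$ determined by its values on a torus, hence by $W_v|_{\GL_n(F)}$. For $m\ge 1$ this decomposition fails, i.e.\ $\SO_{2n+1}(F)\neq U_n(F)T_n(F)K_{n,m}$. Already $P_{\a_n}(F)\backslash\SO_{2n+1}(F)/K_{n,m}$ has the several representatives $x_{n,j}$, $x_{n,i}w_{\ep_n,m}$ of \eqref{E:coset rep for std K when r=n}, and $W_v|_{\GL_n(F)}$ controls $W_v$ only on the single double coset $P_{\a_n}(F)K_{n,m}$. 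No double-coset bookkeeping recovers the remaining cosets. You need a substitute for the Kirillov model, and the one available is \lmref{L:Tsai}, which embeds $\pi^{K_{n,m}}$ into $\Jac_{Y_n}(\pi|_{P_{\a_n}(F)})^{P_{n+1}(\frak{o})}$. By \lmref{L:GPS} and \lmref{L:P fixed}, that space can receive contributions not only from the Gelfand--Graev constituent, which is the only one a Whittaker functional sees. It can also receive contributions from constituents $\ind_{Q_r}^{P_{n+1}}(\tau\bt\psi_{Z_{n+1-r}})$ with $r\ge 1$ and $\tau$ unramified. These are not excluded once $\pi$ has unramified pieces in its cuspidal support (e.g.\ $\phi_\pi\supset\phi_\chi\bt S_{2\ka+1}$), and this is exactly why the paper has to count $|\mu^*_{ur}|$ in \propref{P:mu=0} and \corref{C:key P}. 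Your $\SO_{2n+1}\times\GL_n$ zeta integrals involve only $W_v|_{\SO_{2n}(F)}$. By the Iwasawa decomposition of $\SO_{2n}(F)$ with respect to the hyperspecial $H_{n,m}$, that restriction is determined by $W_v|_{\GL_n(F)}$. So $K_{n,m}$-fixed vectors lying in those degenerate pieces are invisible to your functional equation. Step~2 therefore yields only that $W_v|_{\GL_n(F)}=0$ for $m<c_\pi$, and that the image of $v\mapsto W_v|_{\GL_n(F)}$ on $\pi^{K_{n,c_\pi}}$ is at most one-dimensional. That is not (1). Step~3 inherits the same defect, since vanishing zeta integrals say nothing about the Atkin--Lehner eigenvalue. (For supercuspidal $\pi$, \lmref{L:GPS} leaves only the Gelfand--Graev constituent, and your Step~1 is fine.)

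A symptom of the gap is that Step~4 never uses temperedness: the absence of poles in $\mathrm{Re}(s)>0$ plays no role in your deduction. If Steps~1--2 held as stated, every generic $\pi$ would satisfy $W_v|_{\GL_n(F)}=c\,\Phi$ with $\Phi(1)\neq 0$, and injectivity would force $c\neq0$; that is (3) for all generic $\pi$. Indeed, given your Step~2, injectivity on $\pi^{K_{n,c_\pi}}$ is equivalent to the non-degeneracy in (3), which the paper says it has no approach to for non-tempered $\pi$. The tempered hypothesis must do real work somewhere, and in your argument it does none. A repaired plan needs two things. First, for tempered $\pi$, an argument controlling the $K_{n,m}$-fixed vectors invisible to the Whittaker functional, for instance an injectivity statement for $v\mapsto W_v|_{\GL_n(F)}$ whose proof genuinely uses temperedness. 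Second, a separate treatment of (1)--(2) for non-tempered $\pi$. There $\pi=\Pi=\tau_1\x\cdots\x\tau_\el\rtimes\pi_0$ is the full standard module with $\pi_0$ tempered (Mui\'c). The Mackey-theoretic computation behind \lmref{L:useful} gives $a_\Pi=a_{\pi_0}+2\sum_i c_{\tau_i}$ and $\dim\Pi^{K_{n,a_\Pi}}=\dim\pi_0^{K_{n_0,a_{\pi_0}}}$, which matches $c_\pi=c_{\pi_0}+2\sum_i c_{\tau_i}$. This is the mechanism the present paper imports from \cite[Lemma 6.1 and Corollary 6.2]{YCheng2025} in \propref{P:sq to gen}. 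It yields (1)--(2) but not (3) for non-tempered $\pi$, which is consistent with the theorem as stated.
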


In view of \thmref{T:mainA}, it remains to verify the existence part of the conjecture, namely, to prove that $\pi^{K_{n,c_\pi}}\ne 0$. 
Following the prequel paper, $\pi^{K_{n,c_\pi}}$ is referred to as the space of newforms of $\pi$. The aim of the present paper is to further reduce 
the existence part of the conjecture to the case of generic supercuspidal representations. More precisely, we obtain the following result in this 
paper. 

\begin{thm}\label{T:mainB}
If the space of newforms is non-zero for every irreducible generic supercuspidal representation, then it is also non-zero for all irreducible 
generic representations. 
\end{thm}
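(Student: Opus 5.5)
Every irreducible generic representation $\pi$ of $\SO_{2n+1}(F)$ can be realized inside a standard module, and the plan is to reduce to supercuspidal data in steps along that realization. By Muić's classification of generic representations (or the standard module conjecture, now a theorem), $\pi$ is isomorphic to an irreducible generic subquotient of an induced representation
\[
\tau_1|\cdot|^{s_1}\times\cdots\times\tau_r|\cdot|^{s_r}\rtimes\pi_0 ,
\]
where the $\tau_i$ are discrete series of $\GL_{n_i}(F)$, the $s_i$ are real, and $\pi_0$ is a generic discrete series of $\SO_{2n_0+1}(F)$. When the $s_i$ are in the Langlands position, $\pi$ is the unique generic constituent of this induced representation. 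Since the $L$-parameter is additive under parabolic induction, conductors are additive:
\[
c_\pi=\sum_i 2c_{\tau_i}+c_{\pi_0}.
\]
We may therefore reduce in two steps. First, produce newforms for induced representations from newforms on the Levi. Second, handle discrete series by using Mœglin–Tadić / Jiang–Soudry, which realize a generic discrete series as a subrepresentation of an induced representation from supercuspidal data $\rho_1|\cdot|^{x_1}\times\cdots\rtimes\sigma$ with $\sigma$ generic supercuspidal. The hypothesis supplies newforms for $\sigma$, and for $\GL$ we have the Jacquet–PS–Shalika newforms.

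The key construction is the induction step. Given a $K_{n_0,c_0}$-fixed vector $v_0\in\pi_0$ and a $\GL$-newform $w$ of conductor $c_\tau$ in $\tau$ (for $\GL_k$ with respect to the mirabolic-type group $K_1(\mathfrak p^{c})$), we build an explicit vector in $I=\tau|\cdot|^s\rtimes\pi_0$ that is fixed by $K_{n,c_0+2c_\tau}$. Its support is a suitable double coset $P\,\eta\,K_{n,c}$, and its value at $\eta$ is $w\otimes v_0$. For this to be well defined, one needs
\[
\eta K_{n,c}\eta^{-1}\cap P\subseteq (K_1(\mathfrak p^{c_\tau})\text{-type})\times K_{n_0,c_0}\cdot N,
\]
which is a matrix computation with Gross's lattices $K_{n,m}$. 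We would first attempt this for the single Weyl element $\eta$ used by Roberts–Schmidt in the $n=2$ Klingen/Siegel cases, since they constructed exactly such "paramodular induction" vectors. By induction on $r$, this gives a nonzero $K_{n,c_\pi}$-fixed vector in the full standard module $I$.

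The main obstacle is passing from $I$ to its generic constituent $\pi$, and, in the discrete series step, from the induced representation down to the subrepresentation. Here we use \thmref{T:mainA}(1) together with non-vanishing of the Whittaker functional. We show that the constructed vector $f$ has nonzero Whittaker value, by unfolding the Jacquet integral on the open cell and reducing to the nonvanishing of the Whittaker functional on $v_0$ and $w$ (available for $\GL$, and for $\sigma$ by \thmref{T:mainA}(3) since $\sigma$ is tempered). Because the Whittaker functional on $I$ factors through the unique generic constituent, the image of $f$ in $\pi$, or the component in $\pi$ when $\pi$ is a subrepresentation, is nonzero. This needs $\pi$ to be a sub or quotient, which can be arranged by choosing the order of the exponents, with holomorphic continuation in $s$ for the Jacquet integral. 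Conductor additivity then places the vector in level exactly $c_\pi$. Finally, when reducibility makes the generic piece a subquotient rather than a sub or quotient, we would deform $s$ in a family and take a limit, using the finite-dimensionality of $I^{K_{n,c_\pi}}$ to preserve nonvanishing.
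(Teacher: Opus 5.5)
Your induction step is essentially \lmref{L:useful}, and the reduction from non-tempered to tempered $\pi$ works: by the standard module conjecture $\pi$ equals its standard module, so no passage to a constituent is needed. The proposal breaks down for discrete series, in two places.

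The first problem is the conductor bookkeeping. The identity $c_\pi=\sum_i 2c_{\tau_i}+c_{\pi_0}$ holds for the Langlands realization, where the $L$-parameter of $\pi$ is the sum of the inducing parameters. It fails for the realization of a discrete series as a subrepresentation of $\D_1\times\cdots\times\D_\el\rtimes\sigma$. Take the Steinberg representation of $\SO_3(F)\simeq\PGL_2(F)$: it is the generic subrepresentation of $\Pi=|\cdot|^{1/2}\rtimes 1_{\SO_1(F)}$, yet $a_\Pi=0$ while $c_{\St}=1$. More generally, for a seed representation with $L$-factor $L(s+\ka,\chi)$ one has $c_\pi=a_\Pi+1$ (\lmref{L:seed cond case 2}). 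By \thmref{T:mainA}(1) the level-$a_\Pi$ vector you construct can therefore never lie in $\pi$, and deforming in $s$ does not change its level.

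What is missing is a way to go up one level and isolate the correct vector in $\Pi^{K_{n,a_\Pi+1}}$. The paper does this as follows. It uses the level-raising operators $\theta,\theta'$ and computes $\theta(f),\theta'(f)$ explicitly, via the coset decomposition of $K_{n,a+1}/(K_{n,a}\cap K_{n,a+1})$ and the Kondo--Yasuda Hecke eigenvalues. It then uses that $\Pi$ has length two (\lmref{L:length 2}), so $\pi=\ker M$ for the standard intertwining operator $M$, which commutes with $\theta,\theta'$. The combination $\theta(f)-\chi(\varpi)\theta'(f)$ is killed by $M$ and shown to be nonzero. For the same reason you cannot induce all the way down from supercuspidal data. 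The paper stops at the seed representation attached to $\pi$, where $a_\Pi=c_\pi$ does hold (\lmref{L:cond non-seed}).

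The second problem is the passage from $\Pi$ to $\pi$ via Whittaker functionals. If $\pi$ is a subrepresentation of a non-semisimple $\Pi$, a vector $f\in\Pi$ has no ``component in $\pi$''. Knowing $\lambda(f)\neq0$ then says nothing about whether $\pi^{K}\neq0$.

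If you instead realize $\pi$ as a quotient, you need $\lambda(f)\neq 0$, and this fails in general. In $|\cdot|^{-1/2}\rtimes 1_{\SO_1(F)}$ the spherical vector spans the trivial subrepresentation, on which every Whittaker functional vanishes, so its image in $\St$ is zero. Nonvanishing of the Whittaker functional on such induced vectors does not follow formally from unfolding the Jacquet integral, since the support $P\,x_{r,\xi}\,K_{n,a}$ meets the big cell in a complicated set. The paper itself lists non-degeneracy of the pairing for non-tempered generic representations as open.

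The paper avoids Whittaker functionals at this step entirely:
\begin{itemize}
\item For tempered $\pi$ it uses that a tempered representation with a $K_{n,m}$-fixed vector must be generic (\lmref{L:GP}).
\item For discrete series it proves $\Pi^{K_{n,m}}=\pi^{K_{n,m}}$ for all $m$ by showing $|\mu^*_{ur}(\Pi)|=|\mu^*_{ur}(\pi)|$ (\propref{P:key Jac}) and applying \corref{C:key P}. That corollary rests on Tsai's injection of $\pi^{K_{n,m}}$ into the $P_{n+1}(\frak{o})$-invariants of $\Jac_{Y_n}$, together with Atobe's Jacquet module computations.
\end{itemize}
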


By \thmref{T:mainA} and \thmref{T:mainB}, to settle the newform conjecture, it suffices to prove:
\begin{itemize} 
\item that the space of newforms is non-zero for every irreducible generic supercuspidal representation;
\item that  the natural pairing appearing in the third assertion of the newform conjecture is non-degenerate for non-tempered 
         generic representations. 
\end{itemize}
As mentioned, Tsai's proofs for supercuspidal representations seem to contain some issues; however, we believe that her strategy is essentially 
correct. We hope to settle this conjecture for supercuspidal representations, either by following Tsai's approach or by providing a new proof.
For the assertion concerning the non-degeneracy of the natural pairing, on the other hand, we currently have no approach to proving it. 

\subsection{Outline of the proof}
The proof of \thmref{T:mainB} relies on the following deductions:
\begin{itemize}
\item[(i)] from generic representations to square-integrable representations;
\item[(ii)] from square-integrable representations to seed representations;
\item[(iii)] from seed representations to supercuspidal representations. 
\end{itemize}
More concretely, let $\pi$ be an irreducible generic representation of $\SO_{2n+1}(F)$. Assume that $\pi$ is non-supercuspidal. Then 
\begin{equation}\label{E:intro}
\pi\subset\Pi=\tau_1\x\cdot\cdot\cdot\x\tau_\el\rtimes\pi_0,
\end{equation}
for some irreducible essentially square-integrable representations $\tau_j$ of $\GL_{r_j}(F)$ for $j=1,\ldots,\el$, and some irreducible 
generic representation $\pi_0$ of $\SO_{2n_0+1}(F)$. Here we follow Tadi\'c's notation for induced representations of $\SO_{2n+1}(F)$. 

Our strategy is to prove that if the space of newforms of $\pi_0$ is non-zero, then that of $\pi$ is also non-zero. 
To be more precise, since $\pi^{K_{n,m}}\ne 0$ for some $m$ (\cite[Proposition 7.6]{Tsai2016}), there exists a smallest integer $a_\Pi\ge 0$ 
such that $\Pi^{K_{n,a_\Pi}}\ne 0$. Such an integer can be explicitly determined (see \lmref{L:useful}). Suppose that the space of newforms 
of $\pi_0$ is non-zero and that 
\begin{equation}\label{E:intro 1}
a_\Pi=c_\pi.
\end{equation}
Then the same lemma and \thmref{T:mainA} (1) imply that 
\[
\dim_\bbC\Pi^{K_{n,a_\Pi}}=\dim_\bbC\pi_0^{K_{n_0,c_{\pi_0}}}=1. 
\]
Thus, under the aforementioned assumptions, we must show that 
$
\Pi^{K_{n,c_\pi}}=\pi^{K_{n,c_\pi}}.
$ 
In fact, we prove that 
\begin{equation}\label{E:intro 2}
\Pi^{K_{n,m}}=\pi^{K_{n,m}}
\end{equation}
for every $m\ge 0$. We point out that the assumption $a_\Pi=c_\pi$ may not always hold. Now, we sketch the reduction process. 

Suppose that $\pi$ is non-tempered. Then $\pi_0$ is tempered and $\pi=\Pi$ by the standard module conjecture (\cite{Muic2001}). 
In particular, \eqref{E:intro 2} holds for every $m\ge 0$. Since $a_\Pi=c_\pi$ in this case, the proof is reduced to the case of  
tempered representations. 

Suppose that $\pi$ is tempered but not square-integrable. Then $\pi_0$ is square-integrable, and we again have $a_\Pi=c_\pi$ in this case. 
Since $\pi\ne \Pi$ in general, to prove that \eqref{E:intro 2} holds for every $m\ge 0$, we follow the argument of \cite{AOY2024} to show that 
a tempered representation of $\SO_{2n+1}(F)$ has a non-zero vector fixed by $K_{n,m}$ for some $m\ge 0$ if and only if such a representation
is generic. Now, since $\Pi$ is completely reducible and $\pi$ is the unique generic summand of $\Pi$, we obtain the desired result. 
Hence, the proof reduces to the case of square-integrable representations. 

Suppose that $\pi$ is square-integrable but not supercuspidal. In this case, the argument is considerably more involved than in the previous 
cases. In fact, most of this paper is devoted to reducing the proof of the existence part of the conjecture from square-integrable representations 
to supercuspidal representations. Difficulties arise because (i) $a_\Pi\ne c_\pi$ in general, and (ii) $\Pi$ is not completely reducible. 
To remedy this, we introduce the ``seed representations" (see Definition \ref{D:seed}).  These are (generic) square-integrable representations 
whose $L$-factor has the form
\[
L(s+\ka, \chi)^e\cdot L(s+\ka',\chi')^{e'}
\]
for some $\ka, \ka'\in\frac{1}{2}+\bbZ_{\ge 0}$ and $e, e'\in\stt{0,1}$. Here $\chi$ and $\chi'$ are two unramified quadratic  (possibly trivial) 
characters of $F^\x$. 

Assume that $\pi$ is not a seed representation. Then we attach to $\pi$ a seed representation $\pi_0$ such that \eqref{E:intro} holds and, more
importantly, \eqref{E:intro 1} also holds. To establish \eqref{E:intro 2} for every $m\ge 0$, we investigate the Jacquet modules of $\Pi$ and 
$\pi$ using the results of Atobe in \cite{Atobe2020}.  This reduces the proof to the case of seed representations. 

Suppose that $\pi$ is a seed representation. If the $L$-factor of $\pi$ is equal to $1$, then we realize $\pi$ as a subrepresentation of $\Pi$ as 
in \eqref{E:intro}, where $\pi_0$ is now a supercuspidal representation. In this case, we have $a_\Pi=c_\pi$, and the above Jacquet module 
argument continues to work. 

Assume that the $L$-factor of $\pi$ is $L(s+\ka,\chi)$. In this case, we realize $\pi$ as in \eqref{E:intro 1} with 
$\el=1$, where $\pi_0$ is a square-integrable representation whose $L$-factor is equal to $1$, and $\Pi$ is a standard module. 
Note that, under the assumption that the space of newforms is non-zero for every supercuspidal representation, the space of newforms of 
$\pi_0$ is also non-zero, as we have already proved. It follows that
\[
\dim_\bbC\Pi^{K_{n,a_\Pi}}=1. 
\]
Denote by $f$ a basis vector of this one-dimensional subspace. On the other hand, we have $c_\pi=a_\Pi+1$ in this case. Thus, we have
to investigate the subspace $\Pi^{K_{n,a_\Pi+1}}$ and show that 
\[\Pi^{K_{n,a_\Pi+1}}\cap \pi\ne \stt{0}.
\] 
To this end, we consider the level-raising operators (see \S\ref{SS:level-raising operator})
\[
\theta,\, \theta':\Pi^{K_{n,a_\Pi}}\longto\Pi^{K_{n,a_\Pi+1}}
\]
following Roberts--Schmidt (\cite{RobertsSchmidt2007}). We explicitly determine the elements 
\[
\theta(f),\, \theta'(f)\in\Pi^{K_{n,a_\Pi+1}}. 
\]
It is interesting to note that the Hecke operators on the space of newforms of generic representations (see \cite{KondoYasuda2012}) appear
in this determination. 

A crucial result, proved by Atobe in a special case and by Lo (see \S\ref{S:appendix}) in general, is that $\Pi$ 
has length $2$. It follows that $\pi$ can be realized as the kernel of a standard intertwining map
\[
M:\Pi\longto\Pi'=\tau_1^\vee\rtimes\pi_0. 
\]
We have $a_{\Pi'}=a_\Pi$. To prove this case, we show that there exist $c, c'\in\bbC$ such that 
\[
M\(c\,\theta(f)+c'\theta'(f)\)=0
\quad\text{and}\quad
c\,\theta(f)+c'\theta'(f)\ne 0. 
\]
The key point is that
\[
0\ne M(f)\in\Pi'^{K_{n,a_{\Pi}}}
\]
and
\[
M(\theta(f))=\theta(M(f)),\,\,M(\theta'(f))=\theta'(M(f))
\]
so that our determination of $\theta(f), \theta'(f)$ also applies to $\theta(M(f)), \theta'(M(f))$. 

Finally, assume that the $L$-factor of $\pi$ is $L(s+\ka,\chi)L(s+\ka',\chi')$. We realize $\pi$ as in \eqref{E:intro} with $\el=1$, where 
$\pi_0$ is a square-integrable representation whose $L$-factor is $L(s+\ka,\chi)$, and $\Pi$ is a standard module. Since $\Pi$ again 
has length $2$, $c_\pi$ is equal to $a_\Pi+1$, and the space of newforms of $\pi_0$ is non-zero (provided that the space of 
newforms of every supercuspidal representation is non-zero), we can apply the same argument to conclude that the space of newforms of 
$\pi$ is also non-zero. This completes the proof sketch of \thmref{T:mainB}. 

\subsection{Structure of the paper}
In \S\ref{S:1st red}, we reduce the proof of \thmref{T:mainB} from generic representations to generic square-integrable representations. 
In \S\ref{S:P for 2nd red}, we collect some results that will be used in the second reduction. These include Tad\'ic's result on Jacquet modules 
in \S\ref{SS:Tadic}, $P_{n+1}$-theory for $\GL_{n+1}$ in \S\ref{SS:P-theory}, representations of $\GL_r(F)$ in \S\ref{SS:rep of GL}, and 
the construction of generic square-integrable representations of $\SO_{2n+1}(F)$ in \S\ref{SS:construction}. In \S\ref{S:2nd red}, we reduce the 
proof of \thmref{T:mainB} from generic square-integrable representations to seed representations. Seed representations are introduced in 
Definition \ref{D:seed}. The key to the proof is \propref{P:key Jac}, whose proof occupies a substantial part of this section. 
In \S\ref{S:P for 3rd red}, we prepare for the final reduction. This includes the level-raising operators in \S\ref{SS:level-raising operator} and 
the Hecke operators on $\GL_r$ in \S\ref{SS:Hecke GL}. In \S\ref{S:3rd red}, we reduce the proof of \thmref{T:mainB} from seed representations 
to generic supercuspidal representations. The keys to the proof are \lmref{L:length 2} and the explicit computation of the level-raising operators in 
\S\ref{SS:compute operator I} and \S\ref{SS:compute operator II}. Finally, \lmref{L:length 2} is proved in \S\ref{S:appendix}. 

\subsection{Notation and conventions}
\subsubsection{}
For consistency, we follow the notation and conventions in \cite[\S 1.5]{YCheng2025}. Thus, $F$ is a finite extension of $\mathbb{Q}_p$,
$\frak{o}$ is the valuation ring of $F$, $\frak{p}$ is the maximal ideal of $\frak{o}$, $\varpi$ is a uniformizer of $\frak{p}$, and
$\frak{f}=\frak{o}/\frak{p}$ is the residue field of $F$ with $|\frak{o}/\frak{p}|=q$. Let $|\cdot|$ be the absolute value on $F$, 
normalized so that $|\varpi|=q^{-1}$. Fix an additive character $\psi$ of $F$ that is trivial on $\frak{o}$ but non-trivial on $\frak{p}^{-1}$. 
Denote by $\M_{m,n}(F)$ the space of $m\times n$ matrices with entries in $F$. When $m=n$, we simply write
$\M_{n}(F)=\M_{m,n}(F)$. Let $E^n_{i,j}\in\M_n(F)$ denote the matrix with a single non-zero entry $1$ in the $(i,j)$-th position. 
The identity element in $\M_n(F)$ is denoted by $I_n$, while $J_n\in\M_n(F)$ is defined by $J_n=\sum_{i=1}^n E^n_{i,n+1-i}$. 

\subsubsection{}
Let $G$ be an $\ell$-group in the sense of \cite[\S 1]{BZ1976}. Denote by $\Rep(G)$ the category of smooth complex representations of 
$G$ of finite length. The set of equivalence classes of irreducible smooth complex representations of $G$ is denoted by $\Irr(G)$. 
If $\pi\in\Rep(G)$, we denote by $\pi^\vee\in\Rep(G)$ the contragredient of $\pi$ and by $\omega_\pi$ the central character of $\pi$
(if it exists). If $G$ is the trivial group, we denote by $1_G$ the trivial representation of $G$. Let $\sR(G)$ denote the Grothendieck 
group of $\Rep(G)$, which is a free $\bbZ$-module with basis $\Irr(G)$. The natural map
\[
s.s.:\Rep(G)\longto\sR(G);\quad\pi\mapsto s.s.(\pi)
\]
is called the semi-simplification. We interpret the expression:
\[
s.s.(\pi)=\sum_{i\in I}\pi_i
\]
where $\pi_i\in\Irr(G)$, as being taken with multiplicity. The group $\sR(G)$ has a natural partial order $\ge$, defined by
$\pi_1\ge \pi_2$ if $\pi_1-\pi_2$ is a positive linear combination of irreducible representations.
 
Let $M,N$ be closed subgroups of $G$ such that $M$ normalizes $N$, $M\cap N=\stt{e}$ and the subgroup $P=MN$ of $G$ is 
closed. Every $\sigma\in\Rep(M)$ can then be regarded naturally as an object of $\Rep(P)$, and we denote by $\Ind_P^G(\sigma)$ the 
normalized induced representation of $G$. On the other hand, given $\pi\in\Rep(G)$, let 
$\Jac_N(\pi)$ denote the normalized Jacquet module of $\pi$ with respect to $N$ (\cite[\S 1]{BZ1977}). 
For the groups considered in this paper, one has $\Ind_P^G(\sigma)\in\Rep(G)$ and $\Jac_N(\pi)\in\Rep(M)$; thus $\Ind_P^G$ and 
$\Jac_N$ induce exact functors:
\[
\Ind_P^G:\Rep(M)\longto\Rep(G)
\quad\text{and}\quad
\Jac_N:\Rep(G)\longto\Rep(M).
\]

\subsubsection{}\label{SS:2.3}
We follow the setup in \cite[\S 2]{YCheng2025} for $\SO_{2n+1}$. In particular, the parabolic subgroups $P_S(F)=M_SN_S$ of 
$\SO_{2n+1}$ under consideration are indexed by subsets $S$ of the set of standard simple roots $\Delta_n=\stt{\a_1,\ldots,\a_n}$ of 
$\SO_{2n+1}$.  Our convention is that $P_\emptyset=\SO_{2n+1}$; hence $M_{\Delta_n}=T_n$ and $N_{\Delta_n}=U_n$.  
On the other hand, let $r\in\bbN$ and $\mathbf{r}=(r_1,\ldots, r_\el)$ be a partition of $r$. Denote by 
$P_{\mathbf{r}}=M_{\mathbf{r}}N_{\mathbf{r}}$ the upper block triangular parabolic subgroup of $\GL_r$, whose Levi subgroup 
\[
M_{\mathbf{r}}(F)\simeq\GL_{r_1}(F)\x\cdot\cdot\cdot\x\GL_{r_\el}(F). 
\]
When $r_j=1$ for $j=1,\ldots, \el=r$, we also denote $M_{\mathbf{r}}=A_r$ and $N_{\mathbf{r}}=Z_r$. 
Define non-degenerate characters $\psi_{U_n}$ and $\psi_{Z_r}$ on $U_n(F)$ and $Z_r(F)$, respectively, by 
\[
\psi_{U_n}(u)=\psi\left(u_{1,2}+\cdot\cdot\cdot+u_{n-1,n}+2^{-1}u_{n,n+1}\right)
\quad\text{for $u=(u_{i,j})\in U_n(F)$,}
\]
and 
\[
\psi_{Z_r}(z)=\psi\left(z_{1,2}+\cdot\cdot\cdot+z_{r-1,r}\right)
\quad\text{for $z=(z_{i,j})\in Z_r(F)$.}
\]

For the groups $\GL_r(F)$ and $\SO_{2n+1}(F)$, we also use the following notation for induced representations, following Zelevinsky and 
Tadi\'c.  Namely, if $\mathbf{r}=(r_1,\ldots, r_\el)$ is a partition of $r$, and if $\tau_j\in\Rep\left(\GL_{r_j}(F)\right)$ for $j=1,\ldots, \el$, 
then we denote
\[
\tau_1\x\cdot\cdot\cdot\x\tau_\el
=
\Ind_{P_{\mathbf{r}}(F)}^{\GL_r(F)}\left(\tau_1\boxtimes\cdot\cdot\cdot\boxtimes\tau_\el\right). 
\]
While if $S\subset \Delta_n$ is the subset such that $M_S\simeq\GL_{r_1}(F)\x\cdots\x\GL_{r_\el}(F)\x\SO_{2n_0+1}(F)$,  then we denote 
\[
\tau_1\x\cdot\cdot\cdot\x\tau_\el\rtimes\pi_0
=
\Ind_{P_S(F)}^{\SO_{2n+1}(F)}(\tau_1\boxtimes\cdot\cdot\cdot\boxtimes\tau_\el\boxtimes\pi_0),
\]
where $\tau_j\in\Rep\left(\GL_{r_j}(F)\right)$ for $j=1,\ldots, \el$ and  $\pi_0\in\Rep(\SO_{2n_0+1}(F))$. 

Let $m\ge 0$ be an integer. Denote by $K_{n,m}\subset J_{n,m}\subset\SO_{2n+1}(F)$ the open compact subgroups defined in 
\cite[Section 5]{Gross2015} (see also \cite[Section 7]{Tsai2013} and \cite[Section 3]{YCheng2025}). 

\subsubsection{}
Throughout this paper, we will need the local Langlands correspondence for $\SO_{2n+1}(F)$, which was established by Arthur in 
\cite{Arthur2013} in full generality\footnote{Conditional on the twisted weighted fundamental lemma.} (see also \cite{AGIKMS24}). However, 
since we only require the local Langlands correspondence for generic representations of $\SO_{2n+1}(F)$, and, more importantly, the 
explicit construction of representations from the associated $L$-parameter, we will mainly follow the correspondence established by 
Jiang–Soudry in \cite{JiangSoudry2003} and \cite{JiangSoudry2004}. We remark that their correspondence is based on automorphic 
descent together with the work of Mui\'c in \cite{Muic1998}, and is compatible with Arthur's. 

We understand that $1_{\GL_0(F)}=1_{\SO_1(F)}$ is unramified, generic, and supercuspidal, with $L$-parameter $\phi=\emptyset$. 

\section{First reduction}\label{S:1st red}
The aim of this section is to carry out the first reduction; namely, we reduce the proof of \thmref{T:mainB} from generic representations to 
generic square-integrable representations. This reduction is fairly straightforward and follows from the results in \cite{YCheng2025} and the local 
Gross--Prasad conjecture (now a theorem of Waldspurger) in \cite{GrossPrasad1992}. We begin with the following definition.

\begin{defn}
Let $\Pi\in\Rep(\SO_{2n+1}(F))$ with $n\ge 0$. If $\Pi^{K_{n,m}}\ne 0$ for some $m\ge 0$, then we denote by $a_\Pi\ge 0$ the smallest 
non-negative integer such that $\Pi^{K_{n,a_\Pi}}\ne 0$. 
\end{defn} 

\begin{remark}
Let $\Pi\in\Rep(\SO_{2n+1})$ with $n\ge 0$. If $\Pi$ is of Whittaker type, i.e., $\dim_\bbC{\rm Hom}\(\Pi,\psi_{U_n}\)=1$, then the proof of 
\cite[Proposition 7.6]{Tsai2016} (see also \cite[Lemma 4.2]{YCheng2022}) and a result of Soudry (\cite[Proposition 6.1]{Soudry1993}) imply
that there exists $m\ge 0$ such that $\Pi^{K_{n,m}}\ne 0$. Hence, for such a representation, $a_\Pi$ is defined. 
\end{remark}

\begin{lm}\label{L:GP}
Let $\pi$ be an irreducible tempered representation of $\SO_{2n+1}(F)$. Then $\pi^{K_{n,m}}\ne 0$ for some $m\ge 0$ if and only if 
$\pi$ is generic. 
\end{lm}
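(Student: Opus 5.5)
The direction ``generic $\Rightarrow$ $\pi^{K_{n,m}}\ne 0$ for some $m$'' is already contained in the Remark preceding the lemma. An irreducible generic $\pi$ is of Whittaker type by uniqueness of Whittaker models, so the argument of \cite[Proposition 7.6]{Tsai2016} together with \cite[Proposition 6.1]{Soudry1993} produces a non-zero $K_{n,m}$-fixed vector for $m$ large. Thus the content of the lemma is the converse, and for it I would follow \cite{AOY2024}, reducing to the local Gross--Prasad conjecture (Waldspurger) for the pair $\SO_{2n}\subset\SO_{2n+1}$.

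\textbf{Step 1 (geometry of $K_{n,m}$).} Recall Gross's description of $K_{n,m}$ as the stabilizer of a suitable pair of lattices in the quadratic space $V$ of dimension $2n+1$. From it, extract an anisotropic vector $v_m\in V$ such that $K_{n,m}$ fixes $v_m$ up to sign and contains a hyperspecial maximal compact subgroup $K'_m$ of $H_m=\SO(W_m)$, where $W_m=v_m^\perp$. Here $W_m$ is a $2n$-dimensional quadratic space which is split or quasi-split unramified, depending on the parity of $m$. Hence $\pi^{K_{n,m}}\ne 0$ implies $\pi^{K'_m}\ne 0$.

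\textbf{Step 2 (producing a distinguishing unramified representation).} The space $\pi^{K'_m}$ is a non-zero module over the spherical Hecke algebra $\mathcal H(H_m,K'_m)$. I would show that $\pi|_{H_m}$ has a non-zero $H_m$-equivariant quotient map onto some irreducible $\sigma$ with $\sigma^{K'_m}\ne 0$, i.e.\ $\Hom_{H_m}(\pi\otimes\sigma^\vee,\bbC)\ne 0$ with $\sigma$ unramified.

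To do this, take a character $\chi$ of the commutative algebra $\mathcal H(H_m,K'_m)$ occurring in the finite-length module generated by a fixed vector. Project onto the corresponding Bernstein component of $\pi|_{H_m}$. The finiteness results for the restriction of $\pi$ to $H_m$ (Aizenbud--Gourevitch--Rallis--Schiffmann, together with the finite-multiplicity theorems) then let one pass from the $\chi$-eigenspace to an irreducible quotient $\sigma$ with spherical character $\chi$. An unramified irreducible $\sigma$ of a quasi-split group is a subquotient of an unramified principal series. Possibly after replacing $\sigma$ by the generic constituent of the same principal series, using exactness and Mœglin--Waldspurger's extension of the Gross--Prasad multiplicity formula to generic (not necessarily tempered) representations of $H_m$, we may assume $\sigma$ is unramified and generic.

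\textbf{Step 3 (Gross--Prasad).} Since $\pi$ is tempered, the Gross--Prasad conjecture (Waldspurger, extended by Mœglin--Waldspurger to generic $\sigma$) states the following. Within the Vogan packet of $\phi_\pi\times\phi_\sigma$, exactly one member $\pi'\otimes\sigma'$ is distinguished. It is singled out by the character
\[
\eta(a,b)=\epsilon(\phi_\pi\otimes\phi_\sigma^{b})\,\epsilon(\phi_\pi^{a}\otimes\phi_\sigma)
\]
of the component groups. Because $\sigma$ is unramified, every local root number attached to the $\phi_\sigma$-parts that occur is trivial for our choice of $\psi$ (unramified twists of tempered data). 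Hence the character on the $\SO_{2n+1}$-side is trivial, and the distinguished member of the tempered $L$-packet of $\pi$ is the one with trivial character, namely the $\psi_{U_n}$-generic member. By uniqueness, $\pi$ must be that member, so $\pi$ is generic.

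The main obstacle I expect is Step 2, together with the compatibility check in Step 3. One must pass rigorously from a $K'_m$-fixed vector to an honest $H_m$-quotient that is irreducible, unramified and generic, while keeping $\Hom_{H_m}(\pi,\sigma)\ne 0$. One must also verify that the $\epsilon$-factors computing $\eta$ on the $\pi$-side are indeed trivial for both parities of $m$, where $W_m$ may be quasi-split non-split. I would first handle the split case following \cite{AOY2024} verbatim. For the quasi-split case I would check the root-number computation using that the discriminant character of $W_m$ is unramified.
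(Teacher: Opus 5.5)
\textbf{Overall verdict.} Your architecture matches the paper's, which follows \cite{AOY2024}. Write $H=\SO_{2n}(F)$ and $K'=H_{n,m}=K_{n,m}\cap H$. The steps are: pass to the hyperspecial $K'\subset K_{n,m}$, produce an unramified $\sigma$ with $\Hom_H(\pi,\sigma)\neq 0$, and let Waldspurger's theorem force the character of $\pi$ to be trivial. However, the middle step, which you flag yourself, is exactly the input the paper imports from \cite[Lemma 12.5]{GanSavin2012} rather than proving. Your sketch of it does not work, for three reasons.

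\textbf{Why your Step 2 fails.} (i) Since $\pi|_H$ is not admissible, $\pi^{K'}$ is typically infinite-dimensional, so the $\mathcal H(H,K')$-module generated by a fixed vector need not have finite length. (ii) Even granting a finiteness statement, an irreducible quotient of a finite-length representation in the unramified block with $K'$-fixed vectors need not be spherical. For instance, a reducible unramified principal series whose spherical constituent is the subrepresentation has non-spherical cosocle. Having ``spherical character $\chi$'' then only fixes the central character and does not make $\sigma$ unramified. (iii) Nothing transfers $\Hom_H(\pi,\sigma)\neq 0$ from one constituent of an unramified principal series to its generic constituent. Moreover, M\oe glin--Waldspurger cannot be invoked before you know $\sigma$ is generic.

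\textbf{A test case showing the gap is real.} Your Step 2 never uses that $\pi$ is tempered. Take $n\ge 2$ and $\pi=1_{\SO_{2n+1}(F)}$. Then $\pi^{K'}\neq 0$, but the only irreducible quotient of $\pi|_H$ is the trivial, non-generic representation. So no temperedness-free argument can output a generic unramified $\sigma$.

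\textbf{What is needed, and one way to get it.} You need a \emph{tempered} unramified $\sigma$, and temperedness of $\pi$ must enter exactly here. One route is as follows.
\begin{itemize}
\item For $0\neq v\in\pi^{K'}$, the function $f(h)=\langle\pi(h)v,v\rangle$ is a nonzero bi-$K'$-invariant function on $H$ with $|f|\le C\,\Xi_{\SO_{2n+1}}|_H$.
\item A Cartan-decomposition computation gives $\Xi_{\SO_{2n+1}}|_H\in L^2(H)$ and $\Xi_{\SO_{2n+1}}|_H\,\Xi_H\in L^1(H)$.
\item By the spherical Plancherel theorem, $\hat f(\sigma)=\int_H f(h)\overline{\omega_\sigma(h)}\,dh$ is nonzero for some $\sigma$ in the tempered unramified spectrum.
\item A nonzero value gives a nonzero $H$-invariant form $(v_1,w_1)\mapsto\int_H\langle\pi(h)v_1,v\rangle\overline{\langle\sigma(h)w_1,w\rangle}\,dh$ on $\pi\otimes\bar\sigma$. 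Hence $\Hom_H(\pi,\sigma)\neq 0$.
\end{itemize}

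\textbf{Step 3 once $\sigma$ is tempered.} Waldspurger's tempered theorem now suffices, and M\oe glin--Waldspurger is not needed. The character on the $\SO_{2n+1}$ side is trivial for the following reason. We have $\phi_\sigma=\bigoplus_i(\chi_i\oplus\chi_i^{-1})$ with each $\chi_i$ unramified, and each $\phi^a_\pi$ is symplectic. Therefore $\epsilon(\tfrac12,\phi^a_\pi\otimes\chi_i,\psi)\,\epsilon(\tfrac12,\phi^a_\pi\otimes\chi_i^{-1},\psi)=\det\phi^a_\pi(-1)=1$. This is the actual reason, rather than a vague appeal to unramified twists.

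\textbf{Step 1 is simpler than you describe.} $H_{n,m}$ is hyperspecial in the fixed split group $\SO_{2n}(F)=\SO(e_0^\perp)$ for every $m$, so no quasi-split non-split case arises. Also, $K_{n,m}$ does not fix $e_0$, since it contains $x_{\epsilon_i}(\frak{o})$; only the containment $H_{n,m}\subset K_{n,m}$ is used.
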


\begin{proof}
When $\pi$ is supercuspidal, this was proved by Tsai in \cite[Corollary 3.4.2 and Theorem 7.3.1]{Tsai2013}. For generic $\pi$ (not 
necessarily tempered), the assertion that $\pi^{K_{n,m}}\ne 0$ for some $m\ge 0$ was proved in \cite[Proposition 7.6]{Tsai2016}
(see also \cite[Lemma 4.2]{YCheng2022}). 

When $\pi$ is tempered, the converse statement--namely, that $\pi^{K_{n,m}}\ne 0$ for some $m\ge 0$ implies that $\pi$ is generic--was 
proved in \cite{AOY2024} in the course of establishing a newform theory for the unramified group ${\rm U}_{2n+1}(F)$. The key 
ingredients in their proof are 
\begin{itemize}
\item[-] a lemma of Gan--Savin (\cite[Lemma 12.5]{GanSavin2012});
\item[-] the Gan--Gross--Prasad conjecture (\cite{GanGrossPrasad2012}) for $({\rm U}_{2n+1}, {\rm U}_{2n})$, established by 
               Beuzart-Plessis (\cite{BP2014}, \cite{BP2015} and \cite{BP2016});
\item[-] the fact that the intersection of the open compact subgroups of ${\rm U}_{2n+1}(F)$ defined in \cite{AOY2024} with the
               subgroup ${\rm U}_{2n}(F)\subset{\rm U}_{2n+1}(F)$ is a hyperspecial maximal open compact subgroup of ${\rm U}_{2n}(F)$. 
\end{itemize}
A similar argument also appeared in \cite{Atobe2025} in the development of a newform theory for the unramified group ${\rm U}_{2n}(F)$. 
Now, our proof follows word for word the one in \cite{AOY2024}, except that, instead of using the aforementioned results of 
Beuzart-Plessis, we apply the results of Waldspurger (\cite{Waldspurger2010}, \cite{Waldspurger2012a}).
\end{proof}

We also need one useful lemma. To describe it, suppose that $\tau$ is an irreducible generic representation of $\GL_r(F)$. The $\ep$-factor
$\ep(s,\phi_\tau,\psi)$ associated with the $L$-parameter $\phi_\tau$ of $\tau$ and $\psi$ is of the form
\[
\epsilon(s,\phi_\tau,\psi)
= 
\e_\tau q^{-c_\tau\left(s-\frac{1}{2}\right)},
\]
for some $\e_\tau\in\bbC^\x$ and an integer $c_\tau\ge 0$.  

\begin{lm}\label{L:useful}
Let $\tau_i$ be irreducible generic representations of $\GL_{r_i}(F)$ for $i=1,\ldots, \el$,  and let $\pi_0\in\Rep\(\SO_{2n_0+1}(F)\)$ with 
$n_0\ge 0$. Define
\[
\Pi=\tau_1\x\cdot\cdot\cdot\x\tau_\el\rtimes\pi_0\in\Rep\(\SO_{2n+1}(F)\). 
\]
Suppose that $\pi_0^{K_{n_0,m}}\ne 0$ for some $m\ge 0$. Then $\Pi^{K_{n,m}}\ne 0$ for some $m\ge 0$, and 
\[
a_\Pi=a_{\pi_0}+2\sum_{i=1}^\el c_{\tau_i}. 
\]
Moreover, we have 
\[
\dim_\bbC\Pi^{K_{n,a_\Pi}}=\dim_\bbC\pi_0^{K_{n_0,a_{\pi_0}}}. 
\]
\end{lm}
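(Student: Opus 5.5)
We will compute $\Pi^{K_{n,m}}$ through the Iwasawa-type decomposition $\SO_{2n+1}(F)=\bigsqcup_{w}P_S(F)\,w\,K_{n,m}$, i.e.\ via the double cosets $P_S(F)\backslash\SO_{2n+1}(F)/K_{n,m}$. By transitivity of induction, $\tau_1\x\cdots\x\tau_\el\rtimes\pi_0=\tau\rtimes\pi_0$ with $\tau=\tau_1\x\cdots\x\tau_\el$ a representation of $\GL_r(F)$ of Whittaker type. Since $c_{\tau}=\sum_i c_{\tau_i}$ (the $\ep$-factor is multiplicative in the parameter), it suffices to treat $\el=1$, with $\tau$ an induced representation of Whittaker type whose conductor $c_\tau$ is the smallest level at which the Jacquet--Piatetski-Shapiro--Shalika mirahoric group $K_1(\frak p^{m})\subset\GL_r(\frak o)$ has fixed vectors. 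The JPSS newform theory also gives the dimension $1$ at level $c_\tau$, and, for Whittaker-type induced $\tau$, the known results of Jacquet and Matringe show the fixed space at level $c_\tau$ is one-dimensional.

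For $f\in\Pi^{K_{n,m}}$ and a double coset representative $w$, the value $f(w)$ lies in $(\tau\boxtimes\pi_0)^{M_S\cap wK_{n,m}w^{-1}}$ (with $N_S\cap wK_{n,m}w^{-1}$ acting trivially automatically). The plan is to:
\begin{enumerate}
\item[(a)] Use the explicit shape of $K_{n,m}$ from \cite[\S 3]{YCheng2025}: after conjugating by a suitable torus element, its intersection with $M_S\simeq\GL_r\x\SO_{2n_0+1}$ at the identity coset is $K_1$-type in the $\GL_r$ block (lower-right row/column congruences mod $\frak p^{j}$ on both the first and last row, which is why the level enters twice) times $K_{n_0,m-2j}$ in the $\SO$ block.
\item[(b)] Show that at the identity coset the fixed vectors exist precisely when $m\ge 2c_\tau+a_{\pi_0}$. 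The minimal configuration is $j=c_\tau$ and level $a_{\pi_0}$ for $\pi_0$, giving dimension $\dim\tau^{K_1(\frak p^{c_\tau})}\cdot\dim\pi_0^{K_{n_0,a_{\pi_0}}}=\dim\pi_0^{K_{n_0,a_{\pi_0}}}$.
\item[(c)] Show that the other double cosets contribute nothing at levels $m\le a_{\pi_0}+2c_\tau$. Their stabilizers in $M_S$ are smaller congruence subgroups in the $\GL_r$-part and deeper in $\SO$, so they cannot support vectors below the bound, and at the bound they force the same count or vanish.
\end{enumerate}

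Existence for some $m$ then follows from (b), and the formulas $a_\Pi=a_{\pi_0}+2c_\tau$ and the dimension equality follow from (b) and (c).

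The main obstacle is step (c), together with the precise identification in (a). The double coset space is not a single point, and controlling the stabilizer subgroups for every coset requires the explicit matrix description of $K_{n,m}$ (a paramodular-type group not contained in a maximal compact of standard form). The natural first attempt is induction on $r$: handle $r=1$ (a character $\chi$ with $c_\chi$) by direct computation with the rank-one Levi $\GL_1\x\SO_{2n-1}$, where there are only finitely many explicit cosets indexed by valuations. For general $\tau$, reduce through its realization as an induced representation from characters and segments, using the JPSS fixed-vector dimensions to count. If this proves too messy, an alternative is to exploit the results of \cite{YCheng2025}, where such a computation presumably appears for proving $\pi^{K_{n,m}}=0$ for $m<c_\pi$, and adapt it to the induced setting.
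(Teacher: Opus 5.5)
Your overall route is the paper's. Its proof just invokes the proofs of Lemma 6.1 and Corollary 6.2 of \cite{YCheng2025}, which (as reproduced in the proof of \lmref{L:basis seed case 2}) proceed as follows: write $\Pi=\tau\rtimes\pi_0$ with $\tau=\tau_1\times\cdots\times\tau_\el$ of Whittaker type; decompose $P_{\a_r}(F)\backslash\SO_{2n+1}(F)/K_{n,m}$; and combine Mackey theory and Frobenius reciprocity with the newform theory of \cite{JPSS1981}, as extended by Jacquet and Matringe to representations of Whittaker type.

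However, steps (b) and (c) as you formulate them are wrong, and they carry the whole content of the lemma. The identity coset does not carry the newform. Since $x_{-\ep_r}(\frak{p}^m)\subset K_{n,m}$, the identity lies in the double coset of $x_{-\ep_r}(\varpi^m)$, whose stabilizer has $\GL_r$-component $\GL_r(\frak{o})$. So that coset contributes only when $\tau$ is unramified. Conjugating $K_{n,m}$ by a torus element cannot change this, because $T_n(F)\subset P_{\a_r}(F)$. The heuristic in (c) is also inverted. Stabilizers that are smaller in the $\GL_r$-part and deeper in the $\SO$-part would have more fixed vectors, not fewer. And other cosets giving ``the same count'' at the bound would destroy the dimension equality you are trying to prove.

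What actually happens is a trade-off among the cosets. Representatives are $x_{-\ep_r}(\varpi^j)$ for $\lceil m/2\rceil\le j\le m$. When $n_0=0$ one adds $x_{-\ep_n}(\varpi^i)w_{\ep_n,m}$ for $\lceil\frac{m+1}{2}\rceil\le i\le m$, which have the same stabilizers as $x_{-\ep_n}(\varpi^i)$. The relevant group at $x_{-\ep_r}(\varpi^j)$ is $\Gamma_{r,m-j}\times K'$, where $K'$ is a torus conjugate of $K_{n_0,2j-m}$. So every coset has $\GL_r$-level $g=m-j$ and $\SO$-level $m-2g$. The conditions $g\ge c_\tau$ and $m-2g\ge a_{\pi_0}$ force $m\ge a_{\pi_0}+2c_\tau$. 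Equality is attained at exactly one coset, $j=a_{\pi_0}+c_\tau$; the twisted cosets have $g<c_\tau$ at that level. This single count gives existence, the value of $a_\Pi$, and the dimension formula at once. Your ``$K_1(\frak{p}^j)$ times $K_{n_0,m-2j}$'' has the right shape, but it describes the whole family of cosets, not the identity coset.

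Also, the group that matters is the image in $M_{\a_r}(F)$ of $P_{\a_r}(F)\cap xK_{n,m}x^{-1}$; this is $M^x_{r,K}$ in the proof of \lmref{L:basis seed case 2}. It is not $M_{\a_r}(F)\cap xK_{n,m}x^{-1}$. Invariance under the intersection is necessary but gives only an upper bound, so it cannot yield existence or the dimension equality unless you verify a factorization of $P_{\a_r}(F)\cap xK_{n,m}x^{-1}$. Finally, no induction on $r$ through characters and segments is needed: the $\GL_r$ side is handled in one step by the Whittaker-type newform theory.
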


\begin{proof}
This follows from the proofs of \cite[Lemma 6.1 and Corollary 6.2]{YCheng2025}. 
\end{proof}

Now, we can prove the following. 

\begin{prop}\label{P:sq to gen}
If the space of newforms is non-zero for every irreducible generic square-integrable representation, then it is also non-zero for all
irreducible generic representations. 
\end{prop}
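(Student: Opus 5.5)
The plan is to treat the non-tempered case and the tempered-but-not-square-integrable case separately, reducing the first to the second. The common mechanism in both is: realize $\pi$ inside an induced representation $\Pi=\tau_1\x\cdots\x\tau_\el\rtimes\pi_0$, use \lmref{L:useful} to produce a non-zero $K_{n,a_\Pi}$-fixed vector in $\Pi$, check that $a_\Pi=c_\pi$, and show that this vector lies in $\pi$.

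\emph{Non-tempered case.} By the Langlands classification together with the standard module conjecture (\cite{Muic2001}), $\pi\simeq\Pi=\tau_1\x\cdots\x\tau_\el\rtimes\pi_0$. Here $\tau_j=\nu^{s_j}\delta_j$ are essentially square-integrable with $s_1>\cdots>s_\el>0$, and $\pi_0$ is generic and tempered. Granting the tempered case, $\pi_0^{K_{n_0,c_{\pi_0}}}\ne0$, so \thmref{T:mainA}(1) gives $a_{\pi_0}=c_{\pi_0}$ and $\dim\pi_0^{K_{n_0,a_{\pi_0}}}=1$. \lmref{L:useful} then gives
\[
a_\Pi=c_{\pi_0}+2\sum_j c_{\tau_j}.
\]
The $L$-parameter of $\pi$ is $\phi_{\pi_0}\oplus\bigoplus_j(\phi_{\tau_j}\oplus\phi_{\tau_j}^\vee)$, and $c_{\tau^\vee}=c_\tau$. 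Hence by additivity of conductors $a_\Pi=c_\pi$, and $\pi^{K_{n,c_\pi}}=\Pi^{K_{n,a_\Pi}}\ne0$.

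\emph{Tempered, not square-integrable case.} Here $\pi$ is a direct summand of $\Pi=\tau_1\x\cdots\x\tau_\el\rtimes\pi_0$, where the $\tau_j$ are unitary square-integrable and $\pi_0$ is generic square-integrable. By hypothesis $\pi_0^{K_{n_0,c_{\pi_0}}}\ne0$. The same conductor computation, using the same decomposition of the parameter of $\pi$, gives $a_\Pi=c_\pi$ and $\Pi^{K_{n,a_\Pi}}\ne0$.

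It remains to show that the non-zero fixed vector lies in $\pi$. The representation $\Pi$ is unitary and hence completely reducible. Every summand is tempered, and by Rodier's heredity exactly one summand is generic, namely $\pi$. By \lmref{L:GP}, each non-generic tempered summand $\sigma$ satisfies $\sigma^{K_{n,m}}=0$ for all $m$. Therefore $\Pi^{K_{n,m}}=\pi^{K_{n,m}}$ for all $m$, and in particular $\pi^{K_{n,c_\pi}}\ne0$.

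\emph{Main obstacle.} The delicate step is the conductor identity $a_\Pi=c_\pi$. It needs the precise form of the parameter of $\pi$ in terms of $\phi_{\pi_0}$ and the $\phi_{\tau_j}$ (compatibility of the Jiang–Soudry correspondence with parabolic induction), and the equality $c_{\tau^\vee}=c_\tau$. I would take these from \cite{JiangSoudry2004}. In the tempered case one also has to check that $\pi_0$ is square-integrable and generic; this follows from the classification of generic tempered representations (\cite{Muic1998}).
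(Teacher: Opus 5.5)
Your proposal is correct and follows essentially the same route as the paper. The non-tempered case goes through the standard module conjecture ($\pi=\Pi$) and \lmref{L:useful}; the paper delegates this step to \cite[Lemma 6.1 and Corollary 6.2]{YCheng2025}, which is the argument you wrote out. The tempered, non-square-integrable case goes through Rodier's heredity, \lmref{L:GP}, and the conductor identity $c_\pi=c_{\pi_0}+2\sum_i c_{\tau_i}$ combined with \lmref{L:useful}, exactly as you do.
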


\begin{proof}
By \cite[Lemma 6.1]{YCheng2025} and the proof of \cite[Corollary 6.2]{YCheng2025}, if the space of newforms is non-zero for every 
irreducible generic tempered representation, then it is also non-zero for all irreducible generic representations. In particular, the proof reduces 
to the case of tempered representations. 

Now, let $\pi$ be an irreducible generic tempered representation of $\SO_{2n+1}(F)$ that is not square-integrable. 
Let $\phi$ be the associated $L$-parameter. 
From \cite[\S 3 and \S 4]{JiangSoudry2004}, the $L$-parameter has the following decomposition:
\[
\phi_1\oplus\cdot\cdot\cdot\oplus\phi_\el\oplus\phi_0\oplus\phi_\el^\vee\oplus\cdot\cdot\cdot\oplus\phi_1^\vee,
\]
where
\begin{itemize}
\item[--] $\phi_0$ is a discrete $L$-parameter of $\SO_{2n_0+1}(F)$;
\item[--] $\phi_i$ is an irreducible $r_i$-dimensional representation of the Weil--Deligne group $WD_F$ of $F$ for $i=1,\ldots, \el$; and 
\item[--] $\phi^\vee_i$ is the dual of $\phi_i$ for each $i$. 
\end{itemize}

Let $\pi_0$ (resp. $\tau_i$) be the irreducible generic square-integrable representation of $\SO_{2n_0+1}(F)$ (resp. $\GL_{r_i}(F)$) 
associated with $\phi_0$ (resp. $\phi_i$) under the local Langlands correspondence. Then, by $loc.$ $cit.$, $\pi$ is an irreducible 
summand of 
\[
\Pi=\tau_1\x\cdot\cdot\cdot\x\tau_\el\rtimes\pi_0\in\Rep\(\SO_{2n+1}(F)\).
\]
Note that all other irreducible summands of $\Pi$ are also tempered. Since $\pi_0$ is generic, it follows from a result of 
Rodier (\cite{Rodier1973}) that
\[
\dim_\bbC{\rm Hom}_{U_n(F)}\(\Pi,\psi_{U_n}\)=1. 
\]
Consequently, $\pi$ is the only generic summand of $\Pi$; hence 
\[
\pi^{K_{n,m}}=\Pi^{K_{n,m}}
\]
for each $m\ge 0$ by \lmref{L:GP}. To complete the proof, note that the decomposition of $\phi$ yields
\[
c_\pi=c_{\pi_0}+\sum_{i=1}^\el\(c_{\tau_i}+c_{\tau_i^\vee}\)=c_{\pi_0}+2\sum_{i=1}^\el c_{\tau_i}. 
\]
On the other hand, our assumption on $\pi_0$ implies that $a_{\pi_0}=c_{\pi_0}$. Now the proof follows immediately from \lmref{L:useful}. 
\end{proof}

Because of \propref{P:sq to gen}, we shall focus on square-integrable representations of $\SO_{2n+1}(F)$ in the remainder of this paper.

\section{Preliminaries for the second reduction}\label{S:P for 2nd red}

\subsection{A result of Tadi\'c}\label{SS:Tadic}
In this subsection, we introduce a fundamental result of Tadi\'c in \cite{Tadic1995}, which is indispensable to our reduction. 
Following Tadi\'c, we put
\[
\sR=\bigoplus_{r\geq 0}\sR(\GL_r(F))
\quad\text{and}\quad
\sR(S)=\bigoplus_{n\geq 0}\sR(\SO_{2n+1}(F)).
\]
Note that when $n=r=0$, the corresponding Grothendieck group is that of all finite-dimensional complex vector spaces. 

The functors of parabolic induction and Jacquet modules endow $\sR$ with the structure of a $\bbZ_+$-graded Hopf algebra, where 
$\bbZ_+:=\bbZ_{\ge 0}$. More concretely, we can define a multiplication $m$ on $\sR$ as follows. If $\tau_i\in\Irr(\GL_{r_i}(F))$ for $i=1,2$, 
then 
\[
m(\tau_1,\tau_2):=s.s.(\tau_1\x\tau_2)\in\sR(\GL_{r_1+r_2}(F)).
\]
We then extend $m$ $\bbZ$-linearly to all of $\sR$. In this way, $\sR$ becomes a commutative, associative graded ring.
The induced map $\sR\ot\sR\longto\sR$ is again denoted by $m$. 

To define the comultiplication $m^*:\sR\longto\sR\ot\sR$, let $\mathbf{r}=(r_1,\ldots,r_k)$ be a partition of $r$ and 
let $\tau\in\Rep(\GL_r(F))$.  We may naturally consider 
\[
s.s.\left(\Jac_{N_{\mathbf{r}}(F)}(\tau)\right)\in\sR(\GL_{r_1}(F))\ot\cdot\cdot\cdot\ot\sR(\GL_{r_k}(F)).
\]
Now, set\footnote{We understand that $N_{(r,0)}=N_{(0,r)}$ is the trivial group.} 
\[
m^*(\tau)=\sum_{i=0}^{r}s.s.\left(\Jac_{N_{(i,r-i)}(F)}(\tau)\right)\in\sR\ot\sR.
\]
Again, one extends $m^*$ $\bbZ$-linearly to all of $\sR$. With the multiplication $m$ and the comultiplication $m^*$, 
$\sR$ becomes a $\bbZ_+$-graded Hopf algebra. 

The group $\sR(S)$ becomes a $\bbZ_+$-graded $\sR$-module and a $\bbZ_+$-graded comodule over $\sR$, through 
parabolic induction and Jacquet modules, respectively. In fact, the multiplication $\mu:\sR\x\sR(S)\longto\sR(S)$ can be defined 
as follows. For an irreducible smooth representation $\tau$ (resp. $\sigma$) in $\sR$ (resp. $\sR(S)$), we set 
\[
\mu(\tau,\sigma)=s.s.(\tau\rtimes\sigma).
\]
We extend $\mu$ $\bbZ$-linearly to $\sR\x\sR(S)$. The induced map $\sR\ot\sR(S)\longto\sR(S)$ is also denoted by $\mu$. 
In this way, $\sR(S)$ becomes a $\bbZ_+$-graded $\sR$-module.

We also define a $\bbZ$-linear map $\mu^*:\sR(S)\longto\sR\ot\sR(S)$ by first defining it on an irreducible smooth representation $\pi$ 
as
\[
\mu^*(\pi)=\sum_{\substack{S\subset\Delta_n\\|S|\le 1}}s.s.\left(\Jac_{N_S(F)}(\pi)\right). 
\]
This endows $\sR(S)$ with a $\bbZ_+$-graded comodule structure over $\sR$. 

It is not hard to see that $\sR(S)$ is not a Hopf module over $\sR$ (\cite[Remark 7.3]{Tadic1995}); however, the structure of $\sR(S)$ 
over $\sR$ was determined by Tadi\'c. To describe his result, note that $\sR\ot\sR$ acts naturally on $\sR\ot\sR(S)$; we also denote 
this natural action by $\rtimes$. Now, define
\[
M^*=(m\ot 1)\circ(\vee\ot m^*)\circ s\circ m^*:\sR\longto\sR\ot\sR,
\]
where $1$ denotes the identity map, $\vee$ denotes the contragredient map and $s$ denotes the transposition map
$\sum_i  x_i\ot y_i\mapsto\sum_i y_i\ot x_i$. The following is the main result in \cite{Tadic1995}. 

\begin{thm}[Tadic]\label{T:Tadic}
$\sR(S)$ is an $M^*$-Hopf module over $\sR$. Thus, for $\tau\in\Irr(\GL_r(F))$ and $\sigma\in\Irr(\SO_{2n+1}(F))$, we have 
\[
\mu^*(\tau\rtimes\sigma)=M^*(\tau)\rtimes\mu^*(\sigma).
\]
\end{thm}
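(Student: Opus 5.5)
Since Theorem~\ref{T:Tadic} is Tadi\'c's structure theorem from \cite{Tadic1995}, the plan is to recall the shape of his argument: a geometric lemma computation in Grothendieck groups. The statement is additive in $\tau$ and $\sigma$, so it suffices to compute, for each maximal standard parabolic $P_S$ with $|S|\le 1$ (Levi $\GL_k(F)\times\SO_{2(n+r-k)+1}(F)$), the semi-simplified Jacquet module $s.s.(\Jac_{N_S(F)}(\tau\rtimes\sigma))$. Then we check that summing over $k$ gives $M^*(\tau)\rtimes\mu^*(\sigma)$. The tool is the Bernstein--Zelevinsky geometric lemma \cite{BZ1977}. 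It expresses $\Jac_{N_S}\circ\Ind_{P}$, where $P$ has Levi $\GL_r(F)\times\SO_{2n+1}(F)$, as glued from pieces indexed by the double cosets $W_{M_S}\backslash W/W_{M}$. In the Grothendieck group the gluing is irrelevant, so only the sum of the pieces matters.

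\emph{Parametrizing the double cosets.} A representative $w$ is determined by how the $k$ coordinates of the $\GL_k$ block are distributed:
\begin{itemize}
\item $i$ of them come from the last $i$ coordinates of $\GL_r$, but sign-flipped (moved across the orthogonal group), giving a contragredient;
\item $j$ of them come from the first $j$ coordinates of $\GL_r$ unchanged;
\item $k-i-j$ of them come from the $\GL$-part of a Jacquet module of $\sigma$.
\end{itemize}
For each such $w$, the corresponding piece is
\[
\Ind\circ w\circ \Jac.
\]
Concretely, one first takes $\Jac$ of $\tau$ with respect to the partition $(j,\,r-i-j,\,i)$, giving $\tau_1\otimes\tau_2\otimes\tau_3$, and takes $\Jac$ of $\sigma$ with respect to $\GL_{k-i-j}$, giving $\sigma_1\otimes\sigma_2$. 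The piece is then the representation
\[
(\tau_3^\vee\times\tau_1\times\sigma_1)\otimes(\tau_2\rtimes\sigma_2).
\]

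\emph{Matching with $M^*$.} Iterating $m^*$ (coassociativity) produces exactly the sum over $\tau_1\otimes\tau_2\otimes\tau_3$. Applying $s$, then $\vee$ on the appropriate factor, then $m$ reproduces the first tensor factor $\tau_3^\vee\times\tau_1$ and the second factor $\tau_2$. Acting on $\mu^*(\sigma)=\sum\sigma_1\otimes\sigma_2$ via $\rtimes$ then gives the stated formula.

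\emph{Main obstacle.} The delicate step is the precise identification of the action of $w$ on the $\GL_i$ block. The long Weyl element of $\SO_{2n+1}$ acts on $\GL_i$ by $g\mapsto J_i\,{}^tg^{-1}J_i$, which yields $\tau_3^\vee$ up to an outer automorphism. One must invoke the Gelfand--Kazhdan theorem that $g\mapsto {}^tg^{-1}$ sends an irreducible representation to its contragredient to conclude that this twist is exactly $\tau_3^\vee$. One must also verify that the modulus characters from normalized induction and normalized Jacquet modules cancel, so that no extra unramified twist appears. I would handle both of these via the explicit description of $W_{M_S}\backslash W/W_M$ given in \cite{Tadic1995}.
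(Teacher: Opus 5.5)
Your sketch is correct and is essentially Tadi\'c's own argument in \cite{Tadic1995}; the paper gives no proof and simply cites that result. The ingredients match: the Bernstein--Zelevinsky geometric lemma, double cosets indexed by $(i,j)$, transitivity of Jacquet modules on the $\GL_r$-factor, Gelfand--Kazhdan for the sign-flipped block, and no modulus twist because all functors are normalized.

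One cosmetic point: with minimal-length representatives, the $\GL_k$-factor of each piece literally comes out as $\tau_1\times\sigma_1\times\tau_3^\vee$. You need the commutativity of $\sR$ to rewrite it as $\tau_3^\vee\times\tau_1\times\sigma_1$, which is harmless since only semisimplifications enter.
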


The following definition is crucial for our second reduction. 

\begin{defn}
Let $\Pi\in\Rep(\SO_{2n+1}(F))$ and write 
\[
\mu^*\(\Pi\)=\sum_{i\in I}\tau_i\bt\pi_i,
\]
where $\tau_i\in\Irr(\GL_{r_i}(F))$ and $\pi_i\in\Irr(\SO_{2(n-r_i)+1}(F))$. Define subsets $I_1\subset I_2$ of $I$ by 
\[
I_1
=
\stt{i\in I\mid\text{$\tau_i$ is unramified and $\pi_i$ is generic}}
\subset
I_2
=
\stt{i\in I\mid\text{$\pi_i$ is generic}}. 
\]
Now set 
\[
\mu^*_{ur}\(\Pi\):=\sum_{i\in I_1}\tau_i\le \mu^*_{\GL}\(\Pi\):=\sum_{i\in I_2}\tau_i\in\sR, 
\]
and define
\[
|\mu^*_{ur}\(\Pi\)|=|I_1|. 
\]
\end{defn}

\begin{remark}
Note that if $\Pi$ is of Whittaker type, then $|\mu^*_{ur}(\Pi)|\ge 1$, since in this case $\mu^*_{ur}(\Pi)$ contains the trivial representation of 
$1_{\GL_0(F)}$. One of our reductions is based on counting $|\mu^*_{ur}(\Pi)|$ for certain representations $\Pi$. 
\end{remark}

\subsection{$P_{n+1}$-theory}\label{SS:P-theory}
In this subsection, we apply the $P_{n+1}$-theory developed in \cite[\S 3]{BZ1977} to derive some useful results needed for our reduction 
to square-integrable representations. It was first observed by Gelbart and Piatetski-Shapiro that $P_{n+1}$-theory has applications to the 
study of local Rankin--Selberg integrals for $\SO_{2n+1}\x\GL_n$. In \cite{Tsai2013}, Tsai also used $P_{n+1}$-theory to investigate 
local newforms for generic supercuspidal representations of $\SO_{2n+1}(F)$. Our derivations are based on combining their results. 

\subsubsection{Irreducible representations of $P_{n+1}$}
Let $n\ge 0$ be an integer.  Denote by $P_{n+1}$ the mirabolic subgroup of $\GL_{n+1}(F)$; that is, $P_{n+1}$ is the subgroup of 
matrices with the last row $(0,0,\cdots,0,1)$. We also denote 
\[
P_{n+1}(\frak{o})=P_{n+1}\cap\GL_{n+1}(\frak{o}).  
\]

The set $\Irr(P_{n+1})$ is classified in \cite[\S 3]{BZ1977} using induction and Jacquet modules. To describe their results, let $0\le r\le n$ be an 
integer and let $Q_r\subset P_{n+1}$ be the subgroup defined by 
\[
Q_r
=
\stt{\pMX{a}{b}{0}{z}\mid \text{$a\in\GL_r(F)$, $b\in\M_{r,n+1-r}(F)$, and $z\in Z_{n+1-r}$}}. 
\]
Each $\tau\in\Rep(\GL_r(F))$ induces a smooth representation $\tau\boxtimes\psi_{Z_{n+1-r}}$ of $Q_r$ via
\[
\pMX{a}{b}{0}{z}\longmapsto\tau(a)\psi_{Z_{n+1-r}}(z). 
\]
We thus obtain a normalized compactly induced representation 
\[
\ind_{Q_r}^{P_{n+1}}\left(\tau\boxtimes\psi_{Z_{n+1-r}}\right)
\]
of $P_{n+1}$. Now, \cite[Corollary 3.5]{BZ1977} asserts that 
\[
\sigma\in\Irr(P_{n+1})\Longleftrightarrow\sigma\simeq\ind_{Q_r}^{P_{n+1}}\left(\tau\boxtimes\psi_{Z_{n+1-r}}\right)
\]
for some $0\le r\le n$ and $\tau\in\Irr(\GL_r(F))$. Note that when $r=0$, so that $\tau$ is the trivial representation, 
$\ind_{Z_{n+1}(F)}^{P_{n+1}}\left(\psi_{Z_{n+1}}\right)$ is the standard (irreducible) representation of Gelfand--Greav. 

\begin{lm}\label{L:P fixed}
Let $\tau\in\Rep(\GL_r(F))$ with $0\le r\le n$. Then 
\[
\ind_{Q_r}^{P_{n+1}}\left(\tau\boxtimes\psi_{Z_{n+1-r}}\right)^{P_{n+1}(\frak{o})}\ne 0
\Longleftrightarrow \tau^{\GL_r(\frak{o})}\ne 0. 
\]
\end{lm}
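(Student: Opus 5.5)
We will use the Mackey double-coset decomposition of the compactly induced representation restricted to $P_{n+1}(\frak o)$. Write $\sigma=\ind_{Q_r}^{P_{n+1}}(\tau\boxtimes\psi_{Z_{n+1-r}})$, realized on functions $f:P_{n+1}\to V_\tau$ with $f(qp)=\delta(q)^{1/2}(\tau\boxtimes\psi)(q)f(p)$, compactly supported modulo $Q_r$. The first step is the Iwasawa-type decomposition $P_{n+1}=Q_r\,P_{n+1}(\frak o)$. It holds because $P_{n+1}=\GL_n(F)\ltimes F^n$, and $\GL_n(F)=P'\,\GL_n(\frak o)$ for the upper block parabolic $P'$ with Levi $\GL_r\times A_{n-r}$. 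The unipotent radical of $P'$, the torus $A_{n-r}$, and the translations $F^n$ are all contained in $Q_r$, up to the diagonal torus of the last block. More precisely, $Q_r$ contains $\GL_r\ltimes(\text{upper unipotents})$, and it also contains the upper-right column $F^n$.

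The point needing care is the diagonal torus $A_{n+1-r}$ of the $z$-block. Restricted to $\GL_{n+1-r}$-type elements, $Q_r$ contains only $Z_{n+1-r}$, not the diagonal. So we should instead write $P_{n+1}=\bigsqcup_{t} Q_r\, t\, P_{n+1}(\frak o)$, where $t$ runs over $\diag{I_r,\varpi^{k_1},\ldots,\varpi^{k_{n-r}},1}$ with $k_i\in\bbZ$, modulo units. A $P_{n+1}(\frak o)$-fixed vector is then determined by its values $f(t)\in V_\tau$ on these representatives. Each value must satisfy $f(t)=\tau(a)\psi(z)f(t)$ for all $q=\pMX{a}{b}{0}{z}\in Q_r\cap tP_{n+1}(\frak o)t^{-1}$.

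Next we compute this stabilizer. Its $a$-component runs over all of $\GL_r(\frak o)$, since $t$ is trivial on the $\GL_r$ block. Its $z$-component is $Z_{n+1-r}\cap t\,Z_{n+1-r}(\frak o)\,t^{-1}$, whose superdiagonal entries lie in $\frak p^{k_i-k_{i+1}}$, with $k_{n+1-r}=0$. Since $\psi$ is trivial on $\frak o$ and nontrivial on $\frak p^{-1}$, the character $\psi_{Z}$ is trivial on this group iff all $k_i-k_{i+1}\ge 0$. The $b$-entries do not interfere, because $\psi$ depends only on the superdiagonal of $z$.

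The conclusion follows from this computation. A nonzero fixed vector exists iff, for some dominant $t$, there is $0\ne v\in V_\tau$ fixed by $\tau(\GL_r(\frak o))$. Note that the factor $\delta^{1/2}$ is trivial on compact elements. If $\tau^{\GL_r(\frak o)}\ne0$, take $t=1$ and $v$ fixed; the function supported on $Q_rP_{n+1}(\frak o)$ is then well defined, compactly supported mod $Q_r$, and $P_{n+1}(\frak o)$-invariant. Conversely, a nonzero fixed $f$ has some $f(t)\ne0$, and this value is $\GL_r(\frak o)$-fixed. The main obstacle is the double coset computation itself: verifying the decomposition and, in particular, that the $\GL_r$-component of each stabilizer is exactly the full $\GL_r(\frak o)$. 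This comes down to conjugating block-upper-triangular integral matrices by $t$.
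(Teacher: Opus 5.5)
Your argument is correct and is essentially the paper's proof: you use the diagonal representatives $\diag{I_r,t,1}$ for $Q_r\backslash P_{n+1}/P_{n+1}(\frak{o})$, deduce $\GL_r(\frak{o})$-invariance of a nonzero value $f(t)$ from the fact that $\diag{a,I_{n+1-r}}$ with $a\in\GL_r(\frak{o})$ lies in $Q_r\cap P_{n+1}(\frak{o})$ and commutes with $t$, and for the converse take the function supported on $Q_rP_{n+1}(\frak{o})$ with value $v$ at the identity. You should delete your opening claim $P_{n+1}=Q_rP_{n+1}(\frak{o})$, which is false for $r<n$ (as you notice yourself), and note that your dominance analysis of $\psi_{Z_{n+1-r}}$ on the stabilizers, while correct, is not needed for either direction.
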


\begin{proof}
Suppose that we have
\[
0\ne f\in\ind_{Q_r}^{P_{n+1}}\left(\tau\boxtimes\psi_{Z_{n+1-r}}\right)^{P_{n+1}(\frak{o})}. 
\]
By the Iwasawa decomposition for $\GL_n(F)$, the set
\[
\stt{\diag{I_r, t, 1}\in P_{n+1}\mid t\in A_{n-r}(F)}
\]
contains a set of representatives of 
\[
Q_r\backslash P_{n+1}/P_{n+1}(\frak{o}).
\]
It follows that 
\[
f\left(\diag{I_r, t_0, 1}\right)=v\in\tau
\]
is non-zero for some $t_0\in A_{n-r}(F)$. Now, we have $v\in\tau^{\GL_r(\frak{o})}$; indeed, if $a\in\GL_r(\frak{o})$, then 
\[
\tau(a)v
=
f\left(\diag{a,t_0,1}\right)=v. 
\]

Conversely, let $0\ne v\in\tau^{\GL_r(\frak{o})}$, and define 
\[
f_v\in\ind_{Q_r}^{P_{n+1}}\left(\tau\boxtimes\psi_{Z_{n+1-r}}\right)^{P_{n+1}(\frak{o})},
\]
by requiring that
\[
\supp(f_v)=Q_rP_{n+1}(\frak{o})
\quad\text{and}\quad
f_v(I_{n+1})=v. 
\]
Since $f_v$ is well-defined, the result follows 
\end{proof}

\subsubsection{Results of Gelbart--Piatetski-Shapiro and Tsai}
We now introduce relevant results of Gelbart--Piatetski-Shapiro and Tsai. Let $Y_n\subset P_{\a_n}(F)$ be the subgroup 
defined by 
\[
Y_n=\stt{\begin{pmatrix}I_n&&Y\\&1\\&&I_n\end{pmatrix}\mid\text{$Y\in\M_n(F)$ with $J_n{}^tYJ_n=-Y$}}. 
\]
Then $Y_n$ is normal in $P_{\a_n}(F)$, and 
\[
P_{\a_n}(F)/Y_n\simeq P_{n+1}
\]
by \cite[Proposition 2.2]{GPSR1987}. 

By restricting the action of $\pi\in\Rep(\SO_{2n+1}(F))$ to $P_{\a_n}(F)$, we obtain a smooth representation $\pi|_{P_{\a_n}(F)}$
of $P_{\a_n}(F)$. Using the aforementioned isomorphism, 
\[
\Jac_{Y_n}\left(\pi|_{P_{\a_n}(F)}\right)
\]
can be viewed as a smooth representation of $P_{n+1}$. We then have the following lemma due to Gelbart--Piatetski-Shapiro:

\begin{lm}\label{L:GPS}
Let $\pi\in\Rep(\SO_{2n+1}(F))$. Then 
\[
\Jac_{Y_n}\left(\pi|_{P_{\a_n}(F)}\right)\in\Rep(P_{n+1}).
\]
Moreover, if 
\[
d:=\dim_\bbC{\rm Hom}_{U_n(F)}\left(\pi,\psi_{U_n}\right)\le 1,
\]
then 
\[
s.s.\left(\Jac_{Y_n}\left(\pi|_{P_{\a_n}(F)}\right)\right)
=
d\cdot \ind_{Z_{n+1}(F)}^{P_{n+1}}\left(\psi_{Z_{n+1}}\right)
+
\sum_{i\in I}\ind_{Q_{r_i}}^{P_{n+1}}\left(\tau_i\bt\psi_{Z_{n+1-r_i}}\right)
\in\sR\(P_{n+1}(F)\)
\]
for some $\tau_i\in\Irr\(\GL_{r_i}(F)\)$ with $r_i\ge 1$, which satisfy 
\[
\tau_i|\cdot|^{\frac{1-n}{2}}\le\mu^*_{\GL}(\pi). 
\]
\end{lm}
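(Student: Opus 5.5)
The plan is to use the Bernstein--Zelevinsky filtration of $P_{n+1}$-modules together with the standard description of the derivatives of $\pi|_{P_{\a_n}(F)}$ through Jacquet modules of $\pi$.

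\emph{Finite length.} $\pi$ has finite length, so its restriction to $P_{\a_n}(F)$ is finitely generated. The coinvariants $\Jac_{Y_n}$ form an exact functor to smooth $P_{n+1}$-modules. To see that the result has finite length, we use the Bernstein--Zelevinsky functors $\Phi^-,\Psi^-$. A smooth $P_{n+1}$-module $\sigma$ has finite length as soon as every derivative $\sigma^{(k)}=\Psi^-(\Phi^-)^{k-1}(\sigma)$ is an admissible representation of $\GL_{n+1-k}(F)$ of finite length. We would identify these derivatives with twisted Jacquet modules of $\pi$:
\begin{itemize}
\item $(\Phi^-)^{k-1}$ takes coinvariants along the unipotent radical of $\GL_{n+1}$ in columns $n+2-k,\dots,n+1$, twisted by $\psi$ on the superdiagonal entries.
\item Composing with $\Jac_{Y_n}$ and then $\Psi^-$ gives the Jacquet module of $\pi$ along $N_S$, where $M_S\simeq\GL_{n+1-k}\x\SO_{2(k-1)+1}$.
\item This is followed by the $\psi_{U_{k-1}}$-twisted Jacquet module on the $\SO_{2k-1}$ factor. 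The factor $2^{-1}$ in $\psi_{U_n}$ matches the embedding $P_{\a_n}/Y_n\simeq P_{n+1}$ of \cite{GPSR1987}.
\end{itemize}
We do not recompute this identification; we take it from \cite{GPSR1987} and Tsai.

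So $\sigma^{(k)}$ is, up to a twist by a power of $|\det|$, $\Hom$-dual to the $\GL_{n+1-k}$-part of $\Jac_{N_S}(\pi)$ paired with the generic $\SO_{2k-1}$-constituents. Since $\Jac_{N_S}(\pi)$ has finite length, and taking $\psi_{U_{k-1}}$-coinvariants on a finite-length $\SO_{2k-1}$-module gives a finite-dimensional space (Rodier, multiplicity one for irreducibles), each derivative has finite length. This proves the first assertion.

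\emph{Semisimplification.} By \cite[Corollary 3.5]{BZ1977} and exactness, the semisimplification is
\[
\sum_k\ind_{Q_{n+1-k}}^{P_{n+1}}\bigl(s.s.(\text{irreducible pieces of }\sigma^{(k)})\bt\psi\bigr).
\]
\begin{itemize}
\item The top derivative $k=n+1$ is $\Hom$-dual to $\pi_{U_n,\psi_{U_n}}$. It has dimension $d\le1$, which gives the Gelfand--Graev term $d\cdot\ind_{Z_{n+1}}^{P_{n+1}}(\psi_{Z_{n+1}})$.
\item For $k\le n$, set $r=n+1-k\ge1$. The constituents $\tau_i$ of $\sigma^{(k)}$ are constituents $\tau$ of $\Jac_{N_S}(\pi)$ for pairs $\tau\bt\pi'$ with $\pi'$ generic, with each generic $\pi'$ contributing multiplicity exactly $1$. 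After tracking the modulus characters, each $\tau$ appears twisted by $|\cdot|^{\frac{n-1}{2}}$.
\end{itemize}
Comparing with the definition of $\mu^*_{\GL}$ gives $\tau_i|\cdot|^{\frac{1-n}{2}}\le\mu^*_{\GL}(\pi)$ in $\sR$.

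\emph{Main obstacle.} The hard step is the precise exponent bookkeeping. The normalizations of $\Phi^-,\Psi^-$ (with their $|\det|^{1/2}$ factors), the normalized Jacquet module $\Jac_{N_S}$, and the identification $P_{\a_n}/Y_n\simeq P_{n+1}$ must all be combined to produce exactly the shift $\frac{1-n}{2}$. I would first check the shift in the case $n=1$, where $P_2$ and $\SO_3\simeq\PGL_2$ make it explicit, and then verify that it is independent of $k$ by an inductive comparison of the successive $\Phi^-$ steps with the Levi factors.
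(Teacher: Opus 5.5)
Your proposal is correct and follows essentially the paper's argument, which simply defers to the proof of \cite[Proposition 8.2]{GPSR1987}: the $P_{n+1}$-constituents are detected by $\Jac_{U_{n-r},\psi_{U_{n-r}}}\bigl(\Jac_{N_{\a_r}(F)}(\pi)\bigr)$, and the Gelfand--Graev multiplicity by the $\psi_{U_n}$-coinvariants. One correction to your wording: these twisted Jacquet modules are your derivatives themselves (up to a power of $|\det|$), not their $\Hom$-duals.

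The exponent you flag does not depend on $r$. With unnormalized $Y_n$-coinvariants, and with $P_{n+1}$-irreducibles labelled by their Bernstein--Zelevinsky derivative, the $n-r+1$ factors $|\det|^{-1/2}$ from $\Psi^-(\Phi^-)^{n-r}$ combine with $\delta_{P_{\a_r}}^{1/2}|_{\GL_r}=|\det|^{(2n-r)/2}$ to give $|\det|^{(n-1)/2}$. In any case, \propref{P:mu=0} only uses the lemma up to a power of $|\det|$, which does not affect unramifiedness.
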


\begin{proof}
It suffices to prove the lemma when $\pi\in\Irr\(\SO_{2n+1}(F)\)$. In this case, the first assertion was proved in 
\cite[Proposition 8.2]{GPSR1987} when $\pi$ is generic. Their argument certainly carries over to non-generic representations as well.

We now explain that the second assertion is already contained in their proof of the same proposition. When $\pi$ is generic, they proved that 
\[
\ind_{Z_{n+1}(F)}^{P_{n+1}}\left(\psi_{Z_{n+1}}\right)
\]
appears in $s.s.\left(\Jac_{Y_n}\left(\pi|_{P_{\a_n}(F)}\right)\right)$ with multiplicity one. On the other hand, if $\pi$ is non-generic but
$s.s.\left(\Jac_{Y_n}\left(\pi|_{P_{\a_n}(F)}\right)\right)$ contains $\ind_{Z_{n+1}(F)}^{P_{n+1}}\left(\psi_{Z_{n+1}}\right)$ as a summand, 
then the argument in \cite[p. 110]{GPSR1987} implies that $\pi$ must be generic, which is a contradiction. We thus conclude that 
$\ind_{Z_{n+1}(F)}^{P_{n+1}}\left(\psi_{Z_{n+1}}\right)$ appears in $s.s.\left(\Jac_{Y_n}\left(\pi|_{P_{\a_n}(F)}\right)\right)$ with multiplicity $d$. 

It remains to show that if 
\[
\ind_{Q_{r_i}}^{P_{n+1}}\left(\tau_i\bt\psi_{Z_{n+1-r_i}}\right)
\]
appears in $s.s.\left(\Jac_{Y_n}\left(\pi|_{P_{\a_n}(F)}\right)\right)$ for some $\tau_i\in\Irr\(\GL_{r_i}(F)\)$ with $r_i\ge 1$, then 
\[
\tau_i|\cdot|^{\frac{1-n}{2}}\le\mu^*_{\GL}(\pi). 
\]
This follows from their proof in \cite[pp. 111-114]{GPSR1987}. To prove that $s.s.\left(\Jac_{Y_n}\left(\pi|_{P_{\a_n}(F)}\right)\right)$ 
has finite length, their idea is to show that $\tau_i|\cdot|^{\frac{1-n}{2}}$ appears in the following (twisted) Jacquet module:
\[
\Jac_{U_{n-r_i}(F),\psi_{U_{n-r_i}}}\(\Jac_{N_{\a_{r_i}}(F)}\(\pi\)\),
\]
which is known to have finite length. We note that the twist by $|\cdot|^{\frac{1-n}{2}}$ is needed here because both their induction 
and their Jacquet modules are unnormalized. Now, in view of our definition of $\mu^*_{\GL}(\pi)$, the proof follows. 
\end{proof}

Next, we state a result of Tsai. To this end, recall that the definitions and properties of the relevant families of open compact subgroups 
$K_{n,m}\subset J_{n,m}$ of $\SO_{2n+1}(F)$ can be found in \cite{Gross2015} and \cite{Tsai2013} (see also \cite{YCheng2022} and 
\cite{YCheng2025}). 

\begin{lm}\label{L:Tsai}
Let $\pi\in\Rep\(\SO_{2n+1}(F)\)$. The natural map
\[
\pi^{K_{n,m}}
\longto
\Jac_{Y_n}\(\pi|_{P_{\a_n}(F)}\)^{P_{n+1}(\frak{o})}
\]
is injective for each integer $m\ge 0$. 
\end{lm}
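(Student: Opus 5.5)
The plan is to restrict $\pi$ to the subgroup $\SO_{2n}(F)\subset\SO_{2n+1}(F)$ that stabilizes the anisotropic middle basis vector, and to reduce the statement to a Borel--Casselman type injectivity for Iwahori-fixed vectors of that group.

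The first step is to check two structural facts about Gross's groups $K_{n,m}$ from the explicit descriptions in \cite{Gross2015}, \cite{Tsai2013} and \cite{YCheng2025}.
\begin{itemize}
\item[(a)] The intersection $K_{n,m}\cap\SO_{2n}(F)$ contains a hyperspecial maximal compact subgroup $\SO_{2n}(\frak{o})$, for every $m\ge 0$. This is the analogue of the property used in the proof of \lmref{L:GP}.
\item[(b)] $Y_n$ is exactly the unipotent radical of the Siegel parabolic subgroup of $\SO_{2n}(F)$. Moreover, the image of $K_{n,m}\cap P_{\a_n}(F)$ under $P_{\a_n}(F)\to P_{\a_n}(F)/Y_n\simeq P_{n+1}$ contains $P_{n+1}(\frak{o})$.
\end{itemize}
The second half of (b) is what makes the natural map well defined: $v\in\pi^{K_{n,m}}$ maps to the class of $v$ in $\Jac_{Y_n}$, and this class is invariant under that image. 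I would verify (b) by reading off the Levi part $\GL_n(\frak{o})$ together with the $\frak{o}$-integral part of the Heisenberg-type radical of $P_{\a_n}$ inside $K_{n,m}$.

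For injectivity, set $H=\SO_{2n}(F)$ and let $I\subset\SO_{2n}(\frak{o})$ be an Iwahori subgroup compatible with the upper triangular Borel subgroup $B_H=T_HN_H$ of $H$, so that $Y_n\subset N_H$. Take $v\in\pi^{K_{n,m}}$ whose image in $\Jac_{Y_n}(\pi)$ vanishes. Then its image in $\Jac_{N_H}(\pi|_H)$ also vanishes, because $\Jac_{N_H}$ factors through $\Jac_{Y_n}$. Since $v$ is fixed by $I$, I will invoke Borel's theorem (as extended by Casselman and Bernstein to arbitrary smooth representations): for any smooth representation $V$ of $H$, the natural map
\[
V^{I}\longto \Jac_{N_H}(V)^{T_H(\frak{o})}
\]
is injective, indeed bijective. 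Applying this with $V=\pi|_H$ forces $v=0$, which is the claim.

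The main obstacle is that $\pi|_H$ is not admissible, so I must make sure the Borel--Casselman injectivity really holds for arbitrary smooth $V$. If I cannot cite a clean reference, I would prove it directly, in three steps.
\begin{itemize}
\item[(i)] Vanishing in the Jacquet module means $\int_{N_H\cap t^{-k}It^{k}}\pi(u)v\,du=0$ for a strictly dominant $t\in T_H$ and some $k$.
\item[(ii)] Using the Iwahori factorization of $I$, this integral is a nonzero multiple of $\pi(t^{-k})\,e_I\,\pi(t^{k})v$, that is, of $\pi(t^{-k})$ applied to the $k$-th power of the Iwahori--Hecke operator $T_t$ acting on $v$.
\item[(iii)] $T_t$ is invertible in the Iwahori--Hecke algebra, so $v=0$.
\end{itemize}
This argument uses only the Hecke algebra, so it is insensitive to admissibility. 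A secondary obstacle is verifying fact (a) from Gross's definition uniformly in $m$, which I expect to be a direct matrix check.
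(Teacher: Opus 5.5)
\textbf{The gap: fact (a) is false.} For $n\ge 2$ and $m\ge 1$, fact (a) fails, and with it the step ``since $v$ is fixed by $I$.'' The group $H_{n,m}=K_{n,m}\cap\SO_{2n}(F)$ does not contain a hyperspecial subgroup; it \emph{is} one. Indeed $H_{n,m}=\varpi^{-\lfloor m/2\rfloor\la_n}H_{n,e}\varpi^{\lfloor m/2\rfloor\la_n}$ with $e\in\stt{0,1}$ and $m\equiv e\pmod{2}$. Since it contains $x_{\ep_i+\ep_j}(\frak{p}^{-m})$, it is a maximal compact subgroup different from $\SO_{2n}(\frak{o})$, so it cannot contain $\SO_{2n}(\frak{o})$. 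The property behind \lmref{L:GP} is that the intersection is hyperspecial, not that it contains $\SO_{2n}(\frak{o})$.

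For $m\ge 2$ even the standard Iwahori subgroup $I_0\subset\SO_{2n}(\frak{o})$ is not contained in $K_{n,m}$. It contains $x_{-\ep_i-\ep_j}(\frak{p})$, whereas a compact group containing $x_{\ep_i+\ep_j}(\frak{p}^{-m})$ meets the root group of $-\ep_i-\ep_j$ only inside $x_{-\ep_i-\ep_j}(\frak{p}^{m})$. So, as written, a vector of $\pi^{K_{n,m}}$ need not be fixed by the Iwahori subgroup you apply Borel--Casselman to.

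\textbf{The repair.} A torus conjugation fixes this. The standard Iwahori $I_0$ is generated by $T_n(\frak{o})$, by $x_\alpha(\frak{o})$ for positive roots $\alpha$, and by $x_\alpha(\frak{p})$ for negative roots $\alpha$. It lies in $H_{n,0}\cap H_{n,1}$, because $H_{n,1}$ contains $x_{\ep_i+\ep_j}(\frak{p}^{-1})$, $x_{-\ep_i-\ep_j}(\frak{p})$ and $\GL_n(\frak{o})$. Put $t=\varpi^{\lfloor m/2\rfloor\la_n}\in T_H$; then $t^{-1}I_0t\subset H_{n,m}$. You can finish in either of two ways:
\begin{itemize}
\item[(1)] Run your steps (i)--(iii) with the Iwahori subgroup $t^{-1}I_0t$. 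It still has an Iwahori factorization with respect to $B_H$, because $t$ normalizes $N_H$, $\bar N_H$ and $T_H$ and commutes with your dominant element.
\item[(2)] Replace $v$ by $\pi(t)v$, which is $I_0$-fixed. Since $t$ normalizes $Y_n$, $v$ vanishes in $\Jac_{Y_n}$ if and only if $\pi(t)v$ does.
\end{itemize}
Your reduction from $\Jac_{Y_n}$ to $\Jac_{N_H}$, your fact (b), and your Hecke-algebra proof of injectivity for arbitrary smooth $V$ are all correct.

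\textbf{Comparison with the paper.} With this change your proof is valid but follows a different route. The paper just cites \cite[Lemma 3.4.1]{Tsai2013}: the map from $\pi^{H_{x_{m_+}}}$ into $\Jac_{Y_n}(\pi|_{P_{\a_n}(F)})^{\GL_n(\frak{o})}$ is injective, where $H_{x_{m_+}}\subset K_{n,m}\cap\SO_{2n}(F)$. It then adds exactly your observation (b), that $K_{n,m}\cap P_{\a_n}(F)$ maps onto $P_{n+1}(\frak{o})$. Your argument is self-contained, using only standard Iwahori--Hecke algebra facts. It also proves a stronger statement: $\pi^{K_{n,m}}$ already injects into $\Jac_{N_H}(\pi|_{\SO_{2n}(F)})$.
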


\begin{proof}
This lemma follows from \cite[Lemma 3.4.1]{Tsai2013}, as we now explain. Let $\SO_{2n}(F)\subset{\rm SL}_{2n}(F)$ be the split 
even special orthogonal group defined by $J_{2n}$. It can be embedded into $\SO_{2n+1}(F)$ via
\begin{equation}\label{E:embedding}
\pMX{a}{b}{c}{d}\longmapsto\begin{pmatrix}a&&b\\&1\\c&&d\end{pmatrix},
\end{equation}
where $a,b,c,d\in\M_n(F)$. On the other hand, we regard $\GL_n(F)$ as a subgroup of $P_{n+1}$ in the usual way. Now, 
\cite[Lemma 3.4.1]{Tsai2013} implies that the natural map
\begin{equation}\label{E:Tsai}
\pi^{\(K_{n,m}\cap\SO_{2n}(F)\)}\longto\Jac_{Y_n}\(\pi|_{P_{\a_n}(F)}\)^{\GL_{n}(\frak{o})}
\end{equation}
is injective for each integer $m\ge 0$. We note that the group $H_{x_{m_+}}$ in $loc.$ $cit.$ is a subgroup of $K_{n,m}\cap\SO_{2n}(F)$;
hence Tsai's result is in fact stronger. 

To deduce our lemma from this, we simply note that the image of the map
\[
\(K_{n,m}\cap P_{\a_n}(F)\)/\(K_{n,m}\cap Y_n\)\hookto P_{\a_n}(F)/Y_n\simeq P_{n+1}
\] 
is $P_{n+1}(\frak{o})$ for each $m\ge 0$. Consequently, the image of the map \eqref{E:Tsai}, when restricted to $\pi^{K_{n,m}}$, lies in 
\[
\Jac_{Y_n}\(\pi|_{P_{\a_n}(F)}\)^{P_{n+1}(\frak{o})}. 
\]
This finishes the proof. 
\end{proof}

\subsubsection{Some consequences}
The following results, which are easy consequences of those in the previous subsections, in fact motivate this project.

\begin{prop}\label{P:mu=0}
Let $\pi\in\Rep\(\SO_{2n+1}(F)\)$. Suppose that 
\[
\mu^*_{ur}(\pi)=0. 
\]
Then $\pi^{K_{n,m}}=0$ for every $m\ge 0$. 
\end{prop}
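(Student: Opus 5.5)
The plan is to combine \lmref{L:Tsai}, \lmref{L:GPS} and \lmref{L:P fixed}. Fix $m\ge 0$. By \lmref{L:Tsai}, the natural map $\pi^{K_{n,m}}\to\Jac_{Y_n}\(\pi|_{P_{\a_n}(F)}\)^{P_{n+1}(\frak{o})}$ is injective. So it suffices to show that the $P_{n+1}(\frak{o})$-invariants of $\Sigma:=\Jac_{Y_n}\(\pi|_{P_{\a_n}(F)}\)$ vanish.

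First note that $\mu^*_{ur}(\pi)=0$ forces $d=\dim_\bbC{\rm Hom}_{U_n(F)}(\pi,\psi_{U_n})=0$. Indeed, the term $S=\emptyset$ in $\mu^*(\pi)$ is $1_{\GL_0(F)}\bt\pi$, and more generally each generic irreducible subquotient of $\pi$ contributes the unramified $\tau=1_{\GL_0(F)}$ paired with a generic $\pi_i$. Since $\Pi$ of Whittaker type has $|\mu^*_{ur}|\ge 1$ by the Remark, and $\mu^*_{ur}$ is additive in the Grothendieck group, $\mu^*_{ur}(\pi)=0$ means no irreducible subquotient of $\pi$ is generic. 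We may therefore apply \lmref{L:GPS} to each irreducible subquotient (or directly, since $d=0\le1$). This gives
\[
s.s.(\Sigma)=\sum_{i\in I}\ind_{Q_{r_i}}^{P_{n+1}}\(\tau_i\bt\psi_{Z_{n+1-r_i}}\),
\]
with $r_i\ge1$ and $\tau_i|\cdot|^{\frac{1-n}{2}}\le\mu^*_{\GL}(\pi)$.

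Next, since $P_{n+1}(\frak{o})$ is compact and open, the functor of $P_{n+1}(\frak{o})$-invariants is exact on smooth representations. Hence $\dim\Sigma^{P_{n+1}(\frak{o})}$ is the sum over composition factors of the dimensions of their invariants, and $\Sigma$ has finite length by \lmref{L:GPS}. By \lmref{L:P fixed}, each factor has nonzero invariants if and only if $\tau_i^{\GL_{r_i}(\frak{o})}\ne0$, i.e. $\tau_i$ is unramified.

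Suppose some $\tau_i$ is unramified. Then $\tau_i|\cdot|^{\frac{1-n}{2}}$ is unramified too, since the twist is by an unramified character. This term occurs in $\mu^*_{\GL}(\pi)$, i.e. as the $\GL$-part of some $\tau_j\bt\pi_j$ in $\mu^*(\pi)$ with $\pi_j$ generic. Therefore it contributes to $\mu^*_{ur}(\pi)$, contradicting $\mu^*_{ur}(\pi)=0$. Hence all invariants vanish and $\pi^{K_{n,m}}=0$.

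The main subtle point is the bookkeeping that $\le\mu^*_{\GL}(\pi)$ in $\sR$ means $\tau_i|\cdot|^{\frac{1-n}{2}}$ equals some summand $\tau_j$ with $j\in I_2$. Once that summand is unramified, $j\in I_1$. We also need the reduction $d=0$ (or $d\le1$) so that \lmref{L:GPS} applies.
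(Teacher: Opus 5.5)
Your proposal is correct and follows the paper's proof. You inject $\pi^{K_{n,m}}$ into $\Jac_{Y_n}(\pi|_{P_{\a_n}(F)})^{P_{n+1}(\frak{o})}$ by \lmref{L:Tsai}, decompose that module via \lmref{L:GPS}, and kill each constituent with \lmref{L:P fixed} using $\mu^*_{ur}(\pi)=0$, exactly as the paper does; your extra checks make explicit what the paper leaves implicit. These are that $\mu^*_{ur}(\pi)=0$ forces $d=0$ through the $1_{\GL_0(F)}\bt\pi$ terms, and that exactness of invariants under the compact open $P_{n+1}(\frak{o})$ reduces the question to composition factors (only vanishing is needed there, not a dimension count, since such invariant spaces can be infinite-dimensional).
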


\begin{proof}
By \lmref{L:Tsai}, it suffices to show that 
\[
\Jac_{Y_n}\(\pi|_{P_{\a_n}(F)}\)^{P_{n+1}(\frak{o})}
=0. 
\]
This would follow if we could show that
\[
\ind_{Q_{r_i}}^{P_{n+1}}\left(\tau_i\bt\psi_{Z_{n+1-r_i}}\right)^{P_{n+1}(\frak{o})}=0
\]
for each irreducible summand $\ind_{Q_{r_i}}^{P_{n+1}}\left(\tau_i\bt\psi_{Z_{n+1-r_i}}\right)$ of 
$s.s.\(\Jac_{Y_n}\(\pi|_{P_{\a_n}(F)}\)\)$ by \lmref{L:GPS}. To this end, we apply the same lemma and the assumption 
\[
\mu^*_{ur}(\pi)=0
\]
to conclude that $r_i\ge 1$ and $\tau_i^{\GL_{r_i}(\frak{o})}=0$ for each $i$. The proposition now follows immediately from \lmref{L:P fixed}. 
\end{proof}

As a corollary, we obtain:

\begin{cor}\label{C:key P}
Let $\Pi\in\Rep\(\SO_{2n+1}(F)\)$ and let $\pi$ be a subrepresentation of $\Pi$. Suppose that 
\[
|\mu^*_{ur}\(\Pi\)|=|\mu^*_{ur}(\pi)|.
\]
Then 
\[
\Pi^{K_{n,m}}=\pi^{K_{n,m}}
\]
for each $m\ge 0$. 
\end{cor}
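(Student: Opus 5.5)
The plan is to pass to the quotient $\Pi/\pi$ and show that it has no non-zero $K_{n,m}$-fixed vectors; exactness of the $K_{n,m}$-invariants functor then gives the equality.

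First, consider the exact sequence
\[
0\longto\pi\longto\Pi\longto\Pi/\pi\longto 0
\]
in $\Rep(\SO_{2n+1}(F))$. Since $K_{n,m}$ is compact open, the functor $V\mapsto V^{K_{n,m}}$ is exact on smooth representations (averaging over $K_{n,m}$). Hence
\[
0\longto\pi^{K_{n,m}}\longto\Pi^{K_{n,m}}\longto(\Pi/\pi)^{K_{n,m}}\longto 0
\]
is exact, and it suffices to prove that $(\Pi/\pi)^{K_{n,m}}=0$ for every $m\ge 0$.

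Second, by \propref{P:mu=0} it is enough to show that $\mu^*_{ur}(\Pi/\pi)=0$. All the functors $\Jac_{N_S(F)}$ are exact, so $\mu^*$ is additive on short exact sequences in $\Rep$. Therefore
\[
\mu^*(\Pi)=\mu^*(\pi)+\mu^*(\Pi/\pi)
\]
in $\sR\ot\sR(S)$. Write each side as a sum of irreducible terms $\tau_i\bt\pi_i$, counted with multiplicity. The conditions defining $I_1$ (that $\tau_i$ is unramified and $\pi_i$ is generic) depend only on the individual irreducible term. So the index set $I_1$ for $\Pi$ is the disjoint union of the index sets $I_1$ for $\pi$ and for $\Pi/\pi$, and
\[
|\mu^*_{ur}(\Pi)|=|\mu^*_{ur}(\pi)|+|\mu^*_{ur}(\Pi/\pi)|.
\]
The hypothesis $|\mu^*_{ur}(\Pi)|=|\mu^*_{ur}(\pi)|$ then forces $|\mu^*_{ur}(\Pi/\pi)|=0$, that is, $\mu^*_{ur}(\Pi/\pi)=0$. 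Applying \propref{P:mu=0} to $\Pi/\pi$, which is of finite length and so lies in $\Rep(\SO_{2n+1}(F))$, gives $(\Pi/\pi)^{K_{n,m}}=0$ for all $m$, and the corollary follows.

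The main point to check is the additivity of $|\mu^*_{ur}|$. This requires that $\mu^*_{ur}$ is well defined on the Grothendieck group. It is, because the definition only uses the semisimplified Jacquet modules, which are additive in short exact sequences. I do not expect any further obstacle.
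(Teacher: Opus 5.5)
Your proposal is correct and follows the same route as the paper. You pass to $\sigma=\Pi/\pi$, use additivity of $\mu^*_{ur}$ together with the hypothesis to get $\mu^*_{ur}(\sigma)=0$, and conclude via \propref{P:mu=0} and exactness of $K_{n,m}$-invariants on $0\to\pi\to\Pi\to\sigma\to 0$.
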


\begin{proof}
Since $\pi$ is a subrepresentation of $\Pi$, we have 
\[
\mu^*_{ur}(\pi)\le \mu^*_{ur}\(\Pi\). 
\]
The assumption  
\[
|\mu^*_{ur}\(\Pi\)|=|\mu^*_{ur}(\pi)|
\]
then implies $\mu^*_{ur}\(\Pi\)=\mu^*_{ur}(\pi)$; hence 
\[
\mu^*_{ur}(\sigma)=0,
\]
where $\sigma=\Pi/\pi$. By the following exact sequence 
\[
0\longto\pi^{K_{n,m}}\longto\Pi^{K_{n,m}}\longto\sigma^{K_{n,m}}\longto 0,
\]
and \propref{P:mu=0}, we deduce that $\pi^{K_{n,m}}=\Pi^{K_{n,m}}$ for each $m\ge 0$. This completes the proof. 
\end{proof}

\begin{remark}
\corref{C:key P} is particularly important for our reduction. This is because square-integrable representations $\pi$ can be realized as 
subrepresentations of certain fully induced representations $\Pi$. Our previous result in \cite{YCheng2025} then allows us to compute 
$a_{\Pi}$ explicitly and to show that
\[
\dim_\bbC\Pi^{K_{n,a_\Pi}}=1. 
\]
Now, if $a_{\Pi}=c_{\pi}$ and $|\mu^*_{ur}(\Pi)|=|\mu^*_{ur}(\pi)|$, then we can conclude immediately that $\pi^{K_{n,c_\pi}}\ne 0$. 
This simple observation turns out to be very useful.
\end{remark}


\subsection{Representations of $\GL_r(F)$}\label{SS:rep of GL}
Fix an irreducible supercuspidal representation $\rho$ of $\GL_k(F)$ with $k>0$. Let $x,y\in\bbR$ such that $x-y\in\bbZ$. 
If $x-y\ge 0$, then we denote by $\Delta_\rho[x,y]$ the unique irreducible subrepresentation of 
\[
\d_\rho[x,y]
:=
\rho|\cdot|^x\x\rho|\cdot|^{x-1}\x\cdot\cdot\cdot\x\rho|\cdot|^y. 
\]
Then $\Delta_\rho[x,y]$ is an essentially square-integrable representation of $\GL_{k(x-y+1)}(F)$, and we have 
\[
\D_\rho[x,y]^\vee\simeq\D_{\rho^\vee}[-y, -x]. 
\]
When $x-y<0$, we understand that $\D_\rho[x,y]$ is the trivial representation of $\GL_0(F)$. 

If $y=-x\le 0$, then we also denote 
\[
\St_\rho(x)=\D_\rho[x,-x],
\]
which is called a Steinberg representation of $\GL_{k(2x+1)}(F)$. In general, 
\begin{equation}\label{E:twist}
\D_\rho[x,y]=\St_\rho\(\frac{x-y}{2}\)|\cdot|^{\frac{x+y}{2}}
\end{equation}
is a twist of a Steinberg representation. Note that the $L$-parameter of $\St_\rho(x)$ is  
\[
\phi_\rho\bt S_{2x+1},
\]
where $\phi_\rho$ is the $L$-parameter of $\rho$, while $S_d$ denotes the irreducible $d$-dimensional representation of 
${\rm SL}_2(\bbC)$. 

Following Atobe in \cite{Atobe2020}, define $\Irr_\rho\(\GL_{k\el}(F)\)$ to be the subset of $\Irr\(\GL_{k\el}(F)\)$ consisting of $\tau$ with
cuspidal support of the form $\rho|\cdot|^{x_1}\x\cdot\cdot\cdot\x\rho|\cdot|^{x_\el}$, i.e.,
\[
\tau\hookto\rho|\cdot|^{x_1}\x\cdot\cdot\cdot\x\rho|\cdot|^{x_\el}
\]
for some $x_1,\ldots, x_\el\in\bbR$. We understand that the trivial representation of $\GL_0(F)$ is contained in $\Irr_\rho\(\GL_0(F)\)$. 
To describe the representations in $\Irr_\rho\(\GL_{k\el}(F)\)$, let $\Omega_\el\subset\bbR^\el$ be the subset consisting of the 
elements 
\begin{equation}\label{E:ul(z)}
\ul{z}=\(x_1,x_1-1,\ldots, y_1,\, x_2, x_2-1,\ldots, y_2, \ldots,\, x_t, x_t-1, \ldots, y_t\)
\end{equation}
such that 
\begin{itemize}
\item[--] $x_i-y_i\in\bbZ_+$ for $i=1,\ldots, t$;
\item[--] $x_1\le x_2\le\cdot\cdot\cdot\le x_t$;
\item[--] $y_{i-1}\le y_i$ if $x_{i-1}=x_i$. 
\end{itemize}
In particular, we have $\Omega_0=\stt{0}$. Denote by $\D_\rho(\ul{z})$ the unique irreducible subrepresentation of 
\[
\d_\rho(\ul{z}):=\D_\rho[x_1, y_1]\x\D_\rho[x_2,y_2]\x\cdot\cdot\cdot\x\D_\rho[x_t, y_t]. 
\]
Our convention is that $\d_\rho(\ul{z})=\D_\rho(\ul{z})$ is the trivial representation of $\GL_0(F)$ when $\el=0$. 

Now, we have the following observation, which, due to Atobe (\cite[Lemma 2.2]{Atobe2020}), is based on the results of 
Zelevinsky (\cite{Zelevinsky1980}). 

\begin{lm}\label{L:Atobe}
The map 
\[
\Omega_\el\longto\Irr_\rho\(\GL_{k\el}(F)\);\quad
\ul{z}\longmapsto\D_\rho(\ul{z})
\]
gives rise to a bijection. 
\end{lm}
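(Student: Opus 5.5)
We reduce the lemma to Zelevinsky's classification of $\Irr_\rho$ by multisegments. By \cite{Zelevinsky1980}, every irreducible representation of $\GL_{k\el}(F)$ with cuspidal support on the line $\stt{\rho|\cdot|^{x}}$ is the unique irreducible subrepresentation of a product of segment representations attached to a multisegment. Here we use the segments $\D_\rho[x_i,y_i]$, which are essentially square-integrable. This holds provided the product is arranged in a suitable order, which is the ``standard'' (Langlands) ordering. The assignment is a bijection between multisegments (unordered multisets of segments) of total length $\el$ and $\Irr_\rho(\GL_{k\el}(F))$.

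Concretely, we use the Langlands-classification form of this statement. Given $\tau\in\Irr_\rho(\GL_{k\el}(F))$, it is the unique irreducible quotient of a standard module $\D_\rho[x_{t},y_{t}]\x\cdots\x\D_\rho[x_1,y_1]$. In that standard module the exponents $(x_i+y_i)/2$ are non-increasing, so the segments are ordered so that no later segment ``precedes'' an earlier one in Zelevinsky's sense. Taking contragredients, or equivalently applying the Zelevinsky/Langlands duality in the subrepresentation formulation, $\tau$ is the unique irreducible subrepresentation of a product in the reverse order.

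The main step is then purely combinatorial. Each multiset of segments $\stt{[x_i,y_i]}_{i=1}^t$ with $x_i-y_i\in\bbZ_+$ and $\sum_i(x_i-y_i+1)=\el$ admits exactly one ordering satisfying the conditions defining $\Omega_\el$: $x_1\le\cdots\le x_t$, with ties broken by $y_{i-1}\le y_i$. This is a lexicographic sort on the pairs $(x_i,y_i)$. We must check that this ordering is admissible in Zelevinsky's sense, i.e.\ that for $i<j$ the segment $[x_j,y_j]$ does not precede $[x_i,y_i]$; only then is $\d_\rho(\ul{z})$ guaranteed to have a unique irreducible subrepresentation equal to the Zelevinsky/Langlands representation attached to the multisegment. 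Recall that $[x_j,y_j]$ precedes $[x_i,y_i]$ (linked, with $[x_j,y_j]$ ``to the left'') only if $x_j<x_i$ up to the $\rho$-shift convention. Since $x_i\le x_j$ for $i<j$, and for equal $x$ the segments are nested and hence unlinked, no violation occurs. This is the step requiring care, because one has to match the direction convention ($\D_\rho[x,y]$ as a subrepresentation of $\rho|\cdot|^x\x\cdots\x\rho|\cdot|^y$, with decreasing exponents) with Zelevinsky's ``precedes'' relation; we would check it on the case $t=2$ of two linked segments.

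Finally, injectivity follows because distinct elements of $\Omega_\el$ give distinct multisegments, since the sorted form is unique, and Zelevinsky's classification makes the attached irreducible representations distinct. Surjectivity follows because every $\tau\in\Irr_\rho$ arises from some multisegment, and that multisegment can be sorted into $\Omega_\el$. The case $\el=0$ is the convention $\Omega_0=\stt{0}\mapsto 1_{\GL_0(F)}$.
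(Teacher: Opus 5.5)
Your proposal is correct and follows essentially the route behind the paper's treatment. The paper gives no independent proof: it cites Atobe's Lemma 2.2, which repackages Zelevinsky's classification of $\Irr_\rho$ by multisegments, and your argument reduces to exactly that. Your key check is the right one: sorting the segments by $x_i$ (ties broken by $y_i$) gives an admissible order, since $[x_j,y_j]$ can precede $[x_i,y_i]$ only if $x_j<x_i$, and segments with equal $x$ are nested. Hence $\d_\rho(\ul{z})$ is isomorphic to the Langlands-ordered product, because adjacent unlinked segments commute, and so it has the Langlands representation of the multisegment as its unique irreducible subrepresentation.
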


In the computations below, we need to determine when the representations $\D_\rho[x,y]$ and $\d_\rho(\ul{z})$ contain an unramified 
constituent. This question is addressed in the next two lemmas. 

\begin{lm}\label{L:unram 1}
Let $\rho$ be an irreducible supercuspidal representation of $\GL_k(F)$ and let $x,y\in\bbR$ such that $x-y\in\bbZ$. Then 
\[
\D_\rho[x,y]^{\GL_{k(x-y+1)}(\frak{o})}\ne 0
\]
if and only if
\begin{itemize}
\item[(i)] $x<y$, or
\item[(ii)] $x=y$, $k=1$, and $\rho$ is an unramified character of $\GL_1(F)=F^\x$. 
\end{itemize}
\end{lm}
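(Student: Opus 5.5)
The plan is to handle the degenerate cases directly and then reduce the remaining cases to the Iwahori-invariant theory (Borel--Casselman). If $x<y$, then $\D_\rho[x,y]$ is by convention the trivial representation of $\GL_0(F)$, which is certainly unramified. If $x=y$, then $\D_\rho[x,x]=\rho|\cdot|^x$ is supercuspidal. For $k\ge 2$, a supercuspidal representation has no nonzero $\GL_k(\frak{o})$-fixed vectors: by Borel--Casselman, a nonzero Iwahori-fixed vector forces the representation to embed into an unramified principal series, so it is not supercuspidal. For $k=1$, the character $\rho|\cdot|^x$ of $F^\x$ is trivial on $\frak{o}^\x$ exactly when $\rho$ is unramified. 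This settles (i) and (ii), together with the ``only if'' direction whenever $x\le y$.

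It remains to show that $\D_\rho[x,y]^{\GL_{k(x-y+1)}(\frak{o})}=0$ whenever $x>y$. Suppose instead that there is a nonzero spherical vector. Then $\D_\rho[x,y]$ has nonzero Iwahori-fixed vectors. By Borel--Casselman, its cuspidal support therefore consists of unramified characters of $F^\x$, which forces $k=1$ and $\rho$ unramified. In that case $\D_\rho[x,y]$ embeds into $\rho|\cdot|^x\x\cdots\x\rho|\cdot|^y$, and this principal series has a one-dimensional space of $\GL_{x-y+1}(\frak{o})$-fixed vectors. Consequently it has a unique irreducible unramified subquotient. For this segment, that subquotient is the one-dimensional representation $\rho\circ\det\cdot|\det|^{(x+y)/2}$ (Zelevinsky's $Z$-segment), not the twisted Steinberg representation. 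Indeed, by \eqref{E:twist} we have $\D_\rho[x,y]=\St_\rho(\frac{x-y}{2})|\cdot|^{\frac{x+y}{2}}$, which is infinite-dimensional when $x>y$, so the two constituents are distinct. Since $\D_\rho[x,y]$ is irreducible and not the unique unramified constituent, it has no spherical vector.

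The main obstacle is justifying cleanly that the unramified constituent of $\rho|\cdot|^x\x\cdots\x\rho|\cdot|^y$ is the character and not the Steinberg twist. I would argue this via Jacquet modules. The Jacquet module of $\D_\rho[x,y]$ with respect to the Borel subgroup is the single character $\rho|\cdot|^x\otimes\cdots\otimes\rho|\cdot|^y$, with decreasing exponents. By contrast, the spherical constituent of an unramified principal series is characterized, through the Satake parameter, as the unique constituent whose Jacquet module contains every Weyl-conjugate exponent ordering that is allowed for it; for a segment this is the one-dimensional character. A simpler alternative uses the Iwahori--Matsumoto correspondence: under it, the Steinberg representation corresponds to the sign character of the finite Hecke algebra, which kills the $\GL(\frak{o})$-fixed idempotent. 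So the Steinberg twist has Iwahori-fixed vectors but no $\GL(\frak{o})$-fixed vectors, and this gives the claim directly.
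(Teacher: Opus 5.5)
Your proof is correct, but it takes a different route from the paper. The paper's proof is one line: the convention for $x<y$, plus a black-box citation of \cite[Corollary 1.3]{Matringe2013} for the case $x\ge y$.

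You instead argue from standard structural facts:
\begin{itemize}
\item Borel--Casselman handles $x=y$ with $k\ge 2$.
\item For $x>y$, Borel--Casselman together with uniqueness of cuspidal support forces $k=1$ and $\rho$ unramified.
\item Exactness of taking $\GL(\frak{o})$-invariants, and the one-dimensionality of the spherical vectors in $\rho|\cdot|^x\x\cdots\x\rho|\cdot|^y$, leave exactly one spherical constituent.
\item That constituent must be the character $(\rho\circ\det)|\det|^{(x+y)/2}$. This character is visibly a constituent and visibly $\GL(\frak{o})$-invariant, and it differs from the infinite-dimensional $\D_\rho[x,y]$.
\end{itemize}
Your route is self-contained and explains why the twisted Steinberg representation is not spherical. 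The paper's citation is shorter and matches its other uses of Matringe's results, e.g.\ in \lmref{L:unram 2}.

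What you call the ``main obstacle'' is already settled by your own argument: once the character is a spherical constituent, uniqueness finishes the proof. The Jacquet-module characterization in your last paragraph is imprecise as stated and should be dropped. The Iwahori--Matsumoto remark (the Steinberg representation corresponds to the sign character, which kills the $\GL(\frak{o})$-idempotent) is valid but redundant.
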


\begin{proof}
This follows immediately from our convention and \cite[Corollary 1.3]{Matringe2013}. 
\end{proof}

\begin{lm}\label{L:unram 2}
Let $\rho$ be an irreducible supercuspidal representation of $\GL_k(F)$ and let $\ul{z}\in\Omega_\el$ be given by \eqref{E:ul(z)}. 
Then 
\[
\dim_\bbC\d_\rho(\ul{z})^{\GL_{k\el}(\frak{o})}\le 1,
\]
and we have
\[
\d_\rho(\ul{z})^{\GL_{k\el}(\frak{o})}\ne 0
\]
if and only if $x_i=y_i$ for $i=1,\ldots, t=\el$, $k=1$, and $\rho$ is an unramified character of $\GL_1(F)=F^\x$. 
\end{lm}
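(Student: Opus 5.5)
The plan is to compute $\d_\rho(\ul{z})^{\GL_{k\el}(\frak{o})}$ through the Iwasawa decomposition $\GL_{k\el}(F)=P(F)\GL_{k\el}(\frak{o})$, where $P$ is the standard parabolic subgroup with Levi subgroup $\GL_{k(x_1-y_1+1)}(F)\x\cdot\cdot\cdot\x\GL_{k(x_t-y_t+1)}(F)$. The computation reduces to a statement about the inducing data, which is then settled by \lmref{L:unram 1}.

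First, I will prove the general fact that, for a normalized induced representation $\sigma_1\x\cdot\cdot\cdot\x\sigma_t$ of $\GL_r(F)$, restriction of functions to $\GL_r(\frak{o})$ gives an isomorphism
\[
(\sigma_1\x\cdot\cdot\cdot\x\sigma_t)^{\GL_r(\frak{o})}\simeq\(\sigma_1\bt\cdot\cdot\cdot\bt\sigma_t\)^{M(\frak{o})},
\]
where $M(\frak{o})=P(F)\cap\GL_r(\frak{o})$ modulo its unipotent part, i.e. the product of the $\GL_{r_j}(\frak{o})$. The argument has two parts. A $\GL_r(\frak{o})$-fixed $f$ is determined by $f(1)$, since $P(F)\GL_r(\frak{o})=\GL_r(F)$. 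The value $f(1)$ is fixed by $P(F)\cap\GL_r(\frak{o})$, whose unipotent part acts trivially because the modulus character is trivial on compact elements. Conversely, any vector fixed by $M(\frak{o})$ extends to a well-defined fixed vector. This is the same argument as in \lmref{L:P fixed}. Moreover, $(\sigma_1\bt\cdot\cdot\cdot\bt\sigma_t)^{M(\frak{o})}=\bigotimes_j\sigma_j^{\GL_{r_j}(\frak{o})}$.

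Next, I will apply this with $\sigma_i=\D_\rho[x_i,y_i]$, where by the construction of $\Omega_\el$ we have $x_i-y_i\in\bbZ_+$, so $x_i\ge y_i$. Then
\[
\d_\rho(\ul{z})^{\GL_{k\el}(\frak{o})}\simeq\bigotimes_{i=1}^t\D_\rho[x_i,y_i]^{\GL_{k(x_i-y_i+1)}(\frak{o})}.
\]
Since case (i) of \lmref{L:unram 1} is excluded by $x_i\ge y_i$, each factor is non-zero if and only if $x_i=y_i$, $k=1$, and $\rho$ is an unramified character. In that case $\D_\rho[x_i,x_i]=\rho|\cdot|^{x_i}$ is an unramified character of $F^\x$, and its fixed space is one-dimensional. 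For any irreducible representation of $\GL_r(F)$, the space of $\GL_r(\frak{o})$-fixed vectors has dimension at most $1$ (by the Gelfand pair property), so each factor always has dimension at most $1$. Taking the tensor product gives both $\dim\le 1$ and the stated criterion. When all $x_i=y_i$ we have $t=\el$, which matches the "$t=\el$" in the statement.

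The only delicate point is the bookkeeping in the Iwasawa step: checking that the normalization by the modulus character does not obstruct the extension, which holds because the modulus character is trivial on $P(F)\cap\GL_r(\frak{o})$. The case $\el=0$ is trivial by convention.
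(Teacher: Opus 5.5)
Your proof is correct and follows the paper's route: the paper cites \cite[Proposition 1.5]{Matringe2013} for the factorization $\d_\rho(\ul{z})^{\GL_{k\el}(\frak{o})}\simeq\bigotimes_{i=1}^t\D_\rho[x_i,y_i]^{\GL_{k(x_i-y_i+1)}(\frak{o})}$, which you verify directly via the Iwasawa decomposition. From there both arguments conclude the same way, using \lmref{L:unram 1} (case (i) is excluded since $x_i\ge y_i$) and the fact that $\dim_\bbC\tau^{\GL_r(\frak{o})}\le 1$ for irreducible $\tau$.
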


\begin{proof}
By \cite[Proposition 1.5]{Matringe2013}, we have
\[
\d_\rho(\ul{z})^{\GL_{k\el}(\frak{o})}
\simeq
\D_\rho[x_1, y_1]^{\GL_{k\(x_1-y_1+1\)}(\frak{o})}
\x\cdot\cdot\cdot\x
\D_\rho[x_t, y_t]^{\GL_{k\(x_t-y_t+1\)}(\frak{o})}. 
\]
Now the lemma follows from \lmref{L:unram 1} and the well-known fact that 
\[
\dim_\bbC\tau^{\GL_r(\frak{o})}\le 1
\]
for every $\tau\in\Irr(\GL_r(F))$. 
\end{proof}

In this paper, we are mostly interested in the case where $\rho=\chi$ is an unramified quadratic character (possibly trivial) of 
$\GL_1(F)=F^\x$. In this case, we can explicitly compute the associated $L$-factor and $\epsilon$-factor. 

\begin{lm}\label{L:cond}
Let $\chi$ be an unramified quadratic character of $F^\x$ and let $2x\in\bbZ_+$.  
Then we have 
\[
L\(s, \phi_\chi\bt S_{2x+1}\)
=
\(1-\chi(\varpi) q^{-(s+x)}\)^{-1}
\quad\text{and}\quad
\epsilon\(s,\phi_\chi\bt S_{2x+1},\psi\)
=
\(-\chi(\varpi)\)^{2x}q^{-2x\(s-\frac{1}{2}\)}. 
\]
In particular, 
\[
c_{\St_\chi(x)}=2x. 
\]
\end{lm}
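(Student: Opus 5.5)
The plan is to compute both factors directly from the definition of the $L$- and $\epsilon$-factors of a Weil--Deligne representation of the form $\phi\bt S_d$. We use Tate's formalism as in \cite{Tate1979}. Write $\phi=\phi_\chi\bt S_{2x+1}$ and $d=2x+1$.

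As a representation of the Weil--Deligne group, $\phi$ is the space $V=\bigoplus_{j=0}^{2x}\chi|\cdot|^{x-j}$ with nilpotent monodromy $N$. The operator $N$ shifts $\chi|\cdot|^{x-j}$ to $\chi|\cdot|^{x-j-1}$, up to the normalization of the twist, so that $\ker N$ is one-dimensional. With the standard normalization, this kernel is the line on which $W_F$ acts by $\chi|\cdot|^{x}$.

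\textbf{$L$-factor.} By definition,
\[
L(s,\phi)=\det\!\left(1-q^{-s}\Fr\mid (\ker N)^{I_F}\right)^{-1}.
\]
Since $\chi$ is unramified, $(\ker N)^{I_F}=\ker N$. Hence $L(s,\phi)=(1-\chi(\varpi)q^{-(s+x)})^{-1}$, which is the claimed formula.

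\textbf{$\epsilon$-factor.} We use
\[
\epsilon(s,\phi,\psi)=\epsilon(s,\phi_{W},\psi)\cdot\det\!\left(-q^{-s}\Fr\mid V^{I_F}/(\ker N)^{I_F}\right),
\]
where $\phi_W$ is the underlying Weil representation $\bigoplus_j\chi|\cdot|^{x-j}$.

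Each summand of $\phi_W$ is an unramified character and $\psi$ has conductor $\frak{o}$, so $\epsilon(s,\chi|\cdot|^{x-j},\psi)=1$. Therefore $\epsilon(s,\phi_W,\psi)=1$.

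The quotient $V/\ker N$ is spanned by the lines $\chi|\cdot|^{x-j}$ for $j=1,\ldots,2x$. Geometric Frobenius acts on the $j$-th line by $\chi(\varpi)q^{-(x-j)}$; the exponent's sign must match the $L$-factor convention just used. The determinant is therefore
\[
\prod_{j=1}^{2x}\left(-\chi(\varpi)q^{-s}q^{-(x-j)}\right)
=(-\chi(\varpi))^{2x}\,q^{-2xs}\,q^{-\sum_{j=1}^{2x}(x-j)}.
\]
The exponent sum is $\sum_{j=1}^{2x}(x-j)=2x^2-x(2x+1)=-x$. This gives $(-\chi(\varpi))^{2x}q^{-2x(s-\frac12)}$, as claimed.

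Comparing with $\epsilon(s,\phi,\psi)=\e\, q^{-c(s-\frac12)}$ yields $c_{\St_\chi(x)}=2x$. This uses that the $L$-parameter of $\St_\chi(x)$ is $\phi_\chi\bt S_{2x+1}$, as recalled above.

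The only delicate point is bookkeeping of conventions. We must take geometric rather than arithmetic Frobenius and the matching sign in $|\varpi|=q^{-1}$. We must also confirm that $\ker N$ is the highest-twist line $\chi|\cdot|^{x}$ rather than $\chi|\cdot|^{-x}$. To settle both, we check against the $x=\frac12$ case, i.e. the Steinberg representation of $\GL_2$. There the known answers are $L(s,\St)=(1-q^{-s-\frac12})^{-1}$ and $\epsilon=-q^{-(s-\frac12)}$ for $\chi=1$, and a correct set of conventions must reproduce them.

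Finally, since $\chi$ is quadratic and $2x\in\bbZ$, the sign $(-\chi(\varpi))^{2x}=\pm1$. This is consistent with the paper's normalization of $\e_\pi=\pm 1$.
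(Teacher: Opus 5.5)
Your argument is correct, but it takes a different route from the paper. The paper works on the $\GL$ side. It replaces the Galois-side factors by the Godement--Jacquet/JPSS factors of $\St_\chi(x)$ and takes $L(s,\St_\chi(x))=L(s+x,\chi)$ from JPSS. It then gets $\epsilon$ from multiplicativity, $\gamma(s,\St_\chi(x),\psi)=\prod_{i=0}^{2x}\gamma(s-x+i,\chi,\psi)$, telescoping with $L(s,\chi)=-\chi(\varpi)q^{s}L(-s,\chi)$ and $\epsilon(s,\chi,\psi)=1$. You instead apply Deligne's formula $\epsilon(s,\phi,\psi)=\epsilon(s,\phi_W,\psi)\det(-q^{-s}\Fr\mid V^{I_F}/(\ker N)^{I_F})$ directly to $\phi_\chi\bt S_{2x+1}$. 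Since the statement is literally about the Galois-side factors, your computation is more direct. It needs neither the compatibility of the local Langlands correspondence for $\GL_r$ with $L$-, $\epsilon$- and $\gamma$-factors, nor multiplicativity of $\gamma$. The price is that you must fix the Weil--Deligne conventions yourself, whereas the paper gets them for free from the known $\GL$-side formulas. Your arithmetic, including $\sum_{j=1}^{2x}(x-j)=-x$, is right.

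One inconsistency to fix: if $N$ really sent $\chi|\cdot|^{x-j}$ to $\chi|\cdot|^{x-j-1}$, its kernel would be $\chi|\cdot|^{-x}$. That contradicts your next sentence and would give the wrong $L$-factor. The relation $\rho(w)N\rho(w)^{-1}=\|w\|N$, with $\|\Fr\|=q^{-1}$ for geometric Frobenius, shows that $N$ multiplies the $\Fr$-eigenvalue $\chi(\varpi)q^{-a}$ by $q^{-1}$. Thus $N$ sends $\chi|\cdot|^{a}$ to $\chi|\cdot|^{a+1}$. Equivalently, $N$ is the raising operator of $S_{2x+1}$ and kills the highest-weight line, on which $\Fr$ acts by $\chi(\varpi)q^{-x}$. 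So $\ker N=\chi|\cdot|^{x}$, as you ultimately use, and your $\GL_2$ Steinberg check confirms this. It is a slip in exposition, not a gap.
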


\begin{proof}
It is equivalent to compute 
\[
L\(s, \St_\chi(x)\)
\quad\text{and}\quad
\epsilon\(s,\St_\chi(x),\psi\),
\]
where the $L$-factor and $\epsilon$-factor are those defined in \cite{GodementJacquet1972} or \cite{JPSS1983}. 

From \cite[\S 8]{JPSS1983} (see also \cite{Tate1979}), we deduce that 
\[
L\(s,\St_\chi(x)\)
=
L(s+x,\chi)
=
\(1-\chi(\varpi) q^{-(s+x)}\)^{-1}. 
\]
To compute the $\epsilon$-factor, note that 
\[
L(s,\chi)=-\chi(\varpi)q^s\cdot L(-s,\chi)
\quad\text{and}\quad
\epsilon(s,\chi,\psi)=1. 
\]
Since $\St_\chi(x)$ is self-dual, we have 
\[
\gamma\(s,\St_\chi(x),\psi\)
=
\epsilon\(s,\St_\chi(x),\psi\)
\frac{L(s+x,\chi)}{L(1-s+x,\chi)}. 
\]
It then follows from the multiplicativity of the $\gamma$-factor that
\begin{align*}
\epsilon\(s,\St_\chi(x),\psi\)
&=
\frac{L(1-s+x,\chi)}{L(s+x,\chi)}\cdot\gamma\(s,\St_\chi(x),\psi\)\\
&=
\frac{L(1-s+x,\chi)}{L(s+x,\chi)}\cdot\prod_{i=0}^{2x}\gamma(s-x+i,\chi,\psi)\\
&=
\frac{L(1-s+x,\chi)}{L(s+x,\chi)}\cdot\prod_{i=1}^{2x}\frac{L(1-s+x-i, \chi)}{L(s-x+i, \chi)}\\
&=
\prod_{i=0}^{2x-1}\frac{L(-s+x-i,\chi)}{L(s-x+i,\chi)}\\
&=
\prod_{i=0}^{2x-1}-\chi(\varpi) q^{-(s-x+i)}
=
\(-\chi(\varpi)\)^{2x}q^{-2x\(s-\frac{1}{2}\)}. 
\end{align*}
This completes the proof. 
\end{proof}

As a corollary, we obtain the following result. 

\begin{cor}\label{C:cond}
Let $\chi$ be an unramified quadratic character of $F^\x$ and let $x,y\in\bbR$ with $x-y\in\bbZ_+$. Then 
\[
c_{\D_\chi[x,y]}=x-y. 
\] 
\end{cor}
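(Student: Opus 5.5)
We reduce to \lmref{L:cond} via the twist formula \eqref{E:twist}. Write $\D_\chi[x,y]=\St_\chi\(\frac{x-y}{2}\)|\cdot|^{\frac{x+y}{2}}$, with $2\cdot\frac{x-y}{2}=x-y\in\bbZ_+$. The $L$-parameter of $\D_\chi[x,y]$ is therefore
\[
\(\phi_\chi\bt S_{x-y+1}\)\ot|\cdot|^{\frac{x+y}{2}},
\]
the parameter of $\St_\chi\(\frac{x-y}{2}\)$ twisted by the unramified character $|\cdot|^{\frac{x+y}{2}}$.

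The key step is to understand how the $\epsilon$-factor behaves under unramified twisting. For any $L$-parameter $\phi$ and any $s_0\in\bbC$ one has
\[
\epsilon\(s,\phi\ot|\cdot|^{s_0},\psi\)=\epsilon(s+s_0,\phi,\psi).
\]
This holds because twisting by $|\cdot|^{s_0}$ simply shifts the complex variable. For Weil--Deligne representations it is built into the definition, via $\epsilon(s,\phi,\psi)=\epsilon(\phi\ot|\cdot|^s,\psi)$. Alternatively, on the $\GL$ side one may use the Godement--Jacquet definition, where the zeta integrals of $\tau\ot|\det|^{s_0}$ are those of $\tau$ shifted by $s_0$.

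Applying \lmref{L:cond} with $x$ replaced by $\frac{x-y}{2}$ gives
\[
\epsilon\(s,\D_\chi[x,y],\psi\)=\(-\chi(\varpi)\)^{x-y}q^{-(x-y)\(s+\frac{x+y}{2}-\frac12\)}
=\(-\chi(\varpi)\)^{x-y}q^{-(x-y)\frac{x+y}{2}}\cdot q^{-(x-y)\(s-\frac12\)}.
\]
The first two factors form a nonzero constant $\e_\tau\in\bbC^\x$. The exponent of $q^{-(s-\frac12)}$ is $x-y$, so $c_{\D_\chi[x,y]}=x-y$.

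The degenerate case $x=y$ fits this computation: then $\D_\chi[x,x]=\chi|\cdot|^x$ is an unramified character and the conductor is $0$. There is no real obstacle here. The only point needing care is that the conductor exponent is defined through $\epsilon(s,\phi_\tau,\psi)=\e_\tau q^{-c_\tau(s-\frac12)}$, with $\e_\tau\in\bbC^\x$ allowed to be arbitrary rather than $\pm1$. That flexibility is exactly what lets the twist's shift be absorbed into the constant, so $c_\tau$ is unchanged by unramified twists.
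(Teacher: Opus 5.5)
Your proposal is correct and follows the paper's proof: the paper likewise combines the twist formula \eqref{E:twist} with \lmref{L:cond}, writing only $c_{\D_\chi[x,y]}=c_{\St_\chi(\frac{x-y}{2})}=x-y$. Your explicit check that the unramified twist by $|\cdot|^{\frac{x+y}{2}}$ merely shifts $s$, so that the resulting factor is absorbed into the constant $\e_\tau$, supplies the step the paper leaves implicit.
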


\begin{proof}
It follows from \eqref{E:twist} and \lmref{L:cond} that
\[
c_{\D_\chi[x,y]}=c_{\St\(\tfrac{x-y}{2}\)}=x-y,
\]
as desired. 
\end{proof}

\subsection{Representations of $\SO_{2n+1}(F)$}\label{SS:construction}
In this subsection, we describe the explicit construction from a given discrete $L$-parameter $\phi$ of $\SO_{2n+1}(F)$ (with $n>0$)
to its associated irreducible generic square-integrable representation $\pi$, following \cite[\S 2]{JiangSoudry2004}. 
This explicit construction plays an important role in our reduction. We also define the seed representations in this subsection. 

For convenience, given $\el\in\bbZ_+$, set
\[
\Irr_\el(\GL)=\bigsqcup_{r\ge \el}\Irr(\GL_r(F))
\quad\text{and}\quad
\Irr_\el(\SO)=\bigsqcup_{n\ge \el}\Irr(\SO_{2n+1}(F)).
\]
Recall that if $\rho\in\Irr(\GL_r(F))$ (with $r>0$) is a self-dual supercuspidal representation, then exactly one of 
\[
L\(s,\phi_\rho,\Lambda^2\)
\quad\text{or}\quad
L\(s,\phi_\rho,{\rm Sym}^2\)
\]
has a simple pole at $s=0$. Furthermore, when 
$L\(0,\phi_\rho,\Lambda^2\)=\infty$, $r$ must be even. 

Returning to the discrete $L$-parameter $\phi$, we can write 
\begin{equation}\label{E:L-par}
\phi=\bigoplus_{\rho\in I_\phi}\bigoplus_{\ka\in I_{\phi,\rho}}\phi_\rho\bt S_{2\ka+1},
\end{equation}
where
\begin{itemize}
\item[--] $I_\phi$ is a finite subset of $\Irr_1(\GL)$ consisting of self-dual supercuspidal representations $\rho$;
\item[--] if $L\(0,\phi_\rho, \Lambda^2\)=\infty$, then $I_{\phi, \rho}$ is a finite subset of $\bbZ_+$;
\item[--] if $L\(0,\phi_\rho,{\rm Sym}^2\)=\infty$, then $I_{\phi, \rho}$ is a finite subset of $\frac{1}{2}+\bbZ_+$. 
\end{itemize}
It is important to note that $\rho$ is unramified if and only if $\rho$ is an unramified quadratic character (possibly trivial) of $F^\x$. 
Note also that in this case
\[
L\(0,\phi_\rho,{\rm Sym}^2\)=\infty. 
\]
There are two such characters, and we denote them by $\chi$ and $\chi'$ from now on. In the following, we understand that 
if $\rho\nin I_\phi$, where $\rho\in\Irr_1(\GL)$ is a self-dual supercuspidal representation, then $I_{\phi, \rho}=\emptyset$.  

To construct $\pi$ from $\phi$, denote for each $\rho\in I_\phi$
\[
d_\rho=|I_{\phi, \rho}|
\quad\text{and}\quad
I_{\phi, \rho}=\stt{0\le \ka_{1}(\rho)<\ka_2(\rho)<\cdot\cdot\cdot<\ka_{d_\rho}(\rho)}. 
\]
Define the following (possibly empty) subsets of $I_\phi$:
\begin{align*}
I^0_\phi&=\stt{\rho\in I_\phi\mid\text{$L\(0,\phi_\rho,\Lambda^2\)=\infty$ and $d_\rho$ is odd}},\\
I^1_\phi&=\stt{\rho\in I_\phi\mid\text{$L\(0,\phi_\rho,\Lambda^2\)=\infty$ and $d_\rho$ is even}},\\
I^2_\phi&=\stt{\rho\in I_\phi\mid L\(0,\phi_\rho, {\rm Sym}^2\)=\infty}. 
\end{align*}
Then $I_\phi$ is the disjoint union of these subsets. We further decompose $I^0_\phi$ into the disjoint union of the following  
(possibly empty) subsets:
\begin{align*}
I^{00}_\phi&=\stt{\rho\in I^0_\phi\mid\text{$d_\rho=1$ and $\ka_1(\rho)=0$}},\\
I^{01}_\phi&=\stt{\rho\in I^0_\phi\mid\text{$d_\rho\ge 3$ and $\ka_1(\rho)=0$}},\\
I^{02}_\phi&=\stt{\rho\in I^0_\phi\mid\ka_1(\rho)\ge 1}. 
\end{align*}

By \cite{JiangSoudry2003}, there exists a unique irreducible generic supercuspidal representation $\sigma$ of $\SO_{2n_0+1}(F)$ 
for some $n_0\ge 0$ whose $L$-parameter is 
\[
\bigoplus_{\rho\in I^0_\phi}\phi_\rho. 
\]
Note that $I^0_\phi$ may be empty; in this case, our convention throughout is that $\sigma$ is the trivial representation of the trivial 
group $\SO_1(F)$. Now, to each $\rho\in I_\phi\setminus I^{00}_\phi$, we attach a finite set of essentially square-integrable 
representations in $\Irr_1(\GL)$ as follows. 
\begin{itemize}
\item
If $\rho\in I^{01}_\phi$, define
\[
\D_{\rho,j}=\D_\rho[\ka_{2j+1}(\rho), -\ka_{2j}(\rho)]
\quad\text{for $j=1,\ldots, \tfrac{d_\rho-1}{2}$},
\]
and let $J_\rho=\stt{1,\ldots,\frac{d_\rho-1}{2}}$. 
\item
If $\rho\in I^{02}_\phi$, define 
\[
\D_{\rho, 0}=\D_\rho[\ka_1(\rho), 1]
\quad\text{and}\quad
\D_{\rho,j}=\D_\rho[\ka_{2j+1}(\rho), -\ka_{2j}(\rho)]
\quad\text{for $j=1,\ldots, \tfrac{d_\rho-1}{2}$},
\]
and let $J_\rho=\stt{0, 1,\ldots,\frac{d_\rho-1}{2}}$. 
\item
If $\rho\in I^1_\phi$, define
\[
\D_{\rho, j}
=
\D_\rho[\ka_{2j}(\rho), -\ka_{2j-1}(\rho)]\quad\text{for $j=1,\ldots, \tfrac{d_\rho}{2}$},
\]
and let $J_\rho=\stt{1,\ldots,\frac{d_\rho}{2}}$. 
\item
If $\rho\in I^2_\phi$ and $d_\rho$ is even, define
\[
\D_{\rho, j}
=
\D_\rho[\ka_{2j}(\rho), -\ka_{2j-1}(\rho)]\quad\text{for $j=1,\ldots, \tfrac{d_\rho}{2}$},
\]
and let $J_\rho=\stt{1,\ldots,\frac{d_\rho}{2}}$. 
\item
If $\rho\in I^2_\phi$ and $d_\rho$ is odd, define
\[
\D_{\rho, 0}=\D_\rho[\ka_1(\rho), \tfrac{1}{2}]
\quad\text{and}\quad
\D_{\rho,j}=\D_\rho[\ka_{2j+1}(\rho), -\ka_{2j}(\rho)]
\quad\text{for $j=1,\ldots, \tfrac{d_\rho-1}{2}$},
\]
and let $J_\rho=\stt{0, 1,\ldots,\frac{d_\rho-1}{2}}$. 
\end{itemize}

Then, we have
\[
\Pi
:=
\(\bigtimes_{\rho\in I_\phi\setminus I^{00}_\phi}\bigtimes_{j\in J_\rho}\D_{\rho, j}\)\rtimes \sigma\in\Rep\(\SO_{2n+1}(F)\),
\]
and $\pi$ is the unique irreducible generic constituent of $\Pi$, which is, in fact, a subrepresentation. We observe that
\begin{itemize}
\item
for each $\rho\in I_\phi\setminus I^{00}_\phi$ and $j\in J_\rho$, there exists $x_{\rho, j}>0$ (see \eqref{E:twist}) such that 
\[
|\omega_{\D_{\rho,j}}(y)|=|y|^{x_{\rho,j}},
\]
where $y\in F^\x$; and
\item
the representation 
\[
\(\bigtimes_{\rho\in I_\phi\setminus I^{00}_\phi}\bigtimes_{j\in J_\rho}\D_{\rho, j}\)
\in\Rep(\GL_{n-n_0}(F))
\]
is indeed irreducible (\cite[Theorem 9.7]{Zelevinsky1980}). 
\end{itemize}

Now, we define the ``seed representations." Recall that the unramified quadratic characters (possibly trivial) of $F^\x$ are denoted by 
$\chi$ and $\chi'$. 

\begin{defn}\label{D:seed}
Let $\pi$ be an irreducible generic square-integrable representation of $\SO_{2n+1}(F)$ with $n\ge 0$. We say that $\pi$ is a seed 
representation if either $n=0$ and $\pi=1_{\SO_1(F)}$, or if $n>0$ and the associated 
$L$-parameter $\phi$, written as in \eqref{E:L-par}, satisfies
\[
0\le |I_{\phi, \chi}|,\, |I_{\phi, \chi'}|\le 1. 
\]
In the latter case, we also call $\phi$ a seed $L$-parameter. 
\end{defn}

The next lemma serves as the first step in reducing the proof to the case of seed representations.

\begin{lm}\label{L:inj conj}
Let $\D_1,\ldots, \D_\el\in\Irr_1(\GL)$ be essentially square-integrable representations, and let 
$\sigma\in\Irr_0(\SO)$ be a generic tempered representation. Assume the following:
\begin{itemize}
\item[--] for each $j=1,\ldots, \el$, there exists $x_j>0$ such that $|\omega_{\D_j}(y)|=|y|^{x_j}$, for all $y\in F^\x$;
\item[--] the representation $\D_1\x\cdot\cdot\cdot\x\D_\el$ is irreducible; and
\item[--] there exists $1\le j_0\le\el$ such that the unique irreducible generic constituent $\pi_0$ of 
\[
\D_{j_0}\x\cdot\cdot\cdot\x\D_{\el}\rtimes \sigma
\]
is tempered.
\end{itemize}
Then the unique irreducible generic constituent of 
\[
\D_{1}\x\cdot\cdot\cdot\x\D_{j_0-1}\rtimes \pi_0 
\]
is the same as the unique irreducible generic constituent of 
\[
\D_{1}\x\cdot\cdot\cdot\x\D_{\el}\rtimes \sigma. 
\]
Indeed, these unique irreducible generic constituents are subrepresentations. 
\end{lm}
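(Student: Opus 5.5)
The plan is to combine the standard module conjecture (Mui\'c), Rodier's heredity of Whittaker models, and exactness of parabolic induction. Set
\[
\Pi=\D_1\x\cdot\cdot\cdot\x\D_\el\rtimes\sigma
\quad\text{and}\quad
\Pi'=\D_1\x\cdot\cdot\cdot\x\D_{j_0-1}\rtimes\pi_0 .
\]

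\emph{Step 1: $\Pi'$ is a subrepresentation of $\Pi$.} The third hypothesis gives $\pi_0$ as a constituent of $\D_{j_0}\x\cdot\cdot\cdot\x\D_\el\rtimes\sigma$. I will first upgrade this to $\pi_0\hookto\D_{j_0}\x\cdot\cdot\cdot\x\D_\el\rtimes\sigma$. Since the $\D_j$ have positive exponents $x_j$ and $\sigma$ is tempered, this induced representation is a standard module (after reordering the $\D_j$ by decreasing exponent, which does not change it, because the product $\D_{j_0}\x\cdot\cdot\cdot\x\D_\el$ is irreducible as a sub-product of an irreducible product). Its Langlands quotient is non-tempered, so the tempered generic $\pi_0$ is not that quotient. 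To see that $\pi_0$ is a subrepresentation, I will pass to contragredients: the generic constituent of the dual standard module $\D_{j_0}^\vee\x\cdot\cdot\cdot\rtimes\sigma^\vee$ is a quotient, by the standard module conjecture in the form that the generic constituent of a standard module is the Langlands subrepresentation/quotient of the opposite one (Mui\'c, and Jiang--Soudry in the square-integrable setting). Dualizing then embeds $\pi_0$ into the original induced representation. Exactness of induction then gives
\[
\Pi'\hookto\D_1\x\cdot\cdot\cdot\x\D_{j_0-1}\x\D_{j_0}\x\cdot\cdot\cdot\x\D_\el\rtimes\sigma=\Pi .
\]

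\emph{Step 2: the generic constituents coincide.} By Rodier, $\dim{\rm Hom}_{U_n}(\Pi',\psi_{U_n})=\dim{\rm Hom}_{U_n}(\pi_0,\psi)=1$, and likewise for $\Pi$. Hence $\Pi'$ has a unique generic constituent $\pi'$, and $\Pi$ has a unique generic constituent $\pi$. Since $\Pi'\subset\Pi$, every constituent of $\Pi'$ is a constituent of $\Pi$, so $\pi'=\pi$ by uniqueness.

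\emph{Step 3: subrepresentation property.} I will apply the same standard-module argument to $\Pi'$. It is a standard module, since the $\D_j$ with $j<j_0$ have positive exponents, $\pi_0$ is tempered, and the first hypothesis supplies the ordering. Its generic constituent is therefore a subrepresentation. Then $\pi\hookto\Pi'\hookto\Pi$ shows that $\pi$ is also a subrepresentation of $\Pi$. Alternatively, I will invoke directly the result that the generic constituent of a standard module is a subrepresentation, as Jiang--Soudry use for $\Pi$ above.

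\emph{Main obstacle.} The hardest step is Step 1, which requires justifying the embedding $\pi_0\hookto\D_{j_0}\x\cdot\cdot\cdot\rtimes\sigma$ from the bare statement that $\pi_0$ is a constituent. Doing so means citing cleanly the fact that in a standard module the unique generic constituent is a subrepresentation, i.e.\ the contragredient form of the standard module conjecture (Heiermann--Opdam / Mui\'c), and checking that irreducibility of the partial products permits the needed reorderings.
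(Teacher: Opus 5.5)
Your outline matches the paper's proof. Embed $\pi_0$ into $\D_{j_0}\x\cdots\x\D_\el\rtimes\sigma$, then induce to get $\D_1\x\cdots\x\D_{j_0-1}\rtimes\pi_0\hookto\D_1\x\cdots\x\D_\el\rtimes\sigma$. Show that the generic constituent of the smaller module is a subrepresentation, and conclude by Rodier's multiplicity one.

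\textbf{The gap.} The input you cite for the step you flag as the main obstacle does not give it.

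\emph{The standard module conjecture is too weak.} For classical groups it is due to Mui\'c. It says only that the Langlands quotient of a standard module is generic if and only if the module is irreducible. Here that tells you $\pi_0$ is not the Langlands quotient. It says nothing about whether $\pi_0$ lies in the socle when the length is at least three.

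\emph{Your reformulation is false in this situation.} You claim ``the generic constituent of a standard module is the Langlands subrepresentation/quotient of the opposite one.'' The unique irreducible subrepresentation of $\D_{j_0}^\vee\x\cdots\x\D_\el^\vee\rtimes\sigma$ is the Langlands quotient of $\D_{j_0}\x\cdots\x\D_\el\rtimes\sigma$. That quotient is non-tempered, whereas $\pi_0$ is tempered.

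\emph{Contragredients do not help.} The statement ``$\pi_0^\vee$ is a quotient of the dual module'' is merely equivalent to ``$\pi_0$ embeds in the original module,'' so nothing is gained.

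\textbf{The fix.} You need a separate theorem: the generalized injectivity conjecture of Casselman--Shahidi. It states exactly that the generic constituent of a standard module (generic tempered inducing data, exponents in the closed positive chamber) is a subrepresentation. For $\SO_{2n+1}$ it is Hanzer's theorem. The paper applies it twice: to $\D_{j_0}\x\cdots\x\D_\el\rtimes\sigma$ and to $\D_1\x\cdots\x\D_{j_0-1}\rtimes\pi_0$. Your reorderings are legitimate, since every sub-product of the irreducible $\D_1\x\cdots\x\D_\el$ is irreducible. The fact you mention in Step 3 as an alternative is the right one. However, it follows neither from the standard module conjecture nor from the Jiang--Soudry construction, which concerns only specific induced modules. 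With Hanzer's theorem cited in place of your standard-module argument, your proof coincides with the paper's.

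\textbf{A smaller point.} The first assertion of the lemma needs no embedding at all. Exactness of induction gives $s.s.(\D_1\x\cdots\x\D_{j_0-1}\rtimes\pi_0)\le s.s.(\D_1\x\cdots\x\D_\el\rtimes\sigma)$, so Rodier already identifies the two generic constituents. Injectivity is needed only for the final ``subrepresentation'' claim.
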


\begin{proof}
This is a simple consequence of the generalized injectivity conjecture proposed by Casselman--Shahidi in \cite{CasselmanShahidi1998},
and proved by Hanzer in \cite{Hanzer2020} (see also \cite{Hanzer2010}). Indeed, our assumptions and the main result of
\cite{Hanzer2020} imply that $\pi_0$ is a subrepresentation of 
\[
\D_{j_0}\x\cdot\cdot\cdot\x\D_{\el}\rtimes \sigma. 
\]
On the other hand, if $\pi$ denotes the unique irreducible generic constituent of 
\[
\D_{1}\x\cdot\cdot\cdot\x\D_{j_0-1}\rtimes \pi_0, 
\]
then $\pi$ is also a subrepresentation for the same reason. Since 
\[
\pi\subset \D_{1}\x\cdot\cdot\cdot\x\D_{j_0-1}\rtimes \pi_0\subset \D_{1}\x\cdot\cdot\cdot\x\D_{\el}\rtimes\sigma, 
\]
it follows that $\pi$ is also the unique irreducible generic constituent of $\D_{1}\x\cdot\cdot\cdot\x\D_{\el}\rtimes\sigma$. 
\end{proof}

\section{Second reduction}\label{S:2nd red}
The aim of this section is to prove the following proposition. 

\begin{prop}\label{P:seed to sq}
If the space of newforms is non-zero for every seed representation, then it is also non-zero for all irreducible generic square-integrable
representations. 
\end{prop}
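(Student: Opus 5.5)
Given a non-seed generic square-integrable $\pi$ with discrete parameter $\phi$ as in \eqref{E:L-par}, we will attach a seed representation $\pi_0$ and a standard-type induced representation
\[
\Pi=\D_1\x\cdot\cdot\cdot\x\D_{\el}\rtimes\pi_0\supset\pi
\]
such that $a_\Pi=c_\pi$ and $|\mu^*_{ur}(\Pi)|=|\mu^*_{ur}(\pi)|$. Once both equalities hold, the argument in the Remark after \corref{C:key P} finishes the proof. Since $\pi_0$ is a seed representation, its newform space is non-zero by hypothesis, so $a_{\pi_0}=c_{\pi_0}$. \lmref{L:useful} together with \thmref{T:mainA}(1) then gives $\dim_\bbC\Pi^{K_{n,a_\Pi}}=1$. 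Finally, \corref{C:key P} yields $\pi^{K_{n,c_\pi}}=\Pi^{K_{n,c_\pi}}\ne0$.

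\textbf{Construction of $\pi_0$.} Only the unramified blocks $\rho\in\{\chi,\chi'\}$ prevent $\pi$ from being a seed. For $\rho=\chi$ (and similarly $\chi'$), write $I_{\phi,\chi}=\{\ka_1<\cdots<\ka_d\}\subset\frac12+\bbZ_+$. We keep $\ka_1$ if $d$ is odd, and keep nothing if $d$ is even. The remaining $\ka$'s are paired off as $(\ka_{2j},\ka_{2j+1})$ (odd case) or $(\ka_{2j-1},\ka_{2j})$ (even case), and each pair gives a segment $\D_\chi[\ka_{2j+1},-\ka_{2j}]$ (resp. $\D_\chi[\ka_{2j},-\ka_{2j-1}]$), exactly as in \S\ref{SS:construction}.

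Let $\phi_0$ be $\phi$ with these pairs removed. Then $\phi_0$ is a seed parameter, and we let $\pi_0$ be its generic square-integrable representation. Applying \lmref{L:inj conj} to the Jiang--Soudry realization, ordering the $\D_{\rho,j}$ so that the tail produces $\pi_0$, shows that $\pi\hookto\Pi$ with $\Pi=(\bigtimes\D_{\chi,j}\x\bigtimes\D_{\chi',j})\rtimes\pi_0$.

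\textbf{Conductor equality.} By \corref{C:cond}, $c_{\D_\chi[\ka',-\ka]}=\ka'+\ka$. On the other hand, the pair contributes $\phi_\chi\bt(S_{2\ka+1}\oplus S_{2\ka'+1})$ to $\phi$, whose conductor is $2\ka+2\ka'$ by \lmref{L:cond}. Conductors are additive in $\phi$, so $c_\pi=c_{\pi_0}+2\sum_j c_{\D_j}$. \lmref{L:useful} gives the same expression for $a_\Pi$, hence $a_\Pi=c_\pi$.

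\textbf{Main obstacle: counting $\mu^*_{ur}$.} I expect the key step, presumably the paper's \propref{P:key Jac}, to be proving $|\mu^*_{ur}(\Pi)|=|\mu^*_{ur}(\pi)|$. The plan is as follows.

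First, compute $\mu^*(\Pi)$ by \thmref{T:Tadic}, using $M^*(\D_\rho[x,y])=\sum\D_{\rho^\vee}[-y,-i-1]\x\D_\rho[x,j+1]\ot\D_\rho[j,i+1]$ together with $\mu^*(\pi_0)$.

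Next, keep only the terms $\tau\bt\sigma$ with $\sigma$ generic and $\tau$ unramified. By \lmref{L:unram 2}, $\tau$ must then be a product of unramified characters $\chi|\cdot|^{x}$, or trivial. This forces each GL-factor to be at most one-dimensional and restricts the Jacquet pieces taken from each segment to their extreme characters $\chi|\cdot|^{\ka'}$ or $\chi|\cdot|^{\ka}$. The pieces taken from $\pi_0$ are similarly restricted; since $\pi_0$ is a seed, its $\chi$-part is a single $S_{2\ka+1}$, which sharply limits the unramified Jacquet constituents.

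Then compare with $\mu^*_{ur}(\pi)$, which we compute with Atobe's explicit description of Jacquet modules of square-integrable (generic) representations in \cite{Atobe2020}, via derivatives of $\phi$. The expected outcome is that every unramified $\tau\bt\sigma$ with generic $\sigma$ appearing in $\Pi$ already occurs in $\pi$. Concretely, the constituents coming from the other composition factors of $\Pi$ should have non-generic second component, because removing $\chi|\cdot|^{\ka}$ with $-\ka$ the lower end of a segment would produce a non-discrete, non-generic piece.

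The delicate bookkeeping will be the cases where segments overlap or share endpoints, for example $\ka_{2j}-\ka_{2j-1}=1$, and the interaction with the retained $\ka_1$ of $\pi_0$. I would handle these by induction on the number of removed pairs, adding one segment $\D_{\chi,j}$ at a time. The induction uses the embedding $\pi\hookto\D_{\chi,1}\rtimes\pi'$, where $\pi'$ is the intermediate square-integrable representation, together with the analogous counting for that single step.
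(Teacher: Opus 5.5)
\noindent Your reduction matches the paper: the choice of the seed $\pi_0$, the embedding $\pi\hookto\Pi$ via \lmref{L:inj conj}, the conductor identity giving $a_\Pi=c_\pi$, and the final appeal to \lmref{L:useful}, \thmref{T:mainA}(1) and \corref{C:key P}. The gap is the step you yourself single out: you never establish $|\mu^*_{ur}(\Pi)|=|\mu^*_{ur}(\pi)|$. Your claim that the other composition factors of $\Pi$ contribute only terms with non-generic second component just restates $\mu^*_{ur}(\Pi/\pi)=0$. The reason you give for it is not an argument, since the composition series of $\Pi$ is not under control here. The induction on the number of pairs does not close the gap either. It only reduces the claim to the single-step identity $|\mu^*_{ur}(\pi)|=4|\mu^*_{ur}(\pi')|$, which is of the same nature and is left unproved.

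The missing idea is to turn this into a count, using the fact that one inequality is free.

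First, \lmref{L:unram 3} and \lmref{L:unram 4} give $|\mu^*_{ur}(\Pi)|=4^{\el}|\mu^*_{ur}(\pi_0)|$. A separate computation for seeds (\lmref{L:key Jac}, via \cite[Theorem 4.3]{Atobe2020}) gives $|\mu^*_{ur}(\pi_0)|=2^{|I_{\phi_0,\chi}|+|I_{\phi_0,\chi'}|}$. Together these yield $|\mu^*_{ur}(\Pi)|=2^{|I_{\phi,\chi}|+|I_{\phi,\chi'}|}$.

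Second, since $\mu^*_{ur}(\pi)\le\mu^*_{ur}(\Pi)$ automatically, you only need the lower bound $|\mu^*_{ur}(\pi)|\ge 2^{|I_{\phi,\chi}|+|I_{\phi,\chi'}|}$. This is proved on $\pi$ alone:
\begin{itemize}
\item In Atobe's expansion of $\mu^*_\chi(\pi)$, the coefficient matrix has unit diagonal by \lmref{L:Atobe 2}.
\item By \lmref{L:unram 2}, the unramified pieces $\d_\chi(\ul\ka(\ul a))$ are exactly those with $\ul a\in\{0,1\}^{d}$, where $d=|I_{\phi,\chi}|$.
\item Each $\Jac_{\chi|\cdot|^{\ul\ka(\ul a)}}(\pi)$ is generic by \cite[Theorem 4.2]{Atobe2020} (see also \cite[Lemma 7.3]{Xu2017}).
\end{itemize}
Doing the same for $\chi'$ produces $2^{d+d'}$ distinct unramified--generic summands. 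So $\pi$ itself exhausts the count, and you need no information about the other constituents of $\Pi$ and no case analysis of adjacent or overlapping segments.

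A minor point: in your formula for $M^*$, the first factor should be $\D_{\rho^\vee}[-y,-i]$. As written, $\rho|\cdot|^{i+1}$ is used twice.
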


\subsection{Seed representations associated with square-integrable representations}\label{SS:red to seed}
Let $\pi$ be an irreducible generic square-integrable representation of $\SO_{2n+1}(F)$ with $n>0$, and let $\phi$ be the associated 
$L$-parameter, written as in \eqref{E:L-par}. 
Set 
\[
\phi_{00}=\bigoplus_{\rho\in I_\phi\setminus\stt{\chi,\chi'}}\bigoplus_{\ka\in I_{\phi,\rho}} \phi_{\rho}\bt S_{2\ka+1}. 
\]
We now associate to $\phi$ a seed $L$-parameter $\phi_0$ as follows.

\begin{itemize}
\item If both $|I_{\phi, \chi}|$ and $|I_{\phi, \chi'}|$ are even (including $0$), then we define
\[
\phi_0=\phi_{00}. 
\]
\item If one of $|I_{\phi, \chi}|$ or $|I_{\phi, \chi'}|$ is odd and the other is even, say $|I_{\phi, \chi}|$ is odd, then we define
\[
\phi_0
=
\phi_\chi\bt S_{2\ka_\chi+1}\oplus\phi_{00},
\]
where $\ka_\chi=\min\stt{\ka\mid \ka\in I_{\phi, \chi}}$. 
\item If both $|I_{\phi, \chi}|$ and $|I_{\phi, \chi'}|$ are odd, then we define
\[
\phi_0=
\phi_\chi\bt S_{2\ka_\chi+1}\oplus\phi_{\chi'}\bt S_{2\ka_{\chi'}+1}\oplus\phi_{00},
\]
where $\ka_\chi=\min\stt{\ka\mid \ka\in I_{\phi, \chi}}$ and $\ka_{\chi'}=\min\stt{\ka'\mid \ka'\in I_{\phi, \chi'}}$. 
\end{itemize}
Let $\pi_0$ be the irreducible generic square-integrable representation with $L$-parameter $\phi_0$. This representation is referred to as 
the seed representation associated with $\pi$. 
 
\subsection{A key proposition}
From the construction at the beginning of \S\ref{SS:construction} and \lmref{L:inj conj} 
(see also the proof of \lmref{L:cond non-seed}), we find that if $\phi\ne \phi_0$, then $\pi$ is the unique generic subrepresentation of 
\[
\Pi=\D_1\x\cdot\cdot\cdot\x\D_\el\rtimes\pi_0,
\]
where, for each $1\le j\le \el$, $\D_j$ is an irreducible essentially square-integrable representation in $\Irr_\chi\(\GL_{r_j}(F)\)$ or 
$\Irr_{\chi'}\(\GL_{r_j}(F)\)$ for some $r_j>0$. In particular,  \lmref{L:useful} implies that
\[
a_\Pi=a_{\pi_0}+2\sum_{j=1}^\el c_{\D_j}. 
\]
We next compute $c_\pi$. 

\begin{lm}\label{L:cond non-seed}
Let the notation be as above. Then
\[
c_{\pi}=c_{\pi_0}+2\sum_{j=1}^\el c_{\D_j}. 
\]
\end{lm}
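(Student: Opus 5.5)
The plan is to compute both sides from the $L$-parameters, using that the exponent $c$ in the $\epsilon$-factor is additive under direct sums. Since $\epsilon(s,\phi\oplus\phi',\psi)=\epsilon(s,\phi,\psi)\epsilon(s,\phi',\psi)$, we have $c_{\phi\oplus\phi'}=c_\phi+c_{\phi'}$. So the first step is to make explicit the relation between $\phi$ and $\phi_0$, and the identification of the $\D_j$, that underlies the realization $\pi\subset\Pi=\D_1\x\cdot\cdot\cdot\x\D_\el\rtimes\pi_0$.

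\textbf{Identifying the $\D_j$.} By construction of $\phi_0$ in \S\ref{SS:red to seed}, we have $\phi=\phi_0\oplus\phi_1$. Here $\phi_1$ collects, for $\rho\in\stt{\chi,\chi'}$, the summands $\phi_\rho\bt S_{2\ka+1}$ with $\ka\in I_{\phi,\rho}$, except the minimal one when $|I_{\phi,\rho}|$ is odd. Write $I_{\phi,\chi}=\stt{\ka_1<\cdots<\ka_d}$; recall $\chi\in I^2_\phi$.
\begin{itemize}
\item If $d$ is even, the recipe of \S\ref{SS:construction} attaches $\D_{\chi,j}=\D_\chi[\ka_{2j},-\ka_{2j-1}]$ for $1\le j\le d/2$, and no $\chi$-summand survives in $\phi_0$.
\item If $d$ is odd, it attaches $\D_{\chi,0}=\D_\chi[\ka_1,\tfrac12]$ and $\D_{\chi,j}=\D_\chi[\ka_{2j+1},-\ka_{2j}]$ for $j\ge1$. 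The inducing data for $\pi_0$ is exactly the same as for $\pi$, minus the $\D_{\chi,j}$ with $j\ge1$; this is because $\phi_0$ contains only $\phi_\chi\bt S_{2\ka_1+1}$, and the recipe applied to $\phi_0$ gives just $\D_\chi[\ka_1,\tfrac12]$.
\end{itemize}
The same holds for $\chi'$, while the data for all other $\rho$ and the supercuspidal $\sigma$ are unchanged. Hence the $\D_j$ in $\Pi$ are precisely the $\D_{\rho,j}$ with $\rho\in\stt{\chi,\chi'}$ and $j\ge1$.

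\textbf{Justifying the realization.} To see that $\pi\subset\Pi$, I will apply \lmref{L:inj conj}. Order the irreducible product of $\GL$-factors so that these $\D_{\rho,j}$ come first and the data of $\pi_0$ last, and take $j_0$ to be the first index of the $\pi_0$-block. The hypotheses hold:
\begin{itemize}
\item each $\D_{\rho,j}$ has positive central exponent;
\item the product of all the $\GL$-factors is irreducible (\cite[Theorem 9.7]{Zelevinsky1980});
\item the unique generic constituent of the tail is $\pi_0$, which is square-integrable and hence tempered.
\end{itemize}
The lemma then shows that the generic constituent of $\D_1\x\cdot\cdot\cdot\x\D_\el\rtimes\pi_0$ is $\pi$. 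This bookkeeping, namely checking that applying the Jiang--Soudry recipe to $\phi_0$ reproduces exactly the tail, is the only slightly delicate point.

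\textbf{Computing the conductors.} By additivity, $c_\pi=c_{\pi_0}+c_{\phi_1}$. By \lmref{L:cond}, $c_{\phi_\rho\bt S_{2\ka+1}}=2\ka$ for $\rho\in\stt{\chi,\chi'}$. The summands of $\phi_1$ pair off as $\stt{\ka_{2j-1},\ka_{2j}}$ in the even case and $\stt{\ka_{2j},\ka_{2j+1}}$ in the odd case. Each pair contributes $2(\ka_{2j-1}+\ka_{2j})$, respectively $2(\ka_{2j}+\ka_{2j+1})$. By \corref{C:cond}, $c_{\D_\chi[x,y]}=x-y$, so the corresponding $\D_j$ has conductor $\ka_{2j}+\ka_{2j-1}$, respectively $\ka_{2j+1}+\ka_{2j}$. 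Thus each pair contributes exactly $2c_{\D_j}$, and summing over all pairs for $\chi$ and $\chi'$ gives $c_\pi=c_{\pi_0}+2\sum_j c_{\D_j}$.
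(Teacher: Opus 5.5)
Your proof is correct and follows essentially the same route as the paper. Additivity of the $\epsilon$-exponent over $\phi=\phi_0\oplus\phi_1$ together with \lmref{L:cond} gives $c_\pi=c_{\pi_0}+2\sum\ka$, the sum running over the removed $\chi$- and $\chi'$-summands. The construction of \S\ref{SS:construction} with \lmref{L:inj conj} identifies the $\D_j$ as segments pairing consecutive removed $\ka$'s, and \corref{C:cond} shows each pair contributes exactly $2c_{\D_j}$.

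The only cosmetic difference is indexing. In the odd case you keep the original indexing of $I_{\phi,\chi}$, pairing $\{\ka_{2j},\ka_{2j+1}\}$. The paper instead re-indexes $I_{\phi,\chi}\setminus I_{\phi_0,\chi}$ so that all pairs are $\{\ka_{2j-1},\ka_{2j}\}$.
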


\begin{proof}
Write $\phi_0$ as in \eqref{E:L-par} with $\phi$ replaced by $\phi_0$. The assumption that $\phi\ne\phi_0$ implies that 
\[
I_{\phi, \chi}\setminus I_{\phi_0,\chi}\ne\emptyset
\quad\text{or}\quad
I_{\phi, \chi'}\setminus I_{\phi_0,\chi'}\ne\emptyset. 
\]
Assume that both sets are non-empty. In this case, we can write
\[
\phi
=
\bigoplus_{\ka\in I_{\phi, \chi}\setminus I_{\phi_0,\chi}}\phi_\chi\bt S_{2\ka+1}
\oplus\phi_0\oplus
\bigoplus_{\ka'\in I_{\phi,\chi'}\setminus I_{\phi_0,\chi'}}\phi_{\chi'}\bt S_{2\ka'+1}.  
\]
Denote
\[
I_{\phi, \chi}\setminus I_{\phi_0,\chi}
=
\stt{0<\ka_1<\ka_2\cdot\cdot\cdot<\ka_{2a}}
\quad
\text{and}
\quad
I_{\phi, \chi'}\setminus I_{\phi_0,\chi'}
=
\stt{0<\ka'_1<\ka'_2\cdot\cdot\cdot<\ka'_{2b}}
\]
for some $a,b\in\bbN$. Then, by \lmref{L:cond}, 
\[
c_\pi=c_{\pi_0}+2\sum _{j=1}^{2a} \ka_j+2\sum_{j=1}^{2b} \ka'_j. 
\]
On the other hand, the construction at the beginning of \S\ref{SS:construction} and \lmref{L:inj conj} imply that
\[
\Pi=\bigtimes_{j=1}^a\D_\chi[\ka_{2j}, -\ka_{2j-1}]\,\,\x\,\,\bigtimes_{j=1}^b\D_{\chi'}[\ka'_{2j}, -\ka'_{2j-1}]\rtimes\pi_0. 
\]
It then follows from \corref{C:cond} that 
\[
\sum_{j=1}^a c_{\D_\chi[\ka_{2j}, -\ka_{2j-1}]}
+
\sum_{j=1}^b c_{\D_{\chi'}[\ka'_{2j},-\ka'_{2j-1}]}
=
\sum_{j=1}^{2a}\ka_j
+
\sum_{j=1}^{2b}\ka'_j. 
\]
Together, we obtain
\[
c_\pi
=
c_{\pi_0}
+
2\sum_{j=1}^a c_{\D_\chi[\ka_{2j}, -\ka_{2j-1}]}
+
2\sum_{j=1}^b c_{\D_{\chi'}[\ka'_{2j},-\ka'_{2j-1}]}. 
\]
This proves the case when both $I_{\phi,\chi}\setminus I_{\phi_0,\chi}$ and $I_{\phi,\chi'}\setminus I_{\phi_0,\chi'}$ are non-empty. 
The other case can be proved in a similar way. 
\end{proof}

By \lmref{L:cond non-seed}, we see that if $a_{\pi_0}=c_{\pi_0}$, then $a_{\Pi}=c_\pi$. In particular, if we can show that 
$\Pi^{K_{n,m}}=\pi^{K_{n,m}}$ for every $m\ge 0$, then we can conclude that $\pi^{K_{n,c_\pi}}\ne 0$. 
To this end, we require the following key proposition, whose proof is postponed to the remaining subsections of this section. 

\begin{prop}\label{P:key Jac}
Let $\pi$ be an irreducible generic square-integrable representation of $\SO_{2n+1}(F)$ with $n>0$. Assume that $\pi$ is not a seed
representation, and let $\pi_0$ be the seed representation associated with $\pi$. Thus, $\pi$ is the unique generic subrepresentation of 
\[
\Pi=\D_1\x\cdot\cdot\cdot\x\D_\el\rtimes\pi_0,
\]
where, for each $1\le j\le \el$, $\D_j$ is an irreducible essentially square-integrable representation in $\Irr_\chi\(\GL_{r_j}(F)\)$ or 
$\Irr_{\chi'}\(\GL_{r_j}(F)\)$ for some $r_j>0$. Then we have
\[
|\mu^*_{ur}(\Pi)|=|\mu^*_{ur}(\pi)|. 
\] 
\end{prop}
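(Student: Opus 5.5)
The plan is to compute both sides of the identity explicitly: $|\mu^*_{ur}(\Pi)|$ through \thmref{T:Tadic}, and $|\mu^*_{ur}(\pi)|$ through Atobe's description of Jacquet modules of square-integrable representations in \cite{Atobe2020}. Since $\pi\subset\Pi$ gives $\mu^*_{ur}(\pi)\le\mu^*_{ur}(\Pi)$, it is enough to show that every term $\tau\bt\pi'$ of $\mu^*(\Pi)$ with $\tau$ unramified and $\pi'$ generic already occurs in $\mu^*(\pi)$, with the same multiplicity.

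The first step is the computation for $\Pi$. By \thmref{T:Tadic},
\[
\mu^*(\Pi)=M^*(\D_1)\rtimes\cdots\rtimes M^*(\D_\el)\rtimes\mu^*(\pi_0),
\]
and $M^*(\D_\chi[x,y])$ is an explicit sum over $y-1\le i\le j\le x$ of terms
\[
\D_\chi[-i,-y]\x\D_\chi[x,j+1]\bt\D_\chi[j,i+1].
\]
I would first determine $\mu^*_{ur}(\pi_0)$. Because $\pi_0$ is a seed representation, $\phi_0$ contains at most one summand $\phi_\chi\bt S_{2\ka_\chi+1}$ and at most one summand $\phi_{\chi'}\bt S_{2\ka_{\chi'}+1}$. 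By Atobe's results, the only unramified $\GL$-pieces that occur are therefore trivial, $\chi|\cdot|^{\ka_\chi}$, $\chi'|\cdot|^{\ka_{\chi'}}$, and possibly their product, each with a generic $\SO$-part.

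Next I would count the unramified irreducible subquotients of each product. Here \lmref{L:unram 2} forces every segment that contributes to the $\GL$-part to be degenerate, i.e.\ of length at most one, and among the subquotients of such a product exactly one is unramified. The genericity of the $\SO$-part is decided by Atobe's classification via the $L$-parameter, and I expect it to force the $\GL$-part to be a "top exponent" character $\chi|\cdot|^{\ka}$ peeled off from a segment, or a product of such characters. This should produce an explicit finite list, indexed by the choices of which endpoints $\ka_{2j}$ (and $\ka_\chi,\ka_{\chi'}$) are removed.

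The second step is the corresponding computation for $\pi$. Using Atobe's theory of $\rho$-derivatives (and highest derivatives) for $\rho=\chi|\cdot|^{z}$ with $\chi$ unramified, I would compute the terms $\tau\bt\pi'$ of $\mu^*(\pi)$ with $\tau$ unramified and $\pi'$ generic. A Jacquet module term $\chi|\cdot|^{z_1}\x\cdots\bt\pi'$ corresponds to an iterated derivative. For a generic square-integrable $\pi$ with $\phi\supset\phi_\chi\bt S_{2\ka+1}$, the derivative at $z=\ka$ is nonzero and generic exactly when $\ka-1\notin I_{\phi,\chi}$ or $\ka=\half$, and it replaces $S_{2\ka+1}$ by $S_{2\ka-1}$. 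I would then match this list term by term with the list obtained for $\Pi$, including multiplicities.

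The main obstacle is this matching. Terms of $\mu^*(\Pi)$ whose $\SO$-part is generic may come from the quotient $\Pi/\pi$ rather than from $\pi$. I would rule this out by showing that each such term has a generic $\SO$-part with a specific $L$-parameter, and that $\pi$ already accounts for it: the total multiplicity in $\mu^*(\Pi)$ of $\tau\bt\pi'$ with $\pi'$ generic equals the corresponding multiplicity in $\mu^*(\pi)$. I would try to prove this by induction on $\el$, using \lmref{L:inj conj} to write $\pi\subset\D_1\rtimes\pi_1$ with $\pi_1$ square-integrable and associated to the same seed $\pi_0$. The case $\el=1$ would be handled by direct computation with Atobe's formulas. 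The inductive step would use \thmref{T:Tadic} together with the fact that all exponents are positive and strictly larger than the exponents of $\pi_1$.
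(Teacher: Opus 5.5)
There is a genuine gap, and it sits in the step that has to carry the argument: your derivative criterion is false. A generic $\pi$ corresponds to the trivial character of the component group. Hence \cite[Theorem 4.2]{Atobe2020} (or \cite[Lemma 7.3]{Xu2017}) gives that $\Jac_{\chi|\cdot|^{\ka}}(\pi)$ is non-zero and generic for \emph{every} $\ka\in I_{\phi,\chi}$. When $\ka-1\in I_{\phi,\chi}$ it is tempered rather than square-integrable, since its parameter contains $\phi_\chi\bt S_{2\ka-1}$ twice. For example, take $\phi=\phi_\chi\bt S_2\oplus\phi_\chi\bt S_4$. Then $\pi\hookrightarrow\Pi=\D_\chi[\tfrac{3}{2},-\tfrac{1}{2}]\rtimes 1_{\SO_1(F)}\hookrightarrow\chi|\cdot|^{3/2}\x\D_\chi[\tfrac{1}{2},-\tfrac{1}{2}]\rtimes 1_{\SO_1(F)}$. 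Frobenius reciprocity already gives $\Jac_{\chi|\cdot|^{3/2}}(\pi)\ne 0$, even though $\tfrac{1}{2}=\tfrac{3}{2}-1\in I_{\phi,\chi}$.

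With your criterion you would find only three terms of $\mu^*(\pi)$ with unramified $\GL$-part and generic $\SO$-part: the trivial one, $\chi|\cdot|^{1/2}$, and the unramified constituent of $\chi|\cdot|^{1/2}\x\chi|\cdot|^{3/2}$. But $\Pi$ has four, so your matching cannot succeed.

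Your first step is also mis-anticipated. In $\mu^*(\Pi)$ the $\SO$-parts have the form $\D_\chi[j,i+1]\rtimes\pi_j$, and by Rodier such a product has exactly one generic constituent whenever $\pi_j$ is generic. So $L$-parameters play no role and genericity imposes no further restriction. Each factor $\D_\chi[\ka_{2j},-\ka_{2j-1}]$ contributes four unramified pieces: $1$, $\chi|\cdot|^{\ka_{2j}}$, $\chi|\cdot|^{\ka_{2j-1}}$, and the unramified constituent of their product. The piece $\chi|\cdot|^{\ka_{2j-1}}$ comes from the contragredient factor of $M^*$, which should read $\D_\chi[-y,-i]$, not $\D_\chi[-i,-y]$. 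So the list is not indexed by the endpoints $\ka_{2j}$ alone. This is exactly \lmref{L:unram 3}, \lmref{L:unram 4} and \corref{C:no of unram}, which together with \lmref{L:key Jac} give $|\mu^*_{ur}(\Pi)|=2^{|I_{\phi,\chi}|+|I_{\phi,\chi'}|}$.

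Separately, your paragraph on the main obstacle restates the goal rather than resolving it. Positivity of the exponents of $\D_1$ does not tell you whether a given term of $\mu^*(\D_1\rtimes\pi_1)$ with generic $\SO$-part lies in $\mu^*(\pi)$ or in $\mu^*$ of another constituent, and you give no mechanism for the induction on $\el$. No term-by-term identification is needed.

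Set $d=|I_{\phi,\chi}|$ and $d'=|I_{\phi,\chi'}|$. The inclusion $\pi\subset\Pi$ and the exact count above give $|\mu^*_{ur}(\pi)|\le 2^{d+d'}$. For the reverse inequality, the paper first writes $\mu^*_\chi(\pi)$ via \cite[Theorem 4.3]{Atobe2020} in terms of $\d_\chi(\ul{\ka}(\ul{a}))\bt\Jac_{\chi|\cdot|^{\ul{\ka}(\ul{b})}}(\pi)$. It then applies the corrected derivative fact repeatedly, first for $\chi$ and then for $\chi'$. For every $\ul{a}\in\stt{0,1}^{d}$ and $\ul{a}'\in\stt{0,1}^{d'}$, the iterated derivative along the selected $\ka_i$ and $\ka'_i$ is non-zero and generic. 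This produces a term of $\mu^*(\pi)$ whose $\GL$-part is the unramified constituent of $\d_\chi(\ul{\ka}(\ul{a}))\x\d_{\chi'}(\ul{\ka}'(\ul{a}'))$, and these terms are distinct for distinct choices. That yields $2^{d+d'}$ terms, and equality follows. The fact that every subset of $I_{\phi,\chi}\sqcup I_{\phi,\chi'}$ contributes is precisely what your criterion denies.
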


Assuming \propref{P:key Jac}, we now prove \propref{P:seed to sq}. 

\subsection{Proof of \propref{P:seed to sq}}
Let $\pi$ be an irreducible generic square-integrable representation of $\SO_{2n+1}(F)$ with $n>0$. Assume that $\pi$ is not a seed 
representation, and let $\Pi\in\Rep\(\SO_{2n+1}(F)\)$ be as in \propref{P:key Jac}. Then, by \lmref{L:cond}, \lmref{L:cond non-seed}, and 
the assumption, we have
\[
a_\Pi
=
a_{\pi_0}
+
2\sum_{j=1}^\el c_{\D_j}
=
c_{\pi_0}
+
2\sum_{j=1}^\el c_{\D_j}
=
c_\pi. 
\]
On the other hand, \corref{C:key P} and \propref{P:key Jac} imply
\[
\Pi^{K_{n,m}}=\pi^{K_{n,m}}
\]
for every $m\ge 0$. It follows that 
\[
\pi^{K_{n,c_\pi}}=\Pi^{K_{n,a_{\Pi}}}\ne 0, 
\]
as desired. \qed\\

The remainder of this section is devoted to proving \propref{P:key Jac}. 

\subsection{Unramified constituents I}
The goal of this and the next subsection is to prove \propref{P:key Jac}. In this subsection, we focus on computing the number 
\[
|\mu_{ur}^*(\Pi)|. 
\]
To state the next lemma, we introduce the following definition. If $\tau\in\Rep(\GL_r(F))$ with $r\ge 0$ and 
\[
M^*(\tau)
=
\sum_{i\in I}\tau_i\bt\tau_i'
\]
for some $\tau_i, \tau'_i\in\Irr_0(\GL)$, then we define
\[
M^*_{ur}(\tau)
=
\stt{i\in I\mid\text{$\tau_i$ is unramified and $\tau'_i$ is generic}}. 
\]
Now we have

\begin{lm}\label{L:unram 3}
Let $\tau\in\Rep(\GL_r(F))$ and $\pi\in\Rep\(\SO_{2n+1}(F)\)$ with $r, n\in\bbZ_+$. Then 
\[
|\mu^*_{ur}(\tau\rtimes\pi)|
=
|M^*_{ur}(\tau)|\cdot|\mu^*_{ur}(\pi)|. 
\]
\end{lm}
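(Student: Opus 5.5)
We will argue directly from Tadi\'c's formula, \thmref{T:Tadic}. By additivity of both sides in the Grothendieck group, we may assume $\tau$ and $\pi$ are irreducible; note also that $|\mu^*_{ur}(\cdot)|$ and $|M^*_{ur}(\cdot)|$ depend only on semisimplifications. Write
\[
M^*(\tau)=\sum_{i\in I}\tau_i\bt\tau_i'
\quad\text{and}\quad
\mu^*(\pi)=\sum_{k\in K}\sigma_k\bt\pi_k,
\]
with $\tau_i,\tau_i',\sigma_k\in\Irr_0(\GL)$ and $\pi_k\in\Irr_0(\SO)$. Then \thmref{T:Tadic} gives
\[
\mu^*(\tau\rtimes\pi)=\sum_{i\in I}\sum_{k\in K}s.s.\(\tau_i\x\sigma_k\)\bt s.s.\(\tau_i'\rtimes\pi_k\).
\]
Each irreducible term of $\mu^*(\tau\rtimes\pi)$ therefore has the form $\alpha\bt\beta$, where $\alpha\le\tau_i\x\sigma_k$ and $\beta\le\tau_i'\rtimes\pi_k$ for some pair $(i,k)$.

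\textbf{Unramified first factors.} The unramified constituents $\alpha$ of $\tau_i\x\sigma_k$ are controlled by the fact that $(\tau_i\x\sigma_k)^{\GL(\frak{o})}\simeq\tau_i^{\GL(\frak{o})}\ot\sigma_k^{\GL(\frak{o})}$, as in \cite[Proposition 1.5]{Matringe2013}. Using the Iwasawa decomposition and the multiplicity-one statement $\dim\tau^{\GL_r(\frak{o})}\le 1$, we get:
\begin{itemize}
\item[--] $\tau_i\x\sigma_k$ has an unramified constituent if and only if both $\tau_i$ and $\sigma_k$ are unramified;
\item[--] in that case the unramified constituent occurs with multiplicity exactly one.
\end{itemize}

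\textbf{Generic second factors.} By Rodier's heredity (\cite{Rodier1973}), the number of generic constituents of $\tau_i'\rtimes\pi_k$, counted with multiplicity, equals $\dim{\rm Hom}_{U}(\tau_i'\rtimes\pi_k,\psi_U)$. This dimension equals $\dim{\rm Hom}(\tau_i',\psi_Z)\cdot\dim{\rm Hom}(\pi_k,\psi_{U})$, which is $1$ if both $\tau_i'$ and $\pi_k$ are generic and $0$ otherwise.

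\textbf{Counting.} The pair $(i,k)$ contributes exactly one unramified-generic term $\alpha\bt\beta$ precisely when $i\in M^*_{ur}(\tau)$ and $k$ is counted in $\mu^*_{ur}(\pi)$; otherwise it contributes none. Summing over all pairs gives the product formula $|M^*_{ur}(\tau)|\cdot|\mu^*_{ur}(\pi)|$.

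\textbf{Main obstacle.} The step needing the most care is the unramified count: we must ensure that a product involving a ramified factor has no unramified constituent at all. For this we use exactness of $\GL(\frak{o})$-invariants together with Iwasawa decomposition and the Mackey-type computation in \cite{Matringe2013}, which give $\dim(\tau_1\x\tau_2)^{\GL(\frak o)}=\dim\tau_1^{\GL(\frak o)}\cdot\dim\tau_2^{\GL(\frak o)}$ for irreducible $\tau_1,\tau_2$. Since $\dim\tau^{\GL(\frak o)}\le1$ for irreducible $\tau$, the number of unramified constituents of any finite-length representation, counted with multiplicity, equals the dimension of its invariants. This gives both the vanishing and the multiplicity-one claims above.
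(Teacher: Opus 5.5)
Your proposal is correct and follows essentially the same route as the paper. Both apply Tadi\'c's formula to write $\mu^*(\tau\rtimes\pi)$ as a sum of $(\tau_i\times\sigma_k)\boxtimes(\tau_i'\rtimes\pi_k)$, use Matringe's result (via the Iwasawa decomposition) and Rodier's heredity to show each such term contributes exactly one unramified--generic constituent when $\tau_i,\sigma_k$ are unramified and $\tau_i',\pi_k$ are generic, and none otherwise, and then count. Your reduction to irreducible $\tau,\pi$ and your appeal to exactness of $\GL(\mathfrak{o})$-invariants and of the twisted Jacquet functor only make explicit the multiplicity-one claims that the paper states briefly.
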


\begin{proof}
Write 
\[
M^*(\tau)=\sum_{i\in I}\tau_i\bt\tau_i'
\quad\text{and}\quad
\mu^*(\pi)
=
\sum_{j\in J}\tau''_j\bt\pi_j,
\]
for some $\tau_i, \tau'_i, \tau''_j\in\Irr_0(\GL)$ and $\pi_j\in\Irr_0(\SO)$. By \thmref{T:Tadic}, 
\[
\mu^*(\tau\rtimes\pi)
\simeq
M^*(\tau)\rtimes\mu^*(\pi)
\simeq
\sum_{i\in I,\,j\in J}\(\tau_i\x\tau''_j\)\rtimes\(\tau'_i\rtimes\pi_j\). 
\]
Since
\[
\text{$\tau_i\x\tau''_j$ is unramified $\Longleftrightarrow$ $\tau_i$ and $\tau''_j$ are unramified}
\]
by  \cite[Proposition 1.5]{Matringe2013}, and 
\[
\text{$\tau'_i\rtimes\pi_j$ is generic $\Longleftrightarrow$ $\tau'_i$ and $\pi_j$ are generic}
\]
by  \cite{Rodier1973}, it follows that the multiplicity of the unramified (resp. generic) constituent of $\tau_i\x\tau''_j$ 
(resp. $\tau'_i\rtimes\pi_j$) is at most one. This proves the lemma. 
\end{proof}

We next compute 
\[
|M_{ur}^*(\tau)|
\]
when $\tau$ is square-integrable. 

\begin{lm}\label{L:unram 4}
Let $\rho$ be an irreducible supercuspidal representation of $\GL_r(F)$ with $r>0$, and let $x,y\in\bbR$ with $x-y\in\bbZ_+$. Then 
\[
|M_{ur}^*\(\D_\rho[x,y]\)|
=
\begin{cases}
1\quad&\text{if $\rho$ is ramified},\\
3\quad&\text{if $\rho$ is unramified and $x-y=0$},\\
4\quad&\text{if $\rho$ is unramified and $x-y>0$}. 
\end{cases}
\]
\end{lm}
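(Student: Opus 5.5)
We will compute $M^*(\D_\rho[x,y])$ explicitly using Tadi\'c's formula. For an essentially square-integrable segment representation one has (\cite{Tadic1995})
\[
M^*\(\D_\rho[x,y]\)=\sum_{y-1\le i\le j\le x}\D_{\rho^\vee}[-i-1,-x]\x\D_\rho[y',\ldots]\;\bt\;\D_\rho[j,i+1],
\]
which we write precisely as
\[
M^*\(\D_\rho[x,y]\)=\sum_{i=y-1}^{x}\sum_{j=i}^{x}\D_{\rho^\vee}[-i,-x]\x\D_\rho[x\text{-part}]\ldots
\]
Rather than fix the indexing from memory, we will derive it from the definition $M^*=(m\ot1)\circ(\vee\ot m^*)\circ s\circ m^*$ together with Zelevinsky's formula $m^*(\D_\rho[x,y])=\sum_{k=y-1}^{x}\D_\rho[x,k+1]\ot\D_\rho[k,y]$. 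Applying $s$, then $\vee$ to the first factor and $m^*$ to the second, and multiplying the two left factors, gives
\[
M^*\(\D_\rho[x,y]\)=\sum_{y-1\le k\le l\le x}\D_{\rho^\vee}[-y,-k]\x\D_\rho[x,l+1]\;\bt\;\D_\rho[l,k+1],
\]
where we use $\D_\rho[k,y]^\vee=\D_{\rho^\vee}[-y,-k]$ (the left factor comes from $\vee$ of the part of $m^*$ that was moved). We then work in $\sR$, so the left term $\D_{\rho^\vee}[-y,-k]\x\D_\rho[x,l+1]$ must be expanded into irreducible constituents; each constituent counts separately in $M^*_{ur}$.

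Next we determine which pairs $(k,l)$ contribute. The right factor $\D_\rho[l,k+1]$ is always generic (either trivial or essentially square-integrable), so the only condition is that the left factor contain an unramified constituent. By \lmref{L:unram 2}, the product of two segments has at most one unramified constituent, and has one exactly when each segment is either empty or a single unramified character (i.e.\ $\rho$ is an unramified character and the segment has length $\le1$), using \lmref{L:unram 1} for each factor and \cite[Proposition 1.5]{Matringe2013}. Thus, if $\rho$ is ramified, both segments must be empty: $k=y-1$ and $l=x$, giving exactly one term, namely $1\bt\D_\rho[x,y]$. If $\rho$ is unramified, each of the two segments $[-y,-k]$ and $[x,l+1]$ must have length $0$ or $1$, i.e.\ $k\in\{y-1,y\}$ and $l\in\{x,x-1\}$, subject to $k\le l$. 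When $x-y>0$, all four pairs satisfy $k\le l$ (since $y\le x-1$), giving $4$. When $x=y$, the pair $(k,l)=(y,x-1)=(x,x-1)$ violates $k\le l$, leaving $3$ pairs.

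The main obstacle is bookkeeping: getting Tadi\'c's $M^*$ formula for segments correct (indices and the contragredient), and making sure no degenerate summand is double-counted or missed, e.g.\ confirming that for $x=y$ the term $(k,l)=(y,x)$ gives left factor $\rho^\vee\x\rho$ (a genuine product of two unramified characters, contributing one unramified constituent) and that the multiplicity of the unramified constituent in each product is exactly one. Once the expansion is verified against the definition, the case count above is immediate.
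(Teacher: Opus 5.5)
Your proof is correct and is essentially the paper's argument: the paper twists to $y=0$, writes the same Zelevinsky-based expansion of $M^*(\D_\rho[x,0])$ (your indices correspond to the paper's via $i=x-k$, $j=x-l$), and counts the same pairs $(k,l)$ whose left factor has an unramified constituent. One small correction to your last paragraph, which does not affect the count: when $x=y$ the pair $(k,l)=(y,x)$ has left factor just $\rho^\vee|\cdot|^{-y}$, because $\D_\rho[x,x+1]$ is trivial. The product $\rho^\vee|\cdot|^{-y}\x\rho|\cdot|^{x}$ instead comes from $(k,l)=(y,x-1)$, which is exactly the pair that occurs only when $x>y$.
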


\begin{proof}
Since we can replace $\rho$ by any of its unramified twists without affecting the result, we may assume that $y=0$ and $x\in\bbZ_+$. 
By \cite[Proposition 9.5]{Zelevinsky1980} and a direct computation,
\begin{align*}
M^*\(\D_\rho[x,0]\)
=
\sum_{i=0}^{x+1}\sum_{j=0}^i
\(\D_{\rho^\vee}[0,i-x]\,\x\,\D_{\rho}[x, x+1-j]\)
\ot
\D_\rho[x-j, x+1-i]. 
\end{align*}
Note that $\D_\rho[x-j, x+1-i]$ is always generic; thus, it suffices to compute the number of representations 
\[
\D_{i,j}:=\D_{\rho^\vee}[0,i-x]\,\x\,\D_{\rho}[x, x+1-j]
\]
that contain an unramified constituent (necessarily with multiplicity one). 

If $\rho$ is ramified, then by \lmref{L:unram 1} and the proof of \lmref{L:unram 2}, we find that $\D_{i,j}$ is 
unramified if and only if it is the trivial representation of the trivial group. This, in turn, is equivalent to $(i,j)=(x+1, 0)$. Thus, 
\[
|M_{ur}^*\(\D_\rho[x,y]\)|
=
1
\]
when $\rho$ is ramified. On the other hand, if $\rho$ is unramified, then by the same lemmas, we have
\begin{align*}
\text{$\D_{i,j}$ is unramified}&\Longleftrightarrow\text{$0\le i-x$ and $x\le x+1-j$}\\
&\Longleftrightarrow\text{$(i,j)=(x,0), (x,1), (x+1, 0)$ or $(x+1, 1)$}. 
\end{align*}
Since $(i,j)$ cannot be $(x,1)$ when $x=0$, it follows that
\[
\text{$|M_{ur}^*\(\D_\rho[x,y]\)|=3$ or $4$,}
\]
depending on whether $x=0$ or $x>0$, respectively. This completes the proof. 
\end{proof}

These two lemmas together yield the following corollary. 

\begin{cor}\label{C:no of unram}
Let $\chi_i$ be unramified characters of $F^\x$, and let  $x_i, y_i\in\bbR$ with $x_i-y_i\in\bbN$. Set $\D_i=\D_{\chi_i}[x_i,y_i]$ 
for $i=1,\ldots, \el$. Then, for $\pi_0\in\Rep(\SO_{2n+1}(F))$ with $n\ge 0$, 
\[
|\mu^*_{ur}\(\D_1\x\cdot\cdot\cdot\x\D_\el\rtimes\pi_0\)|
=
4^\el\cdot|\mu^*_{ur}(\pi_0)|. 
\]
\end{cor}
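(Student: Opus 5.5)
The plan is to argue by induction on $\el$, combining \lmref{L:unram 3} with \lmref{L:unram 4}. For $\el=0$ there is nothing to prove. For $\el\ge 1$, write
\[
\Pi_\el=\D_1\x\cdot\cdot\cdot\x\D_\el\rtimes\pi_0=\D_1\rtimes\(\D_2\x\cdot\cdot\cdot\x\D_\el\rtimes\pi_0\),
\]
using transitivity (induction in stages) of parabolic induction. Only the class in the Grothendieck group matters, since $\mu^*_{ur}$ is defined through semisimplified Jacquet modules. Applying \lmref{L:unram 3} with $\tau=\D_1$ and $\pi=\D_2\x\cdot\cdot\cdot\x\D_\el\rtimes\pi_0$ gives
\[
|\mu^*_{ur}(\Pi_\el)|=|M^*_{ur}(\D_1)|\cdot|\mu^*_{ur}(\D_2\x\cdot\cdot\cdot\x\D_\el\rtimes\pi_0)|.
\]
The induction hypothesis then yields the factor $4^{\el-1}|\mu^*_{ur}(\pi_0)|$ for the second term.

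It remains to see that $|M^*_{ur}(\D_i)|=4$. Here $\D_i=\D_{\chi_i}[x_i,y_i]$ with $\chi_i$ an unramified character of $\GL_1(F)$. This is a supercuspidal $\rho$ with $k=1$, and it is unramified. The hypothesis $x_i-y_i\in\bbN$ means $x_i-y_i>0$ (under the convention $\bbN=\bbZ_{\ge1}$, as in the notation $a,b\in\bbN$ used earlier). So the third case of \lmref{L:unram 4} applies and gives $4$. Multiplying the factors completes the induction.

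The only point needing care is that \lmref{L:unram 3} is stated for $\tau\in\Rep(\GL_r(F))$ and $\pi\in\Rep(\SO_{2n+1}(F))$ of finite length, not necessarily irreducible. The second argument here is a full induced representation, which lies in $\Rep$, so the lemma applies directly. Its proof goes through Tadić's formula, \thmref{T:Tadic}, extended $\bbZ$-linearly, and that extension is legitimate because $\mu^*$ and $M^*$ are additive on $\sR(S)$ and $\sR$.

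I do not expect a serious obstacle. The main thing to check is the reading of $\bbN$. If $x_i=y_i$ were allowed, the third case of \lmref{L:unram 4} would not apply, the factor would be $3$ rather than $4$, and the count $4^\el$ would fail.
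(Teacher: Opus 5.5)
Your proposal is correct and matches the paper's proof: it writes $\D_1\x\cdots\x\D_\el\rtimes\pi_0\simeq\D_1\rtimes(\D_2\x\cdots\x\D_\el\rtimes\pi_0)$, applies \lmref{L:unram 3} together with the case $x_i-y_i>0$ of \lmref{L:unram 4} to get the factor $4$, and inducts on $\el$. Your reading of $\bbN$ as $\bbZ_{\ge 1}$ is the intended one, and it is needed, since $x_i=y_i$ would give the factor $3$.
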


\begin{proof}
Since 
\[
\D_1\x\cdot\cdot\cdot\x\D_\el\rtimes\pi_0
\simeq
\D_1\rtimes\(\D_2\x\cdot\cdot\cdot\x\D_\el\rtimes\pi_0\),
\]
the proof follows immediately from \lmref{L:unram 3}, \lmref{L:unram 4}, and induction on $\el$. 
\end{proof}

\subsection{Unramified constituents II}
In this subsection, we are devoted to computing the number
\[
|\mu^*_{ur}(\pi)|
\]
that appears in \propref{P:key Jac} when $\pi$ is a seed representation. The key to our computation is Atobe's paper \cite{Atobe2020}
(see also \cite{Xu2017}) on Jacquet modules and the local Langlands correspondence. Some preparations are needed. 

Let $\rho$ be an irreducible supercuspidal representation of $\GL_k(F)$ with $k>0$, and let $x\in\bbR$. 
Given $\tau\in\Rep(\GL_r(F))$ with $r\ge 0$, define the partial Jacquet module $\Jac_{\rho|\cdot|^x}(\tau)$ as follows. Write
\[
m^*(\tau)=\sum_{i\in I} \tau_i\bt\tau'_i,
\]
where $\tau_i, \tau'_i\in\Irr_0(\GL)$ for each $i\in I$. Then 
\[
\Jac_{\rho|\cdot|^x}
:=
\sum_{\substack{i\in I\\\tau_i\simeq\rho|\cdot|^x}}\tau'_i. 
\]
More generally, if $\ul{x}=(x_1,\ldots, x_\el)\in \bbR^\el$, then define
\[
\Jac_{\rho|\cdot|^{\ul{x}}}
=
\Jac_{\rho|\cdot|^{x_\el}}\circ\Jac_{\rho|\cdot|^{x_{\el-1}}}\circ\cdot\cdot\cdot\circ\Jac_{\rho|\cdot|^{x_1}}. 
\]
Note that if $r=k\el$, then 
\[
\Jac_{\rho|\cdot|^{\ul{x}}}(\tau)\in\sR(\GL_0(F)). 
\]
In particular,  $\Jac_{\rho|\cdot|^{\ul{x}}}(\tau)$ is a finite-dimensional $\bbC$-linear space. 

The following lemma, which computes $\Jac_{\rho|\cdot|^{\ul{x}}}(\tau)$ for certain $\tau$ and $\ul{x}$, is due to Atobe
(\cite[Lemma 2.4]{Atobe2020}). 

\begin{lm}\label{L:Atobe 2}
Let $\ul{z}\in\Omega_\el$ be as in \eqref{E:ul(z)}. For $(x,y)\in\stt{(x_i, y_i)\mid 1\le i\le t}$,  set 
\[
m_{(x,y)}=|\stt{1\le i\le t\mid (x,y)=(x_i, y_i)}|. 
\]
Then, for $\ul{z}'\in\Omega_\el$, we have
\[
\dim_\bbC\Jac_{\rho|\cdot|^{\ul{z}'}}\(\d_\rho(\ul{z})\)
=
\begin{cases}
\prod_{(x,y)\in\stt{(x_i, y_i)\mid 1\le i\le t}} m_{(x,y)}!\quad&\text{if $\ul{z}'=\ul{z}$},\\
0\quad&\text{if $\ul{z}'<\ul{z}$}. 
\end{cases}
\]
Here, $\bbR^\el$ is regarded as a totally ordered set with respect to the lexicographical order. 
\end{lm}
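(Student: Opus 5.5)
The plan is to reduce the lemma to a combinatorial count of ``shuffles'' of the segments, using two properties of the partial Jacquet functor. Throughout, $\ul{z}$ is as in \eqref{E:ul(z)}.

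\emph{Leibniz rule.} Since $m^*$ is a ring homomorphism for the Hopf algebra structure on $\sR$ (Zelevinsky), for $\tau_1,\tau_2\in\Rep_0(\GL)$ we have in $\sR$
\[
\Jac_{\rho|\cdot|^x}(\tau_1\x\tau_2)=\Jac_{\rho|\cdot|^x}(\tau_1)\x\tau_2+\tau_1\x\Jac_{\rho|\cdot|^x}(\tau_2).
\]
Only the terms of $m^*(\tau_1)m^*(\tau_2)$ whose left factor has degree $k$ and equals $\rho|\cdot|^x$ contribute. Since $\rho|\cdot|^x$ is supercuspidal, such a left factor must come from one side with a degree-$0$ partner on the other side.

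\emph{Single segment.} By \cite[Proposition 9.5]{Zelevinsky1980},
\[
\Jac_{\rho|\cdot|^x}\(\D_\rho[a,b]\)=
\begin{cases}
\D_\rho[a-1,b]&\text{if }x=a,\\
0&\text{otherwise.}
\end{cases}
\]

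Iterating these two rules $\el$ times, $\Jac_{\rho|\cdot|^{\ul{z}'}}(\d_\rho(\ul{z}))$ becomes a sum of copies of $1_{\GL_0(F)}$. The copies are indexed by sequences $(i_1,\dots,i_\el)$ of segment indices satisfying two conditions. First, each index $i$ occurs exactly $x_i-y_i+1$ times. Second, the $s$-th exponent $z'_s$ equals the current top of segment $i_s$, namely $x_{i_s}$ minus the number of earlier occurrences of $i_s$. In other words, the dimension equals the number of ways to write $\ul{z}'$ as an interleaving of the decreasing words $(x_i,x_i-1,\dots,y_i)$, where the interleavings are labelled by which segment supplies each letter. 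Thus it suffices to show two things: every interleaving word is $\ge\ul{z}$ lexicographically, and the labelled interleavings producing exactly $\ul{z}$ number $\prod m_{(x,y)}!$.

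I would prove both by induction on $t$, via a greedy comparison. At the start, the available tops are $x_1\le\dots\le x_t$, so any interleaving word $\ul{z}'$ has $z'_1\ge x_1$. If $z'_1>x_1$, then $\ul{z}'>\ul{z}$ and there is nothing to prove. Otherwise some segment $i$ with $x_i=x_1$ is started. Its next top is $x_1-1$, which is strictly smaller than every other available top (all of these are $\ge x_1$). So a word that is not larger than $\ul{z}$ must continue reading segment $i$ until it is exhausted at $y_i$. At that point the next letter is some $x_j\ge x_1$.

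Now compare with $\ul{z}$, whose first block is $x_1,\dots,y_1$ with $y_1$ minimal among the segments having $x$-value $x_1$. If $y_i>y_1$, the words first differ at position $x_1-y_i+2$: there $\ul{z}$ has $y_i-1$, while $\ul{z}'$ has some $x_j\ge x_1>y_i-1$. Hence $\ul{z}'>\ul{z}$. Therefore an interleaving with $\ul{z}'\le\ul{z}$ must first read completely a segment equal to $[x_1,y_1]$. There are exactly $m_{(x_1,y_1)}$ choices for which such segment this is. The remainder is an interleaving of the remaining segments, to be compared with the tail of $\ul{z}$, which is the element of $\Omega_{\el-(x_1-y_1+1)}$ attached to those remaining segments. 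Induction then shows the tail contributes $0$ if it is strictly smaller, and $(m_{(x_1,y_1)}-1)!\prod_{(x,y)\ne(x_1,y_1)}m_{(x,y)}!$ if it is equal. Multiplying gives both assertions of the lemma.

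I expect the main obstacle to be the tie-breaking step when several segments share the same $x$-value. This step is where the ordering condition ``$y_{i-1}\le y_i$ when $x_{i-1}=x_i$'' in the definition of $\Omega_\el$ is used, so the lexicographic comparison there must be checked carefully. A secondary point is bookkeeping in the Leibniz rule: one should verify that the only contributions come from peeling the top of a single segment, which follows from the supercuspidality of $\rho|\cdot|^x$ and the degree count.
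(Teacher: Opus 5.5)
Your proof is correct and complete. The paper gives no argument of its own here; it cites Atobe's Lemma~2.4 \cite{Atobe2020}. What you wrote is a self-contained version of the standard computation behind that lemma, built from three ingredients:
\begin{itemize}
\item the Leibniz rule for $\Jac_{\rho|\cdot|^x}$, which follows from multiplicativity of $m^*$ together with a cuspidal-support count;
\item Zelevinsky's formula for a single segment;
\item a greedy lexicographic induction on $t$ that uses the ordering conditions defining $\Omega_\el$.
\end{itemize}

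One step should be reordered. The claim that a word not exceeding $\ul{z}$ must keep reading segment $i$ until it is exhausted needs $z_p=x_1-p+1<x_1$ at every deviation position $p\le x_1-y_i+1$. That amounts to $x_1-y_i\le x_1-y_1$, which is exactly the minimality $y_1\le y_i$. You state this minimality only in the following paragraph, so invoke it at that earlier step.

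Your argument also proves slightly more than the statement: the vanishing holds for every $\ul{z}'\in\bbR^\el$ with $\ul{z}'<\ul{z}$, not only for $\ul{z}'\in\Omega_\el$.
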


To proceed, let $\pi\in\Rep\(\SO_{2n+1}(F)\)$ with $n\ge 0$, and write 
\[
\mu^*(\pi)=\sum_{i\in I}\tau_i\bt\pi_i
\]
for some $\tau_i\in\Irr_0(\GL)$ and $\pi_i\in\Irr_0(\SO)$. Define
\[
\mu^*_\rho(\pi)
=
\sum_{\substack{i\in I\\\tau_i\in\Irr_\rho(\GL)}}
\tau_i\bt\pi_i,
\]
where 
\[
\Irr_\rho(\GL)
:=
\bigcup_{\ell\ge 0}\Irr_\rho\(\GL_{k\el}(F)\). 
\]
The next lemma is also due to Atobe (\cite[Lemma 2.7]{Atobe2020}). 

\begin{lm}\label{L:Atobe 3}
If we define 
\[
\iota:\sR(S)\longto\sR\ot\sR(S); \quad \pi\longmapsto 1_{\GL_0(F)}\ot \pi,
\]
then 
\[
\mu^*=\circ_\rho\((m\ot{\rm id})\circ({\rm id}\ot\mu^*_\rho)\)\circ\iota,
\]
where $\rho$ runs over all irreducible unitary supercuspidal representations of $\GL_k(F)$ with $k>0$. 
\end{lm}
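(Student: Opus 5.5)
The plan is to compare both sides of the identity as semisimplified Jacquet modules, refined according to cuspidal support. Fix $\pi\in\Irr(\SO_{2n+1}(F))$; both sides are $\bbZ$-linear, so this suffices. Take an irreducible constituent $\tau\bt\pi'$ of $\mu^*(\pi)$, with $\tau\in\Irr(\GL_r(F))$. The cuspidal support of $\tau$ is a multiset of representations $\rho|\cdot|^x$ with $\rho$ unitary supercuspidal and $x\in\bbR$. For distinct unitary $\rho$ the classes $\Irr_\rho(\GL)$ are disjoint. By Zelevinsky's classification (cf.\ \lmref{L:Atobe}), $\tau$ can therefore be written uniquely as
\[
\tau\simeq\tau_{\rho_1}\x\cdots\x\tau_{\rho_s},\qquad \tau_{\rho_i}\in\Irr_{\rho_i}(\GL),
\]
and this product is irreducible because no two factors are linked. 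Thus $m(\tau_{\rho_1}\ot\cdots\ot\tau_{\rho_s})=\tau$, independently of the order of the factors.

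Next I would compute the right-hand side via transitivity of Jacquet modules. Let $\rho_1,\ldots,\rho_s$ be the unitary supercuspidals occurring, in the order in which the composition $\circ_\rho$ is taken; only finitely many $\rho$ contribute nontrivially, since all other $\mu^*_\rho$ only produce the term $1_{\GL_0(F)}$. Applying $({\rm id}\ot\mu^*_{\rho})$ successively computes the semisimplified Jacquet module of $\pi$ with respect to the standard parabolic with Levi $\GL_{r_1}\x\cdots\x\GL_{r_s}\x\SO_{2(n-r)+1}$. This uses transitivity $\Jac_{N'}\circ\Jac_{N}=\Jac_{N''}$ and exactness of Jacquet functors. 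Each time, one keeps only the constituents whose $j$-th $\GL$-block has cuspidal support in the $\rho_j$-line; the block may also be empty. The map $m$ then glues these blocks back into one $\GL_r$-factor. Consequently, the coefficient of $\tau\bt\pi'$ on the right-hand side equals the multiplicity of $\tau_{\rho_1}\bt\cdots\bt\tau_{\rho_s}\bt\pi'$ in $s.s.\,\Jac(\pi)$ for this finer parabolic.

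On the left-hand side, the coefficient of $\tau\bt\pi'$ in $\mu^*(\pi)$ is computed by applying the $\GL_r$-Jacquet functor for the partition $(r_1,\ldots,r_s)$ to the $(\GL_r\x\SO)$-Jacquet module of $\pi$, and projecting onto the Bernstein component in which the $j$-th block has support in the $\rho_j$-line. Again this uses transitivity. Projection onto a Bernstein component is exact, so it passes to the Grothendieck group. It therefore suffices to show the following: for any $\tau'\in\Irr(\GL_r(F))$, the component of $s.s.\,\Jac_{N_{(r_1,\ldots,r_s)}}(\tau')$ with block supports $(\rho_1\text{-line},\ldots,\rho_s\text{-line})$ is
\[
\begin{cases}
\tau_{\rho_1}\ot\cdots\ot\tau_{\rho_s}&\text{if $\tau'\simeq\tau_{\rho_1}\x\cdots\x\tau_{\rho_s}$},\\
0&\text{if the support of $\tau'$ has a different decomposition}.
\end{cases}
\]
Granting this, the multiplicities on both sides agree term by term, which proves the lemma.

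The main obstacle is this last claim, where multiplicity one is the key point. I would derive it from the geometric lemma (Zelevinsky's formula for $m^*$ of a product). Write $\tau'=\tau_{\rho_1}\x\cdots\x\tau_{\rho_s}$ and expand $m^*$ of the product into shuffles of the pieces $\tau_{\rho_i}$. The support condition forces the $j$-th block to receive exactly the whole of $\tau_{\rho_j}$, with no contribution from the other factors. Hence only the identity shuffle survives, with the trivial Jacquet module in each factor, and it contributes exactly $\tau_{\rho_1}\ot\cdots\ot\tau_{\rho_s}$ with multiplicity one. I also expect some bookkeeping about the order of composition of the $\circ_\rho$. This should be harmless, because the $\GL$-parts for distinct $\rho$ commute inside $\sR$ ($m$ is commutative), and the corresponding Jacquet-module components for different orderings of the blocks have the same semisimplification by the same geometric-lemma argument.
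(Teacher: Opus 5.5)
Your argument is correct. Note that the paper does not prove this lemma at all; it simply quotes \cite[Lemma 2.7]{Atobe2020}. So your proposal is a self-contained derivation of a result the paper takes as a black box.

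You compare the coefficient of an irreducible $\tau\bt\pi'$ on both sides with a multiplicity in the semisimplified Jacquet module of $\pi$ for the finer parabolic with Levi $\GL_{r_1}(F)\x\cdots\x\GL_{r_s}(F)\x\SO_{2(n-r)+1}(F)$.
\begin{itemize}
\item On the right-hand side, you use transitivity of Jacquet functors on the $\SO$-factor. You combine it with Zelevinsky's unique, irreducible factorization $\tau\simeq\tau_{\rho_1}\x\cdots\x\tau_{\rho_s}$ along distinct $\rho$-lines.
\item On the left-hand side, you use transitivity on the $\GL_r$-factor. You combine it with the geometric lemma, i.e.\ multiplicativity of the iterated $m^*$. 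This gives the key multiplicity-one statement: only the trivial shuffle survives the support condition.
\end{itemize}

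What your route buys is transparency. It shows the lemma is a formal consequence of transitivity and the Hopf-algebra structure of $\sR$. Moreover, your computation works for any fixed ordering of the $\rho$'s, so it proves for free that the composition $\circ_\rho$ is independent of the order. The statement tacitly presupposes this, so your closing bookkeeping remark is unnecessary.

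Three small presentational points:
\begin{itemize}
\item State the coefficient comparison for an arbitrary irreducible $\tau\bt\pi'$, not only for constituents of $\mu^*(\pi)$. That way terms on the right-hand side not a priori in $\mu^*(\pi)$ are visibly covered; your argument already does this.
\item Formulate the final claim with $\tau'$'s own factorization $\tau'\simeq\tau'_{\rho_1}\x\cdots\x\tau'_{\rho_s}$, with $\deg\tau'_{\rho_j}=r_j$, rather than reusing the factors of $\tau$.
\item The factorization along lines is Zelevinsky's theorem on products of irreducibles whose cuspidal supports lie on different lines. It is not \lmref{L:Atobe}, which only parametrizes a single $\Irr_\rho$.
\end{itemize}
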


At this point, we make some observations based on \lmref{L:Atobe 3}. Let $\pi$ be an irreducible generic square-integrable representation 
of $\SO_{2n+1}(F)$ with $n>0$, and let $\phi$ be the associated $L$-parameter, written as in \eqref{E:L-par}. Then, by \lmref{L:Atobe 3}, 
the summands of $\mu^*(\pi)$ are of the form
\[
\(\bigtimes_{\rho\in I_\phi}\tau_\rho\)\rtimes \pi_0,
\]
where $\tau_\rho\in\Irr_\rho(\GL)$ for each $\rho\in I_\phi$ and $\pi_0\in\Irr_0(\SO)$. 

Since our goal is to compute $|\mu^*_{ur}(\pi)|$, we are mainly interested in those summands for which 
\[
\bigtimes_{\rho\in I_\phi}\tau_\rho
\]
contains an unramified constituent (necessarily with multiplicity one). By \cite[Proposition 1.5]{Matringe2013}, it contains an unramified 
constituent if and only if each $\tau_\rho$ is unramified. Now, \lmref{L:unram 2} implies that $\tau_\rho=1_{\GL_0(F)}$ when 
$\rho\ne\chi, \chi'$, where we recall that $\chi$ and $ \chi'$ denote the unramified quadratic characters of $F^\x$. 

Consequently, to compute $|\mu^*_{ur}(\pi)|$, it suffices to understand 
\[
\(m\ot{\rm id}\)\circ\({\rm id}\ot\mu^*_{\chi'}\)\circ\mu^*_{\chi}(\pi),
\]
i.e., we only need to look at the summands of $\mu^*(\pi)$ of the form
\[
(\tau\,\x\,\tau')\rtimes\pi_0,
\]
where $\tau\in\Irr_\chi(\GL)$ and $\tau'\in\Irr_{\chi'}(\GL)$ are unramified, and $\pi_0\in\Irr_0(\SO)$. 

Now, we can state and prove the following key lemma. 

\begin{lm}\label{L:key Jac}
Let $\pi$ be a seed representation of $\SO_{2n+1}(F)$ with $n\ge 0$, and let $\phi$ be the associated $L$-parameter, 
written as in \eqref{E:L-par} when $n>1$. Then
\[
|\mu^*_{ur}(\pi)|
= 
2^{|I_{\phi, \chi}|+|I_{\phi, \chi'}|}.
\]
\end{lm}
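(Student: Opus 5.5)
We reduce the count to a one-character computation and then use Atobe's description of Jacquet modules of generic discrete series. The case $n=0$ is trivial: $\mu^*(1_{\SO_1(F)})=1_{\GL_0(F)}\bt 1_{\SO_1(F)}$ and $I_{\phi,\chi}=I_{\phi,\chi'}=\emptyset$. For $n>0$, the discussion preceding the lemma (based on \lmref{L:Atobe 3}, \cite[Proposition 1.5]{Matringe2013} and \lmref{L:unram 2}) shows that $|\mu^*_{ur}(\pi)|$ is the number of summands $(\tau\x\tau')\bt\pi_1$ of $(m\ot{\rm id})\circ({\rm id}\ot\mu^*_{\chi'})\circ\mu^*_\chi(\pi)$ with $\tau\in\Irr_\chi(\GL)$ and $\tau'\in\Irr_{\chi'}(\GL)$ unramified and $\pi_1$ generic. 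It therefore suffices to prove a one-character statement.

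\emph{Claim.} Let $\pi$ be a generic square-integrable representation with $|I_{\phi,\chi}|\le 1$. Then the number of summands $\tau\bt\pi_1$ of $\mu^*_\chi(\pi)$ with $\tau$ unramified and $\pi_1$ generic is $2^{|I_{\phi,\chi}|}$. Moreover, each such $\pi_1$ is again generic square-integrable with $I_{\phi_1,\chi'}=I_{\phi,\chi'}$, so that the claim can then be applied to $\pi_1$ and $\chi'$. Multiplying the two counts gives $2^{|I_{\phi,\chi}|+|I_{\phi,\chi'}|}$.

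For the claim, first observe that an unramified $\tau\in\Irr_\chi(\GL_\el(F))$ is, by \lmref{L:Atobe} and \lmref{L:unram 2}, $\D_\chi(\ul z)$ with all segments of length one. Thus $\ul z=(x_1,\ldots,x_\el)$ with $x_1\le\cdots\le x_\el$, and $\tau$ is the unramified constituent of $\chi|\cdot|^{x_1}\x\cdots\x\chi|\cdot|^{x_\el}$. By \lmref{L:Atobe 2}, $\Jac_{\chi|\cdot|^{\ul z}}(\tau)\ne0$. Hence, by transitivity of Jacquet modules, if $\tau\bt\pi_1$ occurs in $\mu^*_\chi(\pi)$, then $\pi$ has the iterated partial Jacquet module $\Jac_{\chi|\cdot|^{x_1}}$, \ldots, $\Jac_{\chi|\cdot|^{x_\el}}$ nonzero, passing through $\pi_1$ at the end.

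Now I would use Atobe's derivative computations in \cite{Atobe2020} for generic discrete parameters. If $I_{\phi,\chi}=\emptyset$, then $\Jac_{\chi|\cdot|^x}(\pi)=0$ for all $x$, so only $\tau=1_{\GL_0(F)}$ contributes, giving the count $1$. If $I_{\phi,\chi}=\{\ka\}$, the only nonzero first derivative in the $\chi$-direction is at $x=\ka$ (note $\ka\ge\frac12$, and Casselman's criterion forces $x>0$). This derivative is $\chi|\cdot|^\ka\bt\pi'$ with multiplicity one, where $\pi'$ is the generic representation with $\chi\bt S_{2\ka+1}$ replaced by $\chi\bt S_{2\ka-1}$ (and simply removed if $\ka=\frac12$). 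The next $\chi$-derivative of $\pi'$ then lives only at $x_2=\ka-1<x_1$. Such a chain corresponds to the segment $\D_\chi[\ka,\ka-\el+1]$, which is not unramified for $\el\ge2$ by \lmref{L:unram 1}. More precisely, by \lmref{L:Atobe 2} (the lexicographic vanishing) and the constraint $x_1\le x_2$, no unramified $\tau$ of length $\ge2$ can occur. Hence the contributions are exactly $\tau=1_{\GL_0(F)}$ and $\tau=\chi|\cdot|^\ka$, giving the count $2$. In both cases the residual $\pi_1$ is generic with unchanged $\chi'$-part, which justifies the second application.

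The main obstacle is making the middle step rigorous. One must show that the full $\mu^*_\chi(\pi)$ contains no other summand with unramified GL-part, not merely that the iterated derivatives vanish. I would handle this by decomposing $\mu^*_\chi(\pi)$ via \thmref{T:Tadic} applied to the embedding $\pi\hookto\Pi$ from \S\ref{SS:construction}. In the seed case, the $\chi$-part of $\Pi$ is either absent or the single factor $\D_{\chi,0}=\D_\chi[\ka,\frac12]$. I would compute $M^*(\D_\chi[\ka,\frac12])$ explicitly, as in \lmref{L:unram 4}, to list all candidate unramified GL-terms together with their complements. I would then discard, using Atobe's results, those that do not survive in $\pi$ or whose $\SO$-part is non-generic.
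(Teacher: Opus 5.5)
Your proof is correct, and the ``main obstacle'' you flag is already handled by your own transitivity argument. You should drop the fallback through \thmref{T:Tadic} applied to $\Pi=\D_\chi[\ka,\tfrac12]\rtimes\pi'$: it would force you to decide which terms of $\mu^*(\Pi)$ lie in $\pi$ rather than in the Langlands quotient, which is the genuinely hard part.

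The chain argument covers \emph{every} summand $\tau\bt\pi_1$ of $\mu^*_\chi(\pi)$ with $\tau$ unramified, whatever $\pi_1$ is. Write $\tau\hookto\chi|\cdot|^{x_1}\x\cdots\x\chi|\cdot|^{x_\el}$ with $x_1\le\cdots\le x_\el$. Then Frobenius reciprocity gives $\Jac_{\chi|\cdot|^{x_\el}}\circ\cdots\circ\Jac_{\chi|\cdot|^{x_1}}(\pi)\ge\pi_1$; cite Frobenius reciprocity here, not \lmref{L:Atobe 2}, which concerns $\d_\chi(\ul z)$ rather than $\tau$. The first-derivative results of \cite{Atobe2020} (or \cite[Lemma 7.3]{Xu2017}) force $x_1=\ka$ with $\Jac_{\chi|\cdot|^\ka}(\pi)$ irreducible. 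Then $x_2=\ka-1<x_1$, or no $x_2$ exists if $\ka=\tfrac12$, so $\el\le 1$. The $\el=1$ term has multiplicity equal to the length of $\Jac_{\chi|\cdot|^\ka}(\pi)$, namely one. It is this monotonicity, not the lexicographic vanishing in \lmref{L:Atobe 2}, that excludes $\el\ge 2$.

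State your Claim in this stronger form, covering all summands with unramified $\tau$ and not only those with generic $\pi_1$. In the composite $(m\ot{\rm id})\circ({\rm id}\ot\mu^*_{\chi'})\circ\mu^*_\chi(\pi)$, a non-generic $\pi_1$ could a priori still produce a generic representation at the $\chi'$-step. The stronger form shows this cannot happen, because both surviving $\pi_1$ are generic.

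Compared with the paper, your route differs in two places.
\begin{itemize}
\item For $I_{\phi,\chi}=\{\ka\}$, the paper quotes \cite[Theorem 4.3]{Atobe2020}, which gives $\mu^*_\chi(\pi)=\sum_\el\D_\chi[\ka,\ka-\el+1]\bt\pi_\el$ outright. It then reads off from \lmref{L:unram 1} that only $\el=0,1$ are unramified, and uses \cite[Theorem 4.2]{Atobe2020} only to identify $\pi_1$ as generic.
\item When $I_{\phi,\chi}=I_{\phi,\chi'}=\emptyset$, the paper argues through the embedding $\pi\hookto\Pi$ with all $\rho$ ramified. It gets $1\le|\mu^*_{ur}(\pi)|\le|\mu^*_{ur}(\Pi)|=1$ from \lmref{L:unram 3} and \lmref{L:unram 4}.
\end{itemize}
Your route needs only first-derivative information plus transitivity and treats the empty and non-empty cases uniformly, so it is slightly more elementary. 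The paper's route is shorter once Theorem 4.3 is available. That theorem is needed anyway in the proof of \propref{P:key Jac}, where $|I_{\phi,\chi}|$ may exceed one and increasing chains of derivatives really do occur.
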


\begin{proof}
The case $n=0$ follows from the definition; thus, we may assume that $n>0$ throughout the proof. We then have three cases 
depending on whether $I_{\phi, \chi}$ or $I_{\phi, \chi'}$ is empty.

Assume first that both of them are empty. Then, by the construction at the beginning of \S\ref{SS:construction}, $\pi$ is the unique 
generic subrepresentation of 
\[
\Pi
=
\(\bigtimes_{\rho\in I_\phi\setminus I^{00}_\phi}\bigtimes_{j\in J_\rho}\D_{\rho, j}\)\rtimes \sigma,
\]
where $\sigma\in\Irr_0(\SO)$ is a generic supercuspidal representation and $\rho\in\Irr_1(\GL)$ are self-dual supercuspidal 
representations.  Now, the assumption that $I_{\phi, \chi}=I_{\phi, \chi'}=\emptyset$ implies that each $\rho$ is ramified. Since 
\[
|\mu^*_{ur}(\sigma)|=1,
\]
it follows from \lmref{L:unram 3}, \lmref{L:unram 4}, and induction (see the proof of \corref{C:no of unram}) that 
\[
|\mu^*_{ur}(\Pi)|=1. 
\]
This implies that $|\mu^*_{ur}(\pi)|=1$. 

Assume next that both $I_{\phi, \chi}$ and $I_{\phi, \chi'}$ are non-empty. Then $I_{\phi, \chi}=\stt{\ka}$ and $I_{\phi, \chi'}=\stt{\ka'}$ 
for some $\ka, \ka'\in\frac{1}{2}+\bbZ_+$. In this case, we have to compute 
\[
\(m\ot{\rm id}\)\circ\({\rm id}\ot\mu^*_{\chi'}\)\circ\mu^*_{\chi}(\pi). 
\]
To compute $\mu^*_{\chi}(\pi)$, we define, for each integer $0\le \el\le\min\stt{n, 2\ka+1}$,
\[
\ul{\ka}(\el)=(\ka, \ka-1,\ldots, \ka-\el+1)\in\Omega_\el. 
\]
Then, by \cite[Theorem 4.3]{Atobe2020}, we have
\[
\mu^*_{\chi}(\pi)
=
\sum_{\el=0}^{\min\stt{n,2\ka+1}}\D_{\chi}[\ka, \ka-\el+1]\bt\pi_\el, 
\]
where
\[
\pi_\el=\Jac_{\chi|\cdot|^{\ul{\ka}(\el)}}(\pi)
\]
for $0\le \el\le\min\stt{n, 2\ka+1}$, so that $\pi_0=\pi$. Since 
$\D_\chi[\ka, \ka-\el+1]$ is unramified if and only if $\el=0$ or $1$ by \lmref{L:unram 1}, it suffices to compute 
\[
\(m\ot{\rm id}\)\circ\({\rm id}\ot\mu^*_{\chi'}\)\(\D_\chi[\ka,\ka-\el+1]\bt\pi_\el\)
\]
for $\el=0$ and $1$, by the observation preceding the lemma. 

When $\el=0$, we have 
\[
\(m\ot{\rm id}\)\circ\({\rm id}\ot\mu^*_{\chi'}\)(1\bt \pi)
=
\mu^*_{\chi'}(\pi),
\]
which, by applying \cite[Theorem 4.3]{Atobe2020} again, is equal to
\[
\sum_{\el'=0}^{\min\stt{n,2\ka'+1}}\D_{\chi'}[\ka', \ka'-\el'+1]\bt\Jac_{\chi'|\cdot|^{\ul{\ka}'(\el')}}(\pi)
\]
with $\ul{\ka}'(\el')\in\Omega_{\el'}$ defined analogously to $\ul{\ka}(\el)$ for 
$0\le \el'\le\min\stt{n, 2\ka'+1}$. Since $\D_{\chi'}[\ka', \ka'-\el'+1]$ is unramified if and only if $\el'=0$ or $1$, 
it suffices to consider the summands
\[
1\bt \pi
\quad\text{and}\quad
\chi'|\cdot|^{\ka'}\bt\Jac_{\chi'|\cdot|^{\ka'}}(\pi)
\]
corresponding to $\el'=0$ and $1$, respectively. Since $\pi$ is generic, we have 
\[
1\le \mu^*_{ur}(\pi).
\] 
On the other hand, by \cite[Theorem 4.2 (2)]{Atobe2020} (or rather \cite[Lemma 7.3]{Xu2017}), 
\[
\Jac_{\chi'|\cdot|^{\ka'}}(\pi)
\] 
is the irreducible generic representation of $\SO_{2n-1}(F)$ with $L$-parameter 
\[
\phi_1=\phi-\phi_{\chi'}\bt S_{2\ka'+1}+\phi_{\chi'}\bt S_{2\ka'-1}. 
\]
It follows that
\[
1+\chi'|\cdot|^{\ka'}\le\mu_{ur}^*(\pi). 
\]

Suppose that $\el=1$. Then we have
\begin{align*}
\(m\ot{\rm id}\)\circ\({\rm id}\ot\mu^*_{\chi'}\)&(\chi|\cdot|^\ka\bt \pi_1)\\
&=
\(m\ot{\rm id}\)\(\chi|\cdot|^\ka\ot\mu^*_{\chi'}\(\pi_1\)\)\\
&=
\sum_{\el'=0}^{\min\stt{n-1, 2\ka'+1}}\(\chi|\cdot|^\ka\,\x\,\D_{\chi'}[\ka',\ka'-\el'+1]\)\bt\Jac_{\chi'|\cdot|^{\ul{\ka}'(\el')}}(\pi_1).
\end{align*}
Since 
\[
\chi|\cdot|^\ka\,\x\,\D_{\chi'}[\ka',\ka'-\el'+1]
\]
is unramified if and only if $\D_{\chi'}[\ka',\ka'-\el'+1]$ is unramified, which holds if and only if $\el'=0$ or $1$, we only need to consider 
the summands
\[
\chi|\cdot|^\ka\bt \pi_1
\quad\text{and}\quad
\(\chi|\cdot|^\ka\,\x\,\chi'|\cdot|^{\ka'}\)\bt\Jac_{\chi'|\cdot|^{\ka'}}(\pi_1)
\]
corresponding to $\el'=0$ and $1$, respectively. Since $\pi_1$ is generic, we have
\[
\chi|\cdot|^\ka\le\mu_{ur}^*(\pi). 
\]
On the other hand, since $\chi|\cdot|^\ka\,\x\,\chi'|\cdot|^{\ka'}$ is unramified and irreducible, and by applying 
\cite[Theorem 4.2]{Atobe2020} (or \cite[Lemma 7.3]{Xu2017}) once more, 
\[
\Jac_{\chi'|\cdot|^{\ka'}}(\pi_1)
\] 
is the irreducible generic representation of $\SO_{2n-3}(F)$ with $L$-parameter 
 \[
\phi_1-\phi_{\chi'}\bt S_{2\ka'+1}+\phi_{\chi'}\bt S_{2\ka'-1},
 \]
we also have
\[
\chi|\cdot|^\ka\,\x\,\chi'|\cdot|^{\ka'}
\le
\mu^*_{ur}(\pi). 
\]
Altogether, we conclude that 
\[
\mu^*_{ur}(\pi)=1+\chi|\cdot|^\ka+\chi'|\cdot|^{\ka'}+\(\chi|\cdot|^\ka\,\x\,\chi'|\cdot|^{\ka'}\),
\]
which shows that $|\mu^*_{ur}(\pi)|=4$. 

Finally, assume that exactly one of $I_{\phi, \chi}$ or $I_{\phi, \chi'}$ is empty, say $I_{\phi, \chi}=\emptyset$. Then 
$I_{\phi, \chi'}=\stt{\ka'}$ for some $\ka'\in\frac{1}{2}+\bbZ_+$. In this case, we have
\[
\(m\ot{\rm id}\)\circ\({\rm id}\ot\mu^*_{\chi'}\)\circ\mu^*_{\chi}(\pi)
=
\mu^*_{\chi'}(\pi). 
\]
Now, the computation when $\el=0$ in the previous case shows that 
\[
\mu^*_{ur}(\pi)=1+\chi'|\cdot|^{\ka'}
\]
and hence $|\mu^*_{ur}(\pi)|=2$. This completes the proof. 
\end{proof}

\subsection{Proof of \propref{P:key Jac}}
Let $\phi$ be the $L$-parameter of $\pi$, written as in \eqref{E:L-par}, and let $\phi_0$ be the associated seed $L$-parameter, written
as in \eqref{E:L-par}, with $\phi$ replaced by $\phi_0$. We show that 
\[
|\mu^*_{ur}(\Pi)|=2^{|I_{\phi, \chi}|+|I_{\phi, \chi'}|}=|\mu^*_{ur}(\pi)|. 
\]
The assumption that $\phi\ne\phi_0$ implies that one of $I_{\phi, \chi}\setminus I_{\phi_0,\chi}$ or $I_{\phi, \chi'}\setminus I_{\phi_0,\chi'}$ is 
non-empty.  Assume first that both of them are non-empty. In this case, we can write 
(see \S\ref{SS:red to seed})
\[
\phi
=
\bigoplus_{\ka\in I_{\phi, \chi}\setminus I_{\phi_0,\chi}}\phi_\chi\bt S_{2\ka+1}
\oplus\phi_0\oplus
\bigoplus_{\ka'\in I_{\phi,\chi'}\setminus I_{\phi_0,\chi'}}\phi_{\chi'}\bt S_{2\ka'+1}.  
\]
Let
\[
I_{\phi, \chi}\setminus I_{\phi_0,\chi}
=
\stt{0<\ka_1<\ka_2\cdot\cdot\cdot<\ka_{2a}}
\quad
\text{and}
\quad
I_{\phi, \chi'}\setminus I_{\phi_0,\chi'}
=
\stt{0<\ka'_1<\ka'_2\cdot\cdot\cdot<\ka'_{2b}}
\]
for some $a,b\in\bbN$. It then follows from the construction at the beginning of \S\ref{SS:construction} and \lmref{L:inj conj} that 
\[
\Pi
=
\bigtimes_{j=1}^a\D_\chi[\ka_{2j}, -\ka_{2j-1}]\,\,\x\,\,\bigtimes_{j=1}^b\D_{\chi'}[\ka'_{2j}, -\ka'_{2j-1}]\rtimes\pi_0. 
\]
Now, \lmref{L:key Jac} and \corref{C:no of unram} imply that
\[
|\mu^*_{ur}(\Pi)|
=
2^{2a+2b}|\mu_{ur}^*(\pi_0)|
=
2^{2a+2b+|I_{\phi_0,\chi}|+|I_{\phi_0,\chi'}|}
=
2^{|I_{\phi,\chi}|+|I_{\phi,\chi'}|}. 
\]
This proves the first equality. 

To verify the second equality, note that 
\[
|\mu^*_{ur}(\pi)|
\le
|\mu^*_{ur}(\Pi)|
=
2^{|I_{\phi,\chi}|+|I_{\phi,\chi'}|}
\]
since $\pi$ is a subrepresentation of $\Pi$. Thus, it is enough to show that 
\[
|\mu^*_{ur}(\pi)|
\ge 
2^{|I_{\phi,\chi}|+|I_{\phi,\chi'}|}. 
\]
For this, we again apply the results of Atobe. By the observations preceding \lmref{L:key Jac}, we have to investigate
\[
\(m\ot{\rm id}\)\circ\({\rm id}\ot\mu^*_{\chi'}\)\circ\mu^*_{\chi}(\pi). 
\]
Assume that 
\[
I_{\phi,\chi}=\stt{0<\ka_1<\ka_2<\cdots<\ka_d}
\quad\text{and}\quad
I_{\phi,\chi'}=\stt{0<\ka'_1<\ka'_2<\cdots<\ka'_{d'}}. 
\]
Note that $d+d'\le n$. 
Following \cite[Theorem 4.3]{Atobe2020}, for an integer $0\le\el \le n$, denote by $K_{\phi,\chi}^{(\el)}$ the set of tuples of integers  
$\ul{a}=(a_1,\ldots, a_d)$ such that 
\begin{itemize}
\item[-] $0\le a_i\le 2\ka_i+1$ for $1\le i\le d$;
\item[-] $a_1+\cdots +a_d=\el$. 
\end{itemize}
For $\ul{a}\in K_{\phi,\chi}^{(\el)}$, set 
\[
\ul{\ka}(\ul{a})
=
(\ka_1, \ka_1-1,\dots, \ka_1-a_1+1, \ldots, \ka_d, \ka_d-1, \ldots, \ka_d-a_d+1)
\in
\Omega_\el. 
\]
For $\ul{a}, \ul{b}\in K_{\phi,\chi}^{(\el)}$, we also set 
\[
m_{\ul{a}, \ul{b}}
=
\dim_\bbC\Jac_{\chi|\cdot|^{\ul{\ka}(\ul{a})}}\(\d_{\chi}(\ul{\ka}(\ul{b}))\),
\]
and define $\(m'_{\ul{a}, \ul{b}}\)_{\ul{a}, \ul{b}\in K_{\phi, \chi}^{(\el)}}$ to be the inverse matrix of 
$\(m_{\ul{a}, \ul{b}}\)_{\ul{a}, \ul{b}\in K_{\phi, \chi}^{(\el)}}$, i.e., 
\[
\sum_{\ul{c}\,\in K_{\phi,\chi}^{(\el)}}m'_{\ul{a},\ul{c}}m_{\ul{c}, \ul{b}}
=
\begin{cases}
1\quad&\text{if $\ul{a}=\ul{b}$},\\
0\quad&\text{if $\ul{a}\ne\ul{b}$}. 
\end{cases}
\]
Now, by \cite[Theorem 4.3]{Atobe2020}, we have
\[
\mu^*_{\chi}(\pi)
=
\sum_{\el=0}^n\,\,\sum_{\ul{a},\, \ul{b}\in K_{\phi,\chi}^{(\el)}} 
m'_{\ul{a}, \ul{b}}\cdot \d_{\chi}(\ul{\ka}(\ul{a}))\bt\Jac_{\chi|\cdot|^{\ul{\ka}(\ul{b})}}(\pi). 
\]
Moreover, \lmref{L:Atobe 2} implies that $\(m_{\ul{a}, \ul{b}}\)_{\ul{a}, \ul{b}\in K_{\phi, \chi}^{(\el)}}$
is a diagonal matrix and that $m_{\ul{a}, \ul{a}}=1$ for each $\ul{a}\in K_{\phi,\chi}^{(\el)}$. Consequently, we also have
$m'_{\ul{a}, \ul{a}}=1$ for each $\ul{a}\in K_{\phi,\chi}^{(\el)}$.  

From the observations preceding \lmref{L:key Jac}, we have to determine when $\d_\chi(\ul{\ka}(\ul{a}))$ is unramified. 
By \lmref{L:unram 2}, we have
\[
\text{$\d_{\chi}(\ul{\ka}(\ul{a}))$ is unramified}
\Longleftrightarrow 
\text{$a_i\le 1$ for $1\le i\le d$}. 
\]
Accordingly, we set 
\[
K^{ur}_{\phi,\chi}
=
\stt{\ul{a}=(a_1,\ldots, a_d)\mid\text{$a_i\in\stt{0,1}$ for each $1\le i\le d$}}. 
\]
Note that 
\[
K^{ur}_{\phi,\chi}\cap K_{\phi, \chi}^{(\el)}\ne\emptyset
\Longleftrightarrow
\el\le d,
\]
in which case
\[
|K^{ur}_{\phi,\chi}\cap K_{\phi, \chi}^{(\el)}|=\begin{pmatrix}d\\\el\end{pmatrix}. 
\]
We proceed to compute
\[
\pi_{\ul{a}}:=\Jac_{\chi|\cdot|^{\ul{\ka}(\ul{a})}}(\pi)
\]
when $\ul{a}\in K^{ur}_{\phi,\chi}\cap K_{\phi, \chi}^{(\el)}$ with $\el\le d$. By applying \cite[Theorem 4.2 (2)]{Atobe2020} repeatedly, 
we find that $\pi_{\ul{a}}$ is the irreducible generic square-integrable representation of $\SO_{2(n-\el)+1}(F)$,
whose $L$-parameter $\phi_{\ul{a}}$ is given by 
\[
\phi_{\ul{a}}
=
\phi
-
\(\bigoplus_{\substack{1\le i\le d\\a_i=1}}\phi_\chi\bt S_{2\ka_i+1}\)
+
\(\bigoplus_{\substack{1\le i\le d\\a_i=1}}\phi_\chi\bt S_{2\ka_i-1}\). 
\]
To summarize, we have shown that 
\[
\sum_{\el=0}^d\,\,\sum_{\ul{a}\in K^{ur}_{\phi,\chi}\cap K^{(\el)}_{\phi,\chi}}
\d_\chi(\ul{\ka}(\ul{a}))\bt\pi_{\ul{a}}
\le
\mu^*_{\chi}(\pi),
\]
where $\d_\chi(\ul{\ka}(\ul{a}))$ (resp. $\pi_{\ul{a}}$) is unramified (resp. generic). 

Our next step is to analyze
\[
(m\ot{\rm id})\circ({\rm id}\bt\mu^*_{\chi'})\(\d_\chi(\ul{\ka}(\ul{a}))\bt\pi_{\ul{a}}\)
=
(m\ot{\rm id})\(\d_\chi(\ul{\ka}(\ul{a}))\bt\mu^*_{\chi'}(\pi_{\ul{a}})\)
\]
for $\ul{a}\in K^{ur}_{\phi,\chi}\cap K_{\phi, \chi}^{(\el)}$ with $\el\le d$. By applying the above computations to each $\pi_{\ul{a}}$, 
we obtain
\[
\sum_{\el'=0}^{d'}\,\,\sum_{\ul{a}'\in K^{ur}_{\phi_{\ul{a}},\chi'}\cap K^{(\el')}_{\phi_{\ul{a}},\chi'}}
\d_{\chi'}(\ul{\ka}'(\ul{a}'))\bt\Jac_{\chi'|\cdot|^{\ul{\ka}'(\ul{a}')}}(\pi_{\ul{a}})
\le
\mu^*_{\chi'}(\pi_{\ul{a}}),
\]
where $\d_{\chi'}(\ul{\ka}'(\ul{a}'))$ is unramified, and 
$
\Jac_{\chi'|\cdot|^{\ul{\ka}'(\ul{a}')}}(\pi_{\ul{a}})
$
is an irreducible generic square-integrable representation of $\SO_{2(n-\el-\el')+1}(F)$ with $L$-parameter 
\[
\phi_{\ul{a}}
-
\(\bigoplus_{\substack{1\le i\le d'\\a'_i=1}}\phi_{\chi'}\bt S_{2\ka'_i+1}\)
+
\(\bigoplus_{\substack{1\le i\le d'\\a'_i=1}}\phi_{\chi'}\bt S_{2\ka'_i-1}\). 
\]
Observe that 
\[
K^{ur}_{\phi_{\ul{a}},\chi'}\cap K^{(\el')}_{\phi_{\ul{a}},\chi'}
=
K^{ur}_{\phi,\chi'}\cap K_{\phi,\chi'}^{(\el')}
\]
for every $\ul{a}\in K^{ur}_{\phi,\chi}\cap K_{\phi, \chi}^{(\el)}$ with $0\le \el\le d$ and every $0\le \el'\le d'$.  Together, we conclude that 
\[
\(m\ot{\rm id}\)\circ\({\rm id}\ot\mu^*_{\chi'}\)\circ\mu^*_{\chi}(\pi)
\]
contains the summands 
\[
\(\d_\chi(\ul{\ka}(\ul{a}))\,\x\,\d_{\chi'}(\ul{\ka}'(\ul{a}'))\)\bt\Jac_{\chi'|\cdot|^{\ul{\ka}'(\ul{a}')}}(\pi_{\ul{a}})
\]
for each $\ul{a}\in K_{\phi, \chi}\cap K_{\phi, \chi}^{(\el)}$ and $\ul{a}'\in K_{\phi, \chi'}\cap K_{\phi, \chi'}^{(\el')}$
with $0\le \el\le d$ and $0\le \el'\le d'$. Since
\[
\text{
$\d_\chi(\ul{\ka}(\ul{a}))\,\x\,\d_{\chi'}(\ul{\ka}'(\ul{a}'))$ (resp. $\Jac_{\chi'|\cdot|^{\ul{\ka}'(\ul{a}')}}(\pi_{\ul{a}})$)}
\]
is unramified (resp. generic), it follows that $\mu^*_{ur}(\pi)$ contains the unramified constituent (necessarily with multiplicity one)
of $\d_\chi(\ul{\ka}(\ul{a}))\,\x\,\d_{\chi'}(\ul{\ka}'(\ul{a}'))$ for each $\ul{a}\in K^{ur}_{\phi, \chi}\cap K_{\phi, \chi}^{(\el)}$ and 
$\ul{a}'\in K^{ur}_{\phi, \chi'}\cap K_{\phi, \chi'}^{(\el')}$ with $0\le \el\le d$ and $0\le \el'\le d'$. Consequently, 
\begin{align*}
 |\mu^*_{ur}(\pi)|
 \ge
 \sum_{\el=0}^d\,\sum_{\el'=0}^{d'} 
 |K^{ur}_{\phi, \chi}\cap K_{\phi, \chi}^{(\el)}|\cdot |K^{ur}_{\phi, \chi'}\cap K_{\phi, \chi'}^{(\el')}|
 =
 \sum_{\el=0}^d\,\sum_{\el'=0}^{d'} 
 \begin{pmatrix}d\\\el\end{pmatrix}\cdot\begin{pmatrix}d'\\\el'\end{pmatrix}
 =
 2^{d+d'} 
 =
 2^{|I_{\phi,\chi}|+|I_{\phi,\chi'}|}. 
\end{align*}
This proves the case where both $I_{\phi,\chi}\setminus I_{\phi_0,\chi}$ and $I_{\phi,\chi'}\setminus I_{\phi_0,\chi'}$ are non-empty. 

Assume that exactly one of $I_{\phi,\chi}\setminus I_{\phi_0,\chi}$ or $I_{\phi,\chi'}\setminus I_{\phi_0,\chi'}$  is empty, say 
$I_{\phi, \chi'}\setminus I_{\phi_0,\chi'}=\emptyset$, and 
\[
I_{\phi, \chi}\setminus I_{\phi_0,\chi}
=
\stt{0<\ka_1<\ka_2\cdot\cdot\cdot<\ka_{2a}}
\]
for some $a\in\bbN$. Then 
\[
\Pi
=
\D_{\chi}[\ka_{2}, -\ka_{1}]\x\cdot\cdot\cdot\x\D_{\chi}[\ka_{2a}, -\ka_{2a-1}]\rtimes\pi_0,
\]
and a similar argument shows that
\[
|\mu^*_{ur}(\pi)|
\le
|\mu^*_{ur}(\Pi)|
=
2^{|I_{\phi,\chi}|}. 
\]
On the other hand, since
\[
\(m\ot{\rm id}\)\circ\({\rm id}\ot\mu^*_{\chi'}\)\circ\mu^*_{\chi}(\pi)
=
\mu^*_{\chi}(\pi),
\]
the above computations imply that $\mu^*_{ur}(\pi)$ contains the unramified constituent (necessarily with multiplicity one) of 
$\d_\chi(\ul{\ka}(\ul{a}))$ for every $\ul{a}\in K^{ur}_{\phi,\chi}\cap K_{\phi,\chi}^{(\el)}$ with $0\le\el\le d$, where the sets $K^{ur}_{\phi,\chi}$ 
and $K_{\phi,\chi}^{(\el)}$ are defined as above, and $d=|I_{\phi,\chi}|$. It follows that 
\[
|\mu^*_{ur}(\pi)|
\ge 
\sum_{\el=0}^d|K^{ur}_{\phi,\chi}\cap K_{\phi,\chi}^{(\el)}|
=
\sum_{\el=0}^d\begin{pmatrix}d\\\el\end{pmatrix}
=
2^d
=
2^{|I_{\phi,\chi}|}. 
\]
This finishes the proof of \propref{P:key Jac}, and hence the proof of \propref{P:seed to sq}.  \qed\\

Although we will not need the following corollary, which follows from \lmref{L:key Jac} and the proof of \propref{P:key Jac}, we record it 
here, as it might be useful elsewhere.  

\begin{cor}\label{C:from key Jac}
Let $\pi$ be an irreducible generic square-integrable representation of $\SO_{2n+1}(F)$ with $n>0$, and let $\phi$ be the associated 
$L$-parameter, written as in \eqref{E:L-par}. Then 
\[
|\mu^*_{ur}(\pi)|=2^{|I_{\phi,\chi}|+|I_{\phi,\chi'}|}. 
\]
\end{cor}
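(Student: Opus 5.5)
The corollary asserts $|\mu^*_{ur}(\pi)|=2^{|I_{\phi,\chi}|+|I_{\phi,\chi'}|}$ for every irreducible generic square-integrable $\pi$ with $n>0$. I would split into two cases according to whether $\pi$ is a seed representation.

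If $\pi$ is a seed representation, then $|I_{\phi,\chi}|,|I_{\phi,\chi'}|\le 1$. The identity is then exactly \lmref{L:key Jac}, so nothing further is needed.

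If $\pi$ is not a seed representation, let $\pi_0$ be the seed representation attached to $\pi$ in \S\ref{SS:red to seed}. Then $\pi\subset\Pi=\D_1\x\cdots\x\D_\el\rtimes\pi_0$, where each $\D_j$ is of the form $\D_\chi[\ka_{2j},-\ka_{2j-1}]$ or $\D_{\chi'}[\ka'_{2j},-\ka'_{2j-1}]$ with $\ka_{2j}+\ka_{2j-1}>0$. By \propref{P:key Jac}, $|\mu^*_{ur}(\pi)|=|\mu^*_{ur}(\Pi)|$. To evaluate the right-hand side, I would apply \corref{C:no of unram}; it applies because each $\D_j$ is built from an unramified character and has positive length $x_j-y_j$. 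This gives
\[
|\mu^*_{ur}(\Pi)|=4^{\el}\,|\mu^*_{ur}(\pi_0)|.
\]
By \lmref{L:key Jac} applied to $\pi_0$, we have $|\mu^*_{ur}(\pi_0)|=2^{|I_{\phi_0,\chi}|+|I_{\phi_0,\chi'}|}$.

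It remains to match exponents. Each $\D_j$ accounts for exactly two elements of $I_{\phi,\chi}\setminus I_{\phi_0,\chi}$ or of $I_{\phi,\chi'}\setminus I_{\phi_0,\chi'}$. Hence
\[
2\el+|I_{\phi_0,\chi}|+|I_{\phi_0,\chi'}|=|I_{\phi,\chi}|+|I_{\phi,\chi'}|,
\]
which yields the claimed identity. This is precisely the first chain of equalities established in the proof of \propref{P:key Jac}.

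There is no real obstacle here. The only point requiring care is the bookkeeping that the construction of $\phi_0$ removes the elements of $I_{\phi,\chi}$ and $I_{\phi,\chi'}$ in pairs, each pair giving one $\D_j$. This follows from the definition of $\phi_0$: the minimum is retained when the cardinality is odd, and all elements are removed when it is even.
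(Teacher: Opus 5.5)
Your proposal is correct and follows the paper's own route. The seed case is \lmref{L:key Jac}. The non-seed case combines \propref{P:key Jac} with $|\mu^*_{ur}(\Pi)|=4^{\el}|\mu^*_{ur}(\pi_0)|=2^{|I_{\phi,\chi}|+|I_{\phi,\chi'}|}$, obtained from \corref{C:no of unram} and \lmref{L:key Jac}, which is exactly the first chain of equalities in the paper's proof of \propref{P:key Jac}.
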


\begin{remark}
Instead of proving \corref{C:from key Jac} first when $\pi$ is a seed representation and then proving it for general $\pi$, as we have done here, 
one may try to verify \corref{C:from key Jac} directly using the results of Atobe, i.e., without using the information from 
$|\mu^*_{ur}(\Pi)|$. Our proof of \propref{P:key Jac} shows that 
\[
|\mu^*_{ur}(\pi)|
\ge
2^{|I_{\phi,\chi}|+|I_{\phi,\chi'}|}. 
\]
To establish the opposite inequality, one also needs to investigate 
\[
\Jac_{\chi|\cdot|^{\ul{\ka}(\ul{b})}}(\pi)
\]
for $\ul{b}\in K_{\phi, \chi}^{(\el)}\setminus K_{\phi,\chi}$. We do not know whether this investigation is complicated; however, 
since we need to know the number $|\mu^*_{ur}(\Pi)|$, the present argument seems more suitable for us. 
\end{remark}

\section{Preliminaries for the final reduction}\label{S:P for 3rd red}
\subsection{Level-raising operators}\label{SS:level-raising operator}
The goal of the next section is to further reduce the proof of the existence part of the newform conjecture from seed 
representations to the case of generic supercuspidal representations. To achieve this goal, we make use of the level-raising operators 
\[
\theta_m,\,\theta'_m:\Pi^{K_{n,m}}\longto\Pi^{K_{n,m+1}}
\]
defined by 
\[
\theta_m(v)
=
\vol\(K_{n,m}\cap K_{n,m+1},dk\)^{-1}\int_{K_{n,m+1}}\Pi(k)v\,dk
\]
and
\[
\theta'_m=\Pi(w_{\ep_n, m+1})\circ\theta_m\circ\Pi(w_{\ep_n,m})
\]
for $v\in\Pi^{K_{n,m}}$, following \cite{RobertsSchmidt2007}. Here $\Pi\in\Rep(\SO_{2n+1}(F))$ with $n\ge 1$, and $w_{\ep_1, m}$, 
$w_{\ep_n, m+1}$ are the Weyl elements defined in \cite[\S 2.5]{YCheng2025}. We note that our convention for $\theta_m, \theta'_m$ 
is different from that of \cite{RobertsSchmidt2007}. Furthermore, we have $w_{\ep_n,m}\in J_{n,m}\setminus K_{n,m}$ for $m>0$, and
$w_{\ep_n, 0}\in J_{n,0}=K_{n,0}$. 

Since 
\begin{equation}\label{E:theta sum}
\theta_m(v)=\sum_{k\in K_{n,m+1}/\(K_{n,m}\cap K_{n,m+1}\)}\Pi(k)v
\end{equation}
and 
\begin{equation}\label{E:theta' sum}
\theta'_{m}(v)
=
\sum_{k\in K_{n,m+1}/\(K_{n,m}\cap K_{n,m+1}\)}\Pi(w_{\ep_n,m+1}\,k\,w_{\ep_n,m})v
\end{equation}
for $v\in\Pi^{K_{n,m}}$, we need to investigate the following left cosets:
\[
K_{n,m+1}/\(K_{n,m}\cap K_{n,m+1}\).  
\]
This is the aim of the next subsection. 

\subsection{Coset decompositions}
In this and the next section, we follow the notation and conventions in \cite[\S 2 and \S 3]{YCheng2025}. In particular, we regard the split
group $\SO_{2n}(F)$ as a subgroup of $\SO_{2n+1}(F)$ via the embedding \eqref{E:embedding}, and for each $m\ge 0$
we have
\[
H_{n,m}=K_{n,m}\cap\SO_{2n}(F). 
\]
To describe and prove the result in this section, we also introduce some additional notation. 
Let $S\subset \cI_n:=\stt{1,2,\ldots, n}$ be a (possibly empty) subset. Define 
\begin{equation}\label{E:w_S}
w_{S,m}=\prod_{j\in S}w_{\ep_j,m}. 
\end{equation}
Note that if $m>0$, then $w_{S,m}\in H_{n,m}$ if and only if $|S|$ is even. We also define
\begin{equation}\label{E:I_S}
I_S
=\stt{\ep_i+\ep_j\mid 1\le i<j\le n,\, i\nin S}. 
\end{equation}
Now we can state the main result of this subsection. 

\begin{prop}\label{P:coset decomp}
Let the notation be as above. Then 
\[
\stt{
w_{S, m+1}\prod_{\beta\in I_S}x_{\beta}\(\varpi^{-m-1}y_{S,\beta}\)
\mid
\text{$|S|$ is even and $y_{S,\beta}\in\frak{o}/\frak{p}$ for each $S$ and $\beta$}
}
\]
forms a set of representatives of 
\[
K_{n,m+1}/\(K_{n,m}\cap K_{n,m+1}\)
\]
for each $m\ge 0$. 
\end{prop}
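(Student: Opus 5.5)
We will work with the explicit description of $K_{n,m}$ from \cite[\S 3]{YCheng2025}: $K_{n,m}$ is generated by $H_{n,m}=K_{n,m}\cap\SO_{2n}(F)$ together with the unipotent elements $x_{\pm\ep_i}(\cdot)$ attached to the short roots. Their entries lie in $\frak{o}$ and $\frak{p}^m$ in the appropriate positions, with compensating powers of $\varpi^{\pm m}$ on the corner entries. The plan is to reduce the count of cosets to a computation inside $\SO_{2n}(F)$ and then to a finite group.

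\emph{Step 1 (reduction to $\SO_{2n}$).} First we show that the short root parts of $K_{n,m+1}$ already lie in $K_{n,m}\cap K_{n,m+1}$. Concretely, $K_{n,m+1}=H_{n,m+1}\cdot\bigl(K_{n,m}\cap K_{n,m+1}\bigr)$. It follows that
\[
K_{n,m+1}/\(K_{n,m}\cap K_{n,m+1}\)\simeq H_{n,m+1}/\(H_{n,m}\cap H_{n,m+1}\).
\]
This is plausible because the short root subgroups $x_{\pm\ep_i}$ in $K_{n,m+1}$ have entries in $\frak{p}^{m+1}\subset\frak{p}^m$ (resp. $\frak{o}$), which lands in $K_{n,m}$. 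We will verify it by an Iwahori-type factorization of $K_{n,m+1}$.

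\emph{Step 2 (conjugation to a parahoric picture).} In \cite{YCheng2025}, $H_{n,m}$ is a conjugate of a fixed maximal compact subgroup of $\SO_{2n}(F)$, for example by $t_m=\diag{\varpi^{-m}I_n,\varpi^{m}I_n}$ up to a twist. Hence $H_{n,m}\cap H_{n,m+1}$ is, after conjugation by $t_{m+1}$, the preimage in $\SO_{2n}(\frak{o})$ of a parabolic subgroup of $\SO_{2n}(\frak{f})$ whose Levi part is of type $\GL_n$. The quotient is then identified with $\SO_{2n}(\frak{f})/P_{\frak{f}}$, where $P_{\frak{f}}$ is the Siegel parabolic (stabilizer of a maximal isotropic subspace $L$) in the identity component. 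That set parametrizes the maximal isotropic subspaces in the same $\SO_{2n}(\frak f)$-orbit as $L$.

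\emph{Step 3 (Bruhat cells).} Decompose $\SO_{2n}(\frak{f})/P_{\frak{f}}=\bigsqcup_S B w_S P/P$, where $w_S$ runs over the relative Weyl group $W_{D_n}/W_{A_{n-1}}$. That set is represented by sign changes on subsets $S$ with $|S|$ even. Each cell is $\prod_{\beta}U_\beta\, w_S$ with $\beta$ over the roots $\ep_i+\ep_j$ in the unipotent radical not inverted by $w_S$. After conjugating back by $t_{m+1}$, the root elements become $x_\beta(\varpi^{-m-1}y)$ with $y\in\frak{o}/\frak{p}$, and $w_S$ becomes $w_{S,m+1}$. We must then check that the relevant index set equals $I_S$ and that the product ordering $w_{S,m+1}\prod x_\beta$ (as opposed to $\prod x_\beta\, w$) still gives distinct representatives; we expect the paper's choice to be arranged for this. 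As a sanity check, $\sum_{|S|\text{ even}} q^{|I_S|}$ should equal $|\SO_{2n}(\frak f)/P_{\frak f}|=\prod_{i=0}^{n-1}(q^i+1)$.

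The main obstacle is Steps 1–2: pinning down exactly which entries of $K_{n,m}$ and $K_{n,m+1}$ differ and confirming the intersection is a parahoric preimage. If the conjugation picture fails, the fallback is a direct argument. We would show that the listed elements are pairwise inequivalent by computing $g_1^{-1}g_2$ and testing membership in $K_{n,m}$ via the valuation of the $(1,2n+1)$-block entries. We would then match cardinalities using the volume index $[K_{n,m+1}:K_{n,m}\cap K_{n,m+1}]$, computed from the Iwahori factorization.
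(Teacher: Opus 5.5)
Your proposal is correct. After the common first step, the reduction to $H_{n,m+1}/(H_{n,m}\cap H_{n,m+1})$ via the factorization of $K_{n,m+1}$ in \cite[Lemma 3.1]{YCheng2025}, it takes a genuinely different route from the paper.

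The paper works with $F$-points throughout. The Bruhat--Tits factorization of the parahoric $H_{n,m+1}$ shows that the elements $w_{S,m+1}\prod_{i<j}x_{\ep_i+\ep_j}(\varpi^{-m-1}y_{i,j})$ exhaust the cosets. A lengthy elimination based on \lmref{L:u(X)} and explicit Weyl-element identities then removes the roots $\ep_k+\ep_h$ with $k\in S$; its termination argument is delicate, since $S$ gets replaced by $(S\setminus\{k\})\cup\{h\}$. Disjointness is proved separately, by showing $\langle he_{n+1-\ell},e_{n+1-\ell}\rangle=-\varpi^{-m-1}\notin\frak{p}^{-m}$.

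You instead identify the quotient with $\SO_{2n}(\frak{f})/P_{\frak{f}}$. The Bruhat decomposition, being disjoint, then gives exhaustion and disjointness at once. It also explains why exactly $I_S$ occurs and yields the index $[K_{n,m+1}:K_{n,m}\cap K_{n,m+1}]$ for free. The price is that you need the precise lattice (or integral-model) description of $H_{n,m}$ and $H_{n,m+1}$, and the surjectivity of $\SO_{2n}(\frak{o})\to\SO_{2n}(\frak{f})$ (Hensel). The paper uses neither.

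Three details need fixing.

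\emph{The conjugating element.} It cannot be your $t_{m+1}\in\SO_{2n}(F)$. Conjugation by $t_m$ rescales the root groups $x_{\ep_i+\ep_j}$ by $\varpi^{-2m}$, so it only relates $H_{n,0}$ to $H_{n,2m}$. For odd $k$, $H_{n,k}$ is not $\SO_{2n}(F)$-conjugate to $H_{n,0}$ at all. Use the similitude $d_{k}=\diag{\varpi^{-k}I_n,I_n}$ instead, which normalizes $\SO_{2n}(F)$ and satisfies $d_k\SO_{2n}(\frak{o})d_k^{-1}=H_{n,k}$. Conjugating by $d_{m+1}^{-1}$ turns the pair $(H_{n,m+1},H_{n,m})$ into $(\SO_{2n}(\frak{o}),\,d_1^{-1}\SO_{2n}(\frak{o})d_1)$. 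Their intersection is the preimage of the block-lower-triangular parabolic $P_{\frak{f}}$, whose Levi is $\GL_n$ and whose radical is given by the roots $-\ep_i-\ep_j$.

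\emph{The index set.} The check you postpone is immediate and needs no special arrangement. The $B(\frak{f})$-orbit of $w_SP_{\frak{f}}$ is $\{w_S\prod_\gamma x_\gamma(y_\gamma)P_{\frak{f}}\}$, where $\gamma$ runs over the roots $\ep_i+\ep_j$ ($i<j$) with $w_S\gamma>0$, that is, with $i\notin S$. This is exactly $I_S$, already in the paper's order $w\prod x_\gamma$. Conjugating back by $d_{m+1}$ turns $x_\gamma(y)$ into $x_\gamma(\varpi^{-m-1}y)$ and the integral sign-change element into $w_{S,m+1}$.

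\emph{The sanity count.} It is off by a factor of $2$. The number $\prod_{i=0}^{n-1}(q^i+1)$ counts all maximal isotropic subspaces, i.e.\ both $\SO_{2n}(\frak{f})$-orbits. The orbit $\SO_{2n}(\frak{f})/P_{\frak{f}}$ has $\prod_{i=1}^{n-1}(q^i+1)$ points. This does equal $\sum_{|S|\ \mathrm{even}}q^{|I_S|}$, because toggling $n\in S$ leaves $I_S$ unchanged while flipping the parity of $|S|$.
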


We need the following lemma. For a given $X\in\M_n(F)$ satisfying ${}^tX=-J_nXJ_n$, denote
\[
u(X)
=
\begin{pmatrix}
I_n&&X\\
&1&\\
&&I_n
\end{pmatrix}
\in\SO_{2n}(F)\hookto\SO_{2n+1}(F). 
\]
Then 
\[
u(X)=\prod_{1\le i<j\le n} x_{\ep_i+\ep_j}(x_{i,j})
\]
for some $x_{i,j}\in F$, and we say that $\ep_k+\ep_h$ occurs in $u(X)$ whenever $x_{k,h}\ne 0$. 

\begin{lm}\label{L:u(X)}
Let $X\in\M_n(\frak{o})$ with ${}^tX=-J_nXJ_n$ and let $y\in\frak{o}$. Suppose that $\ep_k+\ep_h$ does not occur in $u(X)$. Then
\begin{align*}
x_{-\ep_k-\ep_h}&\(\varpi^{m+1}y\)u\(\varpi^{-m-1}X\)x_{-\ep_k-\ep_h}\(-\varpi^{m+1}y\)\cdot\(H_{n,m}\cap H_{n,m+1}\)\\
&=
u\(\varpi^{-m-1}(X-XYX)\)\cdot\(H_{n,m}\cap H_{n,m+1}\),
\end{align*}
where
\[
Y=y\(E^n_{n+1-h, k}-E^n_{n+1-k, h}\). 
\]
\end{lm}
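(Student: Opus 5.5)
The plan is to compute entirely inside $\SO_{2n}(F)\subset\SO_{2n+1}(F)$ using $n\times n$ block matrices, and to reduce everything to a Bruhat-type factorization of the conjugated element.

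First, I will identify the root element $x_{-\ep_k-\ep_h}(t)$ with a lower block-unipotent element
\[
\bar u(Z)=\begin{pmatrix}I_n&&\\&1&\\Z&&I_n\end{pmatrix},
\]
where $Z=tY/y$ with $Y$ as in the statement. The sign convention for $Y$ is exactly the one that makes this identification hold with the normalizations of \cite[\S 2]{YCheng2025}, and I would check it directly. Then set $A=\varpi^{m+1}Y$ and $B=\varpi^{-m-1}X$. A direct multiplication gives
\[
\bar u(A)\,u(B)\,\bar u(-A)=\pMX{I_n-BA}{B}{-ABA}{I_n+AB}=\pMX{I_n-XY}{\varpi^{-m-1}X}{-\varpi^{m+1}YXY}{I_n+YX},
\]
with the middle row and column of the embedding suppressed. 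The powers of $\varpi$ cancel in $AB$ and $BA$.

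The key algebraic step is to show that $YXY=0$, and hence $(YX)^2=0$ and $(XY)^2=0$. Since $Y$ has only the two nonzero entries in positions $(n+1-h,k)$ and $(n+1-k,h)$, the product $YXY$ only involves the entries of $X$ in positions $(k,n+1-h)$ and $(h,n+1-k)$, up to the symmetry ${}^tX=-J_nXJ_n$. These are precisely the entries recording the coefficient of $x_{\ep_k+\ep_h}$ in $u(X)$, so they vanish by hypothesis. I expect this index bookkeeping, including matching the sign and index conventions for $x_{\ep_i+\ep_j}$ against the anti-diagonal symmetry, to be the main (if elementary) obstacle. I would verify it by writing $u(X)=\prod x_{\ep_i+\ep_j}(x_{i,j})$ explicitly as in \cite[\S 2]{YCheng2025}.

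Granting $YXY=0$, the matrix $D:=I_n+YX$ is unipotent with inverse $I_n-YX\in\GL_n(\frak o)$, and the lower-left block vanishes. Using the factorization $\pMX{a}{b}{0}{d}=\pMX{I}{bd^{-1}}{0}{I}\pMX{a}{0}{0}{d}$, the conjugated element equals
\[
u\(\varpi^{-m-1}X(I_n-YX)\)\cdot\diag{I_n-XY,\ I_n+YX}=u\(\varpi^{-m-1}(X-XYX)\)\cdot m(g),
\]
where $g=I_n-XY\in\GL_n(\frak o)$. Here $m(g)$ is the Levi embedding of $\GL_n$ into $\SO_{2n}$. The lower-right block is automatically the twisted inverse transpose of $g$, because the whole product lies in $\SO_{2n}(F)$. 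One should also note that $X-XYX$ again satisfies the symmetry condition, so the first factor is a legitimate $u(\cdot)$.

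Finally, I will invoke the description of $K_{n,m}$ in \cite[\S 3]{YCheng2025} to check that the Levi subgroup $\stt{m(g)\mid g\in\GL_n(\frak o)}$ is contained in $H_{n,m}$ for every $m\ge 0$, and hence in $H_{n,m}\cap H_{n,m+1}$. Right multiplication by the coset $H_{n,m}\cap H_{n,m+1}$ then absorbs $m(g)$, which gives the stated equality of cosets.
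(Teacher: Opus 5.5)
Your proposal is correct and follows the paper's proof essentially step by step. It uses the same block computation of the conjugate, the same claim $YXY=0$ coming from the vanishing of the $\ep_k+\ep_h$ coefficient, and the same absorption of the Siegel-Levi element $\diag{I_n-XY,1,I_n+YX}$ (with $I_n-XY\in\GL_n(\mathfrak{o})$) into $H_{n,m}\cap H_{n,m+1}$. The only small difference is that you get $(I_n+YX)^{-1}=I_n-YX$ directly from $(YX)^2=(YXY)X=0$, whereas the paper deduces it from the orthogonality relation $I_n+YX=J_n\,{}^t(I_n-XY)^{-1}J_n$ together with ${}^tX=-J_nXJ_n$ and ${}^tY=-J_nYJ_n$; your route is slightly shorter.
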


\begin{proof}
We have 
\[
u\(\varpi^{-m-1}X\)
=
\begin{pmatrix}
I_n&&\varpi^{-m-1}X\\
&1&\\
&&I_n
\end{pmatrix}
\quad\text{and}\quad
x_{-\ep_k-\ep_h}\(\varpi^{m+1}y\)
=
\begin{pmatrix}
I_n&&\\
&1&\\
\varpi^{m+1}Y&&I_n
\end{pmatrix}. 
\]
A direct computation shows that 
\begin{align*}
x_{-\ep_k-\ep_h}\(\varpi^{m+1}y\)u\(\varpi^{-m-1}X\)x_{-\ep_k-\ep_h}\(-\varpi^{m+1}y\)
=
\begin{pmatrix}
I_n-XY&&\varpi^{-m-1}X\\
&1&\\
-\varpi^{m+1}YXY&&I_n+YX
\end{pmatrix}. 
\end{align*}
We first claim that 
\[
YXY=0. 
\]
To this end, note that 
\[
X
=
\sum_{1\le i<j\le n} x_{i,j}\(E^n_{i, n+1-j}-E^n_{j, n+1-i}\)
\]
for some $x_{i,j}\in\frak{o}$. The assumption that $\ep_k+\ep_h$ does not occur in $u(X)$ then implies that $x_{k,h}=0$.
Now, if
\[
YXY=y^2\(E^n_{n+1-h, k}-E^n_{n+1-k, h}\)X\(E^n_{n+1-h, k}-E^n_{n+1-k, h}\)\ne 0, 
\]
then we would obtain $x_{k,h}\ne 0$, a contradiction; hence the claim holds. 

Note that both $u\(\varpi^{-m-1}X\)$ and $x_{-\ep_k-\ep_h}\(\varpi^{m+1}y\)$ are contained in $H_{n,m+1}$. The claim then implies 
that $(I_n-XY)\in\GL_n(\frak{o})$. It follows that 
\[
\diag{I_n-XY, 1, I_n+YX}\in H_{n,m}\cap H_{n,m+1}, 
\]
so that 
\begin{align*}
x_{-\ep_k-\ep_h}\(\varpi^{m+1}y\)u&\(\varpi^{-m-1}X\)x_{-\ep_k-\ep_h}\(-\varpi^{m+1}y\)\cdot\(H_{n,m}\cap H_{n,m+1}\)\\
&=
\begin{pmatrix}
I_n&&\varpi^{-m-1}X(I_n+YX)^{-1}\\
&1&\\
&&I_n
\end{pmatrix}
\cdot
\(H_{n,m}\cap H_{n,m+1}\). 
\end{align*}
To proceed, we compute 
\[
(I_n+YX)^{-1}. 
\]
The fact that 
\[
\diag{I_n-XY, 1, I_n+YX}\in\SO_{2n}(F) 
\]
implies that
\[
(I_n+YX)
=
J_n{}^t\(I_n-XY\)^{-1} J_n
=
J_n\(I_n-{}^tY{}^tX\)^{-1}J_n. 
\]
Since 
\[
{}^tX=-J_nXJ_n
\quad\text{and}\quad
{}^tY=-J_nYJ_n,
\]
we get 
\[
(I_n+YX)^{-1}=I_n-J_n{}^tY{}^tXJ_n=I_n-YX. 
\]
Together, we obtain
\begin{align*}
x_{-\ep_k-\ep_h}&\(\varpi^{m+1}y\)u\(\varpi^{-m-1}X\)x_{-\ep_k-\ep_h}\(-\varpi^{m+1}y\)\cdot\(H_{n,m}\cap H_{n,m+1}\)\\
&=
u\(\varpi^{-m-1}(X-XYX)\)\cdot\(H_{n,m}\cap H_{n,m+1}\). 
\end{align*}
This finishes the proof. 
\end{proof}

\subsection{Proof of \propref{P:coset decomp}}
Since $m+1>0$, we have the decomposition
\[
K_{n,m+1}
=
H_{n,m+1}
\prod_{i=1}^n x_{\ep_i}(\frak{o})
\prod_{j=1}^n x_{-\ep_j}\(\frak{p}^{m+1}\)
\]
by \cite[Lemma 3.1]{YCheng2025}. Furthermore, since both $x_{\ep_i}(\frak{o})$ and $x_{-\ep_j}\(\frak{p}^{m+1}\)$ are contained in 
$K_{n,m}$ for $1\le i,j\le n$, the natural map
\[
H_{n,m+1}/\(H_{n,m}\cap H_{n,m+1}\)\longto K_{n,m+1}/\(K_{n,m}\cap K_{n,m+1}\)
\]
is a bijection. Thus, it suffices to find a set of representatives of 
\[
H_{n,m+1}/\(H_{n,m}\cap H_{n,m+1}\). 
\]
This will be done in three steps. 

\subsubsection*{\underline{Step 1}}
We first show that the set 
\[
A
=
\stt{w_{S,m+1}\prod_{1\le i<j\le n} x_{\ep_i+\ep_j}\(\varpi^{-m-1}y_{i,j}\)
\mid 
\text{$|S|$ is even and $y_{i,j}\in\frak{o}/\frak{p}$ for every $1\le i<j\le n$}
}
\]
contains a set of representatives of $H_{n,m+1}/\(H_{n,m}\cap H_{n,m+1}\)$. To this end, recall that $H_{n,0}$ and $H_{n,1}$ are two 
non-conjugate hyperspecial maximal compact subgroups of $\SO_{2n}(F)$, and 
\[
H_{n,m}=\varpi^{-\lfloor\frac{m}{2}\rfloor\lambda_n} H_{n,e} \varpi^{\lfloor\frac{m}{2}\rfloor\lambda_n}
\]
where $e\in\stt{0,1}$ satisfies $m\equiv e\pmod{2}$ (see \cite[\S 3.3]{YCheng2025}). In particular, each $H_{n,m}$ is a parahoric 
subgroup of $\SO_{2n}(F)$, and hence we have the following decomposition:
\begin{align*}
H_{n,m+1}
=
\(N_{\SO_{2n}(F)}(T_n(F))\cap H_{n,m+1}\)
\cdot
\(U^0_n(F)\cap H_{n,m+1}\)
\cdot
\(\bar{U}^0_n(F)\cap H_{n,m+1}\)
\end{align*}
by \cite[Proposition 6.4.9 (iii)]{BruhatTits1972} (see also \cite[Proposition 7.3.12 (1)]{KalethaPrasad2023}), 
where $\bar{U}_n$ denotes the opposite of $U_n$, and 
\[
U^0_n(F)=U_n(F)\cap\SO_{2n}(F)
\quad\text{and}\quad
\bar{U}^0_n(F)=\bar{U}_n(F)\cap\SO_{2n}(F). 
\]
We have 
\[
U^0_n(F)\cap H_{n,m+1}
=
\prod_{1\le i<j\le n}x_{\ep_i+\ep_j}\(\frak{p}^{-m-1}\)\prod_{1\le i<j\le n} x_{\ep_i-\ep_j}(\frak{o})
\] 
and since both
\[
\bar{U}^0_n(F)\cap H_{n,m+1}
\quad\text{and}\quad
\prod_{1\le i<j\le n} x_{\ep_i-\ep_j}(\frak{o}) 
\]
are contained in $H_{n,m}\cap H_{n,m+1}$, it follows that 
\[
\(N_{\SO_{2n}(F)}(T_n(F))\cap H_{n,m+1}\)
\prod_{1\le i<j\le n}x_{\ep_i+\ep_j}\(\frak{p}^{-m-1}\)
\]
contains a set of representatives of $H_{n,m+1}/\(H_{n,m}\cap H_{n,m+1}\)$. 

To proceed, note that 
\[
H_{n,m+1}\cap T_n(F)=T_n(\frak{o})\subset H_{n,m}\cap H_{n,m+1},
\]
and the set
\[
\stt{w_{S,m+1}\hat{w}_\sigma\mid\text{$\sigma\in\frak{S}_n$ and $|S|$ is even}}
\]
forms a set of representatives of
\[
\(N_{\SO_{2n}(F)}(T_n(F))\cap H_{n,m+1}\)/T_n(\frak{o})
\simeq
N_{\SO_{2n}(F)}(T_n(F))/T_n(F)
\simeq
\frak{S}_n\ltimes\stt{\pm 1}^{n-1}. 
\]
Here $\frak{S}_n$ is the symmetric group on $\cI_n$, $w_\sigma\in\GL_n(\frak{o})$ is the permutation matrix associated with 
$\sigma\in\frak{S}_n$, and $\hat{w}_\sigma=\diag{w_\sigma, 1, w^*_\sigma}\in\SO_{2n}(F)$. Since 
\[
T_n(\frak{o})\prod_{1\le i<j\le n}x_{\ep_i+\ep_j}\(\frak{p}^{-m-1}\)
=
\prod_{1\le i<j\le n}x_{\ep_i+\ep_j}\(\frak{p}^{-m-1}\)T_n(\frak{o}),
\]
and 
\[
\hat{w}_\sigma\prod_{1\le i<j\le n}x_{\ep_i+\ep_j}\(\frak{p}^{-m-1}\)
=
\prod_{1\le i<j\le n}x_{\ep_i+\ep_j}\(\frak{p}^{-m-1}\)\hat{w}_\sigma
\]
for each $\sigma\in\frak{S}_n$, we further deduce that 
\[
\bigsqcup_{\substack{S\subset\cI_n\\\text{$|S|$ is even}}}
w_{S,m+1}\prod_{1\le i<j\le n}x_{\ep_i+\ep_j}\(\frak{p}^{-m-1}\)
\]
contains a set of representatives of $H_{n,m+1}/\(H_{n,m}\cap H_{n,m+1}\)$. Finally, since 
\[
\prod_{1\le i<j\le n}x_{\ep_i+\ep_j}\(\frak{p}^{-m}\)\subset H_{n,m}\cap H_{n,m+1}, 
\]
Step 1 is now complete. 

\subsubsection*{\underline{Step 2}}
In Step 2, we verify that 
\[
B
=
\stt{
w_{S, m+1}\prod_{\beta\in I_S}x_{\beta}\(\varpi^{-m-1}y_{S,\beta}\)
\mid
\text{$|S|$ is even and $y_{S,\beta}\in\frak{o}/\frak{p}$ for each $S$ and $\beta$}
}
\]
contains a set of representatives of $H_{n,m+1}/\(H_{n,m}\cap H_{n,m+1}\)$. This is done by proving that each element of $A$ is 
congruent to an element of $B$ modulo $H_{n,m}\cap H_{n,m+1}$. We prove this by induction on $|S|$. When $|S|=0$, so that 
$S=\emptyset$, we have 
\[
I_S=\stt{\ep_i+\ep_j\mid 1\le i<j\le n},
\]
and the assertion is clear. Assume that the assertion holds for elements with $|S|=2(k-1)\ge 0$.  Let $S\subseteq\cI_n$ be a subset 
with $|S|=2k$, and let
\[
w_{S,m+1}\prod_{1\le i< j\le n}x_{\ep_i+\ep_j}\(\varpi^{-m-1}y_{i,j}\)\in A. 
\]
We consider two cases. First, assume that there exist $k,h\in S$ with $k<h$ such that $\ep_k+\ep_h$ occurs in 
\[
\prod_{1\le i< j\le n}x_{\ep_i+\ep_j}\(\varpi^{-m-1}y_{i,j}\),
\]
i.e., we have $y:=y_{k,h}\in\frak{o}^\x$. Let 
\[
S_1=\stt{k,h}
\quad\text{and}\quad
S_2=S\setminus S_1. 
\]
Then 
\[
w_{S,m+1}=w_{S_1, m+1}\,w_{S_2, m+1}=w_{S_2,m+1}\,w_{S_1, m+1}
\]
and we may write
\begin{align}\label{E:coset 1}
\begin{split}
w_{S,m+1}\prod_{1\le i< j\le n}x_{\ep_i+\ep_j}\(\varpi^{-m-1}y_{i,j}\)
&=
w_{S_1,m+1}\,w_{S_2,m+1}\,x_{\ep_k+\ep_h}\(\varpi^{-m-1}y\)\,u\(\varpi^{-m-1}X\)\\
&=
w_{S_1,m+1}\,x_{\ep_k+\ep_h}\(\varpi^{-m-1}y\)\,w_{S_2,m+1}\,u\(\varpi^{-m-1}X\). 
\end{split}
\end{align}
for some $X\in\M_n(\frak{o})$ such that $\ep_k+\ep_h$ does not occur in $u\(\varpi^{-m-1}X\)$, where the last identity holds because $S_2$ does 
not contain $k$ or $h$. 

We now use the identity
\[
x_{\ep_k+\ep_h}\(\varpi^{-m-1}y\)
=
x_{-\ep_k-\ep_h}\(\varpi^{m+1}\,y^{-1}\)w_{S_1,m+1}\,x_{-\ep_k-\ep_h}\(\varpi^{m+1}y^{-1}\)\(-y^{-1}\)^{\ep_k^*+\ep_h^*}w_{\ep_k-\ep_h},
\]
together with
\[
w_{S_1,m+1}\,x_{-\ep_k-\ep_h}\(\varpi^{m+1}y^{-1}\)\,w_{S_1,m+1}
=
x_{\ep_k+\ep_h}\(-\varpi^{-m-1}y^{-1}\).  
\]
Since $w_{S_2,m+1}$ commutes with 
\[
w_{\ep_k-\ep_h},
\quad
\(-y^{-1}\)^{\ep_k^*+\ep_h^*},
\quad
x_{-\ep_k-\ep_h}\(\varpi^{m+1}y^{-1}\)
\quad\text{and}\quad
x_{\ep_k+\ep_h}\(-\varpi^{-m-1}y^{-1}\),
\]
it follows that \eqref{E:coset 1} becomes
\[
w_{S_2,m+1}\,x_{\ep_k+\ep_h}\(-\varpi^{-m-1}y^{-1}\)x_{-\ep_k-\ep_h}\(\varpi^{m+1}y^{-1}\)
\(-y^{-1}\)^{\ep^*_k+\ep^*_h}w_{\ep_k-\ep_h}u\(\varpi^{-m-1}X\).
\]
Now observe that
\[
\(-y^{-1}\)^{\ep^*_k+\ep^*_h}w_{\ep_k-\ep_h}\in H_{n,m}\cap H_{n,m+1}
\]
and 
\[
\(-y^{-1}\)^{\ep^*_k+\ep^*_h}w_{\ep_k-\ep_h}\,u\(\varpi^{-m-1}X\)\,w_{\ep_k-\ep_h}^{-1}\(-y\)^{\ep^*_k+\ep^*_h}
=
u\(\varpi^{-m-1}X_1\)
\]
for some $X_1\in\M_n(\frak{o})$. Moreover, $\ep_k+\ep_h$ does not occur in $u\(\varpi^{-m-1}X_1\)$. Indeed, since 
$\(-y^{-1}\)^{\ep_k^*+\ep_h^*}\in T_n(\frak{o})$, it suffices to verify the same assertion for 
\[
w_{\ep_k-\ep_h}u\(\varpi^{-m-1}X\)w_{\ep_k-\ep_h}^{-1}. 
\]
Because $\ep_i+\ep_j$ does not occur in $u\(\varpi^{-m-1}X\)$ for $(i,j)=(k,h)$ by our assumption, and since conjugation by 
$w_{\ep_k-\ep_h}$ interchanges the roots $\ep_k$ and $\ep_h$, while leaving all other roots unchanged, the claim follows. Hence
\begin{align*}
w_{S,m+1}&\prod_{1\le i< j\le n}x_{\ep_i+\ep_j}\(\varpi^{-m-1}y_{i,j}\)\cdot\(H_{n,m}\cap H_{n,m+1}\)\\
&=
w_{S_2,m+1}\,x_{\ep_k+\ep_h}\(-\varpi^{-m-1}y^{-1}\)x_{-\ep_k-\ep_h}\(\varpi^{m+1}y^{-1}\)
u\(\varpi^{-m-1}X_1\)\cdot\(H_{n,m}\cap H_{n,m+1}\)
\end{align*}
Since
\[
x_{-\ep_k-\ep_h}\(\varpi^{m+1}y^{-1}\)\in H_{n,m}\cap H_{n,m+1},
\]
and $\ep_k+\ep_h$ does not occur in $u\(\varpi^{-m-1}X_1\)$, we obtain
\begin{align*}
x_{-\ep_k-\ep_h}&\(\varpi^{m+1}y^{-1}\)u\(\varpi^{-m-1}X_1\)x_{-\ep_k-\ep_h}\(-\varpi^{m+1}y^{-1}\)\cdot\(H_{n,m}\cap H_{n,m+1}\)\\
&=
u\(\varpi^{-m-1}X_2\)\cdot\(H_{n,m}\cap H_{n,m+1}\)
\end{align*}
for some $X_2\in\M_n(\frak{o})$ by \lmref{L:u(X)}. Thus
\begin{align*}
w_{S,m+1}&\prod_{1\le i< j\le n}x_{\ep_i+\ep_j}\(\varpi^{-m-1}y_{i,j}\)\cdot\(H_{n,m}\cap H_{n,m+1}\)\\
&=
w_{S_2,m+1}\,x_{\ep_k+\ep_h}\(-\varpi^{-m-1}y^{-1}\)u\(\varpi^{-m-1}X_2\)\cdot\(H_{n,m}\cap H_{n,m+1}\)\\
&=
w_{S_2,m+1}\,u\(\varpi^{-m-1}X_3\)\cdot\(H_{n,m}\cap H_{n,m+1}\),
\end{align*}
for some $X_3\in\M_n(\frak{o})$. Since $|S_2|=2(k-1)$, the induction hypothesis implies that 
\begin{align*}
w_{S,m+1}&\prod_{1\le i< j\le n}x_{\ep_i+\ep_j}\(\varpi^{-m-1}y_{i,j}\)\cdot\(H_{n,m}\cap H_{n,m+1}\)\\
&=
w_{S_2,m+1}\,u\(\varpi^{-m-1}X_3\)\cdot\(H_{n,m}\cap H_{n,m+1}\)\\
&=
\alpha\(H_{n,m}\cap H_{n,m+1}\)
\end{align*}
for some $\alpha\in B$. This verifies the first case.  

Next, we assume that for every $k, h\in S$ with $k<h$, $\ep_k+\ep_h$ does not occur in 
\[
\prod_{1\le i< j\le n}x_{\ep_i+\ep_j}\(\varpi^{-m-1}y_{i,j}\). 
\]
Note that if $\ep_k+\ep_h$ also does not occur in the above product for every $k\in S$ and every $h$ wtih $k<h\le n$ and $h\nin S$, then 
\[
w_{S,m+1}\prod_{1\le i< j\le n}x_{\ep_i+\ep_j}\(\varpi^{-m-1}y_{i,j}\)
\]
is itself an element of $B$, and there is nothing to prove. Thus, we may assume that there exist $k\in S$ and $h\nin S$ such that 
$\ep_k+\ep_h$ occurs in the product. In fact, we may further assume that $k$ is the smallest such integer. This assumption implies that 
$\ep_i+\ep_j$ does not occur in the product for any $i\in S$ with $1\le i<k$ and any $j$ with $i<j\le n$. 

Let $\el\in S$ such that $\el\ne k$, and put
\[
S_1=\stt{k, \el}\quad\text{and}\quad S_2=S\setminus S_1,
\]
as in the first case. Note that $S_2\cap\stt{k,h}=\emptyset$. We may write 
\begin{align}\label{E:coset 2}
\begin{split}
w_{S,m+1}\prod_{1\le i< j\le n}x_{\ep_i+\ep_j}\(\varpi^{-m-1}y_{i,j}\)
&=
w_{S_1,m+1}\,w_{S_2,m+1}\,x_{\ep_k+\ep_h}\(\varpi^{-m-1}y\)\,u\(\varpi^{-m-1}X\)\\
&=
w_{S_1,m+1}\,x_{\ep_k+\ep_h}\(\varpi^{-m-1}y\)\,w_{S_2,m+1}\,u\(\varpi^{-m-1}X\)
\end{split}
\end{align}
for some $X\in\M_n(\frak{o})$ such that $\ep_k+\ep_h$ does not occur in $u\(\varpi^{-m-1}X\)$, where $y=y_{k,h}\in\frak{o}^\x$. Similar to 
the first case, \eqref{E:coset 2} can be further written as
\begin{equation}\label{E:coset 3}
w_{S_1,m+1}\,x_{-\ep_k-\ep_h}\(\varpi^{m+1}y^{-1}\)\,w_{\stt{k,h},m+1}\,w_{S_2,m+1}\,x_{-\ep_k-\ep_h}\(\varpi^{m+1}y^{-1}\)
\(-y^{-1}\)^{\ep_k^*+\ep_h^*}\,w_{\ep_k-\ep_h}\,u\(\varpi^{-m-1}X\). 
\end{equation}
We now apply the identity
\[
w_{\stt{k,h},m+1}\,x_{-\ep_k-\ep_h}\(\varpi^{m+1}y^{-1}\)\,w_{\stt{k,h},m+1}
=
x_{\ep_k+\ep_h}\(-\varpi^{-m-1}y^{-1}\)
\]
together with 
\[
w_{S_1,m+1}\,w_{\stt{k,h},m+1}=w_{\stt{h,\el}, m+1}. 
\]
Since $w_{S_2,m+1}$ commutes with $x_{\ep_k+\ep_h}\(-\varpi^{-m-1}y^{-1}\)$, and 
\[
w_{\stt{h,\el},m+1}\,w_{S_2,m+1}=w_{S',m+1},
\]
where
\[
S'=S_2\cup\stt{h,\el},
\]
it follows that \eqref{E:coset 3} becomes
\[
w_{S',m+1}\,x_{\ep_k+\ep_h}\(-\varpi^{-m-1}y^{-1}\)\,x_{-\ep_k-\ep_h}\(\varpi^{m+1}y^{-1}\)
\(-y^{-1}\)^{\ep_k^*+\ep_h^*}\,w_{\ep_k-\ep_h}\,u\(\varpi^{-m-1}X\). 
\]
Now observe that
\[
\(-y^{-1}\)^{\ep^*_k+\ep^*_h}w_{\ep_k-\ep_h}\in H_{n,m}\cap H_{n,m+1}
\]
and 
\[
\(-y^{-1}\)^{\ep^*_k+\ep^*_h}w_{\ep_k-\ep_h}\,u\(\varpi^{-m-1}X\)\,w_{\ep_k-\ep_h}^{-1}\(-y\)^{\ep^*_k+\ep^*_h}
=
u\(\varpi^{-m-1}X_1\)
\]
for some $X_1\in\M_n(\frak{o})$. Furthermore, $\ep_k+\ep_h$ does not occur in $u\(\varpi^{-m-1}X_1\)$, as above. Hence
\begin{align*}
w_{S,m+1}&\prod_{1\le i< j\le n}x_{\ep_i+\ep_j}\(\varpi^{-m-1}y_{i,j}\)\cdot\(H_{n,m}\cap H_{n,m+1}\)\\
&=
w_{S',m+1}\,x_{\ep_k+\ep_h}\(-\varpi^{-m-1}y^{-1}\)x_{-\ep_k-\ep_h}\(\varpi^{m+1}y^{-1}\)
u\(\varpi^{-m-1}X_1\)\cdot\(H_{n,m}\cap H_{n,m+1}\). 
\end{align*}
Since
\[
x_{-\ep_k-\ep_h}\(\varpi^{m+1}y^{-1}\)\in H_{n,m}\cap H_{n,m+1},
\]
and $\ep_k+\ep_h$ does not occur in $u\(\varpi^{-m-1}X_1\)$, we obtain
\begin{align*}
x_{-\ep_k-\ep_h}&\(\varpi^{m+1}y^{-1}\)u\(\varpi^{-m-1}X_1\)x_{-\ep_k-\ep_h}\(-\varpi^{m+1}y^{-1}\)\cdot\(H_{n,m}\cap H_{n,m+1}\)\\
&=
u\(\varpi^{-m-1}X_2\)\cdot\(H_{n,m}\cap H_{n,m+1}\)
\end{align*}
for some $X_2\in\M_n(\frak{o})$ by \lmref{L:u(X)}. Thus
\begin{align*}
w_{S,m+1}&\prod_{1\le i< j\le n}x_{\ep_i+\ep_j}\(\varpi^{-m-1}y_{i,j}\)\cdot\(H_{n,m}\cap H_{n,m+1}\)\\
&=
w_{S',m+1}\,x_{\ep_k+\ep_h}\(-\varpi^{-m-1}y^{-1}\)u\(\varpi^{-m-1}X_2\)\cdot\(H_{n,m}\cap H_{n,m+1}\). 
\end{align*}

At this point, the arguments are parallel to those in the first case; however, in contrast to the first case, $|S'|$ remains unchanged.
To simplify further, we claim that $\ep_i+\ep_j$ does not occur in both 
\[
u\(\varpi^{-m-1}X_1\)
\quad\text{and}\quad
u\(\varpi^{-m-1}X_2\)
\] 
for every $i\in S$ with $1\le i<k$ and $i<j\le n$. Recall that  $\ep_i+\ep_j$ does not occur in $u\(\varpi^{-m-1}X\)$ for every $i\in S$ with 
$1\le i<k$ and $i<j\le n$, by our assumption on $k$. In particular, the argument showing that $\ep_k+\ep_h$ does not occur in 
$u\(\varpi^{-m-1}X_1\)$ applies here to prove the claim for $u\(\varpi^{-m-1}X_1\)$.  We now verify the claim for $u\(\varpi^{-m-1}X_2\)$. By 
\lmref{L:u(X)}, we have
\[
X_2=X_1-X_1YX_1,
\]
where
\[
Y=y^{-1}\(E^n_{n+1-h, k}-E^n_{n+1-k, h}\). 
\]
Thus, it suffices to prove the assertion for 
\[
u\(\varpi^{-m-1}X_1YX_1\). 
\]
To this end, we write 
\[
X_1
=
\sum_{1\le i<j\le n} x_{i,j}
\(E^n_{i,n+1-j}-E^n_{j,n+1-i}\)
\quad\text{and}\quad
X_1YX_1
=
\sum_{1\le i<j\le n} x'_{i,j}
\(E^n_{i,n+1-j}-E^n_{j,n+1-i}\)
\]
for some $x_{i,j}, x'_{i,j}\in\frak{o}$. Note that $x_{i,j}=0$ whenever $i\in S$, $1\le i<k$, and $i<j\le n$. On the other hand, $X_1YX_1$ is the
sum of terms of the form
\begin{equation}\label{E:coset 4}
y^{-1}x_{i,j}x_{i',j'}
\(E^n_{i,n+1-j}-E^n_{j,n+1-i}\)
\(E^n_{n+1-h,k}-E^n_{n+1-k,h}\)
\(E^n_{i',n+1-j'}-E^n_{j',n+1-i'}\)
\end{equation}
for $1\le i<j\le n$ and $1\le i'<j'\le n$. Since we are only interested in the entries $x'_{i,j}$ with $1\le i<k$, it is enough to investigate the product 
\eqref{E:coset 4} when $i<k$ or $j<k$.  However, if $j<k$, then
\[
\(E^n_{i,n+1-j}-E^n_{j,n+1-i}\)
\(E^n_{n+1-h,k}-E^n_{n+1-k,h}\)
=
0. 
\]
Therefore, $j\ge k$, and it remains to consider the case when $i<k$. If moreover $i\in S$, then $x_{i,j}=0$, and hence the claim for 
$u\(\varpi^{-m-1}X_2\)$ follows. 

Let $X_3\in\M_n(\frak{o})$ be such that
\[
x_{\ep_k+\ep_h}\(-\varpi^{-m-1}y^{-1}\)u\(\varpi^{-m-1}X_2\)
=
u\(\varpi^{-m-1}X_3\). 
\]
Then the above claim implies that $\ep_i+\ep_j$ does not occur in $u\(\varpi^{-m-1}X_3\)$ for every $i\in S$ with $1\le i<k$ and $i<j\le n$, and 
we have 
\[
w_{S,m+1}\prod_{1\le i< j\le n}x_{\ep_i+\ep_j}\(\varpi^{-m-1}y_{i,j}\)\cdot\(H_{n,m}\cap H_{n,m+1}\)
=
w_{S',m+1}u\(\varpi^{-m-1}X_3\)\cdot\(H_{n,m}\cap H_{n,m+1}\). 
\]
Now, if $w_{S',m+1}u\(\varpi^{-m-1}X_3\)\in B$ or if it reduces to the first case, then we are done. Suppose that we are still in the second case. 
Then there exists a smallest $k'\in S'$ such that $\ep_{k'}+\ep_{h'}$ occurs in $u\(\varpi^{-m-1}X_3\)$ for some $k'<h'\le n$. The key 
observation is that
\[
k'>k. 
\] 
Indeed, this follows from the fact that 
\[
S'=\(S\setminus\stt{k}\)\cup\stt{h}
\]
and that $\ep_i+\ep_j$ does not occur in $u\(\varpi^{-m-1}X_3\)$ for every $i\in S$ with $1\le i<k$ and $i<j\le n$. In particular, this implies that 
above process must terminate, i.e., 
\[
w_{S,m+1}\prod_{1\le i< j\le n}x_{\ep_i+\ep_j}\(\varpi^{-m-1}y_{i,j}\)
\]
must be congruent to an element of $B$ modulo $H_{n,m}\cap H_{n,m+1}$. This completes Step 2. 

\subsubsection*{\underline{Step 3}}
The final step is to show that two distinct elements of $B$ are incongruent modulo $H_{n,m}\cap H_{n,m+1}$. Let 
\[
w_{S,m+1}\prod_{\beta\in I_S}x_\beta\(\varpi^{-m-1}y_{S,\beta}\)
\ne
w_{S',m+1}\prod_{\beta\in I_S'}x_\beta\(\varpi^{-m-1}y'_{S',\beta}\)
\]
be two distinct elements of $B$. Suppose, for contradiction, that 
\begin{equation}\label{E:coset 5}
w_{S,m+1}\prod_{\beta\in I_S}x_\beta\(\varpi^{-m-1}y_{S,\beta}\)
\in
w_{S',m+1}\prod_{\beta\in I_{S'}}x_\beta\(\varpi^{-m-1}y'_{S',\beta}\)
\cdot
\(H_{n,m}\cap H_{n,m+1}\). 
\end{equation}
Note that if $S'=S$, then \eqref{E:coset 5} implies that 
\[
\prod_{\beta\in I_S}x_{\beta}\(\varpi^{-m-1}\(y_{S,\beta}-y'_{S,\beta}\)\)
\in
H_{n,m}\cap H_{n,m+1},
\]
which, in turn, implies that $y'_{S,\beta}=y_{S,\beta}$ for every $\beta\in I_S$, contradicting our assumption. Assume that $S'\ne S$. Then
\[
w_{S',m+1}\,w_{S,m+1}
=
w_{S,m+1}\,w_{S',m+1}
=
w_{S'',m+1},
\]
where
\[
S''=\(S\cup S'\)\setminus\(S\cap S'\)\ne\emptyset. 
\]
By \eqref{E:coset 5}, we have
\[
h
:=
\prod_{\beta\in I_{S'}}x_\beta\(-\varpi^{-m-1}y'_{S',\beta}\)
\cdot
w_{S'',m+1}
\cdot
\prod_{\beta\in I_{S}}x_\beta\(\varpi^{-m-1}y_{S,\beta}\)
\in
H_{n,m}\cap H_{n,m+1}. 
\] 
At this point, recall that $\stt{e_{-n}, \ldots, e_{-1}, e_0, e_1,\ldots, e_n}$ is an ordered basis of the quadratic space 
$\(V_n, \langle\cdot,\cdot\rangle\)$ defining the group $\SO_{2n+1}(F)$, whose Gram matrix is given by 
\[
\begin{pmatrix}
&&J_n\\
&2&\\
J_n
\end{pmatrix}. 
\]

Let $\el\in S''$. To obtain a contradiction, we compute 
\[
\langle he_{n+1-\el}\,,\, e_{n+1-\el}\rangle. 
\]
Since $h\in H_{n,m}\cap H_{n,m+1}$, we have 
\begin{equation}\label{E:coset 6}
\langle he_{n+1-\el}\,,\, e_{n+1-\el}\rangle
\in
\frak{p}^{-m}.  
\end{equation}
On the other hand, we have
\[
\prod_{\beta\in I_S}x_{\beta}\(\varpi^{-m-1}y_{S,\beta}\)e_{n+1-\el}
=
e_{n+1-\el}
+
\sum_{\substack{1\le i\le n\\i\nin S}}
\varpi^{-m-1}y_{S,\ep_i+\ep_\el} e_{-n-1+i}. 
\]
It follows that 
\begin{align*}
w_{S'',m+1}&\prod_{\beta\in I_S}x_{\beta}\(\varpi^{-m-1}y_{S,\beta}\)e_{n+1-\el}\\
&=
-\varpi^{-m-1}e_{-n-1+\el}
-
\sum_{\substack{1\le i\le n\\i\in S'\setminus S}} y_{S,\ep_i+\ep_\el} e_{n+1-i}
+
\sum_{\substack{1\le i\le n\\ i\nin S\cup S'}}\varpi^{-m-1}y_{S,\ep_i+\ep_\el} e_{-n-1+i}. 
\end{align*}
Since 
\[
\prod_{\beta\in I_{S'}}x_{\beta}\(\varpi^{-m-1}y'_{S',\beta}\)e_{n+1-\el}
=
e_{n+1-\el}
+
\sum_{\substack{1\le i\le n\\i\nin S'}}
\varpi^{-m-1}y'_{S',\ep_i+\ep_\el} e_{-n-1+i},
\]
we obtain
\begin{align*}
\langle he_{n+1-\el}\,,\, e_{n+1-\el}\rangle
=&
\langle
w_{S'',m+1}\prod_{\beta\in I_S}x_{\beta}\(\varpi^{-m-1}y_{S,\beta}\)e_{n+1-\el}\,,\,
\prod_{\beta\in I_{S'}}x_{\beta}\(\varpi^{-m-1}y'_{S',\beta}\)e_{n+1-\el}
\rangle\\
=&
\langle
\sum_{\substack{1\le i\le n\\ i\nin S\cup S'}}\varpi^{-m-1}y_{S,\ep_i+\ep_\el} e_{-n-1+i}\,,\,
\sum_{\substack{1\le i\le n\\i\nin S'}}\varpi^{-m-1}y'_{S',\ep_i+\ep_\el} e_{-n-1+i}
\rangle\\
&-
\langle
\sum_{\substack{1\le i\le n\\i\in S'\setminus S}} y_{S,\ep_i+\ep_\el} e_{n+1-i}\,,\,
\sum_{\substack{1\le i\le n\\i\nin S'}}\varpi^{-m-1}y'_{S',\ep_i+\ep_\el} e_{-n-1+i}
\rangle\\
&\,\,-
\langle
\varpi^{-m-1}e_{-n-1+\el}\,,\, \sum_{\substack{1\le i\le n\\i\nin S'}}\varpi^{-m-1}y'_{S',\ep_i+\ep_\el} e_{-n-1+i}
\rangle\\
&\,\,\,\,+
\langle
\sum_{\substack{1\le i\le n\\ i\nin S\cup S'}}\varpi^{-m-1}y_{S,\ep_i+\ep_\el} e_{-n-1+i}\,,\,
e_{n+1-\el}
\rangle\\
&\,\,\,\,\,\,-
\langle
\sum_{\substack{1\le i\le n\\i\in S'\setminus S}} y_{S,\ep_i+\ep_\el} e_{n+1-i}\,,\, e_{n+1-\el}
\rangle\\
&\,\,\,\,\,\,\,\,-
\langle
\varpi^{-m-1}e_{-n-1+\el}\,,\, e_{n+1-\el}
\rangle
=
-\langle
\varpi^{-m-1}e_{-n-1+\el}\,,\, e_{n+1-\el}
\rangle
=
-\varpi^{-m-1},
\end{align*}
contradicting \eqref{E:coset 6}. We thus conclude that two distinct elements of $B$ are incongruent modulo $H_{n,m}\cap H_{n,m+1}$.
This completes Step 3 and the proof of \propref{P:coset decomp}. \qed

\subsection{Hecke operators on $\GL_r$}\label{SS:Hecke GL}
We will need a result of Kondo--Yasuda (\cite{KondoYasuda2012}) concerning Hecke operators on $\GL_r$. To state their result, 
let $A_r$ be the diagonal torus of $\GL_r$, and let $\e^*_1,\ldots,\e^*_r\in{\rm Hom}(\bbG_m, A_r)$ be the standard basis. Denote
\[
\nu_i=\e^*_1+\cdots+\e^*_i
\]
for $1\le i\le r$, and set $\nu_0=0$. 

Let $\tau$ be an irreducible generic representation of $\GL_r(F)$ with $r\ge 1$. For each integer $m\ge 0$, denote by 
$\Gamma_{r,m}\subset\GL_r(\frak{o})$ the open compact subgroup introduced in \cite{JPSS1981} to establish the newform theory 
for generic representations of $\GL_r(F)$. In particular, we have 
\[
\dim_\bbC\tau^{\Gamma_{r,c_\tau}}=1. 
\]
Let $v_\tau\in\tau^{\Gamma_{r,c_\tau}}$ be a basis vector, referred to as a newform of $\tau$. 

To each integer $0\le i\le r$, define a Hecke operator $T_i$ on $\tau^{\Gamma_{r,c_\tau}}$ by 
\begin{align}\label{E:Hecke op}
\begin{split}
T_i(v_\tau)
&=
\vol\(\Gamma_{r,c_\tau},dk\)^{-1}\int_{\Gamma_{r,c_\tau}\varpi^{\nu_i}\Gamma_{r,c_\tau}}\tau(k)v_\tau\,dk\\
&=
\sum_{k\in\Gamma_{r,c_\tau}/\(\Gamma_{r,c_\tau}\bigcap\varpi^{\nu_i}\Gamma_{r, c_\tau}\varpi^{-\nu_i}\)}
\tau\(k\varpi^{\nu_i}\)v_\tau. 
\end{split}
\end{align}
Since $\tau^{\Gamma_{r,c_\tau}}$ is one-dimensional, there exist $\la_{\tau, 0},\ldots, \la_{\tau, r}\in\bbC$ such that 
\begin{equation}\label{E:Hecke ev}
T_i(v_\tau)=\la_{\tau, i}\,v_\tau
\end{equation}
for each $0\le i\le r$. Now, we have 

\begin{lm}\label{L:Hecke ev}
Let $\phi_\tau$ be the $L$-parameter of $\tau$, and suppose that $c_\tau\ge 1$. Then
\[
L\(s,\phi_\tau\)
=
\(
\sum_{i=0}^{r-1}(-1)^i\,\la_{\tau, i}\, q^{-i\(s+\tfrac{r-1}{2}\)+\tfrac{i(i-1)}{2}}
\)^{-1}. 
\]
\end{lm}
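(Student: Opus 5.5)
The plan is to reduce the statement to the Whittaker-model description of the newform due to Jacquet--Piatetski-Shapiro--Shalika and Matringe (or Miyauchi), and then compute the Hecke operators $T_i$ on that model. Let $W=W_{v_\tau}$ be the Whittaker function of $v_\tau$ with respect to $\psi_{Z_r}$, normalized so that $W(I_r)=1$. The key input is the explicit formula for the newform on the torus when $c_\tau\ge 1$. Write $L(s,\phi_\tau)^{-1}=\prod_{j=1}^{r'}(1-\alpha_jq^{-s})$ with $r'\le r-1$; the bound $r'\le r-1$ is where $c_\tau\ge1$ enters. Then $W(\varpi^{\la})=0$ unless $\la=(\la_1\ge\cdots\ge\la_{r-1}\ge 0=\la_r)$ is dominant with last entry zero, in which case
\[
W(\varpi^{\la})=\delta_{B_{r-1}}^{1/2}(\varpi^{\la'})\,s_{\la'}(\alpha_1,\ldots,\alpha_{r'}),
\]
where $\la'=(\la_1,\ldots,\la_{r-1})$, $s_{\la'}$ is the Schur polynomial (interpreted as zero when $\la'$ has more than $r'$ nonzero parts), and the modulus character is taken on $\GL_{r-1}$. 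This is a theorem of Miyauchi and of Matringe, and I will cite it.

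Next I evaluate $T_i$ on the Whittaker side at the identity: $\la_{\tau,i}=\la_{\tau,i}W(I_r)=\sum_k W(k\varpi^{\nu_i})$ over the coset representatives in \eqref{E:Hecke op}. Since $\Gamma_{r,c_\tau}$ contains the upper triangular matrices in $\GL_r(\frak o)$ with last row congruent to $(0,\ldots,0,1)$ modulo $\frak p^{c_\tau}$, the representatives can be chosen in the form $k=z\,w$, with $z$ in the upper unipotent group $Z_r(\frak o)$ and $w$ running over Weyl-type representatives coming from the Iwahori decomposition of the double coset $\Gamma\varpi^{\nu_i}\Gamma$. Using the unramified character $\psi$, the terms with $z$ integral contribute $W(\varpi^{\mu})$, where $\mu$ ranges over the $S_r$-orbit of $\nu_i$ compatible with $\Gamma_{r,c_\tau}$. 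The support condition $\mu_r=0$ then kills every permutation of $\nu_i$ that places a $1$ in the last coordinate; this is exactly where the mirabolic-type level structure matters and why $T_r$ is excluded. What survives is the classical $\GL_{r-1}$ Hecke computation. I expect to obtain
\[
\la_{\tau,i}=q^{\,i(r-i)/2 \,-\, \text{(shift)}}\,e_i(\alpha_1,\ldots,\alpha_{r'}),
\]
with $e_i$ the elementary symmetric polynomial, since $s_{(1^i)}=e_i$ and the $\delta^{1/2}$ factor together with the index of the coset produces a power of $q$. Concretely, there are $q^{i(r-1-i)}$ representatives times $q^{-i(r-1-i)/2}$ from the modulus character, combined with the normalization in $\GL_r$ versus $\GL_{r-1}$. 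I will track these powers carefully; they should assemble into $q^{i(r-1)/2-i(i-1)/2}$ times $e_i$.

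Finally I substitute into $\prod_j(1-\alpha_jq^{-s})=\sum_i(-1)^ie_i(\alpha)q^{-is}$ and check that the power of $q$ matches $q^{-i(s+\frac{r-1}{2})+\frac{i(i-1)}{2}}$. For $i>r'$ we have $e_i=0$, consistent with the sum stopping at $r-1$.

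The main obstacle is the second step. Choosing the coset representatives of $\Gamma_{r,c_\tau}\varpi^{\nu_i}\Gamma_{r,c_\tau}$ explicitly, and showing that the non-integral or lower-unipotent parts contribute zero (via the $\psi$-equivariance combined with the support of $W$), requires care, because $\Gamma_{r,c}$ is not a parahoric. My first attempt will be to pass to the Kirillov model restricted to $P_r$, where $\Gamma_{r,c}\cap P_r$ acts transparently, and to use the fact that $\varpi^{\nu_i}$ with $i<r$ lies in $\GL_{r-1}\times 1$. This reduces the sum to the spherical Hecke operator of $\GL_{r-1}(\frak o)$ acting on $W|_{\GL_{r-1}}$, which is spherical with Satake parameters $\alpha_j$ padded by zeros.
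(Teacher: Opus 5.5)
Your route is genuinely different from the paper's. The paper proves nothing here: it quotes Theorem~1.1 of Kondo--Yasuda and adds only that $L(s,\phi_\tau)$ is the Godement--Jacquet $L$-factor of $\tau$. You instead recover the eigenvalues from the torus values of the essential Whittaker function, which is in effect Miyauchi's Shintani-type argument run backwards. This gives a self-contained proof, and it is shorter than you fear. The coset decomposition you call the main obstacle is supplied by \lmref{L:Hecke op}, whose proof is pure coset combinatorics. There every $k\varpi^{\nu_i}$ equals $z\,\varpi^{\nu_S}u_S$ with $z\in Z_r(\frak{o})$, $u_S\in\Gamma_{r,c_\tau}$ and $S\subset\cI_{r-1}$. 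Hence
\[
\la_{\tau,i}\,W(1)=\sum_{S\subset\cI_{r-1},\,|S|=i}q^{|J_S|}\,W(\varpi^{\nu_S}),
\]
and dominance kills every $S\neq\stt{1,\ldots,i}$. No non-integral or lower-unipotent terms arise. The detour through the spherical Hecke algebra of $\GL_{r-1}$ is also unnecessary; there, ``Satake parameters padded by zeros'' would need a polynomial-continuation argument.

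However, the normalizations you wrote down are wrong, and they are exactly what the lemma asserts.

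First, the torus formula is $W(\mathrm{diag}(\varpi^{\mu},1))=\delta_{B_r}^{1/2}(\mathrm{diag}(\varpi^{\mu},1))\,s_\mu(\alpha)\,W(1)$. This carries an extra factor $|\det\varpi^{\mu}|^{1/2}$ beyond your $\delta_{B_{r-1}}^{1/2}$. As a check, for the Steinberg representation of $\GL_2(F)$ one has $\alpha_1=q^{-1/2}$ and $W(\mathrm{diag}(\varpi^m,1))=q^{-m}W(1)$.

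Second, $|J_{\stt{1,\ldots,i}}|=i(r-i)$, not $i(r-1-i)$; the extra $q^{i}$ comes from the last-column roots $\e_a-\e_r$. Therefore $\la_{\tau,i}=q^{i(r-i)}\,q^{-i(r-i)/2}e_i(\alpha)=q^{i(r-i)/2}e_i(\alpha)$. Since $\tfrac{i(r-i)}{2}=\tfrac{i(r-1)}{2}-\tfrac{i(i-1)}{2}$, this is the lemma. With your two inputs you would be off by $q^{i/2}$.

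Third, your support claim is false as stated: by the central character, $W(\varpi^\la)$ need not vanish when $\la_r\neq 0$. The correct condition is $\la_1\ge\cdots\ge\la_r$. This is harmless here, since every $\nu_S$ has last entry $0$.

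Finally, watch for circularity. A Shintani-style proof of the Schur-polynomial formula, such as Miyauchi's, takes the eigenvalues $\la_{\tau,i}$ as input from Kondo--Yasuda, so quoting it here is circular. Use Matringe's version instead. Alternatively, derive the formula from the Jacquet--Piatetski-Shapiro--Shalika identity $\Psi(s,W,W^0_\sigma)=L(s,\tau\times\sigma)$ for unramified $\sigma$ of $\GL_{r-1}$, via Shintani's formula, the Cauchy identity and linear independence of Schur polynomials; this also pins down the normalization above. As in the paper, you still need that $L(s,\phi_\tau)$ equals the representation-theoretic $L$-factor of $\tau$ appearing in that formula.
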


\begin{proof}
This follows immediately from \cite[Theorem 1.1]{KondoYasuda2012} and the identification of $L\(s,\phi_\tau\)$ with the $L$-factor attached to 
$\tau$ in \cite{GodementJacquet1972}. 
\end{proof}

We now rewrite the expression for $T_i(v_\tau)$ in a form convenient for our application. For this purpose, let 
$\e_1,\ldots,\e_r\in{\rm Hom}(A_r,\bbG_m)$ denote the standard basis. For each subset $S\subset\cI_{r}$, set 
\begin{equation}\label{E:J_S}
J_S
=
\stt{\e_i-\e_j\mid 1\le i<j\le r,\,i\in S,\,j\nin S}. 
\end{equation}
Define
\[
\nu_S=\sum_{i\in S}\ep_i^*,
\]
so that $\nu_{\stt{1,\ldots, i}}=\nu_i$ for $1\le i\le r$. For $1\le i<j\le r$ and $y\in F$, set
\[
\chi_{\e_i-\e_j}(y)
=
I_r+yE^r_{i,j},
\]
a root element of ${\rm SL}_r(F)\subset\GL_r(F)$ associated with $\e_i-\e_j$. 

\begin{lm}\label{L:Hecke op}
Assume that $r\ge 2$ and $c_\tau\ge 1$. Then, for $1\le i\le r-1$, we have
\[
T_i(v_\tau)
=
\sum_{\substack{S\subset \cI_{r-1}\\|S|=i}}\,
\sum_{y_{S,\beta}\in\frak{o}/\frak{p}}
\tau\(
\prod_{\beta\in J_S}\chi_{\beta}\(y_{S,\beta}\)\varpi^{\nu_S}
\)v_\tau. 
\]
\end{lm}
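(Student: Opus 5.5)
The plan is to make the double coset decomposition in \eqref{E:Hecke op} explicit, by a mod-$\frak{p}$ Bruhat-cell argument similar in spirit to \propref{P:coset decomp} but much simpler. Recall that $\Gamma_{r,m}$ consists of $k\in\GL_r(\frak{o})$ whose last row is congruent to $(0,\ldots,0,*)$ modulo $\frak{p}^m$. Put $\Gamma=\Gamma_{r,c_\tau}$ and $S_0=\stt{1,\ldots,i}$. By \eqref{E:Hecke op}, $T_i(v_\tau)=\sum_k\tau(k\varpi^{\nu_i})v_\tau$, where $k$ runs over $\Gamma/(\Gamma\cap\varpi^{\nu_i}\Gamma\varpi^{-\nu_i})$. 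So it suffices to produce a suitable set of representatives $k$ and then rewrite each $k\varpi^{\nu_i}$.

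First I would describe the subgroup $\Gamma\cap\varpi^{\nu_i}\Gamma\varpi^{-\nu_i}$. Conjugation by $\varpi^{-\nu_i}$ multiplies the $(a,b)$ entry by $\varpi^{[b\in S_0]-[a\in S_0]}$. Hence $g\in\Gamma$ lies in the intersection if and only if $g_{a,b}\in\frak{p}$ for all $a\notin S_0$, $b\in S_0$. The additional condition on row $r$ is $g_{r,b}\in\frak{p}^{c_\tau-1}$ for $b\in S_0$, which is implied by $g\in\Gamma$. In particular the intersection contains the principal congruence subgroup $1+\varpi\M_r(\frak{o})$. Therefore the quotient equals $\bar\Gamma/\bar Q$, where bars denote reduction modulo $\frak{p}$. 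Here $c_\tau\ge1$ is used: $\bar\Gamma=\bar P'$ is the group of matrices in $\GL_r(\frak{f})$ with last row $(0,\ldots,0,*)$, and $\bar Q$ is the block upper triangular parabolic of type $(i,r-i)$ intersected with $\bar P'$. Equivalently, $\bar\Gamma/\bar Q$ is the $\bar P'$-orbit of the standard $i$-dimensional coordinate subspace in the Grassmannian.

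Next I would apply the Bruhat decomposition inside $\bar P'$. Modulo its unipotent radical (the last column), $\bar P'$ is $\GL_{r-1}(\frak{f})\times\GL_1(\frak{f})$. So its orbit decomposes into Schubert cells indexed by permutations of $\cI_{r-1}$ fixing $r$, modulo those stabilizing $S_0$, that is, by subsets $S\subset\cI_{r-1}$ with $|S|=i$. The hypothesis $i\le r-1$ guarantees $r\notin S_0$, so such $S$ exist. The cell attached to $S$ has representatives $\prod_{\beta\in J_S}\chi_\beta(y_{S,\beta})\,w_\sigma$ with $y_{S,\beta}\in\frak{o}/\frak{p}$, where $w_\sigma$ is a permutation matrix fixing $r$ with $\sigma(S_0)=S$. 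Note that $J_S$ includes the roots $\ep_a-\ep_r$ with $a\in S$, coming from the last column. A count, $\sum_{S}q^{|J_S|}=[\bar P':\bar Q]$, confirms that these representatives are complete and distinct, or distinctness can be checked directly as in Step 3 of \propref{P:coset decomp}. Lifting to $\frak{o}$ gives representatives of $\Gamma/(\Gamma\cap\varpi^{\nu_i}\Gamma\varpi^{-\nu_i})$.

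Finally, I would move the permutation across the torus element: $w_\sigma\varpi^{\nu_i}=\varpi^{\nu_S}w_\sigma$. Since $w_\sigma$ fixes $r$, it lies in $\Gamma$, so $\tau(w_\sigma)v_\tau=v_\tau$. This gives exactly the stated formula. The main obstacle I expect is the bookkeeping in the second step. One has to check that the upper triangular unipotent coordinates in $J_S$ (including the column-$r$ entries) are exactly those not absorbed into $\bar Q$ after conjugating by $w_\sigma$, and that every representative, with its $\frak{o}$-lift, genuinely lies in $\Gamma$.
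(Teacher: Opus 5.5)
Your second step contains a transposition error. It is repairable, but as written it makes the counting check fail.

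\textbf{The orientation of $\bar Q$.} Your formula for conjugation by $\varpi^{-\nu_i}$ is correct. However, the factor $\varpi^{-1}$ occurs for $a\in S_0$, $b\notin S_0$. So $H:=\Gamma\cap\varpi^{\nu_i}\Gamma\varpi^{-\nu_i}$ is cut out by $h_{a,b}\in\frak p$ for $a\in S_0$, $b\notin S_0$. Your own row-$r$ computation, which yields $\frak p^{c_\tau-1}$, already uses this orientation.

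Hence $\bar Q$ must be the block \emph{lower} triangular parabolic. Equivalently, it is the stabilizer of $\mathrm{span}(e_{i+1},\dots,e_r)=\varpi^{\nu_i}\frak o^r/\varpi\frak o^r$, not of $\mathrm{span}(e_1,\dots,e_i)$.

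This is not cosmetic. With your block upper triangular $\bar Q$:
\begin{itemize}
\item[(i)] the entire last column of $\bar P'$ lies in $\bar Q$;
\item[(ii)] the quotient is the Grassmannian of $i$-planes in the hyperplane $\{x_r=0\}$ preserved by $\bar P'$, which has $\binom{r-1}{i}_q$ points;
\item[(iii)] since $w_\sigma^{-1}\chi_{\e_a-\e_r}(y)w_\sigma=\chi_{\e_{\sigma^{-1}(a)}-\e_r}(y)\in\bar Q$ for $a\in S$, the column-$r$ factors you keep are absorbed.
\end{itemize}
So your list repeats each coset $q^i$ times. The check $\sum_S q^{|J_S|}=[\bar P':\bar Q]$ fails, because the left side equals $q^i\binom{r-1}{i}_q$: each $a\in S$ contributes the extra root $\e_a-\e_r$.

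With the corrected $\bar Q$, the quotient is the set of $(r-i)$-planes not contained in $\{x_r=0\}$. It has $\binom{r}{i}_q-\binom{r-1}{i-1}_q=q^i\binom{r-1}{i}_q$ elements. Its cells under the upper triangular Borel are the orbits of $\mathrm{span}(e_j:j\notin S)$ with $S\subset\cI_{r-1}$, $|S|=i$. These orbits are parametrized exactly by the coordinates in $J_S$. Your last step ($w_\sigma\in\Gamma$ and $w_\sigma\varpi^{\nu_i}=\varpi^{\nu_S}w_\sigma$) then gives the stated formula.

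\textbf{A smaller slip.} For $c_\tau\ge 2$, the group $\Gamma$, and hence $H$, does not contain $1+\varpi\M_r(\frak o)$; for example $1+\varpi E^r_{r,1}\notin\Gamma$. What the reduction mod $\frak p$ actually needs, and what is true, is $\Gamma\cap(1+\varpi\M_r(\frak o))\subset H$. This gives $\Gamma/H\cong\bar\Gamma/(\bar\Gamma\cap\bar Q)$. Whether the last row of elements of $\Gamma_{r,m}$ is normalized to $(0,\dots,0,*)$ or $(0,\dots,0,1)$ modulo $\frak p^m$ does not affect this quotient.

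\textbf{Comparison with the paper.} Once repaired, your argument is the paper's proof carried out over $\frak f$. The paper takes representatives $\begin{pmatrix}a&y\\0&1\end{pmatrix}$ from Miyauchi's Lemma 3.1. It handles $a$ by Shintani's sublemma for $\GL_{r-1}(\frak o)$. It then shows that $y$ runs over $\sum_{j\in S}y_jE_j$ modulo $aL_i$, which is exactly your last-column fibre over the $\GL_{r-1}(\frak f)$ Bruhat cells. Reducing mod $\frak p$ makes both cited inputs elementary, at the cost of having to get the orientation of $\bar Q$ right by hand.
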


\begin{proof}
By \eqref{E:Hecke op}, we have to investigate the quotient
\begin{equation}\label{E:quotient}
\Gamma_{r,c_\tau}/\(\Gamma_{r,c_\tau}\bigcap\varpi^{\nu_i}\Gamma_{r, c_\tau}\varpi^{-\nu_i}\). 
\end{equation}
By \cite[Lemma 3.1]{Miyauchi2014}, this quotient admits a set of representatives of the form
\[
\pMX{a}{y}{0}{1}
\]
where
\[
a\in\Gamma_{r-1,0}/\(\Gamma_{r-1,0}\bigcap\varpi^{\nu_i}\Gamma_{r-1,0}\varpi^{-\nu_i}\)
\]
and 
\[
y
\in
L_0/aL_i. 
\]
Here $L_0$ and $L_i$ are $\frak{o}$-lattices defined by  
\[
L_0
=
\bigoplus_{j=1}^{r-1}\frak{o} E_j
\quad\text{and}\quad
L_i
=
\bigoplus_{j=1}^i\frak{p} E_j\oplus\bigoplus_{j=i+1}^{r-1}\frak{o} E_j,
\]
respectively, where $E_1,\ldots, E_{r-1}$ form the standard basis of $\M_{r-1, 1}(F)$, and we regard $\varpi^{\nu_i}$ as an element of
$\GL_{r-1}(F)$ via the embedding 
\begin{equation}\label{E:embedding GL}
\GL_{r-1}(F)\longto\GL_r(F);
\quad
a\longmapsto\pMX{a}{}{}{1}. 
\end{equation}
We also need a sublemma (\cite{Shintani1976}) due to Shintani, which asserts that
\[
\Gamma_{r-1,0}\,\varpi^{\nu_i}\,\Gamma_{r-1,0}
=
\bigsqcup_{\substack{S\subset\cI_{r-1}\\|S|=i}}\,\bigsqcup_{x\in Z_{r-1}(\frak{o})/ Z_{r-1}(\frak{o})_S}
x\varpi^{\nu_S}\Gamma_{r-1,0},
\]
where
\[
Z_{r-1}(\frak{o})_S
=
Z_{r-1}(\frak{o})\cap\varpi^{\nu_S}\Gamma_{r-1,0}\varpi^{-\nu_S}. 
\]
We note that in \cite[Sublemma]{Shintani1976}, one should take the set $I_i$ to be $I_i=\stt{\e\in\stt{0,1}^n\mid \sum_{j=1}^n \e_j=i}$, as pointed
out in the proof of \cite[Lemma 3.2]{Miyauchi2014}. 

Now, for each $\emptyset\ne S\subset\cI_{r-1}$ with $|S|=i$, let $u_S\in\Gamma_{r-1,0}$ be a permutation matrix such that 
\[
u^{-1}_S\varpi^{\nu_S}u_S
=
\varpi^{\nu_i}. 
\]
Then we have
\[
\Gamma_{r-1,0}\,\varpi^{\nu_i}\,\Gamma_{r-1,0}
=
\bigsqcup_{\substack{S\subset\cI_{r-1}\\|S|=i}}\,\bigsqcup_{x\in Z_{r-1}(\frak{o})/ Z_{r-1}(\frak{o})_S}
xu_S\cdot\varpi^{\nu_i}\Gamma_{r-1,0}. 
\]
This implies that 
\[
\stt{xu_S\mid \emptyset\ne S\subset\cI_{r-1},\,|S|=i,\,x\in Z_{r-1}(\frak{o})/ Z_{r-1}(\frak{o})_S}
\]
is a set of representatives for
\begin{equation}\label{E:quotient 2}
\Gamma_{r-1,0}/\(\Gamma_{r-1,0}\bigcap\varpi^{\nu_i}\Gamma_{r-1,0}\varpi^{-\nu_i}\). 
\end{equation}
Moreover, a simple computation shows that $Z_{r-1}(\frak{o})/Z_{r-1}(\frak{o})_S$ has a set of representatives given by
\[
\stt{\prod_{\beta\in J'_S}\chi_\beta\(y_{S,\beta}\)\mid y_{S,\beta}\in \frak{o}/\frak{p}},
\]
where 
\[
J'_S
=
\stt{\e_i-\e_j\mid 1\le i<j\le r-1,\,i\in S,\,j\nin S}. 
\]
Here we again regard $\prod_{\beta\in J'_S}\chi_\beta\(y_{S,\beta}\)$ as an element of $\GL_{r-1}(F)$ via the embedding \eqref{E:embedding GL}. 
Together, the quotient \eqref{E:quotient 2} has a set of representatives of the form $xw_S$
with $S\subset\cI_{r-1}$, $|S|=i$, and $x=\prod_{\beta\in J'_S}\chi_{\beta}\(y_{S,\beta}\)$ for some $y_{S,\beta}\in\frak{o}/\frak{p}$.   
We proceed to analyze the quotient 
\[
L_0/xu_S L_i.
\] 
To this end, note that
\[
L_0=xL_0
\quad
\text{and} 
\quad
xu_SL_i
=
xu_S\varpi^{\nu_i}L_0
=
x\varpi^{\nu_S} u_SL_0
=
x\varpi^{\nu_S}L_0. 
\]
It follows that 
\[
\stt{y_jE_j\mid j\in S,\,y_j\in\frak{o}/\frak{p}}
\]
forms a set of representatives for $L_0/xu_S L_i$. Together, we conclude that 
\[
\stt{\prod_{\beta\in J_S}\chi_{\beta}\(y_{S,\beta}\) u_S\mid S\subset\cI_{r-1},\,|S|=i,\,y_{S,\beta}\in\frak{o}/\frak{p}}
\]
is a set of representatives for the quotient \eqref{E:quotient}, where we view $u_S$ as an element of $\GL_r(F)$ through the embedding 
\eqref{E:embedding GL}. 

To complete the proof, we compute
\begin{align*}
T_i(v_\tau)
&=
\sum_{\substack{S\subset\cI_{r-1}\\|S|=i}}\,\sum_{y_{S,\beta}\in\frak{o}/\frak{p}}
\tau\(\prod_{\beta\in J_S}\chi_\beta\(y_{S,\beta}\)u_S\varpi^{\nu_i}\)v_\tau\\
&=
\sum_{\substack{S\subset\cI_{r-1}\\|S|=i}}\,\sum_{y_{S,\beta}\in\frak{o}/\frak{p}}
\tau\(\prod_{\beta\in J_S}\chi_\beta\(y_{S,\beta}\)\varpi^{\nu_S}u_S\)v_\tau
=
\sum_{\substack{S\subset\cI_{r-1}\\|S|=i}}\,\sum_{y_{S,\beta}\in\frak{o}/\frak{p}}
\tau\(\prod_{\beta\in J_S}\chi_\beta\(y_{S,\beta}\)\varpi^{\nu_S}\)v_\tau. 
\end{align*}
This proves the lemma. 
\end{proof}

\section{Final reduction}\label{S:3rd red}
In this section, we prove the following proposition. 

\begin{prop}\label{P:sc to seed}
If the space of newforms is non-zero for every irreducible generic supercuspidal representation, then it is also non-zero for all seed representations.
\end{prop}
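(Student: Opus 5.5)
The plan is to split according to the shape of the $L$-factor of a seed representation $\pi$, i.e.\ according to $|I_{\phi,\chi}|,|I_{\phi,\chi'}|\in\{0,1\}$. We then treat the three resulting cases in increasing order of difficulty, each case feeding into the next.

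\emph{Case $I_{\phi,\chi}=I_{\phi,\chi'}=\emptyset$.} By the construction in \S\ref{SS:construction}, $\pi$ is the generic subrepresentation of $\Pi=(\bigtimes_{\rho,j}\D_{\rho,j})\rtimes\sigma$ with $\sigma$ generic supercuspidal and every $\rho$ ramified. The first part of the proof of \lmref{L:key Jac} gives $|\mu^*_{ur}(\Pi)|=1=|\mu^*_{ur}(\pi)|$, so \corref{C:key P} yields $\Pi^{K_{n,m}}=\pi^{K_{n,m}}$ for all $m$. By hypothesis $a_\sigma=c_\sigma$, so \lmref{L:useful} gives $a_\Pi=c_\sigma+2\sum c_{\D_{\rho,j}}$. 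It remains to check $a_\Pi=c_\pi$ directly from the parameter, using $c_{\phi_\rho\bt S_{2\ka+1}}=(2\ka+1)a(\rho)$ for ramified $\rho$ (there is no $L$-factor correction).
\begin{itemize}
\item Each block $\D_\rho[\ka_{2j+1},-\ka_{2j}]$ has conductor $(\ka_{2j+1}+\ka_{2j}+1)a(\rho)$. Doubling it gives the contribution of $\phi_\rho\bt S_{2\ka_{2j}+1}\oplus\phi_\rho\bt S_{2\ka_{2j+1}+1}$.
\item Each block $\D_\rho[\ka_1,1]$ (resp.\ $\D_\rho[\ka_1,\frac12]$) contributes $2\ka_1a(\rho)$ (resp.\ $(2\ka_1-1)a(\rho)+a(\rho)$ after doubling). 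Together with the $a(\rho)$ carried by $\sigma$ (resp.\ by the $S_1$/shift), this recovers $(2\ka_1+1)a(\rho)$.
\end{itemize}
Hence $a_\Pi=c_\pi$, and $\pi^{K_{n,c_\pi}}=\Pi^{K_{n,a_\Pi}}\neq0$.

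\emph{Case $L(s,\phi_\pi)=L(s+\ka,\chi)$.} Here $\pi\subset\Pi=\D_\chi[\ka,\frac12]\rtimes\pi_0$, where $\pi_0$ has parameter $\phi-\phi_\chi\bt S_{2\ka+1}$. Then $\pi_0$ falls under the previous case, so $a_{\pi_0}=c_{\pi_0}$ and $\dim\Pi^{K_{n,a_\Pi}}=1$ by \lmref{L:useful}. By \corref{C:cond}, $a_\Pi=c_{\pi_0}+2\ka-1=c_\pi-1$. Using \lmref{L:length 2}, $\pi=\ker(M:\Pi\to\Pi'=\D_\chi[-\frac12,-\ka]\rtimes\pi_0)$. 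Since $\pi^{K_{n,a_\Pi}}=0$ by \thmref{T:mainA}(1), $M$ is injective on $\Pi^{K_{n,a_\Pi}}$, so $f':=M(f)\neq0$ spans $\Pi'^{K_{n,a_\Pi}}$ (again by \lmref{L:useful}, since $a_{\Pi'}=a_\Pi$). Because $M$ is $G$-equivariant, $M\theta=\theta M$ and $M\theta'=\theta'M$. It therefore suffices to find $(c,c')$ with
\[
c\,\theta(f')+c'\theta'(f')=0\quad\text{in }\Pi',\qquad c\,\theta(f)+c'\theta'(f)\neq0\quad\text{in }\Pi.
\]
Such a vector then lies in $\pi^{K_{n,c_\pi}}$.

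\emph{Explicit computation.} We realize $f$ in the induced model, supported on $P\cdot K_{n,a_\Pi}$ with value $v_\tau\otimes v_{\pi_0}$ (newforms) at the identity, as in the proof of \lmref{L:useful}. We then evaluate $\theta(f)$ and $\theta'(f)$ at a set of representatives of $P\backslash G/K_{n,a_\Pi+1}$ by means of \eqref{E:theta sum}, \eqref{E:theta' sum} and \propref{P:coset decomp}. After moving the torus parts $w_{S,m+1}$ and root elements into the Levi, the sums over $S$ and over $y_{S,\beta}$ should collapse, via \lmref{L:Hecke op}, into the Hecke operators $T_i$ acting on $v_\tau$ together with fixed vectors of $\pi_0$. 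So both vectors become explicit combinations of a few basis functions, with coefficients polynomial in the eigenvalues $\la_{\tau,i}$ and in $q^{\pm1/2}$. The same formulas hold for $\Pi'$ with $\tau$ replaced by $\tau^\vee=\D_\chi[-\frac12,-\ka]$.

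\emph{Main obstacle.} The hardest step is this explicit evaluation, in particular controlling the Weyl twists in $\theta'$ and the cancellations among the $w_{S,m+1}$. The intended mechanism is to use \lmref{L:Hecke ev} to express the coefficients through the $L$-factors: $L(s,\tau)=L(s+\frac12,\chi)$ has a nontrivial Euler factor, while $\tau^\vee$ has the Euler factor at $\chi|\cdot|^{-\frac12}$. I expect the $2\times2$ coefficient determinant to vanish for $\Pi'$ but not for $\Pi$, i.e.\ $\theta(f')$ and $\theta'(f')$ proportional while $\theta(f)$ and $\theta'(f)$ are independent. This is what produces the desired pair $(c,c')$.

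\emph{Case $L(s,\phi_\pi)=L(s+\ka,\chi)L(s+\ka',\chi')$.} We write $\pi\subset\D_{\chi'}[\ka',\frac12]\rtimes\pi_0$, where $\pi_0$ is now a seed representation of the previous type, so that $a_{\pi_0}=c_{\pi_0}$ by that case. Again $\Pi$ has length $2$ and $c_\pi=a_\Pi+1$, and the same level-raising computation, with $\pi_0$'s newform in place of the supercuspidal one, finishes the proof.
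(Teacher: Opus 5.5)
Your architecture matches the paper's. In Case 1 you use \corref{C:key P} together with a conductor match. Checking $a_\Pi=c_\pi$ by additivity of Artin conductors is a fine substitute for the $\gamma$-factor argument of \lmref{L:seed cond case 1}. One bookkeeping slip: for $\rho\in I^2_\phi$ with $d_\rho$ odd, the doubled block $\Delta_\rho[\kappa_1,\frac12]$ alone already contributes $(2\kappa_1+1)a(\rho)$, and no extra $a(\rho)$ comes from anywhere. In Case 2 you use \lmref{L:length 2}, the operator $M$, and $M\theta=\theta M$, $M\theta'=\theta'M$.

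The gap is that Case 2 consists precisely of the step you leave as an expectation. The vanishing requires actually carrying out the collapse:
\begin{itemize}
\item $f_s$ is supported on $P_{\alpha_r}(F)x_{r,\xi}K_{n,a}$ with $\xi=c_{\pi'}+r-1$, not on $P\,K_{n,a}$ once $r>1$ (\lmref{L:basis seed case 2}).
\item \lmref{L:eva} shows that only $S\ni r$ contribute at $x_{r,\xi+1}$.
\item The $S_0$-sums become Hecke operators for the unitary $\tau=\mathrm{St}_\chi(\frac{r-1}{2})$, with $\lambda_{\tau,0}=1$, $\lambda_{\tau,1}=\chi(\varpi)$, and $\lambda_{\tau,i}=0$ for $i\ge 2$.
\item For $r=1$, where $c_\tau=0$ and \lmref{L:Hecke ev} does not apply, this is done by hand.
\item For $r=n$ one also needs the second double coset and the sign from \thmref{T:mainA}(2).
\end{itemize}
The outcome is
\[
\theta(f_s)(x_{r,\xi+1})=v_\tau\otimes(\alpha_sA+\beta_sB),\qquad \theta'(f_s)(x_{r,\xi+1})=v_\tau\otimes(\beta_sA+\alpha_sB),
\]
with $\alpha_s=\chi(\varpi)^rq^{r(s+n-\frac r2)}$ and $\beta_s=\chi(\varpi)^{r+1}q^{(r-1)(s+n-\frac r2)+(n-r)}$. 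Here $A,B\in\pi'$ are the sums over $S_1\subset\{r+1,\dots,n\}$ with $|S_1|+r$ even, resp.\ odd. Hence $\theta(f_s)-\chi(\varpi)\theta'(f_s)$ takes the value $\chi(\varpi)^r\bigl(q^{r(s+n-\frac r2)}-q^{(r-1)(s+n-\frac r2)+(n-r)}\bigr)\,v_\tau\otimes(A-\chi(\varpi)B)$. This vanishes exactly at $s=-\frac r2$. Also, $L(s,\Delta_\chi[\kappa,\frac12])=L(s+\kappa,\chi)$, not $L(s+\frac12,\chi)$.

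The more serious missing idea is the nonvanishing at $s=\frac r2$. A nonzero $2\times 2$ coefficient determinant makes $\theta(f)$ and $\theta'(f)$ independent only if $A$ and $B$ are independent. Nothing in your plan gives that, nor even that $A$ and $B$ are nonzero. What is actually needed is $A-\chi(\varpi)B\neq 0$, and the paper obtains it from Whittaker functionals:
\begin{itemize}
\item Apply $\lambda=\lambda_{\tau,\psi}\otimes\lambda_{\pi',\psi}\circ\pi'(\varpi^{(r-1)\lambda_{n-r}})$.
\item A Casselman--Shalika-type support argument kills every term with $S_1\neq\emptyset$.
\item The surviving $S_1=\emptyset$ term is a nonzero multiple of $\lambda_{\tau,\psi}(v_\tau)\lambda_{\pi',\psi}(v_{\pi'})$.
\item This is nonzero by Matringe for $\tau$ and by \thmref{T:mainA}(3) applied to the tempered $\pi'$, whose newform exists by the previous case.
\end{itemize}
This gives a value proportional to $q^{nr}-q^{(n-1)r}\neq 0$. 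Without this input, your argument does not show that the vector you produce in $\ker M$ is nonzero. The input is the nonvanishing of the Whittaker functional on the newform of $\pi'$. It is needed again in your third case, where $\pi'$ is the seed with a single unramified factor.
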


Let $\pi$ be a seed representation of $\SO_{2n+1}(F)$ with $n>0$, and let $\phi$ be the corresponding 
seed $L$-parameter, written as in \eqref{E:L-par}.  Assume that $\pi$ is not supercuspidal. The proof is divided into two cases, according to  
whether or not both $|I_{\phi,\chi}|=|I_{\phi,\chi'}|=0$. The first case, namely when both $|I_{\phi,\chi}|=|I_{\phi,\chi'}|=0$, is much easier and 
in fact follows from the results of the previous sections. In contrast, the second case requires more effort and is where the level raising operators 
introduced in the previous section are needed. Note that $|I_{\phi,\chi}|=|I_{\phi,\chi'}|=0$ if and only if each representation occurring in $I_\phi$ is 
ramified, and hence if and only if $L\(s,\phi\)=1$. 

\subsection{The case $|I_{\phi,\chi}|=|I_{\phi,\chi'}|=0$}
By the construction in \S\ref{SS:construction}, $\pi$ can be realized as the unique generic subrepresentation of 
\[
\Pi
=
\Delta_1\x\cdot\cdot\cdot\x\Delta_\el\rtimes\sigma
\]
for some irreducible generic supercuspidal representation $\sigma\in\Irr_0\(\SO\)$ and irreducible essentially square-integrable representations 
$\D_j\in\Irr_1(\GL)$ for $j=1,\ldots,\el$. We will need the following lemma. 

\begin{lm}\label{L:seed cond case 1}
Let the notation be as above, and assume that $a_\sigma=c_\sigma$. Then $a_\Pi=c_\pi$. 
\end{lm}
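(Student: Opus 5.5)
Both $a_\Pi$ and $c_\pi$ can be computed additively from the same pieces, so the plan is to evaluate each side and compare them.

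\emph{The side $a_\Pi$.} Since $\sigma$ is generic supercuspidal, the Remark following the definition of $a_\Pi$ (or \lmref{L:GP}) shows that $\sigma^{K_{n_0,m}}\ne 0$ for some $m$. Then \lmref{L:useful}, together with the hypothesis $a_\sigma=c_\sigma$, gives
\[
a_\Pi=c_\sigma+2\sum_{j=1}^\el c_{\D_j}.
\]

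\emph{The side $c_\pi$.} Here I use the $L$-parameter. The parameter $\phi$ decomposes as $\bigoplus_{\rho\in I_\phi}\bigoplus_{\ka\in I_{\phi,\rho}}\phi_\rho\bt S_{2\ka+1}$, and the $\epsilon$-factor is multiplicative, so $c_\pi=\sum_{\rho,\ka}c(\phi_\rho\bt S_{2\ka+1})$. Every $\rho$ is ramified in this case, so for each summand
\[
c(\phi_\rho\bt S_{2\ka+1})=(2\ka+1)\,c_\rho.
\]
This holds because the Swan/Artin conductor of $\phi_\rho\ot S_d$ is $d\cdot a(\rho)$ when $\rho$ is ramified: the monodromy contributes nothing, since the inertia invariants of $\rho$ vanish. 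In other words $L(s,\phi_\rho\bt S_d)=1$, and the conductor drop term is zero.

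For $\D_\rho[x,y]$ with $\rho$ ramified, the same reasoning gives $c_{\D_\rho[x,y]}=(x-y+1)c_\rho$. The parameter of $\sigma$ is $\bigoplus_{\rho\in I^0_\phi}\phi_\rho$, so $c_\sigma=\sum_{\rho\in I^0_\phi}c_\rho$.

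\emph{Matching the two sides, $\rho$ by $\rho$.} It remains to check that the construction in \S\ref{SS:construction} distributes the segments correctly. For each $\rho$, this means showing
\[
\sum_{\ka\in I_{\phi,\rho}}(2\ka+1)=[\rho\in I^0_\phi]+2\sum_{j\in J_\rho}(\text{length of }\D_{\rho,j}).
\]
\begin{itemize}
\item For $\D_\rho[\ka_{2j+1},-\ka_{2j}]$, the length is $\ka_{2j+1}+\ka_{2j}+1$, so twice the length is $(2\ka_{2j}+1)+(2\ka_{2j+1}+1)$. This accounts for the pair $\ka_{2j},\ka_{2j+1}$.
\item In the case $I^{01}_\phi$ we have $\ka_1=0$, and the leftover $2\ka_1+1=1$ is absorbed by $\sigma$.
\item In the case $I^{02}_\phi$, the segment $\D_\rho[\ka_1,1]$ has length $\ka_1$, and $1+2\ka_1=2\ka_1+1$, as required.
\item The even cases and the $I^2_\phi$ cases are analogous. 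In particular, for $I^2_\phi$ with $d_\rho$ odd, the segment $\D_\rho[\ka_1,\tfrac12]$ has length $\ka_1+\tfrac12$, giving $2\ka_1+1$ with no contribution from $\sigma$.
\end{itemize}
Summing over $\rho$ gives $c_\pi=a_\Pi$.

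The main obstacle is justifying the conductor formula $c(\phi_\rho\bt S_d)=d\,c_\rho$ for ramified $\rho$; I expect to cite the standard formula $a(\phi\ot{\rm Sp}_d)=d\,a(\phi)+(d-1)\dim\phi^{I}$ with $\phi^{I}=0$. The rest is bookkeeping over the five cases of the construction.
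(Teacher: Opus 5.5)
Your proof is correct, but it takes a different route from the paper's.

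\textbf{The paper's route.} The paper never computes a conductor. It notes that in this case $L(s,\phi)$, $L(s,\phi_{\D_j})$, $L(s,\phi_{\D_j^\vee})$ and $L(s,\phi_\sigma)$ are all trivial; the last of these comes from Jiang--Soudry. Hence each $\epsilon$-factor equals the corresponding $\gamma$-factor. The paper then identifies $\gamma(s,\phi,\psi)$ with the Langlands--Shahidi factor $\gamma(s,\pi,\psi)=\gamma(s,\Pi,\psi)$, using Jiang--Soudry's Theorem B and the fact that $\pi$ is a constituent of $\Pi$. Multiplicativity of $\gamma$-factors then gives $\epsilon(s,\phi,\psi)=\prod_j\epsilon(s,\phi_{\D_j},\psi)\,\epsilon(s,\phi_{\D_j^\vee},\psi)\cdot\epsilon(s,\phi_\sigma,\psi)$. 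Comparing exponents of $q$ and applying \lmref{L:useful} finishes the proof.

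\textbf{Your route.} You stay entirely on the Galois side. The formula $a(\phi_\rho\otimes S_d)=d\,a(\phi_\rho)+(d-1)\dim\phi_\rho^{I}$ with $\phi_\rho^I=0$ makes each conductor proportional to the segment length. The claim then reduces to the per-$\rho$ bookkeeping for the construction in \S\ref{SS:construction}, which you carry out correctly. Two small additions would complete it:
\begin{itemize}
\item List the case $I^{00}_\phi$ explicitly. There $J_\rho=\emptyset$ and $\sigma$ contributes $1=2\cdot 0+1$.
\item Justify $\phi_\rho^I=0$. For $k=1$ this is what ramified means. For $k\ge 2$, $\phi_\rho^I$ is $W_F$-stable because $I$ is normal in $W_F$. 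By irreducibility it is $0$ or all of $\phi_\rho$, and the latter would make $\phi_\rho$ factor through $W_F/I\cong\mathbb{Z}$, forcing $k=1$.
\end{itemize}

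\textbf{Comparison.} The paper's argument is shorter and does not use the explicit shape of the segments: it needs only $\pi\subset\Pi$ and trivial $L$-factors. In exchange it relies on Langlands--Shahidi theory and its compatibility with the Jiang--Soudry correspondence. Your argument avoids that machinery at the cost of a case check. It also shows exactly why the unramified case behaves differently. The monodromy term $(d-1)\dim\phi_\rho^I$ is not additive when $S_{2\kappa+1}$ is split into a segment and its dual. That is precisely the source of the shift $a_\Pi=c_\pi-1$ in \lmref{L:seed cond case 2}.
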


\begin{proof}
Since $|I_{\phi,\chi}|=|I_{\phi,\chi'}|=0$, we have
\[
L\(s,\phi\)=L\(s,\phi_{\D_j}\)=L\(s,\phi_{\D^\vee_j}\)=1,
\]
where $\phi_{\D_j}$ (resp. $\phi_{\D_j^\vee}$) denotes the $L$-parameter of $\D_j$ (resp. $\D^\vee_j$) for $j=1,\ldots,\el$. Therefore,  
\[
\ep\(s,\phi,\psi\)
=
\gamma\(s,\phi,\psi\),
\quad
\ep\(s,\phi_{\D_j},\psi\)
=
\gamma\(s,\phi_{\D_j},\psi\)
\quad\text{and}\quad
\ep\(s,\phi_{\D^\vee_j},\psi\)
=
\gamma\(s,\phi_{\D^\vee_j},\psi\)
\]
for each $j$. Moreover, by \cite[Theorem 6.1]{JiangSoudry2003}, we also have $L\(s,\phi_\sigma\)=1$; hence 
\[
\ep\(s,\phi_\sigma,\psi\)
=
\gamma\(s,\phi_\sigma,\psi\),
\] 
where $\phi_\sigma$ is the $L$-parameter of $\sigma$. On the other hand, \cite[Theorem B]{JiangSoudry2004} implies that 
\[
\gamma\(s,\phi,\psi\)
=
\gamma\(s,\pi,\psi\),
\]
where $\gamma(s,\pi,\psi)$ denotes the $\gamma$-factor associated with $\pi$ and $\psi$ defined by the Langlands--Shahidi method 
(\cite{Shahidi1990}). Since $\pi$ is a constituent of $\Pi$, it follows that 
\[
\gamma\(s,\pi,\psi\)=\gamma\(s,\Pi,\psi\). 
\]
By the multiplicativity of the $\gamma$-factor and by applying \cite[Theorem B]{JiangSoudry2004} again, we obtain
\begin{align*}
\gamma\(s,\Pi,\psi\)
&=
\prod_{j=1}^\el\left[\gamma\(s,\D_j,\psi\)\gamma\(s,\D^\vee_j,\psi\)\right]\cdot\gamma(s,\sigma,\psi)\\
&=
\prod_{j=1}^\el\left[\gamma\(s,\phi_{\D_j},\psi\)\gamma\(s,\phi_{\D^\vee_j},\psi\)\right]\cdot\gamma\(s,\phi_\sigma,\psi\)\\
&=
\prod_{j=1}^\el\left[\ep\(s,\phi_{\D_j},\psi\)\ep\(s,\phi_{\D^\vee_j},\psi\)\right]\cdot\ep\(s,\phi_\sigma,\psi\). 
\end{align*}
 Combining the above identities, we deduce that
\[
\ep\(s,\phi,\psi\)
=
\prod_{j=1}^\el\left[\ep\(s,\phi_{\D_j},\psi\)\ep\(s,\phi_{\D^\vee_j},\psi\)\right]\cdot\ep\(s,\phi_\sigma,\psi\). 
\]
By comparing the exponents of $q$ on both sides, and by applying \lmref{L:useful}, we find that 
\begin{align*}
c_\pi
=
\sum_{j=1}^\el \(c_{\D_j}+c_{\D^\vee_j}\)+c_{\sigma}
=
2\sum_{j=1}^\el c_{\D_j}+a_\sigma
=
a_\Pi. 
\end{align*}
This completes the proof. 
\end{proof}

As a corollary, we obtain

\begin{cor}\label{C:sc to seed case 1}
If the space of newforms is non-zero for every irreducible generic supercuspidal representation, then it is also non-zero for all seed 
representations with trivial $L$-factor. 
\end{cor}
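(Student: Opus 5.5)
The plan is to combine \lmref{L:seed cond case 1} with \corref{C:key P}, exactly as in the proof of \propref{P:seed to sq}. Let $\pi$ be a seed representation with trivial $L$-factor, i.e.\ $|I_{\phi,\chi}|=|I_{\phi,\chi'}|=0$. If $\pi$ is supercuspidal there is nothing to prove. Otherwise we realize $\pi$, via the construction in \S\ref{SS:construction}, as the unique generic subrepresentation of
\[
\Pi=\D_1\x\cdot\cdot\cdot\x\D_\el\rtimes\sigma,
\]
where $\sigma$ is a generic supercuspidal representation. By hypothesis $\sigma^{K_{n_0,c_\sigma}}\ne 0$, so $a_\sigma\le c_\sigma$. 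Then \thmref{T:mainA}~(1) applied to $\sigma$ forces $a_\sigma=c_\sigma$ and $\dim_\bbC\sigma^{K_{n_0,c_\sigma}}=1$.

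\lmref{L:useful} then gives $\dim_\bbC\Pi^{K_{n,a_\Pi}}=1$; in particular $\Pi^{K_{n,a_\Pi}}\ne 0$. \lmref{L:seed cond case 1} gives $a_\Pi=c_\pi$.

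It remains to show $\Pi^{K_{n,m}}=\pi^{K_{n,m}}$ for all $m$. By \corref{C:key P}, it suffices to prove $|\mu^*_{ur}(\Pi)|=|\mu^*_{ur}(\pi)|$, and we claim both equal $1$:
\begin{itemize}
\item Every $\rho\in I_\phi$ is ramified, since $\chi,\chi'\notin I_\phi$. So each $\D_j$ is of the form $\D_\rho[x,y]$ with $\rho$ ramified, and $|M^*_{ur}(\D_j)|=1$ by \lmref{L:unram 4}.
\item $\sigma$ is supercuspidal, so its only Jacquet module is the trivial one, and $|\mu^*_{ur}(\sigma)|=1$.
\item \lmref{L:unram 3} and induction on $\el$ then give $|\mu^*_{ur}(\Pi)|=1$. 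This is the computation already carried out in the first case of the proof of \lmref{L:key Jac}.
\item On the other hand, $1\le|\mu^*_{ur}(\pi)|$ because $\pi$ is generic, so $1_{\GL_0(F)}$ contributes. Since $\pi\subset\Pi$ gives $|\mu^*_{ur}(\pi)|\le|\mu^*_{ur}(\Pi)|$, equality holds.
\end{itemize}

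Hence $\pi^{K_{n,c_\pi}}=\Pi^{K_{n,a_\Pi}}\ne 0$. No step is a genuine obstacle. The only points needing care are, first, that $\Pi$ really has $\pi$ as a subrepresentation, which the construction in \S\ref{SS:construction} provides. Second, \lmref{L:seed cond case 1} requires $a_\sigma=c_\sigma$, which we obtain from the hypothesis together with \thmref{T:mainA} as explained above.
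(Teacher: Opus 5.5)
Your proposal is correct and follows the paper's proof essentially step for step. You realize $\pi\subset\Delta_1\times\cdots\times\Delta_\ell\rtimes\sigma$, use \lmref{L:unram 4} and \lmref{L:unram 3} to get $|\mu^*_{ur}(\Pi)|=1=|\mu^*_{ur}(\pi)|$, then apply \corref{C:key P} and \lmref{L:seed cond case 1}. The only difference is that you spell out two points the paper leaves implicit: that $a_\sigma=c_\sigma$ follows from the hypothesis together with \thmref{T:mainA}~(1), and that $|\mu^*_{ur}(\pi)|\ge 1$ because $\pi$ is generic.
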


\begin{proof}
Let $\pi$ be a seed representation of $\SO_{2n+1}(F)$ with $n>0$, and let $\phi$ be the corresponding seed $L$-parameter, written as in 
\eqref{E:L-par}.  Assume that $\pi$ is not supercuspidal. By the construction in \S\ref{SS:construction}, $\pi$ can be realized as the unique
generic subrepresentation of 
\[
\Pi
=
\Delta_1\x\cdot\cdot\cdot\x\Delta_\el\rtimes\sigma
\]
for some irreducible generic supercuspidal representation $\sigma\in\Irr_0\(\SO\)$ and irreducible essentially square-integrable representations 
$\D_j\in\Irr_1(\GL)$ for $j=1,\ldots,\el$. Now the assumption that $L(s,\phi)=1$ implies that every representation occurring in $I_\phi$ is ramified. 
Hence, by \lmref{L:unram 4},
\[
|M^*_{ur}(\D_j)|=1
\]
for $j=1,\ldots,\el$. Since $\sigma$ is supercuspidal, we also have 
\[
|\mu^*_{ur}(\sigma)|=1. 
\] 
It then follows from \lmref{L:unram 3}, and by induction on $\el$ that 
\[
|\mu^*_{ur}\(\Pi\)|=1. 
\]
Therefore,
\[
|\mu^*_{ur}(\pi)|=|\mu^*_{ur}\(\Pi\)|, 
\]
and hence
\[
\pi^{K_{n,m}}=\Pi^{K_{n,m}}
\]
for each $m\ge 0$ by \corref{C:key P}. Since $a_\sigma=c_\sigma$ by the assumption, it follows from \lmref{L:seed cond case 1} that 
$c_\pi=a_\Pi$. In particular,
\[
\pi^{K_{n,c_\pi}}=\Pi^{K_{n,a_\Pi}}\ne 0. 
\]
This proves the corollary. 
\end{proof}

Because of \corref{C:sc to seed case 1}, we will focus on Case 2, that is, the case where $|I_{\phi,\chi}|=1$ or $|I_{\phi,\chi'}|=1$, throughout the 
rest of this section.

\subsection{The case $|I_{\phi,\chi}|=1$ or $|I_{\phi,\chi'}|=1$}\label{SS:seed case 2}
We may assume, without loss of generality, that 
\[
|I_{\phi,\chi}|=1. 
\]
Then $I_{\phi,\chi}=\stt{\ka}$ for some $\ka\in\frac{1}{2}+\bbZ_+$. We can write 
\[
\phi
=
\phi_\chi\bt S_{2\ka+1}\oplus \phi'
\]
for some discrete $L$-parameter $\phi'$ (possibly empty) of $\SO_{2n-2\ka}(F)$. Note that 
\[
|I_{\phi',\chi}|=0. 
\]
In particular, if $|I_{\phi',\chi'}|=0$ as well, then we reduce to Case 1. Denote by $\pi'$ the irreducible generic square-integrable 
representation of $\SO_{2n-2\ka}(F)$ with $L$-parameter $\phi'$. Then, by the construction in \S\ref{SS:construction} and \lmref{L:inj conj}, 
$\pi$ can be realized as the unique generic subrepresentation of the standard module
\[
\Pi:=\D_\chi[\ka, \tfrac{1}{2}]\rtimes \pi'. 
\]
The following lemma compares $c_\pi$ and $a_\Pi$. 

\begin{lm}\label{L:seed cond case 2}
Let the notation be as above and assume that $a_{\pi'}=c_{\pi'}$. Then $a_{\Pi}=c_\pi-1$. 
\end{lm}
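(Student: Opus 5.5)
The plan is to compute both sides explicitly. First, \lmref{L:useful} applied to $\Pi=\D_\chi[\ka,\tfrac12]\rtimes\pi'$ (which is legitimate since $\pi'^{K_{n',m}}\ne 0$ for some $m$ by \lmref{L:GP}, as $\pi'$ is generic and tempered) gives
\[
a_\Pi=a_{\pi'}+2c_{\D_\chi[\ka,\frac12]}=c_{\pi'}+2\(\ka-\tfrac12\),
\]
where the second equality uses the hypothesis $a_{\pi'}=c_{\pi'}$ together with \corref{C:cond}. So the whole task reduces to showing $c_\pi=c_{\pi'}+2\ka$.

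Next, I will compute $c_\pi$ directly from the $L$-parameter. Since $\phi=\phi_\chi\bt S_{2\ka+1}\oplus\phi'$ and the $\epsilon$-factor attached to the standard representation is additive in the parameter, we have $\ep(s,\phi,\psi)=\ep(s,\phi_\chi\bt S_{2\ka+1},\psi)\,\ep(s,\phi',\psi)$. Comparing the exponents of $q$ gives $c_\pi=c_{\phi_\chi\bt S_{2\ka+1}}+c_{\pi'}$. By \lmref{L:cond}, the first term equals $2\ka$. Hence
\[
c_\pi=c_{\pi'}+2\ka=a_\Pi+1,
\]
which is the claim.

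The only step needing care is making sure the conductor of the parameter of the $\GL$-block is the right quantity. In the induced representation the block $\D_\chi[\ka,\tfrac12]$ and its dual contribute $2c_{\D_\chi[\ka,\frac12]}=2\ka-1$ to $a_\Pi$. By contrast, the parameter of $\pi$ contains the single irreducible summand $\phi_\chi\bt S_{2\ka+1}$, whose conductor is $2\ka$, rather than the reducible sum $\phi_{\D}\oplus\phi_{\D^\vee}\oplus\phi_\chi$, whose conductor would be $2\ka-1$. The discrepancy of $1$ comes exactly from the unramified character $\chi$ "absorbed" into the Steinberg block, that is, from the pole of $L(s,\phi_\chi)$ making $\ep\neq\gamma$. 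This is where the argument of \lmref{L:seed cond case 1} via $\gamma$-factors breaks down. I therefore plan to use additivity of Tate/Deligne $\epsilon$-factors on the $L$-parameter side directly, rather than going through Langlands--Shahidi $\gamma$-factors.

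As a sanity check, I will verify that $\ka-\tfrac12\in\bbZ_+$, so that $\D_\chi[\ka,\tfrac12]$ is defined (possibly $\ka=\tfrac12$, in which case it is $\chi|\cdot|^{1/2}$ with $c=0$). I will also confirm that $n'=n-\ka-\tfrac12$, so that $\Pi$ lives on $\SO_{2n+1}(F)$ and \lmref{L:useful} applies.
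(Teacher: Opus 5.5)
Your proof is correct and follows the paper's argument: $c_\pi=c_{\pi'}+2\ka$ comes from additivity of the $\epsilon$-factor over $\phi=\phi_\chi\bt S_{2\ka+1}\oplus\phi'$ together with \lmref{L:cond}, and $a_\Pi=a_{\pi'}+2c_{\D_\chi[\ka,\frac12]}=c_{\pi'}+2\ka-1$ comes from \lmref{L:useful} together with \corref{C:cond} (equivalently \eqref{E:twist} and \lmref{L:cond}). There is one harmless slip in your side remark, which the proof does not use: the comparison parameter is $\phi_{\D}\oplus\phi_{\D^\vee}$, which already has dimension $2\ka+1$, so there is no extra $\phi_\chi$ summand, and the conductor drops by $1$ because the monodromy of $S_{2\ka+1}$ links the exponents $\tfrac12$ and $-\tfrac12$.
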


\begin{proof}
Since 
\[
\phi
=
\phi_\chi\bt S_{2\ka+1}\oplus \phi',
\]
it follows from \lmref{L:cond} that 
\[
c_\pi=c_{\pi'}+2\ka. 
\]
On the other hand, by \lmref{L:useful}, \eqref{E:twist}, and \lmref{L:cond}, we have
\[
a_\Pi
=
a_{\pi'}+2\(2\(\tfrac{\ka-\frac{1}{2}}{2}\)\)
=
c_{\pi'}+2\ka-1
=
c_\pi-1. 
\]
This proves the lemma. 
\end{proof}

\lmref{L:seed cond case 2} suggests that we should investigate the subspace $\Pi^{K_{n,a_\Pi+1}}$. 
To this end, knowledge of the length of $\Pi$ is necessary. The following lemma, proved by Atobe when $\ka=\frac{1}{2}$  
(\cite[Lemma 5.1]{Atobe2020}) and by Lo in general (see \S\ref{S:appendix}), is crucial to our investigation. 

\begin{lm}\label{L:length 2}
In the above setting, $\Pi$ has length two and we have the exact sequence 
\[
0\longto\pi\longto\Pi\longto\varrho\longto 0,
\]
where $\varrho$ is the Langlands quotient of $\Pi$. 
\end{lm}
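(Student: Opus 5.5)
We plan to compute $s.s.(\Pi)$ through its Jacquet modules, using Tadi\'c's formula (\thmref{T:Tadic}) together with Atobe's description of $\mu^*_\chi$ for generic square-integrable representations. The goal is to show that $\Pi$ has exactly two irreducible constituents. Since $\Pi=\D_\chi[\ka,\tfrac12]\rtimes\pi'$ is a standard module with $\ka>0$, it has a unique Langlands quotient $\varrho$. By the construction in \S\ref{SS:construction} and \lmref{L:inj conj}, $\pi$ is a subrepresentation of $\Pi$. Moreover $\pi\not\simeq\varrho$, since $\pi$ is square-integrable and $\varrho$ is not tempered. So $\Pi$ has length at least two, and it suffices to prove that the length is at most two.

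\textbf{Step 1 (the $\chi$-part of $\mu^*(\Pi)$).} Compute the part of $\mu^*(\Pi)$ whose $\GL$-factor is a specific irreducible representation of $\GL_{\ka+1/2}(F)$. The natural choice is $\D_\chi[\ka,\tfrac12]$ itself, obtained by taking the Jacquet module along the maximal parabolic with Levi $\GL_{\ka+1/2}\x\SO_{2n'+1}$. By \thmref{T:Tadic}, $M^*(\D_\chi[\ka,\tfrac12])$ is given by the explicit formula from \cite[Proposition 9.5]{Zelevinsky1980}, as in the proof of \lmref{L:unram 4}. Since $|I_{\phi',\chi}|=0$, Atobe's \cite[Theorem 4.3]{Atobe2020} shows that $\mu^*_\chi(\pi')=1\bt\pi'$. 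It follows that the only terms of $\mu^*(\Pi)$ of the form $\D_\chi[\ka,\tfrac12]\bt(\ast)$ or $\D_\chi[-\tfrac12,-\ka]\bt(\ast)$ come from the two extremal terms of $M^*$, each paired with $\pi'$. This gives
\[
\D_\chi[\ka,\tfrac12]\bt\pi'+\D_\chi[-\tfrac12,-\ka]\bt\pi'\le \mu^*(\Pi),
\]
each with multiplicity one. A priori there may be further terms of the form $(\text{shuffle})\bt\pi'$; I would control these using \lmref{L:Atobe 2}.

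\textbf{Step 2 (Jacquet modules of the constituents).} By Frobenius reciprocity, $\varrho$ is a quotient of $\D_\chi[-\tfrac12,-\ka]\rtimes\pi'$, so $\varrho\hookto\D_\chi[\ka,\tfrac12]^\vee{}^{\vee}$-type induced from the other side. In any case $\D_\chi[-\tfrac12,-\ka]\bt\pi'$ occurs in the Jacquet module of $\varrho$. On the other side, $\pi\hookto\Pi$ gives $\D_\chi[\ka,\tfrac12]\bt\pi'$ in the Jacquet module of $\pi$. Now let $\sigma$ be an arbitrary irreducible subquotient of $\Pi$. Its cuspidal support involves $\chi|\cdot|^{\pm x}$ for $x=\tfrac12,\dots,\ka$ together with the support of $\pi'$. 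The key claim is that every such $\sigma$ has a nonzero Jacquet module term of one of the two forms in Step 1. I would argue this by taking a minimal $\chi$-exponent Jacquet piece of $\sigma$, which lies in $\mu^*_\chi(\sigma)\le\mu^*_\chi(\Pi)$, and computing $\mu^*_\chi(\Pi)=M^*_\chi(\D_\chi[\ka,\tfrac12])\rtimes(1\bt\pi')$ explicitly. The terms are $\D_\chi[-\tfrac12,-i]\x\D_\chi[\ka,j]\bt(\D_\chi[j-1,i+1]\rtimes\pi')$. I would then show that the full $\chi$-degree terms, those with $\SO$-part $\pi'$, reduce via Zelevinsky's classification to the two irreducible $\GL$-representations above, up to constituents whose corresponding $\SO$-subquotients are forced, by irreducibility/Casselman-criterion arguments on lower-rank pieces $\D_\chi[j-1,i+1]\rtimes\pi'$, to coincide with $\pi$ or $\varrho$.

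\textbf{Main obstacle.} The hard step is Step 2: excluding a third constituent. The multiplicity-one counting in Step 1 is only decisive if every constituent must contain one of the two distinguished Jacquet terms. Intermediate terms, with $0<i$ and $j<\ka$, could a priori support an extra constituent. My first attempt would be induction on $\ka$, starting from Atobe's case $\ka=\tfrac12$ (\cite[Lemma 5.1]{Atobe2020}). The inductive step would use $\Pi\hookto\chi|\cdot|^{\ka}\rtimes(\D_\chi[\ka-1,\tfrac12]\rtimes\pi')$, the inductive knowledge of the length-two module $\D_\chi[\ka-1,\tfrac12]\rtimes\pi'$, and Atobe's derivative computation $\Jac_{\chi|\cdot|^{\ka}}$ (\cite[Theorem 4.2]{Atobe2020}) to pin down the highest $\chi|\cdot|^{\ka}$-derivatives of each constituent. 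Multiplicity-one of those derivatives would then force length two.
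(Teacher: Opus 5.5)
There is a genuine gap: Step 2, which you yourself flag as the main obstacle, carries the entire content of the lemma, and the proposal does not resolve it.

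\textbf{Why Step 1 cannot settle it.} Your length-$\ge 2$ setup and the bookkeeping in Step 1 are fine. Carried out fully, Step 1 shows three things. First, the part of $\mu^*(\Pi)$ whose GL-factor has full $\chi$-degree is the multiplicity-free sum
\[
\sum_{i\in\{-\frac{1}{2},\frac{1}{2},\ldots,\kappa\}}\bigl(\Delta_\chi[-\tfrac{1}{2},-i]\times\Delta_\chi[\kappa,i+1]\bigr)\otimes\pi'.
\]
Second, every constituent of $\Pi$ carries at least one of these terms (commute the non-$\chi$ cuspidal pieces past the $\chi$-pieces). Third, by Atobe's Theorem 4.3 applied to $\pi$ with $I_{\phi,\chi}=\{\kappa\}$, only the term $i=-\frac{1}{2}$ lies in $\mu^*(\pi)$. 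Hence the lemma is \emph{equivalent} to the assertion that the $\kappa-\frac{1}{2}$ intermediate terms ($\frac{1}{2}\le i\le\kappa-1$) all lie in $\mu^*(\varrho)$. Your ``key claim'' in Step 2 is a restatement of this. Nothing you list (minimal exponents, Zelevinsky's classification, Casselman's criterion on $\Delta_\chi[j-1,i+1]\rtimes\pi'$) decides which constituent carries an intermediate term.

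\textbf{Why the induction is not yet an argument.} $\Pi$ is only a subrepresentation of $\chi|\cdot|^{\kappa}\rtimes(\Delta_\chi[\kappa-1,\frac{1}{2}]\rtimes\pi')$. To turn $D_{\chi|\cdot|^\kappa}(\Pi)=\Delta_\chi[\kappa-1,\frac{1}{2}]\rtimes\pi'$ into a length bound, you would need two further facts: that \emph{every} constituent has non-zero $\chi|\cdot|^\kappa$-derivative, and that highest $\rho|\cdot|^z$-derivatives ($z\neq 0$) of irreducibles are irreducible and determine the representation (Jantzen, Atobe--M\'inguez). Atobe's Theorem 4.2 concerns only the discrete series, not arbitrary constituents. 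A minor slip: $\varrho$ is a \emph{sub}representation, not a quotient, of $\Delta_\chi[-\frac{1}{2},-\kappa]\rtimes\pi'$, and that is what Frobenius reciprocity uses.

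\textbf{The missing idea.} The paper's appendix sorts constituents by temperedness rather than by Jacquet terms.

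A non-tempered constituent $\tau$ is the unique subrepresentation of its standard module $\Delta_{\rho_1}[x_1,y_1]\times\cdots\times\Delta_{\rho_r}[x_r,y_r]\rtimes\tau_{\mathrm{temp}}$, with $x_1+y_1\le\cdots\le x_r+y_r<0$. Frobenius reciprocity makes the corresponding derivatives of $\tau$ non-zero. The Leibniz rule, the vanishing of all $\chi|\cdot|^z$-derivatives of $\pi'$, and Casselman's criterion for $\pi'$ then force $\rho_i\cong\chi$ and $x_1\in\{-\frac{1}{2},\kappa\}$. Comparing infinitesimal characters gives $r=1$ and $[x_1,y_1]=[-\frac{1}{2},-\kappa]$, so $\tau\cong\varrho$.

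For tempered constituents your own Step 1 actually suffices. Each term with $i\ge\frac{1}{2}$ puts the negative exponent $\Delta_\chi[-\frac{1}{2},-i]$ in front, so Casselman's criterion excludes it. A tempered constituent must therefore carry the multiplicity-one term $\Delta_\chi[\kappa,\frac{1}{2}]\otimes\pi'$, hence is $\pi$ and occurs once. The paper instead uses the $L$-packet of $\phi$ and the highest-derivative formula here.

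Together with multiplicity one of the Langlands quotient, this gives length two. The non-tempered classification is the piece your proposal lacks.
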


At this point, we can describe our strategy for settling Case 2, which proceeds by computing the level-raising operators. More precisely, let 
\[
\Pi':=\D_\chi[\ka,\tfrac{1}{2}]^\vee\rtimes\pi',
\]
and let
\[
M:\Pi\longto\Pi'
\]
denote the standard intertwining operator (\cite{Silberger1979}), defined by an absolutely convergent integral. The image of $M$ is isomorphic 
to $\varrho$; hence, by \lmref{L:length 2}, $\pi$ can be characterized as the kernel of $M$. This characterization turns out to be very important 
for our argument. Indeed, assume that $a_{\pi'}=c_{\pi'}$. Then, by \lmref{L:useful} and \thmref{T:mainA} (1), we have
\[
\dim_{\bbC}\Pi^{K_{n,a_\Pi}}=1. 
\]
Denote by $f_{\Pi}$ a basis vector of this one-dimensional subspace. Then $\theta_{a_{\Pi}}(f_\Pi)$ and $\theta'_{a_\Pi}(f_{\Pi})$ both belong to 
\[
\Pi^{K_{n,a_\Pi}+1}=\Pi^{K_{n,c_\pi}}.
\] 
Now, if there exist $c, c'\in\bbC$ such that 
\[
c\,\theta_{a_\Pi}(f_{\Pi})+c'\theta'_{a_\Pi}(f_{\Pi})\ne 0
\quad\text{and}\quad
M\(c\,\theta_{a_\Pi}(f_{\Pi})+c'\theta'_{a_\Pi}(f_{\Pi})\)=0,
\]
then we can conclude that
\[
c\,\theta_{a_\Pi}(f_{\Pi})+c'\theta'_{a_\Pi}(f_{\Pi})\in \pi^{K_{n,c_\pi}},
\]
and hence $\pi^{K_{n,c_\pi}}\ne 0$. 

\subsection{Setup for computations}\label{SS:setup for level-raising}
In this subsection, we prepare for computing the level raising operators. We retain the setup in \S\ref{SS:seed case 2}. Thus, we have 
\[
\Pi=\D_\chi[\ka,\tfrac{1}{2}]\rtimes\pi'
\quad\text{and}\quad
\Pi'=\D_\chi[\ka,\tfrac{1}{2}]^\vee\rtimes\pi'. 
\]
To streamline our computation, let $r=\ka+\frac{1}{2}$, $\tau=\St_\chi\(\tfrac{r-1}{2}\)$, and set
\[
\Pi_s
=
\tau|\cdot|^s\rtimes\pi',
\]
where $s\in\bbC$. In particular, we have $\Pi=\Pi_\frac{r}{2}$ and $\Pi'=\Pi_{-\frac{r}{2}}$ by \eqref{E:twist}.  

By \lmref{L:useful} and \lmref{L:cond}, we have
\[
a_{\Pi_s}
=
a_{\pi'}+2(r-1)
=
a_{\pi'}+2\ka-1. 
\]
We will need the following lemma. Recall that the elements $\la_j$ are defined in \cite[(2.1)]{YCheng2025}. On the other hand,  for each integer 
$j$, we denote $x_{r,j}=x_{-\ep_r}\(\varpi^j\)$.

\begin{lm}\label{L:basis seed case 2}
Let $a=a_{\pi'}$ or $a=a_{\pi'}+1$, and let $v_\tau$ be a newform of $\tau$. Then
\begin{itemize}
\item[(1)]
If $r<n$, then the subspace $\Pi_s^{K_{n,a+2(r-1)}}$ admits a basis such that each basis vector $f_s$ is characterized by 
\[
\supp\(f_s\)
=
P_{\a_r}(F)\,x_{r,a+r-1 }\,K_{n,a+2(r-1)}
\quad\text{and}\quad
f_s\(x_{r,a+r-1}\)
=
v_\tau\ot v. 
\]
Here $v$ is a basis vector of $\pi'^{K'_{n-r,a}}$, where
\[
K'_{n-r,a}:=\varpi^{-(r-1)\la_{n-r}}K_{n-r,a}\varpi^{(r-1)\la_{n-r}}. 
\]
\item[(2)] 
If $r=n$, then $\pi'=1_{\SO_1(F)}$, and hence $a_{\pi'}=0$. In this case, $\Pi_s^{K_{n,2(n-1)}}$ is one-dimensional and has a basis vector 
$f_s$ that is characterized by 
\[
\supp\(f_s\)
=
P_{\a_n}(F)\,x_{n,n-1 }\,K_{n,2(n-1)}
\quad\text{and}\quad
f_s\(x_{n,n-1}\)
=
v_\tau. 
\]
On the other hand, $\Pi_s^{K_{n,2(n-1)+1}}$ is two-dimensional and has basis vectors $f'_s, f''_s$ that are characterized by 
\[
\supp\(f'_s\)
=
P_{\a_n}(F)\,x_{n,n }\,K_{n,2(n-1)+1}
\quad\text{and}\quad
f'_s\(x_{n,n}\)
=
v_\tau,
\]
and 
\[
\supp\(f''_s\)
=
P_{\a_n}(F)\,x_{n,n}w_{\ep_n,2(n-1)+1}\,K_{n,2(n-1)+1}
\quad\text{and}\quad
f''_s\(x_{n,n}w_{\ep_n,2(n-1)+1}\)
=
v_\tau. 
\]
\end{itemize}
\end{lm}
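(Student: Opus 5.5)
We describe $\Pi_s^{K}$ for $K=K_{n,m}$ through a Mackey-type analysis of the double coset space $P_{\a_r}(F)\backslash\SO_{2n+1}(F)/K_{n,m}$. We follow the proof of \cite[Lemma 6.1 and Corollary 6.2]{YCheng2025}, which underlies \lmref{L:useful}. An element $f_s\in\Pi_s^{K_{n,m}}$ is determined by its values on representatives $g$ of these double cosets. The value $f_s(g)$ must lie in the subspace of $\tau|\cdot|^s\boxtimes\pi'$ fixed by $M_{\a_r}(F)\cap gK_{n,m}g^{-1}$, where this group acts through the Levi projection. The result of \cite{YCheng2025} gives representatives of the form $t\,x_{r,j}\,w$. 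Here $t$ runs over $A_r(F)$-cosets, $x_{r,j}=x_{-\ep_r}(\varpi^j)$ for suitable $j$, and $w$ runs over the Weyl elements $w_{\ep_n,\cdot}$ when $r=n$. For each representative we first compute the image of the stabilizer in $\GL_r(F)\x\SO_{2(n-r)+1}(F)$. The expected answer is: a group containing $\Gamma_{r,c}$-type subgroups (conjugated by a power of $\varpi$) on the $\GL_r$ factor, and a conjugate $\varpi^{-k\la_{n-r}}K_{n-r,m'}\varpi^{k\la_{n-r}}$ on the $\SO$ factor, with $c+2k+\dots$ controlled by $m$.

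The key steps are as follows. (i) Using the Iwasawa decomposition together with the description of $K_{n,m}$ from \cite[Lemma 3.1]{YCheng2025}, reduce the double cosets to representatives $x_{r,j}$, and also $x_{r,j}w_{\ep_n,m}$ when $r=n$. (ii) For each $j$, identify the $\GL_r$-component of the stabilizer. It should be the group whose fixed vectors in $\tau$ are nonzero only if $j$ is large enough relative to the conductor $c_\tau=r-1$ (\lmref{L:cond}). Dually, the $\SO$-component should be $K'_{n-r,m-2j'}$-type, whose fixed vectors in $\pi'$ vanish when the level drops below $a_{\pi'}$. (iii) Combine (ii) with the level $m=a+2(r-1)$, where $a\in\stt{a_{\pi'},a_{\pi'}+1}$. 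The inequality "$\GL$-level $\ge c_\tau$" together with "$\SO$-level $\ge a_{\pi'}$" forces $j=a+r-1$ as the only contributing coset when $r<n$. The value there lies in $\tau^{\Gamma_{r,c_\tau}}\otimes\pi'^{K'_{n-r,a}}$, and the first factor is one-dimensional by newform theory for $\GL_r$ \cite{JPSS1981}. Conversely, any such vector extends to a well-defined $f_s$ supported on that single coset. This gives (1), with one basis vector for each basis vector $v$.

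(iv) For $r=n$, the $\SO$ factor is trivial. The same count at level $2(n-1)$ leaves one coset, $x_{n,n-1}$. At level $2(n-1)+1$ the bound leaves $j=n$, but the coset now splits in two: $x_{n,n}$ and $x_{n,n}w_{\ep_n,2n-1}$. These are distinct because $w_{\ep_n,2n-1}\in J\setminus K$ and is not absorbed by $P_{\a_n}$. Each coset carries a one-dimensional space of values, namely the newform line of $\tau$. This gives the basis $f'_s,f''_s$.

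The main obstacle is step (ii)–(iii) in the parity-dependent setting. The groups $K_{n,m}$ alternate between conjugates of the two hyperspecial subgroups $H_{n,0}$ and $H_{n,1}$. As a result, the Levi projection of the stabilizer at $x_{r,j}$ depends on the parity of $m$, and in particular on whether $a=a_{\pi'}$ or $a=a_{\pi'}+1$. My first attempt will be to reuse verbatim the stabilizer computations of \cite[\S6]{YCheng2025}, which already treat general $m$. I will then check that at $a=a_{\pi'}+1$ no extra coset contributes when $r<n$, a point that relies on the $\pi'$-level bound absorbing the extra unit. The well-definedness of the extension in (iii) is routine.
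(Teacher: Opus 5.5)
Your route is the paper's. You use a Mackey decomposition of $\Pi_s^{K_{n,m}}$ over $P_{\a_r}(F)\backslash\SO_{2n+1}(F)/K_{n,m}$, with the double-coset and stabilizer results of \cite{YCheng2025}, and Frobenius reciprocity. Then the two inequalities $m-j\ge r-1$ and $2j-m\ge a_{\pi'}$ at $m=a+2(r-1)$ force $j=a+r-1$ for both choices of $a$. Part (1) goes through this way. Some inaccuracies to fix:
\begin{itemize}
\item The cited results are stated for $K^0_{n,m}=t_mK_{n,m}t_m^{-1}$ with $t_m=\varpi^{\lfloor m/2\rfloor\la_n}$. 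Since $t_m\in P_{\a_r}(F)$, you transport them by right translation by $t_m$. This gives representatives $x_{r,j}$ with $\lceil m/2\rceil\le j\le m$ and projections $\Gamma_{r,m-j}\x K'_{n-r,2j-m}$.
\item The conjugation affects only the $\SO$ factor. There is no extra family of torus representatives $t$, since any such $t$ is absorbed into $P_{\a_r}(F)$.
\item The relevant group is the Levi projection of $P_{\a_r}(F)\cap gK_{n,m}g^{-1}$, not $M_{\a_r}(F)\cap gK_{n,m}g^{-1}$.
\item The $\GL_r$ condition bounds $j$ from above, not below.
\item The parity dependence you anticipated does not appear in the stabilizers, which are uniform in $m$ after this translation. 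It appears in the range of representatives, and that is exactly what step (iv) needs.
\end{itemize}

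The genuine gap is in (iv). There the entire content of (2) is the count of double cosets, and your justification for two cosets at level $2n-1$ does not work. You say $w_{\ep_n,2n-1}\in J_{n,2n-1}\setminus K_{n,2n-1}$ ``is not absorbed by $P_{\a_n}(F)$''. The second clause is exactly what has to be proved. The first clause holds equally at level $2(n-1)$ when $n\ge 2$.

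Since $w_{\ep_n,m}$ normalizes $K_{n,m}$, the Levi projections at $x_{n,j}$ and $x_{n,j}w_{\ep_n,m}$ coincide. So if $P_{\a_n}(F)\,x_{n,n-1}w_{\ep_n,2(n-1)}\,K_{n,2(n-1)}$ were a new double coset, it would contribute a second copy of $\tau^{\Gamma_{n,n-1}}$. Then $\Pi_s^{K_{n,2(n-1)}}$ would be two-dimensional, contradicting (2). You drop this coset at even level and keep its analogue at odd level without justification.

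What is needed is the explicit parity-dependent set of representatives for $P_{\a_n}(F)\backslash\SO_{2n+1}(F)/K_{n,m}$, again obtained from \cite{YCheng2025} via the $t_m$-translation:
\begin{itemize}
\item $x_{n,j}$ with $\lceil m/2\rceil\le j\le m$;
\item $x_{n,i}w_{\ep_n,m}$ with $\lceil\tfrac{m+1}{2}\rceil\le i\le m$.
\end{itemize}
So for even $m$ the $w_{\ep_n,m}$-translate of $x_{n,m/2}$ is not a new double coset, while for odd $m$ the lowest translate $i=\tfrac{m+1}{2}$ is. Combined with $i,j\le m-(n-1)$, this leaves only $x_{n,n-1}$ at $m=2(n-1)$, and exactly $x_{n,n}$ and $x_{n,n}w_{\ep_n,2n-1}$ at $m=2n-1$. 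That is precisely what (2) asserts.
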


\begin{proof}
The proof is similar to that of \cite[Lemma 6.1]{YCheng2025}. Assume first that $r<n$, and let $t_m=\varpi^{\lfloor\frac{m}{2}\rfloor\la_n}$,  
where $m\ge 0$ is an integer. By \cite[Corollary 4.3]{YCheng2025},
\[
\SO_{2n+1}(F)
=
\bigsqcup_{j=e}^{\lceil\frac{m}{2}\rceil} P_{\a_r}(F)\,x_{r,j}\,K^0_{n,m},
\]
where $e\in\stt{0,1}$ satisfies $m\equiv e\pmod{2}$, and $K^0_{n,m}=t_m\,K_{n,m}\,t^{-1}_m$. Since 
\[
t^{-1}_m\,P_{\a_r}(F)\,t_m=P_{\a_r}(F)
\quad\text{and}\quad
t^{-1}_m\,x_{r,j}\,t_m=x_{r,j+\lfloor\frac{m}{2}\rfloor},
\]
it follows that 
\[
\SO_{2n+1}(F)
=
\bigsqcup_{j=\lceil\frac{m}{2}\rceil}^{m} P_{\a_r}(F)\,x_{r,j}\,K_{n,m}. 
\]
This implies that $P_{\a_r}(F)\backslash\SO_{2n+1}(F)/K_{n,m}$ has a set of representatives given by 
\begin{equation}\label{E:coset rep for std K when r<n}
\stt{x_{r,j}\mid \lceil\tfrac{m}{2}\rceil\le j\le m}. 
\end{equation}

Next, for a given open compact subgroup $K$ of $\SO_{2n+1}(F)$ and an element $x\in\SO_{2n+1}(F)$, we define
\[
M_{r, K}^x
=
\stt{g\in M_{\a_r}(F)\mid\text{$x^{-1}gux\in K$ for some $u\in N_{\a_r}(F)$}}. 
\]
By \cite[Corollary 5.2]{YCheng2025}, we have
\[
M_{r, K_{n,m}^0}^{x_{r,j}}
=
\stt{\diag{a, h, a^*}\mid a\in\Gamma_{r,\lceil\frac{m}{2}\rceil-j},\,h\in K_{n-r, 2j-e}^0}
\simeq
\Gamma_{r, \lceil\frac{m}{2}\rceil-j}\,\x\,K^0_{n-r, 2j-e}
\]
for $e\le j\le \lceil\frac{m}{2}\rceil$. Since $t_m$ normalizes both $M_{\a_r}(F)$ and $N_{\a_r}(F)$, it follows that 
\[
g\in M_{r, K_{n,m}^0}^{x_{r,j}}\Longleftrightarrow t^{-1}_mgt_m\in M^{x_{r, j'}}_{r, K_{n,m}},
\]
where $j'=j+\lfloor\frac{m}{2}\rfloor$. Combining these, we obtain
\[
M^{x_{r,j}}_{r, K_{n,m}}
\simeq
\Gamma_{r, m-j}\,\x\,K'_{n-r, 2j-m}
\]
for $\lceil\frac{m}{2}\rceil\le j\le m$, where $K'_{n-r, 2j-m}:=\varpi^{(j-m)\la_{n-r}}\,K_{n-r, 2j-m}\,\varpi^{-(j-m)\la_{n-r}}$. 

It then follows from the Mackey decomposition (\cite[Theorem 1]{Yamamoto2022}) and Frobenius reciprocity 
(\cite[Theorem 2.28]{BZ1976}) that 
\[
\Pi_s^{K_{n,m}}
\simeq
\bigoplus_{j=\lceil\frac{m}{2}\rceil}^m\(\tau|\cdot|^s\bt\pi'\)^{M^{x_{r,j}}_{r, K_{n,m}}}
\simeq
\bigoplus_{j=\lceil\frac{m}{2}\rceil}^m
\(\tau|\cdot|^s\)^{\Gamma_{r, m-j}}\ot\pi'^{K'_{n-r, 2j-m}}. 
\]
Note that if
\[
\(\tau|\cdot|^s\)^{\Gamma_{r,m-j}}\ot\pi'^{K'_{n-r,2j-m}}\ne 0,
\]
then $m-j\ge (r-1)$ and $2j-m\ge a_{\pi'}$, which implies that 
\[
j\ge a_{\pi'}+r-1. 
\]
In particular, when $m=a+2(r-1)$, we have
\[
\Pi_s^{K_{n,m}}
\simeq
\(\tau|\cdot|^s\bt\pi'\)^{M^{x_{r,a+r-1}}_{r, K_{n,m}}}
\simeq
\(\tau|\cdot|^s\)^{\Gamma_{r, r-1}}\ot\pi'^{K'_{n-r, a}}. 
\]
Thus, if $f_s\in\Pi_s^{K_{n,a+2(r-1)}}$, then it satisfies
\[
\supp\(f_s\)
=
P_{\a_r}(F)\,x_{r,a+r-1 }\,K_{n,a+2(r-1)}
\quad\text{and}\quad
f_s\(x_{r,a+r-1}\)
\in
\(\tau|\cdot|^s\)^{\Gamma_{r, r-1}}\ot\pi'^{K'_{n-r, a}}. 
\]
This proves $(1)$. 

Assume that $r=n$. The proof is similar to the above argument, except that in this case, the quotient 
$P_{\a_n}(F)\backslash\SO_{2n+1}(F)/K_{n,m}$ has a set of representatives given by 
\begin{equation}\label{E:coset rep for std K when r=n}
\stt{x_{r,j},\,\,x_{r,i}\,w_{\ep_n,m}\mid \lceil\tfrac{m}{2}\rceil\le j\le m,\,\,\lceil\tfrac{m+1}{2}\rceil\le i\le m}. 
\end{equation}
Since $w_{\ep_n,m}$ normalizes $K_{n,m}$, it follows that 
\[
M_{n, K_{n,m}}^{x_{n,j}w_{\ep_n, m}}
=
M_{n, K_{n,m}}^{x_{n,j}}. 
\]
Now, a similar argument shows that
\[
\Pi_s^{K_{n,m}}
\simeq
\(\tau|\cdot|^s\)^{M^{x_{n,n-1}}_{n, K_{n,m}}}
\simeq 
\(\tau|\cdot|^s\)^{\Gamma_{n,n-1}}
\]
when $m=2(n-1)$, and 
\[
\Pi_s^{K_{n,m}}
\simeq
\(\tau|\cdot|^s\)^{M^{x_{n,n}}_{n, K_{n,m}}}
\oplus
\(\tau|\cdot|^s\)^{M^{x_{n,n}w_{\ep_n,m}}_{n, K_{n,m}}}
\simeq 
\(\tau|\cdot|^s\)^{\Gamma_{n,n-1}}
\oplus
\(\tau|\cdot|^s\)^{\Gamma_{n,n-1}},
\]
when $m=2(n-1)+1$. This proves $(2)$ and hence completes the proof of the lemma. 
\end{proof}

Assume that $a_{\pi'}=c_{\pi'}$ and that $a=a_{\Pi_s}$ from now on.
By \thmref{T:mainA} (1) and \lmref{L:useful}, we have
\[
\dim_\bbC\Pi_s^{K_{n, a}}=\dim_{\bbC}\pi'^{K_{n-r},c_{\pi'}}=1. 
\]
It then follows from \lmref{L:basis seed case 2} $(1)$ that $\Pi_s^{K_{n,a}}$ admits a basis vector $f_s$, which is characterized by 
\[
\supp\(f_s\)
=
P_{\a_r}(F)\,x_{r, \xi}\,K_{n,a}
\quad\text{and}\quad
f_s\(x_{r,\xi}\)
=
v_\tau\ot v'_{\pi'},
\]
where $v'_{\pi'}\in\pi'^{K'_{n-r,c_{\pi'}}}$ is a basis vector, and 
\[
\xi:=c_{\pi'}+r-1. 
\]
For simplicity, we denote 
\[
\theta=\theta_{a}
\quad\text{and}\quad
\theta'=\theta'_{a}
\]
in the remainder of this section. Our aim is to determine 
\[
\theta(f_s),\, \theta'(f_s)\in\Pi^{K_{n,a+1}}.  
\]
By \lmref{L:basis seed case 2} $(2)$, it suffices to compute 
\begin{equation}\label{E:seed case 2 main}
\theta(f_s)\(x_{r,\xi+1}\)
\quad\text{and}\quad
\theta'(f_s)\(x_{r,\xi+1}\),
\end{equation}
when $r<n$, and, in addition, 
\begin{equation}\label{E:seed case 2 main'}
\theta(f_s)\(x_{n,n}w_{\ep_n,a+1}\)
\quad\text{and}\quad
\theta'(f_s)\(x_{n,n}w_{\ep_n, a+1}\),
\end{equation}
when $r=n$. However, we claim that when $r=n$, the computation of \eqref{E:seed case 2 main'} can be reduced to that of
\eqref{E:seed case 2 main}. Indeed, \thmref{T:mainA} $(2)$ and \cite[Lemma 6.1]{YCheng2025} imply that 
\[
\Pi_s\(w_{\ep_n, a}\)f_s=\e f_s
\]
for some $\ep\in\stt{\pm 1}$. It follows that 
\begin{equation}\label{E:theta when r=n}
\theta(f_s)\(x_{n,n}w_{\ep_n,a+1}\)
=
\ep\Pi_s\(w_{\ep_n,a_{\Pi_s}+1}\)\circ\theta\(\Pi_s\(w_{\ep_n, a}\)f_s\)\(x_{n,n}\)
=
\ep\theta'(f_s)\(x_{n,n}\). 
\end{equation}
On the other hand, since $w_{\ep_n,m}^2=1$ for every $m$, we also have
\begin{equation}\label{E:theta' when r=n}
\theta'(f_s)\(x_{n,n}w_{\ep_n, a+1}\)
=
\ep\theta(f_s)\(x_{n,n}\),
\end{equation}
by the same argument. Now the claim follows.  

We will need the following notation in later computations. 
Let $1\le b \le d\le n$ be integers and $S\subset\stt{b,\cdots, d}$ be a subset (possibly empty). Define the sets
\begin{equation}\label{E:I_S^pm 2} 
I^-_{S,b, d}
=
\stt{\ep_i-\ep_j\mid b\le i<j\le d,\,i\nin S,\,j\in S},
\quad
I^+_{S,b,d}
=
\stt{\ep_i+\ep_j\mid b\le i<j\le d,\,i\nin S,\,j\nin S}. 
\end{equation}
When $b, d$ are clear from the context, we simply denote $I_S^\pm=I_{S,b,d}^\pm$. 
Note that if $S_0\subset\stt{1,\ldots, r}$ and $S_1\subset\stt{r+1,\ldots, n}$ are subsets and $S=S_0\cup S_1$, then
\[
I^-_S
=
I^-_{S_0}\sqcup I^-_{S_1}\sqcup\stt{\ep_i-\ep_j\mid 1\le i\le r<j\le n,\,i\nin S_0,\,j\in S_1}
\]
and 
\[
I^+_{S}
=
I^+_{S_0}\sqcup I^+_{S_1}\sqcup\stt{\ep_i+\ep_j\mid 1\le i\le r<j\le n,\,i\nin S_0,\,j\nin S_1}. 
\]

\subsection{Some double cosets}
In this and the next subsections, we prepare for computing \eqref{E:seed case 2 main}. Note that
\begin{equation}\label{E:theta main id}
\theta(f_s)\(x_{r,\xi+1}\)
=
\sum_{k\in K_{n,a+1}/\(K_{n,a}\cap K_{n,a+1}\)}
f_s\(x_{r,\xi+1}k\)
\end{equation}
and 
\begin{equation}\label{E:theta' main id}
\theta'(f_s)\(x_{r,\xi+1}\)
=
\sum_{k\in K_{n,a+1}/\(K_{n,a}\cap K_{n,a+1}\)}
f_s\(x_{r,\xi+1}w_{\ep_n,a+1}kw_{\ep_n,a}\).
\end{equation}
Moreover, a set of representatives for $K_{n,a+1}/\(K_{n,a}\cap K_{n,a+1}\)$ is given by \propref{P:coset decomp}. Since 
\[
\supp(f_s)
=
P_{\a_r}(F)\,x_{r,\xi}\,K_{n,a},
\]
it is natural to consider the following lemma. To state it, let $S\subset\cI_n$ be a subset (possibly empty). Recall that $I_S$ (resp. $w_{S,m}$) 
is defined by \eqref{E:I_S} (resp. \eqref{E:w_S}), and $I_S^\pm=I_{S,1,n}^\pm$ are defined by \eqref{E:I_S^pm 2}. 
Note that 
\[
\ep_i-\ep_j\in I_S^-\Longrightarrow \ep_i+\ep_j\in I_S
\quad
\text{and}
\quad
I_S=I_S^+\sqcup\stt{\ep_i+\ep_j\mid \ep_i-\ep_j\in I_S^-}. 
\]
On the other hand, we denote 
\[
\la_S=\sum_{i\in S}\ep^*_i,
\]
so that $\la_i=\la_{\cI_i}$ for $1\le i\le n$. Now, the lemma can be stated as follows. 

\begin{lm}\label{L:eva}
Let $c_1, c_2\ge 0$ be integers and put $m=c_1+2c_2$ and $c=c_1+c_2$. Let $S\subset\cI_n$ be a subset, and for each $\beta\in I_S$, 
let $y_\beta\in\frak{o}/\frak{p}$. Then
\begin{itemize}
\item[(1)]
If $r\in S$, then
\[
x_{r, c+1}w_{S,m+1}\prod_{\beta\in I_S}x_{\beta}\(\varpi^{-m-1}y_\beta\)w_{S,m}
=
\prod_{\beta\in I_S^-}x_\beta\(y'_\beta\)\prod_{\beta\in I_S^+}x_{\beta}\(\varpi^{-m-1}y_\beta\)\varpi^{-\la_S} x_{r, c},
\]
where $y'_{\ep_i-\ep_j}=-y_{\ep_i+\ep_j}$ for $\ep_i-\ep_j\in I_S^-$. 
\item[(2)]
If $r\nin S$, then 
\[
P_{\a_r}(F)
x_{r, c+1}w_{S,m+1}\prod_{\beta\in I_S}x_{\beta}\(\varpi^{-m-1}y_\beta\)w_{S,m}
K_{n,m}
=
P_{\a_r}(F)x_{r, c+1}K_{n,m}. 
\]
\end{itemize}
\end{lm}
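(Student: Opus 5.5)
The plan is a direct matrix computation in $\SO_{2n+1}(F)$ using the explicit realizations of $x_{r,j}=x_{-\ep_r}(\varpi^j)$, of the Weyl elements $w_{\ep_j,m}$ from \cite[\S 2.5]{YCheng2025}, and of the root subgroups $x_{\ep_i\pm\ep_j}$. Roughly, $w_{\ep_j,m}$ swaps $e_{n+1-j}$ and $e_{-n-1+j}$ up to the factors $\varpi^{\pm\lceil m/2\rceil}$ and a sign, and these factors differ between levels $m+1$ and $m$. First I would record the identity
\[
w_{S,m+1}w_{S,m}=\varpi^{-\la_S}\cdot t,
\]
with $t\in T_n(\frak{o})$, and in fact $t=1$ in the normalization of loc.\ cit. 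This identity is what produces the factor $\varpi^{-\la_S}$ in (1). I would then move $w_{S,m}$ to the left through the product of root elements. Conjugation by $w_{S,m}$ sends $x_{\ep_i+\ep_j}(z)$, with $i\notin S$, to:
\begin{itemize}
\item $x_{\ep_i+\ep_j}(z)$ if $j\notin S$;
\item $x_{\ep_i-\ep_j}(\pm\varpi^{\lceil m/2\rceil}z)$ if $j\in S$.
\end{itemize}
Combined with the $\varpi^{-\la_S}$ twist and the scaling $z=\varpi^{-m-1}y_\beta$, the second case should give exactly $x_{\ep_i-\ep_j}(-y_{\ep_i+\ep_j})$, where $\beta=\ep_i-\ep_j\in I_S^-$. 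The $I_S^+$ terms should pass through unchanged.

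For (1), with $r\in S$, I would also commute $x_{r,c+1}=x_{-\ep_r}(\varpi^{c+1})$ past $w_{S,m+1}$. Conjugation turns it into $x_{\ep_r}(\pm\varpi^{c+1-\lceil (m+1)/2\rceil})$ on the right side. Then I would check how it interacts with the root elements and with $w_{S,m}$, so that it reappears as $x_{-\ep_r}(\varpi^{c})$ after passing $\varpi^{-\la_S}$; the shift $c+1\mapsto c$ comes precisely from $\ep_r(\varpi^{-\la_S})=\varpi^{-1}$. Commutators of $x_{\pm\ep_r}$ with $x_{\ep_i\pm\ep_j}$ must vanish or be absorbed into the displayed product. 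This needs the observation that $r\in S$ forces every $\beta\in I_S$ to have $i\ne r$. The remaining terms involve $\ep_i+\ep_r$ with $i\notin S$, and these become $\ep_i-\ep_r$ terms, which commute with $x_{-\ep_r}$ up to $x_{\ep_i-\ep_r}$-type corrections that I expect to cancel by ordering. Getting signs and the exact exponents consistent with $m=c_1+2c_2$, $c=c_1+c_2$ is the main bookkeeping obstacle. I would first verify the rank-one case $n=1$ by hand to pin down conventions.

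For (2), with $r\notin S$, the elements $w_{S,m+1}$ and $w_{S,m}$ commute with $x_{-\ep_r}$. The factors $x_\beta(\varpi^{-m-1}y_\beta)$ with $\beta=\ep_i+\ep_j$ either lie in $N_{\a_r}(F)$ or $M_{\a_r}(F)$ when $i\le r$, or commute suitably with $x_{r,c+1}$ when $i,j>r$. So I would rewrite the element as
\[
p\cdot x_{r,c+1}\cdot(\varpi^{-\la_S}\,u)\cdot k,
\]
with $p\in P_{\a_r}(F)$ and $k\in K_{n,m}$. The step I expect to be hardest is showing that the residual torus and unipotent pieces lie in $x_{r,c+1}^{-1}P_{\a_r}(F)x_{r,c+1}K_{n,m}$. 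For this I would use the description of $M^{x}_{r,K}$ from \cite[Corollary 5.2]{YCheng2025}, together with the fact that $w_{S,m+1}\prod x_\beta(\cdot)\,w_{S,m}\in J_{n,m+1}J_{n,m}$ normalizes the relevant double coset. As a fallback, I would identify the double coset via the invariant $\langle g\,e,e'\rangle$-valuations that distinguish the representatives $x_{r,j}$ in \eqref{E:coset rep for std K when r<n}.
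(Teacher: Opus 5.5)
Your part (1) is essentially the paper's argument. You conjugate $x_{r,c+1}$ through $w_{S,m+1}$ to an element of $x_{\ep_r}(F)$. That element commutes with every $x_{\ep_i+\ep_j}$, so the commutator corrections you worry about never arise. You then pass it through $w_{S,m}$ to get $x_{r,c}$ and use $w_{S,m+1}w_{S,m}=\varpi^{-\la_S}$.

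One correction: $w_{\ep_j,m}$ carries the factors $\varpi^{\pm m}$, not $\varpi^{\pm\lceil m/2\rceil}$. The identities you need are $w_{S,m+1}x_{r,c+1}w_{S,m+1}=x_{\ep_r}(-\varpi^{c-m})$ and $w_{S,m+1}x_{\ep_i+\ep_j}(\varpi^{-m-1}y)w_{S,m+1}=x_{\ep_i-\ep_j}(-y)$ for $j\in S$. With your normalization, $w_{S,m+1}w_{S,m}=\varpi^{-\la_S}$ would fail for odd $m$, and the exponents would not collapse to units.

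Part (2) has a genuine gap. The whole difficulty lies in the factors $x_{\ep_r+\ep_j}(\varpi^{-m-1}y)$ with $r<j\le n$. They do lie in $N_{\a_r}(F)$, but they sit to the right of $x_{r,c+1}$, and neither direct route disposes of them:
\begin{itemize}
\item conjugating past $x_{r,c+1}$ produces $x_{\ep_j}(\frak{p}^{-c_2})$ and $x_{-\ep_r+\ep_j}(\frak{p}^{c_1+1})$, and the latter lies in the opposite radical, not in $P_{\a_r}(F)$;
\item pushing them through $w_{S,m}$ gives $x_{\ep_r+\ep_j}(\varpi^{-m-1}y)$ or $x_{\ep_r-\ep_j}(\pm\varpi^{-1}y)$, and neither lies in $K_{n,m}$.
\end{itemize}
The paper splits the commutator and sends each piece to the side where it is absorbed:
\begin{itemize}
\item $x_{\ep_j}(\frak{p}^{-c_2})$ and $x_{\ep_r+\ep_j}$, after conjugation by $w_{S,m+1}$, land in $x_{\pm\ep_j}(F)$ and $x_{\ep_r\pm\ep_j}(F)$, both inside $P_{\a_r}(F)$;
\item $x_{-\ep_r+\ep_j}(\frak{p}^{c_1+1})$ commutes with $x_{r,c+1}$, and conjugation by $w_{S,m}$ turns it into $x_{-\ep_r+\ep_j}(\frak{p}^{c_1+1})$ or $x_{-\ep_r-\ep_j}(\frak{p}^{m+c_1+1})$, both in $K_{n,m}$;
\item reordering creates secondary terms $x_{\ep_j+\ep_\ell}(\frak{p}^{-2c_2})$, $r<j<\ell$; these lie in $M_{\a_r}(F)$ when $j,\ell\notin S$, and otherwise conjugate by $w_{S,m}$ into $K_{n,m}$ because $m-2c_2=c_1\ge 0$.
\end{itemize}
Your target form $p\,x_{r,c+1}(\varpi^{-\la_S}u)k$ is right. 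In the paper, $u$ ends up as a product of such $x_{\ep_j+\ep_\ell}(\frak{p}^{-2c_2})$ with $j,\ell\notin S$. But producing that form is exactly where $m=c_1+2c_2$ and $c=c_1+c_2$ enter. Your plan never uses these hypotheses, which signals that the mechanism is missing.

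Your proposed justification, that $w_{S,m+1}\prod_\beta x_\beta(\cdot)\,w_{S,m}\in J_{n,m+1}J_{n,m}$ normalizes the relevant double coset, cannot work as a general principle. Part (1) gives a counterexample: for an element $g$ of exactly this shape with $r\in S$, one has $P_{\a_r}(F)x_{r,c+1}gK_{n,m}=P_{\a_r}(F)x_{r,c}K_{n,m}$. That is a different double coset as soon as $c_2\ge 1$, since $c$ and $c+1$ then both lie in $[\lceil m/2\rceil,m]$. So (2) must use $r\notin S$ together with the valuation bookkeeping above.

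The fallback via $\langle ge,e'\rangle$-valuations might be made to work. However, you have not specified an invariant that separates the cosets $P_{\a_r}(F)x_{r,j}K_{n,m}$ for $\lceil m/2\rceil\le j\le m$. For $r>1$ that is a nontrivial orbit classification in its own right.
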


\begin{proof}
We first verify $(1)$. To this end, note that 
\[
w_{S,m+1}x_{r,c+1}w_{S,m+1}
=
x_{\ep_r}\(-\varpi^{c-m}\)
\quad\text{and}\quad
w_{S,m}x_{\ep_r}\(-\varpi^{c-m}\)w_{S,m}
=
x_{r,c}. 
\]
Since $x_{\ep_r}\(-\varpi^{c-m}\)$ commutes with $\prod_{\beta\in I_S}x_\beta\(\varpi^{-m-1}y_\beta\)$, 
$w_{S,m+1}$ commutes with $x_{\beta}\(\varpi^{-m-1}y_\beta\)$ for $\beta\in I_S^+$, and for every $\ep_i+\ep_j\in I_S$ with $j\in S$,
we have
\[
w_{S,m+1}\,x_{\ep_i+\ep_j}\(\varpi^{-m-1}y_{\ep_i+\ep_j}\)\,w_{S,m+1}
=
x_{\ep_i-\ep_j}\(-y_{\ep_i+\ep_j}\). 
\]
It follows that 
\begin{align*}
x_{r, c+1}w_{S,m+1}\prod_{\beta\in I_S}x_{\beta}\(\varpi^{-m-1}y_\beta\)w_{S,m}
&=
w_{S,m+1}x_{\ep_r}\(-\varpi^{c-m}\)\prod_{\beta\in I_S}x_{\beta}\(\varpi^{-m-1}y_\beta\)w_{S,m}\\
&=
w_{S,m+1}\prod_{\beta\in I_S}x_{\beta}\(\varpi^{-m-1}y_\beta\)x_{\ep_r}\(-\varpi^{c-m}\)w_{S,m}\\
&=
w_{S,m+1}\prod_{\beta\in I_S}x_{\beta}\(\varpi^{-m-1}y_\beta\)w_{S,m}x_{r,c}\\
&=
w_{S,m+1}\prod_{\beta\in I_S}x_{\beta}\(\varpi^{-m-1}y_\beta\)w_{S,m+1}^2w_{S,m}x_{r,c}\\
&=
w_{S,m+1}\prod_{\beta\in I_S}x_{\beta}\(\varpi^{-m-1}y_\beta\)w_{S,m+1}\varpi^{-\la_S}x_{r,c}\\
&=
\prod_{\beta\in I_S^-}x_\beta\(y'_\beta\)\prod_{\beta\in I_S^+}x_{\beta}\(\varpi^{-m-1}y_\beta\)\varpi^{-\la_S} x_{r, c}. 
\end{align*}
This proves $(1)$. 

To verify $(2)$, we repeatedly use the Chevalley commutator formula:
\[
[x_\a(y), x_\beta(z)]
=
\prod_{i,j\in\bbN}
x_{i\a+j\beta}\(c_{i,j}y^iz^j\)
\quad
(\a+\beta\ne 0)
\]
for some $c_{i,j}\in\bbZ$, where the product is taken over all pairs $i,j\in\bbN$ for which $i\a+j\beta$ is a root, in order of increasing $i+j$. 
In fact, the following corollary suffices for our argument. Since $c_{i,j}$ are integers (hence in $\frak{o}$), it follows that 
\[
[x_\a(y), x_\beta(z)]
\in
\prod_{i,j\in\bbN} x_{i\a+j\beta}\(\frak{p}^{ik+jh}\)
\subset
\prod_{i,j\in\bbN} x_{i\a+j\beta}\(F\)
\]
if $y\in\frak{p}^k$ and $z\in\frak{p}^h$.

Note that the above formula also implies that if $\a+\beta\ne 0$ and $i\a+j\beta$ is not a root for any $i,j\in\bbN$, then $x_\a(y)$ commutes 
with $x_\beta(z)$. In this case, we simply say that $x_\a$ commutes with $x_\beta$. More generally, by saying that $g\in\SO_{2n+1}(F)$ 
commutes with $x_\beta$, we mean that $g$ commutes with $x_\beta(y)$ for every $y\in F$.  We begin with the following decomposition: 
\[
I_S
=
I_S^{(1)}\sqcup I_S^{(2)}\sqcup I_S^{(3)},
\]
where
\[
I_S^{(1)}=\stt{\ep_i+\ep_j\in I_S\mid 1\le i<j\le n,\,i\ne r,\,j\ne r},
\]
\[
I_S^{(2)}=\stt{\ep_i+\ep_r\in I_S\mid 1\le i< r}
\quad\text{and}\quad
I_S^{(3)}=\stt{\ep_r+\ep_j\in I_S\mid  r<j\le n}. 
\]
Since $r\nin S$, it follows that $w_{S,m+1}$ commutes with $x_{r,c+1}$ and with each $x_\beta$ for $\beta\in I_S^{(2)}$. The same reason also 
implies that $x_{r,c+1}$ commutes with $x_\beta$ for every $\beta\in I_S^{(1)}$. On the other hand, if $\ep_i+\ep_j\in I_S^{(1)}$ and $y\in F$, 
then
\[
w_{S,m+1}x_{\ep_i+\ep_j}(y)w_{S,m+1}\in x_{\ep_i-\ep_j}(F)\subset P_{\a_r}(F)
\]
if $j\in S$, while 
\[
w_{S,m+1}x_{\ep_i+\ep_j}(y)w_{S,m+1}\in x_{\ep_i+\ep_j}(F)\subset P_{\a_r}(F)
\]
if $j\nin S$. We thus conclude that 
\[
w_{S,m+1}\prod_{\beta\in I_S^{(1)}}x_\beta\(\varpi^{-m-1}y_\beta\)w_{S,m+1}\in P_{\a_r}(F). 
\]
Combining these, we obtain
\begin{align*}
P_{\a_r}(F)&
x_{r, c+1}w_{S,m+1}
\prod_{\beta\in I_S^{(1)}\sqcup I_S^{(2)}\sqcup I_S^{(3)}}x_{\beta}\(\varpi^{-m-1}y_\beta\)
w_{S,m}K_{n,m}\\
&=
P_{\a_r}(F)w_{S,m+1}
\prod_{\beta\in I_S^{(1)}}x_{\beta}\(\varpi^{-m-1}y_\beta\)
x_{r,c+1}
\prod_{\beta\in I_S^{(2)}\sqcup I_S^{(3)}}x_{\beta}\(\varpi^{-m-1}y_\beta\)
w_{S,m}K_{n,m}\\
&=
P_{\a_r}(F)x_{r,c+1}w_{S,m+1}
\prod_{\beta\in I_S^{(2)}\sqcup I_S^{(3)}}x_{\beta}\(\varpi^{-m-1}y_\beta\)
w_{S,m}K_{n,m}\\
&=
P_{\a_r}(F)x_{r,c+1}
\prod_{\beta\in I_S^{(2)}}x_{\beta}\(\varpi^{-m-1}y_\beta\)
w_{S,m+1}
\prod_{\beta\in I_S^{(3)}}x_{\beta}\(\varpi^{-m-1}y_\beta\)
w_{S,m}K_{n,m}. 
\end{align*}
Since
\[
x_{r,c+1}x_{\ep_i+\ep_r}\(\varpi^{-m-1}y_{\ep_i+\ep_r}\)x_{r,c+1}^{-1}
\in
x_{\ep_i}\(F\)x_{\ep_i-\ep_r}(F)x_{\ep_i+\ep_r}(F)
\subset 
P_{\a_r}(F)
\]
for every $\ep_i+\ep_r\in I_S^{(2)}$, it follows that 
\begin{align*}
P_{\a_r}(F)&x_{r,c+1}
\prod_{\beta\in I_S^{(2)}}x_{\beta}\(\varpi^{-m-1}y_\beta\)
w_{S,m+1}
\prod_{\beta\in I_S^{(3)}}x_{\beta}\(\varpi^{-m-1}y_\beta\)
w_{S,m}K_{n,m}\\
&=
P_{\a_r}(F)x_{r,c+1}w_{S,m+1}
\prod_{\beta\in I_S^{(3)}}x_{\beta}\(\varpi^{-m-1}y_\beta\)
w_{S,m}K_{n,m}\\
&=
P_{\a_r}(F)w_{S,m+1}x_{r,c +1}
\prod_{\beta\in I_S^{(3)}}x_{\beta}\(\varpi^{-m-1}y_\beta\)
w_{S,m}K_{n,m}. 
\end{align*}

To proceed, we compute 
\begin{align*}
x_{r,c+1}x_{\ep_r+\ep_j}\(\varpi^{-m-1}y_{\ep_r+\ep_j}\)x_{r,c+1}^{-1}
&\in
x_{\ep_j}\(\frak{p}^{-c_2}\)x_{-\ep_r+\ep_j}\(\frak{p}^{c_1+1}\)x_{\ep_r+\ep_j}\(\frak{p}^{-m-1}\)\\
&=
x_{\ep_r+\ep_j}\(\frak{p}^{-m-1}\)x_{\ep_j}\(\frak{p}^{-c_2}\)x_{-\ep_r+\ep_j}\(\frak{p}^{c_1+1}\),
\end{align*}
for $\ep_r+\ep_j\in I_S^{(3)}$, where the equality follows from the fact that $x_{\ep_r+\ep_j}$ commutes with both $x_{\ep_j}$ and 
$x_{-\ep_r+\ep_j}$. We also compute
\begin{align*}
x_{-\ep_r+\ep_j}\(\frak{p}^{c_1+1}\)x_{\ep_r+\ep_\el}\(\frak{p}^{-m-1}\)
&\subset
x_{\ep_j+\ep_\el}\(\frak{p}^{-2c_2}\)x_{\ep_r+\ep_\el}\(\frak{p}^{-m-1}\)x_{-\ep_r+\ep_j}\(\frak{p}^{c_1+1}\),
\end{align*}
for $r<j<\el\le n$. Using these results, we deduce that 
\begin{align*}
P_{\a_r}(F)&w_{S,m+1}x_{r,c+1}
\prod_{\beta\in I_S^{(3)}}x_{\beta}\(\varpi^{-m-1}y_\beta\)
w_{S,m}K_{n,m}\\
&=
P_{\a_r}(F)w_{S,m+1}
\prod_{r<j\le n}x_{r,c+1}x_{\ep_r+\ep_j}\(\varpi^{-m-1}y_{\ep_r+\ep_j}\)x_{r,c+1}^{-1}
x_{r,c+1}w_{S,m}K_{n,m}\\
&\subset
P_{\a_r}(F)w_{S,m+1}
\prod_{r<j\le n}x_{\ep_r+\ep_j}\(\frak{p}^{-m-1}\)x_{\ep_j}\(\frak{p}^{-c_2}\)x_{-\ep_r+\ep_j}\(\frak{p}^{c_1+1}\)
x_{r,c+1}w_{S,m}K_{n,m}\\
&\subset
P_{\a_r}(F)w_{S,m+1}
\prod_{r<j\le n} x_{\ep_r+\ep_j}\(\frak{p}^{-m-1}\)
\prod_{r<j\le n} x_{\ep_j}\(\frak{p}^{-c_2}\)
\prod_{r<j<\el\le n} x_{\ep_j+\ep_\el}\(\frak{p}^{-2c_2}\)\\
&\quad\quad\quad\quad\quad\quad\quad
\cdot\prod_{r<j\le n}x_{-\ep_r+\ep_j}\(\frak{p}^{c_1+1}\)
x_{r,c+1}w_{S,m}K_{n,m}. 
\end{align*}
In the above derivation, we use the fact that $x_{\ep_j+\ep_\el}$ commutes with all other root elements in the product. 

Since $j>r$, we have
\[
w_{S,m+1}x_{\ep_r+\ep_j}\(\frak{p}^{-m-1}\)w_{S,m+1}
\subset
x_{\ep_r\pm\ep_j}(F)\subset P_{\a_r}(F),
\]
and
\[
w_{S,m+1}x_{\ep_j}\(\frak{p}^{-c_2}\)w_{S,m+1}
\subset
x_{\pm\ep_j}(F)
\subset P_{\a_r}(F),
\]
where the sign $\pm$ in front of $\ep_j$ depends on whether or not $j\in S$. On the other hand, since $x_{-\ep_r+\ep_j}$ commutes with 
$x_{r,c+1}$, and 
\[
w_{S,m}^{-1}x_{-\ep_r+\ep_j}\(\frak{p}^{c_1+1}\)w_{S,m}
=
x_{-\ep_r+\ep_j}\(\frak{p}^{c_1+1}\)\subset K_{n,m}
\]
if $j\nin S$, while 
\[
w_{S,m}^{-1}x_{-\ep_r+\ep_j}\(\frak{p}^{c_1+1}\)w_{S,m}
=
x_{-\ep_r-\ep_j}\(\frak{p}^{m+c_1+1}\)\subset K_{n,m}
\]
if $j\in S$, we find that 
\begin{align*}
&P_{\a_r}(F)w_{S,m+1}
\prod_{r<j\le n} x_{\ep_r+\ep_j}\(\frak{p}^{-m-1}\)
\prod_{r<j\le n} x_{\ep_j}\(\frak{p}^{-c_2}\)
\prod_{r<j<\el\le n} x_{\ep_j+\ep_\el}\(\frak{p}^{-2c_2}\)\\
&\quad\quad\quad\quad\quad\quad
\cdot\prod_{r<j\le n}x_{-\ep_r+\ep_j}\(\frak{p}^{c_1+1}\)
x_{r,c+1}w_{S,m}K_{n,m}\\
&=
P_{\a_r}(F)w_{S,m+1}
\prod_{r<j<\el\le n}x_{\ep_j+\ep_\el}\(\frak{p}^{-2c_2}\)
x_{r,c+1}w_{S,m}K_{n,m}. 
\end{align*}
At this point, we obtain
\begin{align*}
P_{\a_r}(F)&
x_{r, c+1}w_{S,m+1}\prod_{\beta\in I_S}x_{\beta}\(\varpi^{-m-1}y_\beta\)w_{S,m}
K_{n,m}\\
&\subset
P_{\a_r}(F)w_{S,m+1}
\prod_{r<j<\el\le n}x_{\ep_j+\ep_\el}\(\frak{p}^{-2c_2}\)
x_{r,c+1}w_{S,m}K_{n,m}. 
\end{align*}

To complete the proof, let 
\[
J=\stt{\ep_j+\ep_\el\mid r<j<\el\le n}
=
J^{(1)}\sqcup J^{(2)},
\]
where
\[
J^{(1)}=\stt{\ep_j+\ep_\el\in J\mid j\nin S,\, \el\nin S}
\quad\text{and}\quad
J^{(2)}=J\setminus J^{(1)}. 
\]
Note that $w_{S,m}$ commutes with $x_{r,c+1}$ and with $x_\beta$ for every $\beta\in J^{(1)}$. Furthermore, for $\ep_j+\ep_\el\in J^{(2)}$, 
we have
\[
w_{S,m}^{-1}x_{\ep_j+\ep_\el}\(\frak{p}^{-2c_2}\)w_{S,m}
=
\begin{cases}
x_{\ep_j-\ep_\el}\(\frak{p}^{c_1}\)\quad&\text{if $j\nin S$ and $\el\in S$},\\
x_{-\ep_j+\ep_\el}\(\frak{p}^{c_1}\)\quad&\text{if $j\in S$ and $\el\nin S$},\\
x_{-\ep_j-\ep_\el}\(\frak{p}^{2c_1+2c_2}\)\quad&\text{if $j\in S$ and $\el\in S$}. 
\end{cases}
\]
In particular, if $\beta\in J^{(2)}$, then
\[
w_{S,m}^{-1}x_\beta\(\frak{p}^{-2c_2}\)w_{S,m}\subset K_{n,m}. 
\]
Since $x_{r,c+1}$ commutes with $x_\beta$ for every $\beta\in J$, and 
$x_\beta\(\frak{p}^{-2c_2}\)\subset P_{\a_r}(F)$ for each $\beta\in J^{(1)}$, it follows that   
\begin{align*}
P_{\a_r}(F)&w_{S,m+1}
\prod_{r<j<\el\le n}x_{\ep_j+\ep_\el}\(\frak{p}^{-2c_2}\)
x_{r,c+1}w_{S,m}K_{n,m}\\
&=
P_{\a_r}(F)w_{S,m+1}
\prod_{\beta\in J^{(1)}}x_{\beta}\(\frak{p}^{-2c_2}\)x_{r,c+1}w_{S,m}
w_{S,m}^{-1}
\prod_{\beta\in J^{(2)}}x_{\beta}\(\frak{p}^{-2c_2}\)
w_{S,m}
K_{n,m}\\
&=
P_{\a_r}(F)w_{S,m+1}w_{S,m}
\prod_{\beta\in J^{(1)}}x_{\beta}\(\frak{p}^{-2c_2}\)
x_{r,c+1}K_{n,m}\\
&=
P_{\a_r}(F)\varpi^{-\la_S}
\prod_{\beta\in J^{(1)}}x_{\beta}\(\frak{p}^{-2c_2}\)
x_{r,c+1}K_{n,m}\\
&=
P_{\a_r}(F)x_{r,c+1}K_{n,m}. 
\end{align*}
To conclude, we have shown that 
\[
P_{\a_r}(F)
x_{r, c+1}w_{S,m+1}\prod_{\beta\in I_S}x_{\beta}\(\varpi^{-m-1}y_\beta\)w_{S,m}
K_{n,m}
\subset
P_{\a_r}(F)x_{r, c+1}K_{n,m},
\]
which implies that 
\[
P_{\a_r}(F)
x_{r, c+1}w_{S,m+1}\prod_{\beta\in I_S}x_{\beta}\(\varpi^{-m-1}y_\beta\)w_{S,m}
K_{n,m}
=
P_{\a_r}(F)x_{r, c+1}K_{n,m}. 
\]
This proves $(2)$ and hence completes the proof of the lemma. 
\end{proof}

\subsection{Computation of level-raising operators I}\label{SS:compute operator I}
The aim of this and the next subsections is to compute \eqref{E:seed case 2 main}. For this, we follow the setup in 
\S\ref{SS:setup for level-raising}. 
In addition, if $1\le n_0\le n$ is an integer, we regard $\SO_{2n_0+1}(F)$ as a subgroup of $\SO_{2n+1}(F)$ via the embedding
\[
\SO_{2n_0+1}(F)\longto\SO_{2n+1}(F);\quad g_0\longmapsto\diag{I_{n-n_0}, g_0, I_{n-n_0}}. 
\]
In particular, the root system of $\SO_{2n_0+1}$ can be identified with the set
\[
\stt{\pm\ep_i\pm\ep_j\mid n-n_0+1\le i<j\le n}\cup\stt{\pm\ep_\el\mid n-n_0+1\le \el\le n}. 
\]
We split the computation into the cases $r=1$ and $r>1$. In this subsection, we focus on the case $r=1$. The case $r>1$ will be computed in 
the next subsection. 

Assume that $r=1$ in this subsection. Since 
\[
a=a_{\Pi_s}=c_{\pi'}+2(r-1)=c_{\pi'}
\quad
\text{and}
\quad
\xi=c_{\pi'}+r-1=c_{\pi'}, 
\]
it follows that $x_{1,\xi+e}\in K_{n, a+e}$ for $e\in\stt{0,1}$. Hence, we have
\[
\theta(f_s)\(x_{1,\xi+1}\)
=
\theta(f_s)(I_{2n+1})
\quad
\text{and}
\quad
\theta'(f_s)\(x_{1,\xi+1}\)
=
\theta'(f_s)(I_{2n+1}). 
\]
Note that
\[
\supp(f_s)=P_{\a_1}(F)K_{n,a}
\quad
\text{and}
\quad
f_s(I_{2n+1})=v_{\pi'}\in\pi'^{K_{n-1, c_{\pi'}}}
\]
by \lmref{L:basis seed case 2}. 

We first compute $\theta(f_s)(I_{2n+1})$, which, by \propref{P:coset decomp} and \eqref{E:theta main id}, is given by
\[
\theta(f_s)(I_{2n+1})
=
\sum_{\substack{S\subset\cI_n\\\text{$|S|$ even}}}\,
\sum_{y_{S,\beta}\in\frak{o}/\frak{p}}
f_s\(
w_{S,a+1}\prod_{\beta\in I_S} x_\beta\(\varpi^{-a-1}y_{S,\beta}\)
\). 
\]
Note that if $\ep_i+\ep_j\in I_S$, then
\[
w_{S,a+1}\,x_{\ep_i+\ep_j}\(\varpi^{-a-1}y\)\,w_{S,a+1}^{-1}
=
\begin{cases}
x_{\ep_i+\ep_j}\(\varpi^{-a-1}y\)\quad&\text{if $j\nin S$},\\
x_{\ep_i-\ep_j}\(-y\)\quad&\text{if $j\in S$}. 
\end{cases}
\]
We thus deduce that
\[
\theta(f_s)\(I_{2n+1}\)
=
\sum_{\substack{S\subset\cI_n\\\text{$|S|$ even}}}\,
\sum_{y_{S,\beta}\in\frak{o}/\frak{p}}
f_s\(
\prod_{\beta\in I_S^-}x_\beta\(y_{S,\beta}\)
\prod_{\beta\in I^+_S} x_\beta\(\varpi^{-a-1}y_{S,\beta}\)w_{S,a+1}
\). 
\]
Now, let $S\subset\cI_n$ be any subset, and decompose $S=S_0\sqcup S_1$, where $S_0=S\cap\stt{1}$ and 
$S_1=S\setminus S_0\subset\stt{2,\ldots, n}$. Set
\[
I_S^\pm=I_{S,1,n}^\pm
\quad\text{and}\quad
I_{S_1}^\pm=I_{S_1,2,n}^\pm.  
\]
Note that 
\[
x_\beta(F)\subset N_{\a_1}(F)
\Longleftrightarrow
\beta\in 
\(I^+_S\setminus I^+_{S_1}\)
\cup
\(I^-_S\setminus I^-_{S_1}\),
\]
and 
\begin{equation}\label{E:no of root in N r=1}
\text{$|I^+_S\setminus I^+_{S_1}|+|I^-_S\setminus I^-_{S_1}|=n-1$ or $0$}
\Longleftrightarrow
\text{$S_0=\emptyset$ or $S_0=\stt{1}$.} 
\end{equation}

If $|S|$ is even and $S_0=\emptyset$, then $w_{S,a+1}w_{S, a}=\varpi^{-\la_S}=\varpi^{-\la_{S_1}}$, and hence 
\begin{align*}
f_s\(
\prod_{\beta\in I_S^-}x_\beta\(y_{S,\beta}\)
\prod_{\beta\in I^+_S} x_\beta\(\varpi^{-a-1}y_{S,\beta}\)w_{S,a+1}
\)
&=
f_s\(
\prod_{\beta\in I_S^-}x_\beta\(y_{S,\beta}\)
\prod_{\beta\in I^+_S} x_\beta\(\varpi^{-a-1}y_{S,\beta}\)w_{S,a+1}w_{S,a}
\)\\
&=
f_s\(
\prod_{\beta\in I_{S_1}^-}x_\beta\(y_{S,\beta}\)
\prod_{\beta\in I^+_{S_1}} x_\beta\(\varpi^{-a-1}y_{S,\beta}\)\varpi^{-\la_{S_1}}
\)\\
&=
\pi'\(
\prod_{\beta\in I_{S_1}^-}x_\beta\(y_{S,\beta}\)
\prod_{\beta\in I^+_{S_1}} x_\beta\(\varpi^{-a-1}y_{S,\beta}\)\varpi^{-\la_{S_1}}
\)v_{\pi'}. 
\end{align*}
Here we use the fact that $w_{S,a}\in K_{n,a}$ when $|S|$ is even, and we regard $\varpi^{-\la_S}=\varpi^{-\la_{S_1}}$ as an element of 
$\SO_{2n-1}(F)$. On the other hand, if $|S|$ is even and $S_0=\stt{1}$, then 
$\varpi^{-\la_S}=\varpi^{-\ep^*_1}\varpi^{-\la_{S_1}}$, $I_{S}^\pm=I_{S_1}^\pm$, and
$\varpi^{-\ep^*_1}$ commutes with $x_\beta(y)$ for every $\beta\in I_{S_1}^+\cup I_{S_1}^-$. It follows that
\begin{align*}
f_s&\(
\prod_{\beta\in I_S^-}x_\beta\(y_{S,\beta}\)
\prod_{\beta\in I^+_S} x_\beta\(\varpi^{-a-1}y_{S,\beta}\)w_{S,a+1}
\)\\
&\quad\quad\quad\quad\quad\quad=
f_s\(
\prod_{\beta\in I_S^-}x_\beta\(y_{S,\beta}\)
\prod_{\beta\in I^+_S} x_\beta\(\varpi^{-a-1}y_{S,\beta}\)\varpi^{-\la_{S}}
\)\\
&\quad\quad\quad\quad\quad\quad=
f_s\(\varpi^{-\ep^*_1}
\prod_{\beta\in I_{S_1}^-}x_\beta\(y_{S,\beta}\)
\prod_{\beta\in I^+_{S_1}} x_\beta\(\varpi^{-a-1}y_{S,\beta}\)\varpi^{-\la_{S_1}}
\)\\
&\quad\quad\quad\quad\quad\quad=
\chi(\varpi)q^{s+n-\frac{1}{2}}\pi'\(
\prod_{\beta\in I_{S_1}^-}x_\beta\(y_{S,\beta}\)
\prod_{\beta\in I^+_{S_1}} x_\beta\(\varpi^{-a-1}y_{S,\beta}\)\varpi^{-\la_{S_1}}
\)v_{\pi'}. 
\end{align*}
Combining with \eqref{E:no of root in N r=1}, we obtain
\begin{align}\label{E:theta r=1}
\begin{split}
\theta(f_s)&(I_{2n+1})\\
=&
q^{n-1}\sum_{\substack{S_1\subset\stt{2,\ldots, n}\\\text{$|S_1|$ even}}}\,
\sum_{y_{S_1,\beta}\in\frak{o}/\frak{p}}
\pi'\(
\prod_{\beta\in I_{S_1}^-}x_\beta\(y_{S_1,\beta}\)
\prod_{\beta\in I^+_{S_1}} x_\beta\(\varpi^{-a-1}y_{S_1,\beta}\)\varpi^{-\la_{S_1}}
\)v_{\pi'}\\
&+
 \chi(\varpi)q^{s+n-\frac{1}{2}}
\sum_{\substack{S_1\subset\stt{2,\ldots, n}\\\text{$|S_1|$ odd}}}\,
\sum_{y_{S_1,\beta}\in\frak{o}/\frak{p}}
\pi'\(
\prod_{\beta\in I_{S_1}^-}x_\beta\(y_{S_1,\beta}\)
\prod_{\beta\in I^+_{S_1}} x_\beta\(\varpi^{-a-1}y_{S_1,\beta}\)\varpi^{-\la_{S_1}}
\)v_{\pi'}. 
\end{split}
\end{align}

We next compute $\theta'(f_s)(I_{2n+1})$, which, by \propref{P:coset decomp} and \eqref{E:theta' main id}, is given by
\begin{align*}
\theta'(f_s)(I_{2n+1})
=
\sum_{\substack{S\subset\cI_n\\\text{$|S|$ even}}}\,
\sum_{y_{S,\beta}\in\frak{o}/\frak{p}}
f_s\(
w_{\ep_n, a+1}w_{S,a+1}
\prod_{\beta\in I_S} x_\beta\(\varpi^{-a-1}y_{S,\beta}\)w_{\ep_n,a}
\). 
\end{align*}
For a given $S\subset\cI_n$, we define $S'\subset\cI_n$ by $S'=S\setminus\stt{n}$ if $n\in S$, and $S'=S\cup\stt{n}$ if $n\nin S$. 
Note that 
\[
w_{\ep_n, m}w_{S,m}=w_{S',m}
\quad\text{and}\quad
I_{S}=I_{S'},
\] 
where $m$ is an integer. It follows that 
\begin{align*}
\theta'(f_s)\(I_{2n+1}\)
&=
\sum_{\substack{S\subset\cI_n\\\text{$|S|$ even}}}\,
\sum_{y_{S,\beta}\in\frak{o}/\frak{p}}
f_s\(
w_{\ep_n, a+1}w_{S,a+1}
\prod_{\beta\in I_S} x_\beta\(\varpi^{-a-1}y_{S,\beta}\)w_{\ep_n,a}
\)\\
&=
\sum_{\substack{S\subset\cI_n\\\text{$|S|$ even}}}\,
\sum_{y_{S,\beta}\in\frak{o}/\frak{p}}
f_s\(
w_{\ep_n, a+1}w_{S,a+1}
\prod_{\beta\in I_S} x_\beta\(\varpi^{-a-1}y_{S,\beta}\)w_{\ep_n,a}w_{S,a}
\)\\
&=
\sum_{\substack{S'\subset\cI_n\\\text{$|S'|$ odd}}}\,
\sum_{y_{S',\beta}\in\frak{o}/\frak{p}}
f_s\(
w_{S',a+1}
\prod_{\beta\in I_{S'}} x_\beta\(\varpi^{-a-1}y_{S',\beta}\)w_{S',a}
\). 
\end{align*}
By a similar computation as in the case of $\theta(f_s)(I_{2n+1})$, we deduce that 
\begin{align}\label{E:theta' r=1}
\begin{split}
\theta'&(f_s)(I_{2n+1})\\
&=
q^{n-1}
\sum_{\substack{S_1\subset\stt{2,\ldots, n}\\\text{$|S_1|$ odd}}}\,
\sum_{y_{S_1,\beta}\in\frak{o}/\frak{p}}
\pi'\(
\prod_{\beta\in I_{S_1}^-}x_\beta\(y_{S_1,\beta}\)
\prod_{\beta\in I^+_{S_1}} x_\beta\(\varpi^{-a-1}y_{S_1,\beta}\)\varpi^{-\la_{S_1}}
\)v_{\pi'}\\
&\quad+
\chi(\varpi)q^{s+n-\frac{1}{2}}
\sum_{\substack{S_1\subset\stt{2,\ldots, n}\\\text{$|S_1|$ even}}}\,
\sum_{y_{S_1,\beta}\in\frak{o}/\frak{p}}
\pi'\(
\prod_{\beta\in I_{S_1}^-}x_\beta\(y_{S_1,\beta}\)
\prod_{\beta\in I^+_{S_1}} x_\beta\(\varpi^{-a-1}y_{S_1,\beta}\)\varpi^{-\la_{S_1}}
\)v_{\pi'}. 
\end{split}
\end{align}
This completes the computation for the case $r=1$. 

\subsection{Computation of level-raising operators II}\label{SS:compute operator II}
In this subsection, we assume that $r>1$. Recall that 
\[
a=a_{\Pi_s}=c_{\pi'}+2(r-1)
\quad
\text{and}
\quad
\xi=c_{\pi'}+r-1. 
\]
On the other hand, by \lmref{L:basis seed case 2}, we have 
\[
\supp(f_s)
=P_{\a_r}(F)\,x_{r,\xi}\,K_{n,a}
\quad\text{and}\quad
f_s(x_{r,\xi})
=
v_\tau\ot v'_{\pi'},
\]
where $v_\tau$ is a newform of $\tau$, and $v'_{\pi'}$ is a basis vector of the one-dimensional space $\pi'^{K'_{n-r, c_{\pi'}}}$, with
\[
K'_{n-r, c_{\pi'}}=\varpi^{-(r-1)\la_{n-r}}K_{n-r, c_{\pi'}}\varpi^{(r-1)\la_{n-r}}. 
\]
When $r=n$, we understand that $v'_{\pi'}$ is a non-zero complex number. 

By \propref{P:coset decomp} and \eqref{E:theta main id}, we have
\begin{align*}
\theta(f_s)\(x_{r,\xi+1}\)
=
\sum_{\substack{S\subset\cI_n\\\text{$|S|$ even}}}\,
\sum_{y_{S,\beta}\in\frak{o}/\frak{p}}
f_s\(
x_{r,\xi+1}w_{S,a+1}
\prod_{\beta\in I_S} x_\beta\(\varpi^{-a-1}y_{S,\beta}\)
\). 
\end{align*}
We now analyze each term in the sum. If $r\in S$, then \lmref{L:eva} implies that
\begin{align*}
f_s\(
x_{r,\xi+1}w_{S,a+1}
\prod_{\beta\in I_S} x_\beta\(\varpi^{-a-1}y_{S,\beta}\)w_{S,a+1}
\)
&=
f_s\(
x_{r,\xi+1}w_{S,a+1}
\prod_{\beta\in I_S} x_\beta\(\varpi^{-a-1}y_{S,\beta}\)w_{S,a}
\)\\
&= 
f_s\(
\prod_{\beta\in I_S^-}x_\beta\(y'_{S,\beta}\)
\prod_{\beta\in I^+_S} x_\beta\(\varpi^{-a-1}y_{S,\beta}\)\varpi^{-\la_S}x_{r,\xi}
\),
\end{align*}
where $y'_{S, \ep_i-\ep_j}=-y_{S, \ep_i+\ep_j}$ for $\ep_i-\ep_j\in I_S^-$. If $r\nin S$, then since 
\[
\(P_{\a_r}(F)\,x_{r,\xi+1}\,K_{n,a}\)
\cap
\(P_{\a_r}(F)\,x_{r,\xi}\,K_{n,a}\)
=
\emptyset
\]
by \eqref{E:coset rep for std K when r<n} and \eqref{E:coset rep for std K when r=n}, the same lemma implies that 
\begin{align*}
f_s\(
x_{r,\xi+1}w_{S,a+1}
\prod_{\beta\in I_S} x_\beta\(\varpi^{-a-1}y_{S,\beta}\)
\)
=
f_s\(
x_{r,\xi+1}w_{S,a+1}
\prod_{\beta\in I_S} x_\beta\(\varpi^{-a-1}y_{S,\beta}\)w_{S,a}
\)
=
0. 
\end{align*}
Combining these, we deduce that
\begin{align*}
\theta(f_s)\(x_{r,\xi+1}\)
=
\sum_{\substack{r\in S\subset\cI_n\\\text{$|S|$ even}}}\,
\sum_{y_{S,\beta}\in\frak{o}/\frak{p}}
f_s\(
\prod_{\beta\in I_S^-}x_\beta\(y_{S,\beta}\)
\prod_{\beta\in I^+_S} x_\beta\(\varpi^{-a-1}y_{S,\beta}\)\varpi^{-\la_S}x_{r,\xi}
\). 
\end{align*}

At this point, let $S\subset\cI_n$ be any subset, and decompose $S=S_0\sqcup S_1$,
where $S_0=S\cap\cI_r$ and $S_1=S\setminus S_0\subset\stt{r+1,\ldots, n}$. Let 
\[
I_S^\pm=I_{S,1,n}^\pm,\quad I_{S_0}^\pm=I_{S_0,1,r}^\pm,\quad
\text{and}\quad I_{S_1}^\pm=I_{S_1,r+1, n}^\pm. 
\]
Note that 
\[
I^+_S
=
I_{S_0}^+\sqcup I_{S_1}^+\sqcup\stt{\ep_i+\ep_j\mid 1\le i\le r<j\le n,\,i\nin S_0,\,j\nin S_1},
\]
and 
\[
I^-_S
=
I_{S_0}^-\sqcup I_{S_1}^-\sqcup\stt{\ep_i-\ep_j\mid 1\le i\le r<j\le n,\,i\nin S_0,\,j\in S_1}. 
\]
Note also that 
\[
\text{$x_\beta(F)\subset N_{\a_r}(F)$}
\Longleftrightarrow
\text{$\beta\in I_S^+\setminus I^+_{S_1}$ or $\beta\in I^-_S\setminus\(I_{S_0}^-\cup I_{S_1}^-\)$,}
\]
and 
\begin{equation}\label{E:no of roots}
|I_S^+\setminus I^+_{S_1}|+|I^-_S\setminus\(I_{S_0}^-\cup I_{S_1}^-\)|
=
\tfrac{1}{2}\(r-|S_0|\)\(2n-r-|S_0|-1\). 
\end{equation}

We then decompose this sum as follows: 
\begin{align*}
\theta(f_s)\(x_{r,\xi+1}\)
&=
\sum_{\substack{r\in S\subset\cI_n\\\text{$|S|$ even}}}\,
\sum_{y_{S,\beta}\in\frak{o}/\frak{p}}
f_s\(
\prod_{\beta\in I_S^-}x_\beta\(y_{S,\beta}\)
\prod_{\beta\in I^+_S} x_\beta\(\varpi^{-a-1}y_{S,\beta}\)\varpi^{-\la_S}x_{r,\xi}
\)\\
&=
\sum_{\substack{S=S_0\sqcup S_1\\r\in S_0\subsetneq\cI_r,\,\text{$|S_0|$ even}}}\,
\sum_{y_{S,\beta}\in\frak{o}/\frak{p}}
f_s\(
\prod_{\beta\in I_S^-}x_\beta\(y_{S,\beta}\)
\prod_{\beta\in I^+_S} x_\beta\(\varpi^{-a-1}y_{S,\beta}\)\varpi^{-\la_S}x_{r,\xi}
\)\\
&\quad+
\sum_{\substack{S=S_0\sqcup S_1\\r\in S_0\subsetneq\cI_r,\,\text{$|S_0|$ odd}}}\,
\sum_{y_{S,\beta}\in\frak{o}/\frak{p}}
f_s\(
\prod_{\beta\in I_S^-}x_\beta\(y_{S,\beta}\)
\prod_{\beta\in I^+_S} x_\beta\(\varpi^{-a-1}y_{S,\beta}\)\varpi^{-\la_S}x_{r,\xi}
\)\\
&\quad\quad+
\sum_{\substack{S=S_0\sqcup S_1\\ S_0=\cI_r}}\,
\sum_{y_{S,\beta}\in\frak{o}/\frak{p}}
f_s\(
\prod_{\beta\in I_S^-}x_\beta\(y_{S,\beta}\)
\prod_{\beta\in I^+_S} x_\beta\(\varpi^{-a-1}y_{S,\beta}\)\varpi^{-\la_S}x_{r,\xi}
\). 
\end{align*}
We proceed to analyze each term in the sum. If $S=S_0\sqcup S_1$ with $r\in S_0\subsetneq\cI_r$, then 
\begin{align*}
f_s&\(
\prod_{\beta\in I_S^-}x_\beta\(y_{S,\beta}\)
\prod_{\beta\in I^+_S} x_\beta\(\varpi^{-a-1}y_{S,\beta}\)\varpi^{-\la_S}x_{r,\xi}
\)\\
&\quad\quad\quad=
q^{|S_0|\(s+n-\tfrac{r}{2}\)}\tau\(
\prod_{\beta\in I^-_{S_0}}\chi_\beta\(y_{S_0,\beta}\)\varpi^{-\nu_{S_0}}
\)v_\tau\\
&\quad\quad\quad\quad\quad\quad\ot
\pi'\(
\prod_{\beta\in I^-_{S_1}}x_\beta\(y_{S_1,\beta}\)
\prod_{\beta\in I^+_{S_1}}x_\beta\(\varpi^{-a-1}y_{S_1,\beta}\)\varpi^{-\la_{S_1}}
\)v'_{\pi'}. 
\end{align*}
On the other hand, if $S=S_0\sqcup S_1$ with $S_0=\cI_r$, then since $\tau$ has central character $\chi^r$,
\begin{align*}
f_s&\(
\prod_{\beta\in I_S^-}x_\beta\(y_{S,\beta}\)
\prod_{\beta\in I^+_S} x_\beta\(\varpi^{-a-1}y_{S,\beta}\)\varpi^{-\la_S}x_{r,\xi}
\)\\
&=
\chi(\varpi)^rq^{r\(s+n-\tfrac{r}{2}\)}\,v_\tau
\ot
\pi'\(
\prod_{\beta\in I^-_{S_1}}x_\beta\(y_{S_1,\beta}\)
\prod_{\beta\in I^+_{S_1}}x_\beta\(\varpi^{-a-1}y_{S_1,\beta}\)\varpi^{-\la_{S_1}}
\)v'_{\pi'}. 
\end{align*}
Here, we regard $\GL_r(F)$ as a subgroup of $\SO_{2n+1}(F)$ via the embedding
\[
a\longmapsto\diag{a, I_{2(n-r)+1}, a^*}
\]
for $a\in\GL_r(F)$, and we follow the notation introduced in \S\ref{SS:Hecke GL}. In the following, for simplicity, we write
\begin{equation}\label{E:u(S_1)}
u\(S_1,\stt{y_{S_1,\beta}}_\beta\)
=
\prod_{\beta\in I^-_{S_1}}x_\beta\(y_{S_1,\beta}\)
\prod_{\beta\in I^+_{S_1}}x_\beta\(\varpi^{-a-1}y_{S_1,\beta}\)\varpi^{-\la_{S_1}},
\end{equation}
where $S_1\subset\stt{r+1,\ldots, n}$ and $y_{S_1,\beta}\in\frak{o}/\frak{p}$ for each $\beta\in I_{S_1}^+\cup I_{S_1}^-$. 

By \eqref{E:no of roots} and the above analysis, we obtain
\begin{align*}
\theta&(f_s)\(x_{r,\xi+1}\)\\
&=
\sum_{\substack{S_1\subset\stt{r+1,\ldots, n}\\\text{$|S_1|$ even}}}\,
\sum_{\substack{1<\el< r\\\text{$\el$ even}}}\,
q^{\tfrac{1}{2}\left[2\el\(s+n-\tfrac{r}{2}\)+(r-\el)(2n-r-\el-1)\right]}\\
&\quad\quad\left[
\sum_{\substack{r\in S_0\subsetneq\cI_r\\ |S_0|=\el}}\,
\sum_{y_{S_0,\beta}\in\frak{o}/\frak{p}}
\tau\(
\prod_{\beta\in I^-_{S_0}}\chi_\beta\(y_{S_0,\beta}\)
\varpi^{-\nu_{S_0}}
\)v_\tau\right]
\ot
\left[
\sum_{y_{S_1,\beta}\in\frak{o}/\frak{p}}\pi'\(u\(S_1,\stt{y_{S_1,\beta}}_\beta\)\)v'_{\pi'}
\right]\\
&\quad+
\sum_{\substack{S_1\subset\stt{r+1,\ldots, n}\\\text{$|S_1|$ odd}}}\,
\sum_{\substack{1\le\el<r\\\text{$\el$ odd}}}\,
q^{\tfrac{1}{2}\left[2\el\(s+n-\tfrac{r}{2}\)+(r-\el)(2n-r-\el-1)\right]}\\
&\quad\quad\quad\left[
\sum_{\substack{r\in S_0\subsetneq\cI_r\\ |S_0|=\el}}\,
\sum_{y_{S_0,\beta}\in\frak{o}/\frak{p}}
\tau\(
\prod_{\beta\in I^-_{S_0}}\chi_\beta\(y_{S_0,\beta}\)
\varpi^{-\nu_{S_0}}
\)v_\tau\right]
\ot
\left[
\sum_{y_{S_1,\beta}\in\frak{o}/\frak{p}}\pi'\(u\(S_1,\stt{y_{S_1,\beta}}_\beta\)\)v'_{\pi'}
\right]\\
&\quad\quad+
\chi(\varpi)^rq^{r\(s+n-\tfrac{r}{2}\)}\,
v_\tau
\ot
\left[
\sum_{\substack{S_1\subset\stt{r+1,\ldots,n}\\\text{$|S_1|+r$ even}}}\,
\sum_{y_{S_1,\beta}\in\frak{o}/\frak{p}}\pi'\(u\(S_1,\stt{y_{S_1,\beta}}_\beta\)\)v'_{\pi'}
\right]. 
\end{align*}
At this point, let $S'_0=\cI_r\setminus S_0$. Then we have 
\[
I_{S_0}^-=J_{S'_0}
\quad\text{and}\quad
\varpi^{\nu_r-\nu_{S_0}}
=
\varpi^{\nu_{S'_0}},
\]
where $J_{S'_0}$ is the set defined in \eqref{E:J_S}. Since $\tau$ has central character $\chi^r$, it follows from \lmref{L:Hecke op} 
and \eqref{E:Hecke ev} that
\begin{align*}
\sum_{\substack{r\in S_0\subsetneq\cI_r\\|S_0|=\el}}\,
\sum_{y_{S_0,\beta}\in\frak{o}/\frak{p}}
\tau\(
\prod_{\beta\in I^-_{S_0}}\chi_\beta\(y_{S_0,\beta}\)
\varpi^{-\nu_{S_0}}
\)v_\tau
&=
\sum_{\substack{r\in S_0\subsetneq\cI_r\\|S_0|=\el}}\,
\sum_{y_{S_0,\beta}\in\frak{o}/\frak{p}}
\chi(\varpi)^r\tau\(
\prod_{\beta\in I^-_{S_0}}\chi_\beta\(y_{S_0,\beta}\)
\varpi^{\nu_r-\nu_{S_0}}
\)v_\tau\\
&=
\chi(\varpi)^r\sum_{\substack{S'_0\subset\cI_{r-1}\\|S'_0|=r-\el}}\,
\sum_{y_{S'_0,\beta}\in\frak{o}/\frak{p}}
\tau\(
\prod_{\beta\in J_{S'_0}}\chi_\beta\(y_{S'_0,\beta}\)
\varpi^{\nu_{S'_0}}
\)v_\tau\\
&=
\chi(\varpi)^rT_{r-\el}\(v_\tau\)\\
&=
\chi(\varpi^r)\la_{\tau, r-\el}\,v_\tau. 
\end{align*}
Since $c_\tau=r-1\ge 1$ and
\[
L\(s,\phi_\tau\)
=
L\(s,\phi_\chi\bt S_{r}\)
=
\(1-\chi(\varpi)q^{-\(s+\tfrac{r-1}{2}\)}\)^{-1}
\]
by \lmref{L:cond}, it follows from \lmref{L:Hecke ev} that
\[
\la_{\tau,0}=1,\,\la_{\tau,1}=\chi(\varpi),\,\text{and $\la_{\tau,i}=0$ for $2\le i\le r-1$}. 
\] 
Combining these, we deduce that 
\begin{align}\label{E:theta r>1}
\begin{split}
\theta&(f_s)\(x_{r,\xi+1}\)\\
&=
\chi(\varpi)^{r+1}q^{(r-1)\(s+n-\tfrac{r}{2}\)+(n-r)}\,
v_\tau
\ot
\left[
\sum_{\substack{S_1\subset\stt{r+1,\ldots, n}\\\text{$|S_1|+r-1$ even}}}\,
\sum_{y_{S_1,\beta}\in\frak{o}/\frak{p}}\pi'\(u\(S_1,\stt{y_{S_1,\beta}}_\beta\)\)v'_{\pi'}
\right]\\
&\quad+
\chi(\varpi)^rq^{r\(s+n-\tfrac{r}{2}\)}\,
v_\tau
\ot
\left[
\sum_{\substack{S_1\subset\stt{r+1,\ldots,n}\\\text{$|S_1|+r$ even}}}\,
\sum_{y_{S_1,\beta}\in\frak{o}/\frak{p}}\pi'\(u\(S_1,\stt{y_{S_1,\beta}}_\beta\)\)v'_{\pi'}
\right]. 
\end{split}
\end{align}

We next compute $\theta'(f_s)\(x_{r,\xi+1}\)$, which, by \propref{P:coset decomp} and \eqref{E:theta' main id} is given by 
\begin{align*}
\theta'(f_s)\(x_{r,\xi+1}\)
=
\sum_{\substack{S\subset\cI_n\\\text{$|S|$ even}}}\,
\sum_{y_{S,\beta}\in\frak{o}/\frak{p}}
f_s\(
x_{r,\xi+1}w_{\ep_n, a+1}w_{S,a+1}
\prod_{\beta\in I_S} x_\beta\(\varpi^{-a-1}y_{S,\beta}\)w_{\ep_n,a}
\). 
\end{align*}
For a given $S\subset\cI_n$, we define $S'\subset\cI_n$ by $S'=S\setminus\stt{n}$ if $n\in S$, and $S'=S\cup\stt{n}$ if $n\nin S$. 
Note that 
\[
w_{\ep_n, m}w_{S,m}=w_{S',m}
\quad\text{and}\quad
I_{S}=I_{S'},
\] 
where $m$ is an integer. It follows that 
\begin{align*}
\theta'(f_s)\(x_{r,\xi+1}\)
&=
\sum_{\substack{S\subset\cI_n\\\text{$|S|$ even}}}\,
\sum_{y_{S,\beta}\in\frak{o}/\frak{p}}
f_s\(
x_{r,\xi+1}w_{\ep_n, a+1}w_{S,a+1}
\prod_{\beta\in I_S} x_\beta\(\varpi^{-a-1}y_{S,\beta}\)w_{\ep_n,a}
\)\\
&=
\sum_{\substack{S\subset\cI_n\\\text{$|S|$ even}}}\,
\sum_{y_{S,\beta}\in\frak{o}/\frak{p}}
f_s\(
x_{r,\xi+1}w_{\ep_n, a+1}w_{S,a+1}
\prod_{\beta\in I_S} x_\beta\(\varpi^{-a-1}y_{S,\beta}\)w_{\ep_n,a}w_{S,a}
\)\\
&=
\sum_{\substack{S'\subset\cI_n\\\text{$|S'|$ odd}}}\,
\sum_{y_{S',\beta}\in\frak{o}/\frak{p}}
f_s\(
x_{r,\xi+1}w_{S',a+1}
\prod_{\beta\in I_{S'}} x_\beta\(\varpi^{-a-1}y_{S',\beta}\)w_{S',a}
\). 
\end{align*}
At this point, we can apply a similar computation to that for $\theta(f_s)\(x_{r,\xi+1}\)$ to obtain 
\begin{align}\label{E:theta' r>1}
\begin{split}
\theta'&(f_s)\(x_{r,\xi+1}\)\\
&=
\chi(\varpi)^{r+1}q^{(r-1)\(s+n-\tfrac{r}{2}\)+(n-r)}\,
v_\tau
\ot
\left[
\sum_{\substack{S_1\subset\stt{r+1,\ldots, n}\\\text{$|S_1|+r-1$ odd}}}\,
\sum_{y_{S_1,\beta}\in\frak{o}/\frak{p}}\pi'\(u\(S_1,\stt{y_{S_1,\beta}}_\beta\)\)v'_{\pi'}
\right]\\
&\quad+
\chi(\varpi)^rq^{r\(s+n-\tfrac{r}{2}\)}\,
v_\tau
\ot
\left[
\sum_{\substack{S_1\subset\stt{r+1,\ldots,n}\\\text{$|S_1|+r$ odd}}}\,
\sum_{y_{S_1,\beta}\in\frak{o}/\frak{p}}\pi'\(u\(S_1,\stt{y_{S_1,\beta}}_\beta\)\)v'_{\pi'}
\right]. 
\end{split}
\end{align}
This completes the computation for the case $r>1$. 

\begin{remark}
Observe that if we formally let $r=1$, then equation \eqref{E:theta r>1} (resp. \eqref{E:theta' r>1}) recovers \eqref{E:theta r=1} 
(resp. \eqref{E:theta' r=1}). Therefore, we may summarize our computations in this and the previous subsection by \eqref{E:theta r>1} 
and \eqref{E:theta' r>1}, allowing $r=1$ in these equations. 
\end{remark}

\subsection{Proof of the case $|I_{\phi,\chi}|=1$ or $|I_{\phi,\chi'}|=1$}
We follow the setup in \S\ref{SS:seed case 2} and \S\ref{SS:setup for level-raising}. Therefore, $\pi$ is a seed representation of 
$\SO_{2n+1}(F)$ with $L$-parameter 
\[
\phi=\phi_\chi\bt S_{2\ka+1}\oplus\phi'
\]
for some $\ka\in\tfrac{1}{2}+\bbZ_+$, and a discrete $L$-parameter $\phi'$ (possibly empty) of $\SO_{2n-2\ka}(F)$. 
Let $r=\ka+\tfrac{1}{2}$, and let $\pi'$ be the irreducible generic representation of $\SO_{2n-2\ka}(F)=\SO_{2(n-r)+1}(F)$ with 
$L$-parameter $\phi'$. Then $\pi'$ is a seed representation, and our assumption implies that 
\[
|I_{\phi',\chi}|=0.
\] 

Let $\tau=\St_\chi\(\tfrac{r-1}{2}\)$ and set
\[
\Pi_s
=
\tau|\cdot|^s\rtimes\pi',
\]
where $s\in\bbC$.  Let $\Pi=\Pi_{\frac{r}{2}}$ and $\Pi'=\Pi_{-\frac{r}{2}}$. Then $\pi$ is the unique irreducible generic 
subrepresentation of $\Pi$, and we have an intertwining map
\[
M:\Pi\longto\Pi'
\]
whose kernel is $\pi$, by \lmref{L:length 2} and the fact that $\Pi$ is a standard module. 

Assume first that $|I_{\phi,\chi'}|=|I_{\phi',\chi'}|=0$. Then, by \corref{C:sc to seed case 1} and our assumption on supercuspidal 
representations, we have $a_{\pi'}=c_{\pi'}$, and hence $a=a_\Pi=c_\pi-1$ by \lmref{L:seed cond case 2}. 
Now \lmref{L:basis seed case 2} implies that 
\[
\Pi_s^{K_{n,a}}
=
\bbC\,f_s
\]
is one-dimensional for every $s$. Since $M$ is an intertwining map, it follows that 
\[
M\(f_{\frac{r}{2}}\)\in\Pi'^{K_{n,a}}. 
\]
Note that $M\(f_{\frac{r}{2}}\)\ne 0$. Otherwise, we would have $f_{\frac{r}{2}}\in\pi^{K_{n,c_\pi-1}}$, contradicting \thmref{T:mainA} (1). 
We may assume that 
\[
M\(f_{\frac{r}{2}}\)
=
f_{-\frac{r}{2}}
\]
after choosing local newforms in $\tau$ and $\pi'$ properly. 

Let $\theta=\theta_a$ and $\theta'=\theta'_a$ be the level-raising operators introduced in \S\ref{SS:level-raising operator}. We have 
\[
\theta\(f_{\frac{r}{2}}\),\,\theta'\(f_{\frac{r}{2}}\)\in\Pi^{K_{n,c_\pi}},
\]
and
\[
M\(c\,\theta\(f_{\frac{r}{2}}\)+c'\theta'\(f_{\frac{r}{2}}\)\)
=
c\,\theta\(M\(f_{\frac{r}{2}}\)\)+c'\theta'\(M\(f_{\frac{r}{2}}\)\)
=
c\,\theta\(f_{-\frac{r}{2}}\)+c'\theta'\(f_{-\frac{r}{2}}\)
\]
for every $c, c'\in \bbC$ by \eqref{E:theta sum} and \eqref{E:theta' sum}. Now, by \lmref{L:basis seed case 2}, \eqref{E:theta when r=n} and 
\eqref{E:theta' when r=n}, together with \eqref{E:theta r>1} and \eqref{E:theta' r>1}, we deduce that
\[
\theta\(f_{-\frac{r}{2}}\)-\chi(\varpi)\theta'\(f_{-\frac{r}{2}}\)=0. 
\]
This implies that 
\[
\theta\(f_{\frac{r}{2}}\)-\chi(\varpi)\theta'\(f_{\frac{r}{2}}\)\in\pi^{K_{n,c_\pi}}. 
\]
It remains to show that 
\[
\theta\(f_{\frac{r}{2}}\)-\chi(\varpi)\theta'\(f_{\frac{r}{2}}\)\ne 0. 
\]
To this end, consider a Whittaker functional $\la$ on $\tau\ot\pi'$ defined as follows. Recall that $\psi$ is an additive character on $F$ that is 
trivial on $\frak{o}$ but non-trivial on $\frak{p}^{-1}$, and that there are non-degenerate characters $\psi_{U_{n-r}}$ and $\psi_{Z_r}$
of $U_{n-r}(F)$ and $Z_r(F)$, respectively, defined in \S\ref{SS:2.3}. Let $\la_{\tau,\psi}\in{\rm Hom}_{Z_r}\(\tau,\psi_{Z_r}\)$ and 
$\la_{\pi',\psi}\in{\rm Hom}_{U_{n-r}(F)}\(\pi',\psi_{U_{n-r}}\)$ be Whittaker functionals. Then the functional $\la$ is defined by 
\[
\la=\la_{\tau,\psi}\ot\la_{\pi',\psi}\circ\pi'(\varpi^{(r-1)\la_{n-r}}). 
\]

By \cite{Matringe2013} and \thmref{T:mainA} (3), we have
\[
\la(v_\tau\ot v'_{\pi'})
=
\la_{\tau,\psi}(v_\tau)\la_{\pi',\psi}\(\pi\(\varpi^{(r-1)\la_{n-r}}\)v'_{\pi'}\)
=
\la_{\tau,\psi}(v_\tau)\la_{\pi',\psi}(v_{\pi'})\ne 0,
\]
where $v_{\pi'}=\pi'\(\varpi^{(r-1)\la_{n-r}}\)v'_{\pi'}$ is a newform of $\pi'$. To prove that 
\[
\theta\(f_{\frac{r}{2}}\)-\chi(\varpi)\theta'\(f_{\frac{r}{2}}\)\ne 0,
\]
it suffices to show that
\[
\la\(\theta\(f_{\frac{r}{2}}\)\(x_{r,\xi+1}\)-\chi(\varpi)\theta'\(f_{\frac{r}{2}}\)\(x_{r,\xi+1}\)\)\ne 0. 
\]
For this purpose, we evaluate $\la$ at each term appearing in $\theta\(f_\frac{r}{2}\)\(x_{r,\xi+1}\)$ and 
$\theta'\(f_\frac{r}{2}\)\(x_{r,\xi+1}\)$ in \eqref{E:theta r>1} and \eqref{E:theta' r>1}, respectively. By the same equations, 
$\theta\(f_{\frac{r}{2}}\)\(x_{r,\xi+1}\)$ and $\theta'\(f_{\frac{r}{2}}\)\(x_{r,\xi+1}\)$ are linear combinations of vectors of the form
\[
v_\tau
\ot
\pi'\(u\(S_1,\stt{y_{S_1,\beta}}_\beta\)\)v'_{\pi'},
\]
where $u\(S_1,\stt{y_{S_1,\beta}}_\beta\)$ is given by \eqref{E:u(S_1)}. Since $u\(S_1,\stt{y_{S_1,\beta}}_\beta\)\in U_{n-r}(F)$, it follows that
\[
\la\(v_\tau\ot\pi'\(u\(S_1,\stt{y_{S_1,\beta}}_\beta\)\)v'_{\pi'}\)
=
c\,\la_{\tau,\psi}\(v_\tau\)\la_{\pi',\psi}\(\pi'\(\varpi^{-\la_{S_1}}\)v_{\pi'}\)
\]
for some $0\ne c\in\bbC$. At this point, we note that 
\[
\la_{\pi',\psi}\(\pi'\(\varpi^{-\la_{S_1}}\)v_{\pi'}\)\ne 0
\Longleftrightarrow
S_1=\emptyset. 
\]
Indeed, since $v_{\pi'}$ is fixed by $K_{n-r, c_{\pi'}}$ and $\psi$ is trivial on $\frak{o}$ but non-trivial on $\frak{p}^{-1}$, the assertion follows
from a standard argument, as in \cite[Proposition 6.1]{CasselmanShalika1980}. When $S_1=\emptyset$, we have
\[
I_{S_1}^-=\emptyset
\quad\text{and}\quad
I_{S_1}^+
=
\stt{\ep_i+\ep_j\mid r+1\le i<j\le n}. 
\]
Since
\[
|I_{S_1}^+|
=
\tfrac{(n-r)(n-r-1)}{2},
\]
it follows that 
\begin{align*}
\la&\(\theta\(f_{\frac{r}{2}}\)\(x_{r,\xi+1}\)-\chi(\varpi)\theta'\(f_{\frac{r}{2}}\)\(x_{r,\xi+1}\)\)\\
&\quad=
(-1)^r\chi(\varpi)^r\la_{\tau,\psi}(v_\tau)\la_{\pi',\psi}(v_{\pi'})q^{\frac{(n-r)(n-r-1)}{2}}\(q^{nr}-q^{(n-1)r}\)\ne 0. 
\end{align*}
This verifies the case when $|I_{\phi,\chi}|=1$ and $|I_{\phi,\chi'}|=0$. 

Finally, assume that $|I_{\phi,\chi}|=|I_{\phi,\chi'}|=1$. Then $|I_{\phi',\chi}|=0$ and $|I_{\phi',\chi'}|=1$, and hence 
$\pi'^{K_{n-r},c_{\pi'}}\ne 0$, by what we just proved. By applying the same argument as above, we conclude that 
\[
\pi^{K_{n,c_\pi}}\ne 0. 
\]
This completes the proof for the case where $|I_{\phi,\chi}|=1$ or $|I_{\phi,\chi'}|=1$, and hence the proof of \propref{P:sc to seed}.\qed

\subsection{Proof of \thmref{T:mainB}}
This follows from \propref{P:sq to gen}, \propref{P:seed to sq} and \propref{P:sc to seed}.\qed 

\subsection{Concluding remarks}\label{SS:Tsai lemma}
Let $\pi$ be an irreducible generic supercuspidal representation of $\SO_{2n+1}(F)$ with $n\ge 2$, and let $v\in\pi$ be a non-zero element that 
is fixed by $K_{n,m}$ for some $m\ge 0$. Such an element always exists by \cite[Corollary 3.4.2 and Theorem 7.3.1]{Tsai2013}, and we have 
$m\ge c_\pi\ge 2$. In \cite[Lemma 8.3.3]{Tsai2013}, it is asserted that 
\begin{equation}\label{E:Tsai lemma}
W_v\(\varpi^{a\ep_1^*}x_{-\ep_1}(y_1)x_{-\ep_1+\ep_i}(y_2)x_{-\ep_1+\ep_j}(y_3)\varpi^\mu\)
=
W_v\(\varpi^{a\ep_1^*+\mu}\)
\end{equation}
for every $y_1, y_2, y_3\in F$, $\mu\in X_*(T_n)$, and $a\in\bbZ$. Here $W_v$ is the Whittaker function attached to $v$, defined by 
\[
W_v(g)=\la_{\pi,\psi}(\pi(g)v)
\] 
for $g\in\SO_{2n+1}(F)$, where $\la_{\pi,\psi}\in{\rm Hom}_{U_n(F)}(\pi,\psi_{U_n})$ is a non-zero Whittaker functional. 
Unfortunately, this assertion appears to be incorrect. 

In fact, if \eqref{E:Tsai lemma} holds for every $y_1, y_2, y_3\in F$, $\mu\in X_*(T_n)$, and $a\in\bbZ$, then we particularly have
\[
W_v\(\varpi^{a\ep_1^*}x_{-\ep_1}(1)\)
=
W_v\(\varpi^{a\ep_1^*}\)
\]
for every $a\in\bbZ$. Since $\pi\(w_{\ep_1,m}\)$ induces an involution on $\pi^{K_{n,m}}$, we may assume that 
\[
\pi(w_{\ep_1,m})W_v=\ep W_v
\]
for some $\ep\in\stt{\pm 1}$. Now the identity
\[
x_{-\ep_1}(1)
=
x_{\ep_1}(1)\,\varpi^{m\ep^*_1}\,w_{\ep_1,m}\,x_{\ep_1}(1),
\]
and the the fact that 
\[
x_{\ep_1}(1)\in K_{n,m}
\]
imply
\begin{align*}
W_v\(\varpi^{a\ep_1^*}\)
&=
W_v\(\varpi^{a\ep_1^*}x_{-\ep_1}(1)\)\\
&=
W_v\(\varpi^{a\ep_1^*}\,x_{\ep_1}(1)\,\varpi^{m\ep^*_1}\,w_{\ep_1,m}\,x_{\ep_1}(1)\)\\
&=
W_v\(x_{\ep_1}\(\varpi^{a}\)\,\varpi^{(a+m)\ep_1^*}\,w_{\ep_1,m}\)\\
&=
\ep W_v\(\varpi^{(a+m)\ep_1^*}\). 
\end{align*}

Since $W_v$ is right $K_{n,m}$-invariant and $\psi$ is trivial on $\frak{o}$ but non-trivial on $\frak{p}^{-1}$, a standard argument, as in 
\cite[Proposition 6.1]{CasselmanShalika1980}, shows that $W_v\(\varpi^{a\ep_1^*}\)=0$ for every $a<0$. The above identity then implies that 
$W_v\(\varpi^{a\ep_1^*}\)=0$ for every $a\in\bbZ$. In particular, we have 
\[
W_v\(I_{2n+1}\)
=
\la_{\pi,\psi}(v)=0.  
\]
However, this contradicts \cite[Main Theorem 2]{Tsai2013}. 

Tsai used this lemma to compute the Hecke operators on $\pi^{K_{n,m}}$ in \cite[Section 8.3]{Tsai2013}. Consequently, she obtained two 
propositions (\cite[Proposition 8.3.4 and Proposition 8.3.6]{Tsai2013}), which allowed her to deduce two corollaries 
(\cite[Corollary 8.3.7 and Corollary 8.3.8]{Tsai2013}). Using these two corollaries, she proved the main theorems 
(\cite[Theorem 8.4.1 and Theorem 8.4.2]{Tsai2013}) in \cite[Section 8.4]{Tsai2013}. In other words, Proposition 8.3.4 and Proposition 8.3.6
are key ingredients in her proof, which in turn rely on the computation of the Hecke operators. 

That being said, it seems that, in computating the Hecke operators, she did not need the full strength of the lemma; namely, she only 
required \eqref{E:Tsai lemma} to hold for certain $y_1, y_2, y_3\in F$ and $\mu\in X_*(T_n)$. In this way, it may still be possible to address
this issue.

\subsection*{Acknowledgements}
Part of this work was carried out during the Relative Langlands Program Conference held at IMS. The author would like to thank the 
organizers for providing a stimulating and productive environment for discussion.

The author is deeply indebted to Chi-Heng Lo for addressing his questions and for writing the appendix to this paper; is particularly 
indebted to Masao Oi for his constant and insightful suggestions and discussions; is grateful to Hiraku Atobe for kindly and patiently 
answering his questions through numerous email exchanges; and thanks Yiyang Wang for helpful discussions and for sharing his paper.
This work was partially supported by MOST under Grant No. 112-2115-M-032-004-MY3.

\appendix
\section{A lemma on certain parabolic induction}\label{S:appendix}
\begin{center}
by Chi-Heng Lo
\end{center}
\subsection{Notation for derivatives}

For any supercuspidal representation $\rho$ of $\GL_k(F)$ and $x,y\in \bbR$ such that $x-y \in \bbZ_{\geq 0}$, we define the essentially 
square-integrable representation $\Delta_{\rho}[x,y]$ of $\GL_{k (x-y+1)}(F)$ to be the unique irreducible subrepresentation of the 
normalized parabolic induction
\[ 
\rho\lvert \cdot\rvert^{x} \times \rho\lvert \cdot\rvert^{x-1} \times \cdots \times \rho\lvert \cdot\rvert^{y}.
\]
We may uniquely write $\rho= \rho^u\lvert\cdot\rvert^{z}$, where $\rho^u$ is a unitary supercuspidal representation and $z \in \bbR$. 
Then if $\rho^u \not \cong (\rho')^{u}$, then we have
\begin{align}\label{eq GL commutative}
\Delta_{\rho}[x,y] \times \Delta_{\rho'}[z,w] = \Delta_{\rho'}[z,w] \times \Delta_{\rho}[x,y],
\end{align}
which is irreducible.

Let $D_{\rho}$ denote the derivatives of $\SO_{2n+1}(F)$ and let $L_{\rho}$ and $R_{\rho}$ denote the left and right derivatives of $\GL_{n}(F)$. 
These are linear maps between Grothendieck groups
\begin{align*}
    D_{\rho}&: \mathscr{R}( \SO_{2n+1}(F)) \to \mathscr{R}(\SO_{2n-2d+1}(F)),\\ 
    L_{\rho}&: \mathscr{R}(\GL_{n}(F)) \to \mathscr{R}(\GL_{n-d}(F)),\\
    R_{\rho}&: \mathscr{R}(\GL_{n}(F)) \to \mathscr{R}(\GL_{n-d}(F)),
\end{align*}
defined by the following formulas in the Grothendieck group: For $\pi \in \mathscr{R}( \SO_{2n+1}(F))$ and 
$\sigma \in \mathscr{R}(\GL_{n}(F))$, 
\begin{align*}
\Jac_{\GL_{d} \times \SO_{2n-2d+1}(F)}(\pi) &= \rho \boxtimes D_{\rho}(\pi) + \sum_{\tau } \tau \boxtimes \pi_{\tau},\\
\Jac_{\GL_d \times \GL_{n-d}}(\sigma)&= \rho \boxtimes L_{\rho}(\sigma)+ \sum_{\tau } \tau \boxtimes \sigma_{\tau},\\
\Jac_{\GL_{n-d}\times \GL_{d}}(\sigma)&= R_{\rho}(\sigma) \boxtimes \rho+ \sum_{\tau }  \sigma_{\tau} \boxtimes \tau,
\end{align*}
where the summation is taken over irreducible representations $\tau$ of $\GL_{d}(F)$ that are not isomorphic to $\rho$. By Tadi{\'c}'s formula for Jacquet modules (see \cite[Proposition 1.7]{Zelevinsky1980} \cite[Theorem 6.5]{Tadic1995}), we have Leibniz rule for derivatives:
\begin{enumerate}
\item $L_{\rho}(\tau_1 \times \tau_2)= L_{\rho}(\tau_1) \times \tau_2 + \tau_1 \times L_{\rho}(\tau_2)$.
    \item $D_{\rho}( \tau \rtimes \sigma )= L_{\rho}(\tau) \rtimes  \sigma + R_{\rho^{\vee}}(\tau) \rtimes \sigma+ \tau \rtimes D_{\rho}(\sigma).$
\end{enumerate}
Also, the left and right derivatives of essentially discrete series representations of $\GL_n(F)$ are known 
(\cite[Proposition 9.5]{Zelevinsky1980}): Let $\rho$ and $\rho'$ be unitary supercuspidal representations of general linear groups. Then
\begin{align}\label{eq der GL 1}
    L_{\rho|\cdot|^{z}}(\Delta_{\rho'}[x,y])= &\begin{cases}  
        \Delta_{\rho'}[x-1,y] & \text{ if } \rho' \cong \rho\text{ and } z=x \\
        0 & \text{ otherwise.}
        \end{cases}\\
         \label{eq der GL 2}
         R_{\rho|\cdot|^{z}}(\Delta_{\rho'}[x,y])= &\begin{cases}  
            \Delta_{\rho'}[x,y+1] & \text{ if } \rho' \cong \rho\text{ and } z=y \\
            0 & \text{ otherwise.}
            \end{cases}
\end{align}
In the Grothendieck group $\mathscr{R}(\SO_{2n+1}(F))$, we write $\Pi_1 \leq \Pi_2$ if $\Pi_2-\Pi_1$ is a positive linear combination of irreducible representations.

\subsection{Statement of the lemma}

Let $\phi$ be a tempered $L$-parameter of $\SO_{2n+1}(F)$. Let $\rho$ be an orthogonal supercuspidal representation of $\GL_k(F)$ and let $\phi_{\rho}$ denote its $L$-parameter, which is a selfdual $k$-dimensional irreducible representation of $W_F$. Assume that $\phi$ contains $ \phi_{\rho} \boxtimes S_{2 \kappa+1}$ for some $\kappa \in \frac{1}{2}+\bbZ_{\geq 0}$, and the $L$-parameter of $\SO_{2n+1- k(2\kappa +1)}(F)$
\[ \phi_0:= \phi - \phi_\rho \boxtimes S_{2\kappa +1}   \ \ \ \ \ \text{(possibly empty)}\]
does not contain any summand of the form $\phi_{\rho} \boxtimes S_{d}$ for any $d \in \bbZ_{> 0}$. Let $\pi_0$ denote the unique generic representation in $\Pi_{\phi_0}$ and let $\pi$ denote the unique generic representation in $\Pi_{\phi}$. Finally, set 
$\Pi:= \Delta_{\rho}[\kappa, \frac{1}{2}] \rtimes \pi_0$.

\begin{lm}
    In the above setting, $\Pi$ has length two and we have an exact sequence
    \[  0 \to \pi \to \Pi \to \sigma \to 0,\]
    where $\sigma$ is the Langlands quotient of $\Pi$.
\end{lm}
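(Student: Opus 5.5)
The plan is to compare the semisimplification of $\Pi$ with what we know about its two obvious constituents, using the derivative $D_{\rho|\cdot|^{1/2}}$ as a counting device. The standard facts are these. $\Pi$ is a standard module, since $\Delta_\rho[\kappa,\tfrac12]$ has positive exponent and $\pi_0$ is tempered. Hence $\Pi$ has a unique irreducible (Langlands) quotient $\sigma$, occurring with multiplicity one. The generic constituent $\pi$ occurs in $\Pi$ with multiplicity one by Rodier's result, and it is a subrepresentation by the generalized injectivity conjecture (Hanzer, as in \lmref{L:inj conj}). Moreover $\pi\neq\sigma$, because $\pi$ is tempered while $\sigma$ is not. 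So it suffices to show that $s.s.(\Pi)$ has exactly two irreducible constituents.

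\textbf{Step 1: the derivative of $\Pi$.} Put $z=\tfrac12$ and compute $D_{\rho|\cdot|^{1/2}}(\Pi)$ with the Leibniz rule (2).
\begin{itemize}
\item $L_{\rho|\cdot|^{1/2}}(\Delta_\rho[\kappa,\tfrac12])$ vanishes unless $\kappa=\tfrac12$, in which case it gives $1\rtimes\pi_0=\pi_0$.
\item Since $\rho$ is self-dual, $R_{\rho|\cdot|^{-1/2}}(\Delta_\rho[\kappa,\tfrac12])=0$ by \eqref{eq der GL 2}.
\item $D_{\rho|\cdot|^{1/2}}(\pi_0)=0$. Indeed, $\phi_0$ contains no $\phi_\rho\boxtimes S_d$, so by Atobe's/Xu's description of Jacquet modules of tempered generic representations (equivalently, a cuspidal-support argument via Mœglin's criterion) $\pi_0$ admits no $\rho|\cdot|^{x}$ in its Jacquet module for $x\neq0$.
\end{itemize}
In general I will instead use the highest derivative with respect to the full segment, or iterate $D_{\rho|\cdot|^{\kappa}}$ first. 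Concretely, $D_{\rho|\cdot|^{\kappa}}(\Pi)=\Delta_\rho[\kappa-1,\tfrac12]\rtimes\pi_0$ when $\kappa>\tfrac12$. This is irreducible-or-induction-controlled, and by induction on $\kappa$ (base case $\kappa=\tfrac12$, which is Atobe's \cite[Lemma 5.1]{Atobe2020}) it has length two.

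\textbf{Step 2: injectivity of the derivative on constituents.} Every irreducible constituent $\tau$ of $\Pi$ has cuspidal support containing $\rho|\cdot|^{\kappa}$, and by Frobenius reciprocity-type arguments one needs $D_{\rho|\cdot|^{\kappa}}(\tau)\neq0$. I will argue this from the Leibniz formula applied to $\Pi$: constituents with zero derivative would have to appear in $\Pi$ with Jacquet modules avoiding $\rho|\cdot|^{\kappa}$ entirely. This is impossible, since by Tadić's formula (\thmref{T:Tadic}) every term of $\mu^*(\Pi)$ restricted to the $\rho|\cdot|^{\pm\kappa}$-part carries exponent $\kappa$ on the left: $-\kappa$ only appears via $R_{\rho|\cdot|^{-\kappa}}$, which vanishes. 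I must be careful here; the honest tool is Jantzen's/Atobe–Mínguez's result that for $\rho|\cdot|^{\kappa}$ with $\kappa$ the extreme exponent, the highest derivative $D^{(k)}_{\rho|\cdot|^{\kappa}}$ is injective on irreducibles.

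\textbf{Step 3: conclusion by counting.} Applying the highest derivative gives
\[
D^{(1)}_{\rho|\cdot|^{\kappa}}(\pi)\ne0,\qquad D^{(1)}_{\rho|\cdot|^{\kappa}}(\sigma)\ne0,
\]
and their sum is at most the length-two representation $\Delta_\rho[\kappa-1,\tfrac12]\rtimes\pi_0$. Each constituent of $\Pi$ contributes at least one constituent to the derivative, and the derivative has multiplicity exactly one for $\rho|\cdot|^\kappa$. Hence $\Pi$ has at most two constituents, so exactly $\pi$ and $\sigma$.

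The main obstacle is Step 2, making the counting sharp: showing that each constituent of $\Pi$ has nonzero, and correctly sized, $\rho|\cdot|^\kappa$-derivative, so that no extra constituent can hide. I would first try the Atobe–Mínguez theory of highest derivatives and socles for this. The base case $\kappa=\tfrac12$, where $\rho|\cdot|^{1/2}\rtimes\pi_0$ reduces through Mœglin–Tadić reducibility at $\tfrac12$, is taken from Atobe.
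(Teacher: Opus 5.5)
Your set-up is fine, and so is the computation $D_{\rho|\cdot|^{\kappa}}(\Pi)=\Delta_\rho[\kappa-1,\frac12]\rtimes\pi_0$ for $\kappa>\frac12$, which has length two by induction. The gap is Step 2, and Step 3 rests entirely on it. Your count only bounds the constituents $\tau$ with $D_{\rho|\cdot|^{\kappa}}(\tau)\neq0$. You never show that every constituent, and in particular $\sigma$, has this property.

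The reason you give is wrong. That $R_{\rho|\cdot|^{-\kappa}}(\Delta_\rho[\kappa,\frac12])=0$ only says that $\rho|\cdot|^{-\kappa}$ never sits at the front of a first-order Jacquet module. Deeper in $\mu^*(\Pi)$ it does occur with no $\rho|\cdot|^{\kappa}$ in front. The extreme term of $M^*(\Delta_\rho[\kappa,\frac12])$ gives $\Delta_\rho[-\frac12,-\kappa]\otimes\pi_0\le\mu^*(\Pi)$, and $L_{\rho|\cdot|^{\kappa}}(\Delta_\rho[-\frac12,-\kappa])=0$. This is exactly the term behind $\sigma\hookrightarrow\Delta_\rho[-\frac12,-\kappa]\rtimes\pi_0$. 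So a priori $\sigma$, or an extra constituent whose Jacquet modules all begin with $\rho|\cdot|^{-1/2}$, is invisible to $D_{\rho|\cdot|^{\kappa}}$.

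The Atobe--M\'inguez injectivity of highest derivatives does not help here. Since $\sum_\tau D_{\rho|\cdot|^{\kappa}}(\tau)=D_{\rho|\cdot|^{\kappa}}(\Pi)$ in the Grothendieck group, injectivity plays no role in the count. What you need is non-vanishing.

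The gap can be closed inside your framework, in three steps.

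\textbf{The $\rho$-part of the Jacquet module.} Since $\pi_0$ has no $\rho$-derivatives, the part of the Jacquet module of $\Pi$ along $\GL_{k(\kappa+1/2)}\times\SO_{2n_0+1}$ with $\rho$-supported $\GL$-component is $\sum_{i}\Delta_\rho[-\frac12,-i]\times\Delta_\rho[\kappa,i+1]\otimes\pi_0$, with $i=-\frac12,\frac12,\ldots,\kappa$. Each product is irreducible, because the segments are unlinked. Each has non-zero $L_{\rho|\cdot|^{\kappa}}$, except the term $i=\kappa$, which occurs with multiplicity one.

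\textbf{Only $\sigma$ can have vanishing derivative.} Every constituent has a non-zero Jacquet module of this kind, by cuspidal support. So a constituent with vanishing $D_{\rho|\cdot|^{\kappa}}$ has it equal to $\Delta_\rho[-\frac12,-\kappa]\otimes\pi_0$. By Frobenius reciprocity it then embeds into $\Delta_\rho[-\frac12,-\kappa]\rtimes\pi_0$, hence is $\sigma$.

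\textbf{The derivative of $\sigma$ is non-zero.} For $\kappa>\frac12$, embed $\sigma$ into $\rho|\cdot|^{-1/2}\times\cdots\times\rho|\cdot|^{-\kappa}\rtimes\pi_0$. Then use that $\rho|\cdot|^{-\kappa}\rtimes\pi_0\cong\rho|\cdot|^{\kappa}\rtimes\pi_0$ is irreducible; reducibility occurs only at $\pm\frac12$, since $\phi_0$ has no $\rho$-part. Finally commute $\rho|\cdot|^{\kappa}$ past the unlinked $\rho|\cdot|^{-j}$ to the front, giving $D_{\rho|\cdot|^{\kappa}}(\sigma)\neq0$.

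With this patch, your induction on $\kappa$ is a valid alternative to the paper's argument. The paper proceeds differently. It shows every non-tempered constituent is $\sigma$ by examining its standard module. It shows tempered constituents lie in $\Pi_\phi$ and must be $\pi$ by a component-group and supercuspidal-support argument. It gets multiplicity one of $\pi$ from the composite derivative $D_{\rho|\cdot|^{1/2}}\circ\cdots\circ D_{\rho|\cdot|^{\kappa}}$.
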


Clearly, the Lemma follows from the following proposition:
\begin{prop}\label{prop: length 2}
    In the above setting, the following hold:
    \begin{enumerate}
        \item [(a)] If $\tau$ is an irreducible subquotient of $\Pi$ that is not isomorphic to $\sigma$, then $\tau$ is tempered.
       
        \item [(b)] The representation $\pi$ has multiplicity one in the semisimplification of $\Pi$.
        
        \item [(c)] If $\tau$ is a tempered irreducible subquotient of $\Pi$, then $\tau$ is isomorphic to $\pi$. 
    \end{enumerate}
\end{prop}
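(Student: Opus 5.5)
We prove (a), (b), (c) in that order, relying on the Langlands classification, Tadi\'c's formula (\thmref{T:Tadic}) in the form of the Leibniz rule for the derivative $D_{\rho|\cdot|^{z}}$, and Atobe's/Xu's description of derivatives of tempered generic representations. For convenience, write $\Delta=\Delta_\rho[\kappa,\tfrac12]$.

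\textbf{Part (a).} Since $\Pi$ is a standard module, its Langlands quotient $\sigma$ has multiplicity one, and every other subquotient $\tau$ has Langlands data strictly smaller than that of $\sigma$. Write the data of $\tau$ as $\tau\hookrightarrow \Delta_{\rho_1}[x_1,y_1]\times\cdots\rtimes\tau_{\rm temp}$ with exponents $x_i+y_i<0$. All cuspidal exponents occurring in $\Pi$ lie in $\mathrm{supp}(\pi_0)\cup\{\rho|\cdot|^{\pm1/2},\ldots,\rho|\cdot|^{\pm\kappa}\}$. Moreover, $\phi_0$ contains no $\phi_\rho\boxtimes S_d$, so the $\rho$-part of the cuspidal support of $\pi_0$ is trivial. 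Consequently any nontempered segment in the data of $\tau$ must be a $\rho$-segment built from these exponents. By Frobenius reciprocity, $\tau$ then has a Jacquet module term $\Delta_\rho[x,y]\boxtimes\cdots$ with $x+y<0$. We compare this with the full Jacquet module of $\Pi$, computed via Tadi\'c: the $\rho$-exponents attached to $\Delta$ appear either as $\Delta$ itself or, through $R_{\rho^\vee}$-type terms, as $\Delta^\vee=\Delta_\rho[-\tfrac12,-\kappa]$. This comparison forces the Langlands data of $\tau$ to equal $\Delta^\vee\rtimes\pi_0$, i.e.\ $\tau\cong\sigma$. The remaining possibility is a segment $\Delta_\rho[-\tfrac12,-y]$ with $y<\kappa$; we rule it out using the derivative $D_{\rho|\cdot|^{-1/2}}$ together with the absence of $\rho$ in $\phi_0$.

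\textbf{Parts (b) and (c).} We use the highest derivative $D_{\rho|\cdot|^{\kappa}}$. By the Leibniz rule,
\[
D_{\rho|\cdot|^{\kappa}}(\Pi)=\Delta_\rho[\kappa-1,\tfrac12]\rtimes\pi_0,
\]
because $R_{\rho|\cdot|^{-\kappa}}(\Delta)=0$ and $D_{\rho|\cdot|^{\kappa}}(\pi_0)=0$, the latter since $\rho$ does not occur in $\phi_0$ for $\kappa>\tfrac12$. Iterating along the sequence $\kappa,\kappa-1,\ldots,\tfrac12$ gives
\[
D_{\rho|\cdot|^{1/2}}\circ\cdots\circ D_{\rho|\cdot|^{\kappa}}(\Pi)=\pi_0,
\]
with multiplicity one. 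For the tempered generic $\pi$ with $\phi=\phi_0\oplus\phi_\rho\boxtimes S_{2\kappa+1}$, Atobe's Theorem 4.2 yields
\[
D_{\rho|\cdot|^{\kappa}}(\pi)=\pi_{\phi_0\oplus\rho\boxtimes S_{2\kappa-1}}\neq 0,
\]
and iterating produces $\pi_0$. Hence $\pi$ uses up the unique copy of $\pi_0$ in this iterated derivative, and (b) follows.

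For (c), let $\tau$ be a tempered subquotient of $\Pi$. Its cuspidal support equals that of $\Pi$, so its $L$-parameter has the same semisimplified $W_F\times\mathrm{SL}_2$ cuspidal data. Because $\phi_0$ has no $\rho$-part, the $\rho$-part of $\phi_\tau$ must be $\rho\boxtimes S_{2\kappa+1}$; the alternatives would require $\rho|\cdot|^{\pm x}$-pairs already in $\pi_0$, or non-tempered pieces. Comparing the supports outside $\rho$ should then give $\phi_\tau=\phi$ after a careful accounting. The main obstacle is that the $L$-packet $\Pi_\phi$ may contain several tempered members. To exclude a non-generic member of $\Pi_\phi$, we argue as follows. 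Since $\tau$ is tempered and appears in $\Pi$, Jacquet-module considerations force $D_{\rho|\cdot|^{\kappa}}^{(\kappa+1/2)}(\tau)\ne0$. Comparing with $\Pi$, whose iterated derivative is exactly $\pi_0$ once, and using (b), the only tempered constituent carrying this derivative is $\pi$. For the other members of the packet, Atobe's theorem computes the derivative in terms of the character $\varepsilon$, and it either vanishes or yields a representation other than the generic $\pi_0$. We expect this character bookkeeping, namely matching $\varepsilon$ with the Jacquet module of the standard module, to be the hardest step.
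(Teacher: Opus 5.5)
Your part (b) is correct and matches the paper's argument. With $D:=D_{\rho|\cdot|^{1/2}}\circ\cdots\circ D_{\rho|\cdot|^{\kappa}}$, the Leibniz rule gives $D(\Pi)=\pi_0$, the highest-derivative formula gives $D(\pi)=\pi_0$, and positivity of $D$ on the Grothendieck group bounds the multiplicity of $\pi$ by one.

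\textbf{The gap in (c).} Your reduction to $\tau\in\Pi_\phi$ is fine; the paper does the same. The next step is the problem: the claim that ``Jacquet-module considerations force'' $D(\tau)\neq 0$ is unsupported, and it is exactly where the content lies. Here I read your $D^{(\kappa+1/2)}_{\rho|\cdot|^{\kappa}}$ as the chain $D$, since the literal iterate of $D_{\rho|\cdot|^{\kappa}}$ already kills $\Pi$ when $\kappa\ge\tfrac32$.

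By the highest-derivative formula for tempered representations, $D(\tau)\neq 0$ holds precisely when the character $\eta$ of $\tau$ satisfies $\eta(\phi_\rho\boxtimes S_{2\kappa+1})=1$. Members with $\eta(\phi_\rho\boxtimes S_{2\kappa+1})=-1$ exist in general, e.g.\ whenever $\phi_0$ is a non-zero discrete parameter, and for them $D(\tau)=0$. The constraint $D(\tau)\le D(\Pi)-D(\pi)=0$ is then satisfied, so no contradiction arises. Your fallback, that the derivative ``either vanishes or yields a representation other than $\pi_0$'', disposes only of the second alternative; the first is the whole difficulty. For $\eta(\phi_\rho\boxtimes S_{2\kappa+1})=1$ your argument does close, since $D(\tau)\neq0$ and $D(\tau)+D(\pi)\le\pi_0$ force $\tau\cong\pi$.

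\textbf{The missing idea.} For $\eta(\phi_\rho\boxtimes S_{2\kappa+1})=-1$ you need a supercuspidal-support argument, not a comparison with the Jacquet module of $\Pi$. Apply only the truncated chain $D'=D_{\rho|\cdot|^{3/2}}\circ\cdots\circ D_{\rho|\cdot|^{\kappa}}$. By the highest-derivative formula, $D'(\tau)$ is an irreducible tempered representation with parameter $\phi_0\oplus\phi_\rho\boxtimes S_2$, and it is annihilated by every $D_{\rho|\cdot|^{z}}$. Hence $\tau\hookrightarrow\rho|\cdot|^{\kappa}\times\cdots\times\rho|\cdot|^{3/2}\times\rho_1|\cdot|^{x_1}\times\cdots\times\rho_r|\cdot|^{x_r}\rtimes\tau_{sc}$, with $\rho_i\not\cong\rho$ and $\tau_{sc}$ supercuspidal whose parameter contains $\phi_\rho\boxtimes S_2$. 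By contrast, the $\SO$-part of the supercuspidal support of $\Pi$ is that of $\pi_0$, which involves no $\rho$. So $\tau$ cannot be a subquotient of $\Pi$.

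\textbf{Part (a).} This also needs tightening, though within your framework.
\begin{enumerate}
\item Triviality of the $\rho$-part of the support of $\pi_0$ says nothing about a non-tempered $\rho'$-segment with $\rho'\not\cong\rho$, whose exponents come from $\pi_0$. To exclude it, move that segment to the front and observe that the corresponding iterated $\rho'$-derivative of $\Pi$ equals $\Delta\rtimes D(\pi_0)$. Then kill $D(\pi_0)$ by Casselman's criterion for the tempered $\pi_0$.
\item Once $D_{\rho|\cdot|^{x_1}}(\Pi)\neq0$ gives $x_1\in\{-\tfrac12,\kappa\}$, excluding $r\ge2$ and $y_1\neq-\kappa$ comes from comparing infinitesimal parameters. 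All $\rho$-exponents of the Langlands data of $\tau$, together with their negatives, must fit inside $[\kappa,-\kappa]_\rho$. Your suggested use of $D_{\rho|\cdot|^{-1/2}}$ does not do this.
\end{enumerate}
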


\subsection{\texorpdfstring{Proof of Proposition \ref{prop: length 2}(a)}{}}

Suppose that $\tau$ is a non-tempered irreducible subquotient of $\Pi$. We write the subrepresentation version of the standard module of $\tau$ as
\[ M_{sub}(\tau)=\Delta_{\rho_1}[x_1,y_1] \times \cdots \times \Delta_{\rho_r}[x_r,y_r] \rtimes \tau_{temp}.\]
Here, $r\geq 1$ and
\begin{itemize}
    \item $\rho_i$ are unitary supercuspidal representations of $\GL_{k_i}(F)$ $x_i,y_i \in \bbR$ such that $x_i-y_i \in \bbZ_{\geq 0}$.
    \item $x_1+y_1\leq  \cdots  \leq x_r+y_r <0$.
    \item $\tau_{temp}$ is a tempered representation.
    \item $\tau$ is the unique irreducible subrepresentation of $M_{sub}(\tau)$
\end{itemize}
Given any irreducible representation $\tau$ of $\SO_{2n+1}(F)$, there exists a unique $M_{sub}(\tau)$ satisfying the above conditions.

First, we claim that $\rho_i \cong \rho$ for all $1 \leq i \leq r$. Indeed, suppose the contrary and let $s:= \min\{1 \leq i \leq r \ | \ \rho_i \not\cong \rho\}$. Then by \eqref{eq GL commutative}, we have 
\begin{align*}
    \tau & \hookrightarrow \Delta_{\rho_1}[x_1,y_1] \times \cdots \times \Delta_{\rho_r}[x_r,y_r] \rtimes \tau_{temp}\\
    &= \Delta_{\rho_s}[x_s,y_s] \times \bigtimes_{i \neq s} \Delta_{\rho_i}[x_i,y_i] \rtimes \tau_{temp}\\
    & \hookrightarrow (\rho_s \lvert\cdot\rvert^{x_s}  \times  \rho_s \lvert\cdot\rvert^{x_s-1} \times \cdots \times \rho_s \lvert\cdot\rvert^{y_s}) \times     \bigtimes_{i \neq s} \Delta_{\rho_i}[x_i,y_i] \rtimes \tau_{temp}.
\end{align*}
Set $D:= D_{\rho_s\lvert\cdot\rvert^{y_s}} \circ D_{\rho_s\lvert\cdot\rvert^{y_s+1}} \circ \cdots \circ D_{\rho_s\lvert\cdot\rvert^{x_s}}.$ Then Frobenius reciprocity implies that $D(\tau) \neq 0$. On the other hand, by the Leibniz rule, \eqref{eq der GL 1}, \eqref{eq der GL 2} and the assumption that $\rho_s \not \cong \rho$, we have
\[ 0 \neq D(\tau) \leq D (\Pi)= \Delta_{\rho}[\kappa, 1/2] \rtimes D(\pi_0).\]
However, since $x_s+ x_{s-1}+\cdots +y_s<0$, Casselman's criterion implies that $D(\pi_0) = 0$. This gives a contradiction and verifies the claim.

By the same argument as in the previous paragraph, we have $D_{\rho\lvert\cdot\rvert^{x_1}}(\tau) \neq 0$. Hence,
\[ 
0 \neq D_{\rho\lvert\cdot\rvert^{x_1}}(\tau) \leq D_{\rho\lvert\cdot\rvert^{x_1}}(\Pi)
= 
(L_{\rho\lvert\cdot\rvert^{x_1}}
+ R_{\rho\lvert\cdot\rvert^{-x_1}} )(\Delta_{\rho}[\kappa, 1/2]) \rtimes \pi_0
+ \Delta_{\rho}[\kappa, 1/2] \rtimes D_{\rho\lvert\cdot\rvert^{x_1}}(\pi_0).
\]
On the other hand, by the highest-derivatives formula for tempered representations in \cite[Theorem C.4.3]{AGIKMS24}, our assumption that $\phi_0$ does not contain any summand of the form $\phi_{\rho} \otimes S_{d}$ for any $d \in \bbZ_{> 0}$ implies that
\begin{align}\label{eq der pi 0}
    D_{\rho\lvert\cdot\rvert^{z}}(\pi_0) =0 \ \ \ \ \text{for all } z \in \bbR 
\end{align}
 Thus, \eqref{eq der GL 1} and \eqref{eq der GL 2} imply that $x_1= -\half{1}$ or $\kappa$. Moreover, set $[x,y]_{\rho}:= \{\rho \lvert\cdot\rvert^{x} , \rho \lvert\cdot\rvert^{x-1} , \ldots , \rho \lvert\cdot\rvert^{y}\}$. Comparing the infinitesimal parameters of $\tau$ and $\sigma$, we obtain a containment of multi-sets
\[ \sum_{i=1}^r [x_i,y_i]_{\rho} + [-y_i,-x_i]_{\rho} \subseteq [\kappa, 1/2]_{\rho}+ [- 1/2,-\kappa]_{\rho}= [\kappa,-\kappa]_{\rho}.\] 
Then since $x_1+y_1\leq x_2+y_2 \leq \cdots \leq x_r+y_r <0$, we must have $x_1= -\half{1}$, $y_1= -\kappa$ and $r=1$. This proves that $\tau$ must be the Langlands quotient of $\Pi$ and completes the proof of Part (a) of Proposition \ref{prop: length 2}.

\subsection{\texorpdfstring{Proof of Proposition \ref{prop: length 2}(b)}{}}
Let $D:= D_{\rho\lvert\cdot\rvert^{1/2}} \circ D_{\rho\lvert\cdot\rvert^{3/2}} \circ \cdots \circ D_{\rho\lvert\cdot\rvert^{\kappa}}$. 
By the Leibniz rule, we see that $D(\Pi)= \pi_0$, which is irreducible. Moreover,  by \eqref{eq der pi 0}, it is a composition of highest derivatives. As a consequence, there must exist an irreducible subquotient $\tau$ of $\Pi$ of multiplicity one such that $D(\tau) = \pi_0$. Such an irreducible representation is unique by \cite[Proposition 3.3]{AM23}. 

On the other hand, since $\pi_0$ and $\pi$ are generic, they correspond to the trivial character of the component group of $\phi_0$ and $\phi$, respectively. Then, applying the highest-derivatives formula for tempered representations in \cite[Theorem C.4.3]{AGIKMS24} repeatedly, 
we see that $D(\pi)= \pi_0$. This completes the proof of Part (b) of Proposition \ref{prop: length 2}.

\subsection{\texorpdfstring{Proof of Proposition \ref{prop: length 2}(c)}{}}
Suppose that $\tau$ is a tempered irreducible subquotient of $\Pi$. By comparing the extended cuspidal support (see \cite[Proposition 2.3]{Ato23}) or infinitesimal parameters, we see that $\tau$ is in the $L$-packet of $\phi$. Let $\eta$ denote the character of the component group of $\phi$ corresponding to $\tau$, which we identify with a function on irreducible summands of $\phi$ with value $\pm 1$ (see the notation in \cite[\S 1.5]{AGIKMS24}). 

Suppose that $\eta(\phi_{\rho} \boxtimes S_{2\kappa+1}) = 1$. Set $D:=D_{\rho\lvert\cdot\rvert^{1/2}} \circ D_{\rho\lvert\cdot\rvert^{3/2}} \circ \cdots \circ D_{\rho\lvert\cdot\rvert^{\kappa}}$. Then the formula for highest derivatives of tempered representations in \cite[Theorem C.4.3]{AGIKMS24} implies that $D(\tau)\neq 0$. Then $D(\tau) \leq D(\Pi)=\pi_0 $, which implies that $\tau \cong \pi$ as in the proof of Part (b).

Suppose that $\eta(\phi_{\rho} \boxtimes S_{2\kappa+1}) = -1$. 
Set $D':= D_{\rho\lvert\cdot\rvert^{3/2}} \circ \cdots \circ D_{\rho\lvert\cdot\rvert^{\kappa}}$. 
Then again by \cite[Theorem C.4.3]{AGIKMS24}, $D'(\tau)$ is an irreducible tempered representation with 
$L$-parameter $\phi_0+\phi_{\rho} \boxtimes S_{2}$, and $D_{\rho\lvert\cdot\rvert^{z}}(D'(\tau))=0$ for any $z \in \bbR$. 
As a consequence, we have  
\[ 
\tau \hookrightarrow  (\rho\lvert\cdot\rvert^{\kappa}  \times \rho\lvert\cdot\rvert^{\kappa-1} \times 
\cdots \times  \rho\lvert\cdot\rvert^{3/2}) \times  \rho_1\lvert\cdot\rvert^{x_1} \times \cdots \times \rho_r\lvert\cdot\rvert^{x_r} \rtimes \tau_{sc}, 
\]
where 
\begin{itemize}
    \item $\rho_i$'s are unitary supercuspidal representations not isomorphic to $\rho$ and $x_i \in \bbR$.
    \item $\tau_{sc}$ is a supercuspidal representation of some $\SO_{2m+1}(F)$ whose $L$-parameter contains $\phi_{\rho} \boxtimes S_{2}$.
\end{itemize}
Then it is clear that $\tau$ has different supercuspidal support from $\sigma$, which contradicts the assumption that $\tau$ and $\sigma$ are irreducible subquotients of the same induction $\Pi$. This completes the proof of Part (c) of Proposition \ref{prop: length 2}.\qed

\end{document}